\documentclass[10pt]{article}
\usepackage[utf8]{inputenc}
\usepackage{amsmath}
\usepackage{amsfonts}
\usepackage{amssymb}
\usepackage{enumerate}
\usepackage{mathtools}
\usepackage{soul}
\usepackage{enumitem}
\usepackage{adjustbox}
\usepackage{indentfirst}
\usepackage{xcolor}
\usepackage{esint}
\usepackage{authblk}
\usepackage{eucal}
\usepackage{mathrsfs}

\usepackage{etoolbox}
\apptocmd{\thebibliography}
  {\setlength{\itemsep}{0pt}
   \setlength{\parskip}{0pt}}
  {}{}

\usepackage[normalem]{ulem}

\newcommand{\cS}{\mathcal{S}}

\renewcommand{\bar}[1]{\overline{#1}}
\renewcommand{\tilde}{\widetilde}
\renewcommand{\hat}{\widehat}
\newcommand{\pt}{\partial}

\DeclareMathOperator{\vol}{vol}
\DeclareMathOperator{\Vol}{Vol}

\newcommand{\rv}{\mathrm{v}}
\newcommand{\Ric}{\mathrm{Ric}}

\DeclareMathOperator{\Hess}{Hess}
\renewcommand{\div}{\operatorname{div}}

\newcommand{\dd}[1]{\ensuremath{\operatorname{d}\!{#1}}}
\newcommand{\abs}[1]{\lvert #1 \rvert}
\newcommand{\norm}[1]{\lVert #1 \rVert}

\newcommand{\Lip}{\operatorname{Lip}}
\newcommand{\loc}{\operatorname{loc}}
\newcommand{\bLap}{\boldsymbol{\Delta}}
\newcommand{\diam}{\operatorname{diam}}

\mathchardef\mhyphen="2D

\usepackage{amsthm}
\usepackage{aliascnt}
\usepackage{hyperref}
\usepackage{cleveref}

\theoremstyle{plain}
\newtheorem{theorem}{Theorem}[section]

\theoremstyle{definition}

 \newaliascnt{lemma}{theorem} 
 \newaliascnt{proposition}{theorem} 
 \newaliascnt{corollary}{theorem} 
 \newaliascnt{remark}{theorem} 
 \newaliascnt{definition}{theorem}

 \newtheorem{definition}[definition]{Definition}
 \newtheorem{proposition}[proposition]{Proposition}
 \newtheorem{corollary}[corollary]{Corollary}
 \newtheorem{lemma}[lemma]{Lemma}

 \newtheorem{remark}[remark]{Remark}
 
\aliascntresetthe{corollary}
\aliascntresetthe{proposition}
\aliascntresetthe{lemma}
\aliascntresetthe{remark}
\aliascntresetthe{definition}

\crefname{subsection}{subsection}{subsections}
\Crefname{subsection}{Subsection}{Subsections}

\newtheorem{example}{Example}[section]

 \usepackage{titlesec}
 \titleformat{\section}{\large\bfseries}{\thesection.}{0.5em}{}

\titleformat{\subsection}
  {\normalfont\normalsize\bfseries}  
  {\thesubsection.}                   
  {0.5em}                             
  {}                                  

    \usepackage[
      paper=a4paper,
      top=1.2in,
      bottom=1.2in,
      left=1.2in,
      right=1.2in
    ]{geometry}

\usepackage{setspace}
\title{\large\bfseries
QUANTITATIVE VOLUME ESTIMATE ON SINGULAR SETS OF
ONE-PHASE FREE BOUNDARY PROBLEMS\\ IN RICCI LIMIT SPACES}

\author[1]{Alejandro Bellati}
\author[2]{Kai-Hsiang Wang}
\affil[1]{Department of Mathematics, Princeton University\\ abellati@math.princeton.edu}
\affil[2]{Faculty of Mathematics, Technion\\
khwang2025@campus.technion.ac.il}
\date{}

\allowdisplaybreaks[4]

\begin{document}

\maketitle
\vspace{-1cm}
\begin{abstract}
The study of free boundary problems in Euclidean spaces has a long history. In \cite{CZZ22}, the study was extended to one-phase problems on noncollapsed $\mathsf{RCD}(K,n)$ spaces, where singularities
may arise from both the ambient geometry and the free boundary itself. Regarding the singular set of the free boundary in this setting, they proved that it has a sharp Hausdorff dimension bound $n-3$. However, in the absence of a general monotonicity formula, finer properties of the singular set remain inaccessible. 
\par
In this work, we study \textit{limit solutions} of the one-phase problem on noncollapsed Ricci limit spaces.
We introduce a quantitative stratification $\mathcal{S}^k_{\varepsilon,r}$ of the singular set that simultaneously captures the approximate splitting of the ambient space and the approximate homogeneity and translational invariance of the solution. We also introduce a new truncated parabolic Weiss quantity that provides quantitative homogeneity via pinching across scales. 
Although this quantity may not be uniformly bounded across all small scales, it effectively shows that, among the first $j$ scales in a fixed decreasing geometric progression, at most $C\sqrt{j}$ scales fail to exhibit quantitative homogeneity. Such slow growth in $j$ enables us to adapt the covering argument from \cite{CN13} to show, for every $\eta>0$, a $Cr^{n-k-\eta}$ upper bound for the volume of the $r$-tubular neighborhood of the $k$-th quantitative singular stratum, $\mathcal{S}^k_{\varepsilon,r}$. 
\par
The same tubular-neighborhood estimate holds for the effective stratum $\mathcal S_\varepsilon^k$, and in particular we get the Minkowski dimension bound $\dim_{\mathcal M} \mathcal S_\varepsilon^k \le k$, from which the previously known sharp bound $\dim_{\mathcal H} \mathcal S \le n-3$ follows.
As another corollary, we prove that the free boundary admits a H\"older manifold structure away from a set of Minkowski dimension at most $n-3$. 
\end{abstract}

\tableofcontents

\setlength{\parskip}{0.5em}

\section{Introduction}

Let $(X,d,\mu)$ be a suitable metric measure space and $\Omega\subset X$ a bounded open domain.
The one-phase Bernoulli free boundary problem, for short \textit{the one-phase problem}, is concerned with minimizers of the Alt--Caffarelli functional:
\begin{equation}\label{1-phase_Ber}
    E(v)=E_Q(v):= \int_\Omega |\nabla v|^2 + Q^2 1_{v>0} \dd \mu,
\end{equation}
among $v\in W^{1,2}(\Omega)$ with $v-v_0\in W^{1,2}_0(\Omega)$, for some given nonnegative $v_0\in W^{1,2}(\Omega)$ and strictly positive $Q \in C^{\alpha}(\Omega)$.
In the case when $\Omega$ is not bounded, we require that $u\in W^{1,2}_{\text{loc}}(\Omega)$ minimizes $E$ over any bounded subdomain of $\Omega$ among competitors with the same boundary data. In particular, when $\Omega=X$, we say that $u$ is a global minimizer.
By considering the positive part, we may assume that a minimizer $u$ is nonnegative, and hence the name \emph{one-phase}.
Note that the boundary of the positive part, $\pt \{u>0\}\cap \Omega$, is undetermined, and hence the name \textit{free boundary} in this context.
Formally, $u$ solves the following free boundary problem:
\begin{equation}\label{eq:PDE}
    \begin{cases}
        \Delta u = 0& \text{on $\{u>0\}\cap \Omega$},\\
        |\nabla u|=Q& \text{on $\pt \{u>0\}\cap \Omega$},\\
        u=v_0 & \text{on $\pt \Omega$}.
    \end{cases}
\end{equation}
This free boundary problem is an example of a broader class of free boundary problems that arise in a wide variety of fields, including physics, economics, and biology.
The fundamental questions about free boundary problems concern the regularity of minimizers and the associated free boundary. Along these lines, the free boundary is partitioned into a regular set and a singular set, and hence the question becomes how smooth the regular set is, and how small the singular set is.
\par
When $X$ is the Euclidean space (with the canonical metric measure structure), this free boundary problem has been intensively studied in the literature. The pioneer work \cite{AC81} established the existence and Lipschitz continuity of minimizers $u$, and showed that $u$ essentially solves the Dirichlet problem \eqref{eq:PDE}.
For the free boundary, they proved that it is a $C^{1,\alpha}$-manifold outside a relatively closed set $\mathcal{S}_u$ with zero Hausdorff $(n-1)$-measure. This $\mathcal{S}_u$ is the singular set in this context.
\par
A more systematic study of $\mathcal{S}_u$ in the Euclidean setting begins with \cite{Wei99}, who introduced the Weiss monotonicity formula (see Theorem \ref{thm:orginal_weiss}) and confirmed the conjecture in \cite{AC81} that blow-ups of minimizers are one-homogeneous. Together with the dimension reduction argument, the Hausdorff dimension bound on $\mathcal{S}_u$ was improved to $\dim_\mathcal{H} (\cS_u)\leq n - k^*$, with $k^*$ being the smallest dimension such that $\mathbb{R}^{k^*}$ admits a non-linear, one-homogeneous (global) minimizer with constant $Q$. Later on with the works \cite{caffarelli2004global}, \cite{jerison2015some}, \cite{de2009singular}, we know $k^* \in \{5,6,7\}$, although the exact value is still unknown. 
\par
With the development of the study of singular sets in various geometric and analytic contexts, it has been understood that for a finer understanding of singular sets, it is useful to group points according to a quantitative notion of how symmetric solutions are at given scales, instead of measuring the amount of exact symmetries of their blow-ups.
This gives rise to the so-called \emph{quantitative stratification} $\mathcal{S}^k_{\eta,r}$, which first appeared in the context of Ricci limits \cite{CN13} and was subsequently used in many other problems.
Using these ideas, \cite{EE18} defined such stratification for free boundary problems in the Euclidean setting. Combined with the rectifiable Reifenberg-type theorems from \cite{NV17} and a corona-type covering argument, they improved the previous Hausdorff dimension bounds to the stronger packing number estimates on the singular strata, and moreover proved their rectifiability. For more background on the Euclidean theory of the one-phase problem, we refer to the monograph \cite{caffarelli_geometric_2005}, \cite{Vel23}, and the introduction in \cite{EE18}.
\par
In \cite{CZZ22}, the study of this free boundary problem is extended to nonsmooth spaces. The ambient spaces are now the so-called $\mathsf{RCD}(K,N)$ spaces, which are metric measure spaces that have Riemannian Ricci curvature bounded from below by $K$ and dimension bounded from above by $N$ in a synthetic sense. They also assume that the space is noncollapsed in volume, which amounts to saying that the measure is the $n$-dimensional Hausdorff measure, with $N=n\in\mathbb{N}$ (namely, the $\mathsf{ncRCD}(K,n)$ space as defined in \cite{DPG2018}).
Among their results, they recovered most of the regularity results in \cite{AC81} for minimizers in this nonsmooth setting. As for the free boundary, notice that the singularity can now come from either the free boundary structure itself or from the background space.
In this situation of possible mixed singularities, they proved that, away from a subset of Hausdorff dimension at most $n-3$, the free boundary is a H\"older manifold, and this dimension bound is sharp.
However, finer properties of the singular set remain inaccessible without a general monotone quantity.  In fact, even the homogeneity of blow-ups of the minimizers is currently unknown.
\par
Classical usage of monotone quantities to control singular sets via symmetry relied on the following principle: Monotonicity, together with boundedness, allows one to pass the quantity to the limit as the scale goes to $0$.
This limit value is then attained by the corresponding quantity of a blow-up at every scale, and rigidity of the monotone quantity can now be invoked to show cone-like behavior ($0$-symmetry) of the blow-up.
In the singular setting as in \cite{CZZ22}, however, such bounded monotone quantities are known to exist only on $\mathsf{ncRCD}$ cones (see \cite[Lemma A.3]{CZZ22}). 
Consequently, it is not clear a priori whether blow-ups exhibit any cone-like behavior.
In parallel to this discussion, \cite{FMS25} studied perimeter-minimizing sets in $\mathsf{RCD}$ spaces. 
When the ambient space is an $\mathsf{ncRCD}$ cone, they proved a monotonicity formula for the perimeter density ratio, together with the corresponding rigidity statement.
This led to parallel regularity results for the boundaries of perimeter-minimizing sets. However, it remains unknown whether every blow-up at a boundary point is a perimeter-minimizing cone.

\par
This article aims to show finer properties of the singular set in the nonsmooth setting, where no bounded monotone quantity is currently available.
Based on the preceding developments, it is natural to ask whether the quantitative stratification techniques can be adapted to this situation.
We follow the strategy first developed in \cite{CN13}, and later used in \cite{CheegerNaberValtorta2015CriticalSets}, \cite{CheegerNaber2013QuantitativeStratification}, \cite{ABS19}, among others.
The work \cite{CN13} marked the first step toward making full use of the monotone quantity itself, rather than only using the existence of its limiting value. 
More precisely, they exploited the fact that, at each point $x$, the quantity is $\delta$-pinched at all but (universally) finitely many scales, and hence provides quantitative cone-like behavior at such abundant scales via almost rigidity. 
\par
As it turns out, \cite{CN13} provides a way to circumvent the boundedness
requirement on the monotone quantity.
A key observation is that, in order to obtain a $Cr^{n-k-\eta}$ upper bound on the volume of the $r$-tubular neighborhood of the $k$-th quantitative stratum, the covering argument does not require the monotone quantity to remain finite down to arbitrarily small scales. In our setting, for every $0<t_0\ll 1$, we introduce a truncated parabolic Weiss quantity $W_{t_0,t}$, which is increasing for $t\in[t_0,1]$ and has an effective bound that explicitly depends on $t_0$ (see \eqref{eq:Wbound} below). This $W_{t_0,t}$ is designed to provide almost rigidity for scales between $t_0$ and $1$, and the rate at which $W_{t_0,t}$ diverges, as $t_0\to 0$, is sufficiently slow to adapt the covering argument from \cite{CN13}.
Effectively, at each point, we have the following control
\begin{center}
    $\text{\# \{bad scales between $t_0$ and $1$}\}\lesssim \sqrt{\text{\# \{scales between $t_0$ and $1$\}}}$.
\end{center}
At the remaining good scales, the quantity is well-pinched and hence gives almost homogeneity of the minimizer $u$.

Our main result (see \Cref{thm:main}) improves the Hausdorff dimension bound of the singular set in the nonsmooth setting by establishing Minkowski content estimates on the effective singular strata $\mathcal{S}^k_\varepsilon$.
In particular, this implies sharp Minkowski dimension bounds for $\mathcal{S}^k_\varepsilon$. Nonetheless, our results are obtained under the following restrictions\label{intro:lim_sol}: 
\begin{enumerate}\label{restrictions}
\item We only work on noncollapsed Ricci limit spaces, which are contained in the $\mathsf{ncRCD}$ framework used in \cite{CZZ22}. The reason for this restriction is that we use a domain variation formula to prove the truncated parabolic Weiss formula (\Cref{thm:parabolic_weiss}), and such variations involve the flow of vector fields on smooth manifolds. Therefore, we only work with their noncollapsed limits. See \Cref{sec:ParabolicWeissIntro} for more explanation.

\item  For the same reason as above, we only consider \textit{limit solutions}, which are limits of minimizers to the one-phase problem \eqref{1-phase_Ber} on smooth manifolds that pGH-converge to the limit space. In particular, the approximating sequence of minimizers satisfies a uniform Lipschitz bound.
See \Cref{subsec:lim_sol} for the rigorous definition.   
\end{enumerate}

In the remainder of this introduction, we provide further details on the ingredients of our argument and state our main results. In \Cref{sec:QuantitiveStrataIntro}, we define the quantitative stratification with respect to both the space and the minimizer. In \Cref{sec:ParabolicWeissIntro}, we provide more details about the truncated parabolic Weiss formula and show the explicit divergence bound. In \Cref{sec:mainresultsintro}, we present our main results, along with some other consequences.

\subsection{Quantitative Singular Stratification of Two Aspects}\label{sec:QuantitiveStrataIntro}

Let $u$ be a minimizer to the one-phase problem \eqref{1-phase_Ber} in a noncollapsed Ricci limit space, or more generally a continuous nonnegative function on an $\mathsf{ncRCD}(K,n)$ space.
Following \cite{CN13}, our quantitative stratification is based on whether there is a definite amount of symmetry at a fixed scale.
To deal with the possible singularities from either the ambient space or the free boundary structure, we stratify $\pt\{u>0\}\cap \Omega$ in both of these aspects.
The resulting quantitative strata $\mathcal{S}^k_{\eta,r}$ can be viewed as a combination of those in \cite{CN13,ABS19} for studying Ricci limit spaces and $\mathsf{RCD}$ spaces, and those in \cite{EE18} for studying free boundary problems on the Euclidean space. To be precise, we first define the quantitative symmetry in our context.

\begin{definition}[$(k,\eta)$-symmetric ball]\label{def:quant_sym}
    Let $(X,d,\mathcal{H}^n)$ be an $\mathsf{ncRCD}(K,n)$ space, $u:\Omega \subset X \to [0,\infty)$ a continuous function, and $x\in \Omega$ such that $u(x)=0$.
    Fix $k\in \mathbb{N}_0\cap[0,n]$, and $\eta>0$.
    We say that $B_r(x)\subset \Omega$ is $(k,\eta)$-symmetric if the following hold:
    \begin{enumerate}
        \item There is an $\mathsf{ncRCD}(0,n)$ cone $(\mathbb{R}^k\times C(Z),d,\mathcal{H}^n, (0,z_*))$  and a Borel $\eta r$-pGH approximation $F:B_r(0,z_*)\to B_r(x)$, where $B_r(0,z_*)$ is the $r$-ball in $\mathbb{R}^k\times C(Z)$ centered at the vertex point $(0,z_*)$. Here $C(Z)$ denotes the metric cone over some metric space $Z$.
        \item There exist a $1$-homogeneous and $\mathbb{R}^k$-invariant function $\tilde{u}$ on $\mathbb{R}^k\times C(Z)$ such that
        \begin{equation}\label{eq:L2etaclosenesstohomogeneity}
            r^{-2-n}\int_{B_r(0,z_*)} |u\circ F-\tilde{u}|^2 \dd{}\mathcal{H}^n< \eta. 
        \end{equation}
        Here by $\mathbb{R}^k$-invariant, we mean $\tilde{u}(p+v,q) = \tilde{u}(p,q)$ for every $p,v \in \mathbb{R}^k$ and $q \in C(Z)$.
        By $1$-homogeneous, we mean $\lambda u(p,q)= u(\lambda p,\lambda q)$ for every $p\in \mathbb{R}^k$, $q \in C(Z)$, and $\lambda\in [0,\infty)$, where $(\lambda p, \lambda q)$ represents the radial dilation by the factor $\lambda$.
    \end{enumerate}
    We call $(B_r(0,z_*), \tilde{u})$ a \emph{symmetry model} for the $(k,\eta)$-symmetric ball $B_r(x)$.
\end{definition}
We also define balls with exact symmetry.
\begin{definition}[$k$-symmetric ball]
    Under the same assumptions above, we say that $B_r(x)$ is $k$-symmetric if 
    \begin{enumerate}
        \item  There is an $\mathsf{ncRCD}(0,n)$ cone $(\mathbb{R}^k\times C(Z),d,\mathcal{H}^n, (0,z_*))$ and an isometry $F:B_r(0,z_*)\to B_r(x)$.
        \item $u\circ F$ is $1$-homogeneous and $\mathbb{R}^k$-invariant.
    \end{enumerate}
\end{definition}

In the definitions above, the first items describe the local symmetry of the ambient spaces since, by \cite{CC96,DPG2018}, any tangent space of a noncollapsed Ricci limit space or an $\mathsf{ncRCD}$ space is an $\mathsf{ncRCD}(0,n)$ cone, whose symmetry is measured by the number of Euclidean splitting factors. 
The second items describe the (almost) symmetry of $u$, in terms of the radial homogeneity and the invariance along those Euclidean splitting factors.
We remark that the almost symmetry of a minimizer $u$ translates into the almost symmetry of $\pt\{u>0\}$; see \Cref{prop:almostsymualmostfree}. More remarks follow below.

\begin{remark}
    From \cite{CZZ22} (see also \Cref{thm:mainCZZ22}), minimizers of the one-phase problem are locally Lipschitz. This is the standing regularity condition upon which we build our argument throughout this paper. Notice that we do not require the symmetry models $\tilde{u} $ to be minimizers.
\end{remark}

\begin{remark}[$L^2$-distance]
We measure the symmetry of the function in \Cref{def:quant_sym} using the $L^2$-distance.
In our applications, the relevant functions satisfy uniform local Lipschitz bounds.
Hence, the arguments could alternatively be formulated
using uniform closeness, provided we modify the quantitative constants accordingly.
We use the $L^2$-formulation to remain consistent with other applications of quantitative stratification.
\end{remark}

\begin{remark}[Compatibility of measures]
{Instead of requiring the cone models to be noncollapsed, one could impose an additional quantitative compatibility between the measure
of the ball in $X$ and that in the cone model $C$, for instance $\left|\log\frac{m_C(B_r(x))}{m_X(B_r(x))}\right|<\eta$.
Such a condition would provide the measured compactness of the cone models needed in our arguments. From this point of view, the choice made in \Cref{def:quant_sym} is mostly a matter of convenience: ultimately, the relevant class of cone models is determined by the almost-rigidity \Cref{lem:alm_rigid}. We therefore work directly with $\mathsf{ncRCD}(0,n)$ cone models.} 
\end{remark}
\begin{remark}[Cone models and the metric-measure perspective]
Within the noncollapsed category, every metric isometry between cone models automatically preserves the reference measure, since the Hausdorff measure is invariant under isometries. Hence, every isometry as such is automatically an isomorphism of metric measure spaces.
Moreover, using the results of \cite{DPG2018,Ketterer2015}, the reference
measure on $\mathbb{R}^k\times C(Z)$ decomposes according to the product and cone structures as
$\mathcal{H}^n=\mathscr{L}^k\otimes\mathcal{H}^{n-k}_{C(Z)}
=\mathscr{L}^k\otimes r^{n-k-1}\,dr\otimes\mathcal{H}^{n-k-1}_Z$;
see \Cref{lem:ncRCD_cone_struct}.
\end{remark}

\begin{remark}\label{rmk:rescaling}
By rescaling $(X,d,\mathcal{H}^n,u) \mapsto (X,r^{-1}d,\mathcal{H}^n_{r^{-1}d} = r^{-n}\mathcal{H}^n, r^{-1}u)$, the conditions in \Cref{def:quant_sym} can be expressed in the following manner:
$$
\left\{\begin{aligned}
& F:B_1^{r^{-1}d}(0,z_*)\subset \mathbb R^k \times C(Z) \to B_1^{r^{-1}d}(x) \subset X \ \ \ \text{is a $\eta$-pGH approximation},  \\
& \int_{B_1(0,z_*)} \abs{r^{-1} u\circ F - r^{-1}\tilde{u}}^2 \dd{}  \mathcal H^n < \eta.\\
\end{aligned}\right.
$$
Hence $(B^d_r(x),u)$ is $(k,\eta)$-symmetric if and only if $(B^{r^{-1}d}_1(x),r^{-1}u)$ is $(k,\eta)$-symmetric, and the symmetric function models are related by $\tilde{u} \mapsto r^{-1}\tilde{u}$.

\end{remark}
Given the definition of quantitative symmetry, we proceed to define the quantitative strata.
\begin{definition}[Quantitative singular stratum]\label{def:quant_sing_stra}
Under the same assumptions and notation as in \Cref{def:quant_sym}, for $r\in(0,1)$ we define 
\begin{equation}
\mathcal{S}^k_{\eta,r}(u):=\{x\in \pt \{u>0\}\cap \Omega \mid \text{for all $s\in [r,\bar{d}(x)]$, $B_s(x)$ is not $(k+1,\eta)$-symmetric}\},
\end{equation}
where $\bar{d}(x) = \text{min}\{1,d(x,\partial \Omega)/2\}$.
\end{definition}
Clearly, we have $\mathcal S^k_{\eta,r}(u)\subset
\mathcal S^{k'}_{\eta',r'}(u)$ whenever
$k'\geq k$, $r'\geq r$, and $\eta'\leq\eta$.
This motivates the following definitions:
\begin{equation}    \mathcal{S}^k_\eta(u):=\bigcap_{r>0}\mathcal{S}^k_{\eta,r}(u),\quad \mathcal{S}^k(u):= \bigcup_{\eta>0} \mathcal{S}^k_\eta(u).
\end{equation}
\par
When there is no room for confusion, we will drop the dependence on \(u\) in the notation. In \Cref{sec:QuantitiveStrataTwoTypes} we present some basic properties and relevant results for these strata. In particular, \Cref{prop:basicpropstrata} shows that $\mathcal{S}^k(u)$ coincides with the classical singular strata, which are characterized by the amount of symmetry of blow-ups.

\subsection{Truncated Parabolic Weiss Quantity}\label{sec:ParabolicWeissIntro}

For the one-phase problem in Euclidean space, the original Weiss quantity introduced in \cite{Wei99} (see also Theorem \ref{thm:orginal_weiss}) is bounded and (almost-)monotone in the radial variable.
It also has the rigidity that, for a constant coefficient $Q$, it is constant if and only if the solution is $1$-homogeneous. 
In particular, this implies that blow-ups of solutions are $1$-homogeneous, which is the starting point for obtaining additional symmetries (invariant directions) through the cone-splitting principle.
A version of the Weiss formula was further developed for $\mathsf{ncRCD}$ spaces in \cite[Eq.\@ 8.1]{CZZ22}.
They showed that the corresponding quantity is monotone (for constant $Q$) when the ambient space is an $\mathsf{ncRCD}$ \emph{cone}.
Without the cone structure, however, no monotonicity formula for the one-phase problem is known on general $\mathsf{ncRCD}$ spaces.
\par
There are, at least, two distinct difficulties. The first concerns the derivation of such formulas on a nonsmooth space. Although a substantial second-order calculus is available in the $\mathsf{RCD}$ setting, carrying out the necessary domain-variation argument is already a delicate problem; see \cite[Remark 1.7]{CZZ22} and partial progress in \cite{bruè2018regularitylagrangianflowsrcdkn}. Another example is the extension of Perelman's entropy monotonicity to $\mathsf{RCD}$ spaces carried out in \cite{brena2025perelman}, where the use of the nonsmooth second-order calculus introduces additional technical difficulties.
In this work, we use smooth manifolds approximating the Ricci limit space to avoid addressing this issue directly, though it remains an interesting question for future research.
\par
The second difficulty is geometric and is already present on smooth manifolds. In the Euclidean space, the derivation underlying the Weiss formula relies on the identity $\Hess\left(\frac{1}{2}d_x^2\right)=g_{\text{Euc}}$.
On a general manifold, the discrepancy between $\Hess(\frac{1}{2}d_x^2)$ and the metric tensor $g$ produces an additional term with no definite sign.
Moreover, this term need not remain uniformly bounded, in a scale-wise $L^1$ sense, along a sequence of manifolds $M_i$ converging to a Ricci limit space.
To handle this difficulty, we introduce a lower cutoff $t_0$ of the scales in the corresponding correction term, resulting in a truncated quantity $W_{t_0,t}$ that has an effective bound in terms of $t_0$.
\par
Another modification we make to solve the second difficulty above is that our monotone quantity uses the heat kernel $\rho_{t,x}$ of the ambient space. 
More precisely, we replace the radial vector field $\nabla\frac{1}{2}d_x^2$ with $2t\nabla f_{t,x}$, where $f_{t,x}:=-\log\bigl((4\pi t)^{n/2}\rho_{t,x}\bigr)$.
These two vector fields agree on the Euclidean space and, more generally, on $\mathsf{ncRCD}(0,n)$ cones when $x$ is the vertex.
This heat-kernel formulation avoids the regularity issues associated with the distance function. 
Most importantly, the heat kernel satisfies an $L^2$-Hessian estimates based on the local entropy \cite[Theorem 4.22]{CJN21}, and converges strongly in $W^{1,2}$ along converging sequences of manifolds (see \Cref{thm:heatkernelconvergence} and the references cited above it).
These properties allow us to control the geometric correction term and to pass to the limit along the approximating sequence.
\par
Our monotonicity formula is the following: 
\begin{theorem}[Truncated Parabolic Weiss Quantity]\label{thm:parabolic_weiss}
Let $u$ be a minimizer of the one-phase problem \eqref{1-phase_Ber} with $X$ being a smooth manifold $(M,g)$. 
Fix some ball $B_{4\bar{r}}(\bar{x})\subset\Omega$. Assume $\bar{r}\leq \delta^{-1}$, $\Ric \geq -(n-1)\delta^2$, and $\Vol(B_{\bar r}(\bar x))\ge \mathrm{v}\,\bar r^n$, for some $0<\delta\leq 1$ and $\mathrm{v}>0$.
Assume also that $\abs{Q+1/Q}_{C^0,B_{4\bar r}(\bar x)} + \overline{r}^{\alpha}[Q]_{\alpha, B_{4\bar r}(\bar x)}<\Lambda$, and that $u$ is $L$-Lipschitz on $B_{4\bar r}(\bar x)$.
Pick some $0<t_0 < \frac{1}{2}\bar{r}^2$. Let $\phi:M\to [0,1]$ be a cutoff function with support in $B_{4\bar{r}}(\bar{x})$, satisfying
\begin{equation}\label{cutoff}
    \phi|_{B_{2\bar{r}}(\bar{x})}\equiv 1, \quad |\nabla \phi|^2+|\Delta \phi|\leq C(n)\bar{r}^{-2}.
\end{equation} 
For $x\in B_{\bar{r}}(\bar{x})\cap \pt\{u>0\}$ and $t\in [t_0,\overline{r}^2]$, define
    \begin{align}
        W_{t_0,t}(u,g,Q,x):=&\int_M \left[2(|\nabla u|^2+{Q^2(x)}1_{u>0})-t^{-1}u^2\right] \phi \rho_{t,x}\dd\vol\\
    +&4\int_{t_0}^t \int_M \left|(H_{s,x} +\frac{\rho_{s,x}}{2s}g)(\nabla u, \nabla u)\right| \phi\dd\vol \dd s
    +\mathcal{E}_{t}(u,g,Q,x),
    \end{align}
    where $\mathcal{E}_t(u,g,Q,x)\geq 0$ is an error term defined by
\begin{align}\label{eq:parabolic_weiss_error}
\mathcal{E}_t(u,g,Q,x)
:=&
\int_0^t \int_M
\left[
(6|\nabla u|^2+2{Q^2(x)})|\nabla f_{s,x}||\nabla\phi|
+s^{-1}u^2|\Delta\phi|
\right]
\rho_{s,x}\,d\vol\,ds
\\
&
+{2\int_0^t \int_{\partial\{u>0\}}
|\delta_x Q^2|\,|\nabla f_{s,x}|\,
\rho_{s,x}\phi\,
d\mathcal{H}^{n-1}\,ds},
\end{align}
where $(\delta_{x}Q^2)(y):=Q^2(y)-Q^2(x)$.
Then $ t \mapsto W_{t_0,t}(u,g,x)$ belongs to $C^1([t_0,\overline{r}^2])$, and we have 
\begin{align}
\frac{d}{dt}W_{t_0,t}(u,g,Q,x) & \geq t^{-2}\int_M \left[ u-\nabla u\cdot \nabla(2tf_{t,x})\right]^2 \phi \rho_{t,x}\dd\vol, \label{eq:monder} \\
|W_{t_0,t}(u,g,x)| & \le C(n,\mathrm{v},L,\Lambda,\alpha)
\left(
1+\left|\log\frac{t}{t_0}\right|^{1/2}
+[Q]_{\alpha,B_{4\bar r}(\bar x)}t^{\alpha/2}
\right), \label{eq:Wbound}\\
\mathcal E_t(u,g,x)
&\leq
C(n,\mathrm{v},L,\Lambda)\exp\left(-\frac{\bar r^2}{10t}\right)
+
C(n,\mathrm{v},L,\Lambda,\alpha)[Q]_{\alpha, B_{4\overline{r}}(\overline{x})}
t^{\alpha/2}.
\label{eq:errorb}
\end{align}
\end{theorem}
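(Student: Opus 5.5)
We compute $\frac{d}{dt}$ of the first (heat-kernel weighted) term. We use $\partial_t\rho_{t,x}=\Delta\rho_{t,x}$ and a domain variation of $u$ along the vector field $X_t:=2t\nabla f_{t,x}\,\phi = -2t\,\phi\,\nabla\rho_{t,x}/\rho_{t,x}$. This is legitimate on the smooth manifold $M$: the flow of $X_t$ produces competitors with the same boundary data, since $\phi$ is supported in $B_{4\bar r}(\bar x)\subset\Omega$. Minimality then gives the first variation identity
\begin{equation*}
\int_M \Bigl[(|\nabla u|^2+Q^2 1_{u>0})\div X - 2\,\nabla X(\nabla u,\nabla u)\Bigr]\dd\vol + \int_M 1_{u>0}\,\nabla Q^2\cdot X \dd\vol =0 ,
\end{equation*}
where the last term is interpreted via the free boundary as in \cite{CZZ22}. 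I replace $Q^2$ by $Q^2(x)+\delta_xQ^2$; the $\delta_xQ^2$ piece becomes the boundary integral appearing in $\mathcal E_t$. I also use the harmonicity of $u$ in $\{u>0\}$, integrating $u\Delta u$ and $u^2\Delta\rho$ by parts, to handle the $t^{-1}u^2$ term.

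Expanding $\nabla X_t = 2t\phi\,\nabla^2 f\,+\ldots$ and using $\nabla^2 f\cdot\rho = -H_{t,x}+\rho\,df\otimes df$, where $H$ is the Hessian of $\rho$, the derivative reorganizes into three pieces. The first is the perfect square $t^{-2}\int[u-\nabla u\cdot\nabla(2tf)]^2\phi\rho$; the usual Weiss computation produces it exactly when $\Hess(2tf)=g$, which holds on cones. The second is the geometric defect $-4\int (H_{t,x}+\frac{\rho_{t,x}}{2t}g)(\nabla u,\nabla u)\phi$, with no sign. The third consists of terms with $\nabla\phi,\Delta\phi$ and the $\delta_xQ^2$ boundary term. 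The defect is dominated by the $t$-derivative of $4\int_{t_0}^t\int|\cdots|$, so adding that integral kills it. The $\phi$-terms and the $Q$-oscillation terms are bounded by the integrands of $\mathcal E$, and since $\frac{d}{dt}\mathcal E_t$ equals those integrands (nonnegative), adding $\mathcal E_t$ absorbs them. This gives \eqref{eq:monder}. The $C^1$ regularity follows from smoothness of $\rho$ in $t>0$ and the Lipschitz bound on $u$.

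For \eqref{eq:errorb}, the terms involving $\nabla\phi,\Delta\phi$ live on $B_{4\bar r}\setminus B_{2\bar r}$, while $x\in B_{\bar r}(\bar x)$. I would use Li--Yau Gaussian upper bounds for $\rho$ and $|\nabla f|\lesssim (d^2/s+1)/\sqrt s$-type gradient bounds (valid under $\Ric\ge-(n-1)\delta^2$ and the noncollapsing assumption $\Vol(B_{\bar r}(\bar x))\ge \mathrm{v}\,\bar r^n$). These give a contribution of order $e^{-\bar r^2/(10t)}$. For the boundary term I use $|\delta_xQ^2|\le C\Lambda [Q]_\alpha d(x,y)^\alpha$ together with the density bound $\mathcal H^{n-1}(\partial\{u>0\}\cap B_r)\le Cr^{n-1}$ from \cite{CZZ22}. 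Integrating against $\rho_s|\nabla f_s|$ gives a bound of order $s^{\alpha/2-1}$, which integrates to $t^{\alpha/2}$.

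For \eqref{eq:Wbound}, the first term is bounded by $C(L,\Lambda)$: $u(y)\le L\,d(x,y)$ since $u(x)=0$, and $\int d^2\rho_t\lesssim t$. The main obstacle is the truncated Hessian integral. By Cauchy--Schwarz in $s$ with the measure $ds/s$, and $|\nabla u|\le L$,
\begin{equation*}
\int_{t_0}^t\!\!\int |H_s+\tfrac{\rho_s}{2s}g|\dd\vol\dd s \le \Bigl(\log\tfrac{t}{t_0}\Bigr)^{1/2}\Bigl(\int_{t_0}^t s\Bigl(\int|H_s+\tfrac{\rho_s}{2s}g|\Bigr)^2 ds\Bigr)^{1/2}.
\end{equation*}
It then suffices that $s\int_0^{\bar r^2}\!\int|H_s+\frac{\rho_s}{2s}g|^2\rho_s^{-1}\,ds$ is bounded by $C(n,\mathrm v)$. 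I would obtain this from the $L^2$-Hessian estimate of the heat kernel controlled by the drop of local entropy \cite[Theorem 4.22]{CJN21}, combined with the boundedness of the entropy under the noncollapsing assumption. Producing exactly this weighted square-integrable form, uniformly in the geometry, is the delicate step.
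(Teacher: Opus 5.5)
Your architecture is the paper's: inner variation along a heat-kernel field, completing a square with the $t^{-1}u^2$ term, absorbing the sign-less defect and the cutoff and $\delta_xQ^2$ errors into the added integrals, Gaussian and Ahlfors bounds for $\mathcal E_t$, and Cauchy--Schwarz plus the local entropy of \cite{CJN21} for the truncated term. However, two identifications are wrong, and as written the central steps fail.

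First, the variation field must carry the heat-kernel weight. The paper uses $V=\phi\nabla\rho_{t,x}=-\phi\rho_{t,x}\nabla f_{t,x}$. Only then does $\div V=\phi\Delta\rho_{t,x}+\langle\nabla\phi,\nabla\rho_{t,x}\rangle$ turn $I'(t)=\int(|\nabla u|^2+Q^2(x)1_{u>0})\phi\,\partial_t\rho_{t,x}$ into a first-variation expression. Your field $X_t=2t\phi\nabla f_{t,x}$ gives an unweighted identity, which does not compute $I'(t)$.

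Second, and decisively, $H_{t,x}$ is $-\rho_{t,x}\Hess f_{t,x}$, not $\Hess\rho_{t,x}$. With $V=\phi\nabla\rho$, the variation contributes $2\phi\Hess\rho_{t,x}(\nabla u,\nabla u)=2\phi H_{t,x}(\nabla u,\nabla u)+2\phi\rho_{t,x}\langle\nabla u,\nabla f_{t,x}\rangle^2$ to $I'(t)$. It is exactly the $\rho\,df\otimes df$ part that, together with the terms of $J'(t)$, completes the square $t^{-2}[u-\langle\nabla u,\nabla(2tf_{t,x})\rangle]^2$. What remains is $4\int(H_{t,x}+\frac{\rho_{t,x}}{2t}g)(\nabla u,\nabla u)\phi$ with the paper's $H$.

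If you read $H$ as $\Hess\rho$, two things break:
\begin{itemize}
\item Your three-piece decomposition is false: a term $-4\int\rho_{t,x}\phi\langle\nabla u,\nabla f_{t,x}\rangle^2$ is left over, and nothing absorbs it.
\item The bound \eqref{eq:Wbound} fails outright. On $\mathbb R^n$ one has $\Hess\rho_s+\frac{\rho_s}{2s}g=\rho_s\frac{(y-x)\otimes(y-x)}{4s^2}$. For the half-plane solution $u=Q\,y_n^+$ the truncated integral is then, up to cutoff effects, $Q^2\log(t/t_0)$ rather than $O(|\log(t/t_0)|^{1/2})$, and the $C\sqrt{j}$ count of bad scales would be lost.
\end{itemize}

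With the correct $H$, the defect vanishes identically on $\mathbb R^n$. Your quantity $\int|H_s+\frac{\rho_s}{2s}g|^2\rho_s^{-1}\phi$ is then precisely $\int|\Hess f_s-\frac{g}{2s}|^2\rho_s\phi$, and the step you call delicate goes through as you suggest. The local pointed entropy $\mathcal W_t$ of \cite[Section 4.6]{CJN21}, with its Ricci-correction terms and the cutoff $\phi$, satisfies $\partial_t\mathcal W_t\le-2t\int|\Hess f_{t,x}-\frac{g}{2t}|^2\phi\rho_{t,x}$. Hence $\int_0^{\bar r^2}2s\int|\Hess f_{s,x}-\frac{g}{2s}|^2\phi\rho_{s,x}\,d\vol\,ds\le\mathcal W_{0^+}-\mathcal W_{\bar r^2}$. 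Here $\mathcal W_{0^+}=0$ on a smooth manifold, and $-\mathcal W_{\bar r^2}\le C(n,\mathrm v)$ by the heat-kernel estimates.

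Separately, the term $\int 1_{u>0}\nabla Q^2\cdot X$ is meaningless for $Q\in C^\alpha$. The paper instead differentiates $\int Q^2\,(1_{\{u>0\}}\circ\Phi_t)\,d\vol$ directly, using that $\{u>0\}$ has locally finite perimeter together with the Gauss--Green formula. This gives $\int_{\partial\{u>0\}}Q^2 g(V,\nu)\,d\mathcal H^{n-1}$, which is the form needed to split off the $\delta_xQ^2$ boundary term.
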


The usefulness of $W_{t_0, t}$ boils down to two facts. First, from \eqref{eq:monder}, pinching of the quantity between scales implies that $u$ is quantitatively close to a $1$-homogeneous function, and therefore provides effective control of its symmetry; see its precise usage with the volume ratio in the Almost Rigidity \Cref{lem:alm_rigid}.
Secondly, although the bound \eqref{eq:Wbound} diverges as $t_0 \to 0$, it still provides some effective control.
Indeed, it follows that the number of bad scales $r=\gamma^{i}$ with  $i \leq i_0$ is bounded by $C\sqrt{i_0}$. This, in turn, allows us to adapt the covering argument from \cite{CN13} to our setting; see \Cref{lem:decomp}.

\subsection{Main Results}\label{sec:mainresultsintro}

In this article, we improve the Hausdorff dimension estimate for the singular set of the free boundary $\partial\{u>0\}\cap\Omega$ in the nonsmooth setting by establishing Minkowski content estimates for the quantitative singular strata. 
The proof relies on the truncated parabolic Weiss formula (\Cref{thm:parabolic_weiss}), which is derived in the smooth category. 
Therefore, we only consider \emph{limit solutions} as mentioned in \Cref{intro:lim_sol}; see \Cref{subsec:lim_sol} for more details. Our main result is the following.
\begin{theorem}\label{thm:main}
    Let $X$ be an $n$-dimensional noncollapsed Ricci limit space,
    and let $u$ be a limit solution with $C^\alpha$ coefficient $Q$ on a bounded domain $\Omega\subset X$.
     Without loss of generality, we fix $B_4(x)\Subset \Omega$ and assume $\mathcal{H}^n(B_4(x))\geq \mathrm{v}>0$.
    Let $L,\Lambda$ denote, respectively, the Lipschitz and coefficient bounds for the approximating sequence of $u$ as in \Cref{subsec:lim_sol} on $B_4(x)$.
    Then for any $\eta\in(0,1)$ and $\varepsilon \in (0,1)$, there exists $C=C(n,\mathrm{v},L,\Lambda,\alpha,\eta,\varepsilon)>0$ such that for any $r\in(0,1)$, we have
    \begin{equation}
      \Vol(T_r(\mathcal{S}^k_{\varepsilon}) \cap B_1(x)) \leq \Vol(T_r(\mathcal{S}^k_{\varepsilon, r})\cap B_1(x))\leq Cr^{n-k-\eta}, \label{eq:vol1}
    \end{equation}
    where $T_r(\cdot)$ denotes the $r$-tubular neighborhood, and $\mathcal{S}^k_{\varepsilon,r}$ is the $(k,\varepsilon,r)$-singular strata as in \Cref{def:quant_sing_stra}.
    We also have 
\begin{equation}
\mathcal{H}^{n-1} \llcorner \partial \{u > 0\}\left(T_r(\mathcal{S}^k_{\varepsilon, r})\cap B_1(x)\right)\leq Cr^{n-k-1-\eta} \label{eq:vol2},
\end{equation}
and the Minkowski dimension bound
    \begin{equation}
    \dim_{\mathcal{M}}(\mathcal{S}^{k}_\varepsilon\cap B_1(x))\leq k. \label{eq:vol3}
    \end{equation}
\end{theorem}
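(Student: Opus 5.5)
The proof adapts the covering argument of \cite{CN13}, with the bounded monotone quantity replaced by the truncated parabolic Weiss quantity of \Cref{thm:parabolic_weiss}, transferred to the limit space via the approximating manifolds. First, the truncated Weiss formula passes to limit solutions: along the approximating sequence $(M_i,g_i,u_i)$, the heat kernels converge strongly in $W^{1,2}$ (\Cref{thm:heatkernelconvergence}) and the $u_i$ are uniformly Lipschitz. Hence \eqref{eq:monder}, \eqref{eq:Wbound} and \eqref{eq:errorb} hold, after taking a liminf or limit, for a limit quantity on $X$. Fix $\gamma\in(0,1)$ small and scales $r_j=\gamma^j$. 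Set $t_0=r_{j_0}^2$, where $r_{j_0}\approx r$ is the target scale.

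Second, I will prove the almost rigidity statement (\Cref{lem:alm_rigid}) by contradiction and compactness. Suppose that at $x$ both the volume ratio and the quantity $W_{t_0,t}$ are $\delta$-pinched between $t=r_{j+1}^2$ and $r_{j-2}^2$. Then $B_{r_j}(x)$ is $(0,\varepsilon')$-symmetric. The volume pinching gives a metric cone by volume-cone-implies-metric-cone, and \eqref{eq:monder} forces $u\approx\nabla u\cdot\nabla(2tf_{t,x})$, which on a cone is $1$-homogeneity. I also need a cone-splitting lemma. Suppose the points where $B_{r_j}$ is $(0,\delta)$-symmetric $\delta r_j$-effectively span a $(k+1)$-dimensional affine subspace in $B_{r_j}(y)$. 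Then $B_{r_j}(y)$ is $(k+1,\varepsilon)$-symmetric, with $\mathbb{R}^{k+1}$-invariance of $\tilde u$ coming from homogeneity about two distinct centers. This too is proved by compactness, using the $\mathsf{ncRCD}(0,n)$ cone structure (\Cref{lem:ncRCD_cone_struct}).

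Third, the counting step. By monotonicity, the increments of $W_{t_0,\cdot}$ over consecutive scale blocks are nonnegative up to the error \eqref{eq:errorb}. By \eqref{eq:Wbound}, their total over $j\le j_0$ is at most $C(1+\sqrt{j_0\log\gamma^{-1}})$, so at most $C\delta^{-1}\sqrt{j_0}$ scales are not $\delta$-pinched. The same holds for the volume ratio, which is monotone by Bishop--Gromov with $\delta$-small curvature after rescaling, with total bound $C(n,\rv)$.

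Label each $x$ with the tuple $T^j(x)\in\{0,1\}^{j_0}$ recording its bad scales. The number of admissible tuples is
\begin{equation*}
\binom{j_0}{\le C\sqrt{j_0}}\le \exp\bigl(C\sqrt{j_0}\log j_0\bigr)\le C_\eta\, r^{-\eta/2}.
\end{equation*}
For a fixed tuple, run the \cite{CN13} inductive covering (\Cref{lem:decomp}). At a good scale $j$, cone-splitting shows that the points of $\mathcal{S}^k_{\varepsilon,r}$ in a ball $B_{r_j}$ lie near a $k$-plane. Such a ball is covered by $c(n)\gamma^{-k}$ balls of radius $r_{j+1}$. At a bad scale, use the trivial count $c(n)\gamma^{-n}$. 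Since there are only $C\sqrt{j_0}$ bad scales, this gives
\begin{equation*}
N\le (c\gamma^{-k})^{j_0}(c\gamma^{-n})^{C\sqrt{j_0}}.
\end{equation*}
Choosing $\gamma$ small so that $c^{j_0}\le r^{-\eta/4}$, and absorbing the sub-power factor, gives $N\le C r^{-k-\eta}$ balls of radius $r$. Summing over tuples and multiplying by $\Vol(B_{2r})\le Cr^n$ yields \eqref{eq:vol1}; the inclusion $\mathcal{S}^k_\varepsilon\subset\mathcal{S}^k_{\varepsilon,r}$ gives its first inequality.

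Finally, \eqref{eq:vol2} follows from the same covering together with the upper density estimate $\mathcal{H}^{n-1}(\partial\{u>0\}\cap B_{2r}(y))\le Cr^{n-1}$ for limit solutions, which comes from \cite{CZZ22} via nondegeneracy and the Lipschitz bound. The Minkowski bound \eqref{eq:vol3} follows from \eqref{eq:vol1} by letting $r\to0$ and then $\eta\to0$.

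The main obstacle is the rigidity and cone-splitting step in the nonsmooth limit. The pinching in \eqref{eq:monder} yields homogeneity only with respect to the heat-kernel radial field $2t\nabla f_{t,x}$. This must be identified with the cone radial field $\nabla\frac12 d^2$ in the limit, which requires the $L^2$-Hessian estimates of \cite{CJN21}. The truncation term $\int_{t_0}^t|H+\frac{\rho}{2s}g|$ must also be shown to contribute only at the $C\sqrt{j_0}$ bad scales. I would first try to get the rigidity cleanly by contradiction, using convergence of heat kernels on the limit cone.
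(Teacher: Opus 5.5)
The compactness proof you sketch for almost rigidity does not close under the hypotheses you state. The rest of your architecture matches the paper's: tuples of bad scales, $t_0=\gamma^{2j}$ so that \eqref{eq:Wbound} leaves $O(\sqrt j)$ unpinched Weiss scales, sub-exponential growth of $j^{C\sqrt j}$, and cone-splitting only at good scales.

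\textbf{The gap in almost rigidity.} You assume volume pinching only on the fixed window of radii $[r_{j+1},r_{j-2}]$. A contradiction sequence then yields a limit that is a metric cone only on a ball of bounded radius about $x_\infty$. But $\rho_{t,x}$, and hence $2t\nabla f_{t,x}$, is nonlocal. At $t\approx r_{j-2}^2$ its Gaussian scale equals the radius of the conical region, so it feels the uncontrolled geometry outside. Even at the bottom of the window it differs from the cone kernel by an amount that does not vanish as $\delta\to0$. You therefore only obtain $u_\infty=\langle\nabla u_\infty,2t\nabla f_{t,x_\infty}\rangle$ with $2t\nabla f_{t,x_\infty}\neq d_{x_\infty}\nabla d_{x_\infty}$. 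That is approximate homogeneity with a non-vanishing error, which cannot give $(0,\varepsilon)$-symmetry for the small $\varepsilon$ that cone-splitting requires.

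This is why \Cref{lem:alm_rigid} asks for pinching of $\log\mathcal V_{-1}$ from $\sqrt{t_1}$ all the way up to $\delta^{-1}\sqrt{t_2}$, together with $t_2\le\delta^2$. With that window:
\begin{enumerate}
\item the rescaled limit is a \emph{global} $\mathsf{ncRCD}(0,n)$ cone;
\item \eqref{eq:heatkernelcone} gives $2t\nabla f_{t,x_\infty}=d_{x_\infty}\nabla d_{x_\infty}$ exactly;
\item \Cref{lem:ft_convergence} plus the weak stability of $\nabla u_i\cdot\nabla f_{t,x_i}$ from \cite{AH18} finish the argument.
\end{enumerate}
The $L^2$-Hessian estimates of \cite{CJN21} play no role in this identification. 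They only bound the truncation term by $C|\log(t/t_0)|^{1/2}$ in \eqref{eq:Wbound}. Since that term is part of the nondecreasing $W_{t_0,t}$, it needs no separate treatment at bad scales.

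The repair costs nothing in the count. Choose $m$ with $\gamma^{-m}\ge\delta^{-1}$. The extended volume window fails at most $(m+2)\delta^{-1}|\log(c\mathrm v)|$ times, and $r_{j-2}>\delta$ holds at only $O(\log\delta/\log\gamma)$ scales. Both counts are independent of $j$.

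\textbf{Passing the Weiss quantity to $X$.} Your first step departs from the paper and is not justified by what you cite. Strong $W^{1,2}$ convergence of heat kernels does not control the Hessian term $\int_{t_0}^t\int|(H_{s,x}+\frac{\rho_{s,x}}{2s}g)(\nabla u,\nabla u)|\phi$. So there is no intrinsic limit quantity on $X$ obeying \eqref{eq:monder}. You can make it work by setting $W^X(t):=\liminf_i W_{t_0,t}(u_i,g_i,Q_i,x_i)$. Superadditivity of $\liminf$, the uniform bound \eqref{eq:Wbound}, and weak lower semicontinuity (via \cite{AH18} and \Cref{lem:ft_convergence}, with convergent cutoffs) then give $W^X(t_2)-W^X(t_1)\ge\int_{t_1}^{t_2}t^{-2}\int_X[u-\nabla u\cdot\nabla(2tf_{t,x})]^2\phi\rho_{t,x}\,d\mathcal H^n\,dt$. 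That is all the rigidity and counting steps use, so the covering could then be run on $X$.

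The paper instead runs everything on the smooth $M_i$, where the Weiss identity is literal. It transfers only the final ball count, using volume convergence and \Cref{prop:basicpropstrata}(iv): every point of $\mathcal S^k_{\eta,r}(u)$ is a limit of points of $\mathcal S^k_{\eta/2,r}(u_i)$. This avoids any second-order object on $X$. Either route works once the rigidity window is fixed, but yours needs this $\liminf$ mechanism spelled out.

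A minor point: the upper bound $\mathcal H^{n-1}(\partial\{u>0\}\cap B_{2r})\le Cr^{n-1}$ used for \eqref{eq:vol2} comes from testing the Euler--Lagrange equation of \Cref{thm:mainCZZ22} against a cutoff together with the Lipschitz bound. It does not come from nondegeneracy.
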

\par

Combined with the Reifenberg-type results in \Cref{prop:reifenbergprop} and \Cref{cor:Reifenberg}, we get:
\begin{corollary}\label{cor:intro_biholder}
Under the hypothesis above, there is a set $\tilde{\mathcal{S}}$ with Minkowski dimension at most $n-3$ and bounded Minkowski $(n-3 + \varepsilon)$-content, such that $\partial \{u>0\} \cap \Omega \backslash \tilde{\mathcal{S}}$ is a H\"older manifold. 
\end{corollary}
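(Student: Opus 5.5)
The corollary follows by splitting the free boundary at the stratum level $k=n-3$. Points with enough almost-symmetry at some scale are regular and lie in a Hölder chart; everything else falls into a set controlled by \Cref{thm:main}.

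\textbf{Definition of the exceptional set.} I would take
\[
\tilde{\mathcal{S}} := \mathcal{S}^{n-3}_{\varepsilon_0}\cap \partial\{u>0\},
\]
where $\varepsilon_0=\varepsilon_0(n,\mathrm{v},L,\Lambda,\alpha)>0$ is the threshold supplied by the Reifenberg-type results \Cref{prop:reifenbergprop} and \Cref{cor:Reifenberg}. The intended content of those results is the following: if at some scale $s$ the ball $B_s(y)$, with $y\in\partial\{u>0\}$, is $(n-2,\varepsilon_0)$-symmetric, then $\partial\{u>0\}$ is bi-Hölder to a ball in $\mathbb{R}^{n-1}$ near $y$.

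\textbf{Regularity off $\tilde{\mathcal{S}}$.} Fix $y\in\partial\{u>0\}\cap\Omega\setminus\tilde{\mathcal{S}}$.

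1. By \Cref{def:quant_sing_stra}, there is a scale $s\in(0,\bar d(y)]$ such that $B_s(y)$ is $(n-2,\varepsilon_0)$-symmetric.
2. The symmetry model is then $\mathbb{R}^{n-2}\times C(Z)$, where $C(Z)$ is a two-dimensional noncollapsed cone, together with an $\mathbb{R}^{n-2}$-invariant $1$-homogeneous function $\tilde u$.
3. By \Cref{prop:almostsymualmostfree}, $\partial\{u>0\}$ is Hausdorff-close in $B_s(y)$ to $\partial\{\tilde u>0\}$.
4. Since $\tilde u$ is invariant along $\mathbb{R}^{n-2}$ and homogeneous on a $2$-dimensional cone, its free boundary is $\mathbb{R}^{n-2}$ times a union of rays.
5. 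Openness of almost-symmetry lets this closeness propagate to nearby points and all smaller scales. Together with nondegeneracy and Lipschitz bounds, this should force $\partial\{\tilde u>0\}$ to be a single hyperplane-like set $\mathbb{R}^{n-2}\times$ (line). This is what lets the Reifenberg result produce a Hölder chart.

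\textbf{Size of $\tilde{\mathcal{S}}$.} Apply \Cref{thm:main} with $k=n-3$, $\varepsilon=\varepsilon_0$ and $\eta$ replaced by $\varepsilon$. On each $B_1(x)$ with $B_4(x)\Subset\Omega$ this gives
\[
\Vol\bigl(T_r(\tilde{\mathcal{S}})\cap B_1(x)\bigr)\le C r^{3-\varepsilon}.
\]
This is exactly a bound on the $(n-3+\varepsilon)$-dimensional Minkowski content, and \eqref{eq:vol3} gives $\dim_{\mathcal M}\le n-3$. A finite covering of compact subsets of $\Omega$ by such balls, after rescaling via \Cref{rmk:rescaling}, globalizes the statement locally in $\Omega$. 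I would also note that $\mathcal{S}^{n-3}_{\varepsilon_0}$ is closed in $\partial\{u>0\}\cap\Omega$, so the complement is open and the manifold charts patch together.

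\textbf{Main obstacle.} The hard step is the regularity argument: showing that $(n-2,\varepsilon_0)$-symmetry at a single scale upgrades to Reifenberg flatness of the free boundary at all smaller scales and nearby points. The ambient model cone $C(Z)$ may be singular, and the free boundary model could be a wedge rather than a flat half-plane. I would rely on the cited Reifenberg proposition, which presumably uses the \cite{CZZ22} classification of $1$-homogeneous minimizers on two-dimensional cones: there, flatness of the two-dimensional factor is forced by the free boundary condition. Compactness of limit solutions is what passes this classification to nearby scales.
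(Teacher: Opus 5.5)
Your proposal is correct and is essentially the paper's proof: the paper likewise takes $\tilde{\mathcal S}=\mathcal S^{n-3}_\delta$ with $\delta$ from \Cref{cor:Reifenberg}, gets the H\"older charts off $\tilde{\mathcal S}$ directly from that corollary, and gets the content and dimension bounds from \eqref{eq:vol1} and \eqref{eq:vol3} with $k=n-3$. Your heuristic steps 3--5 are not how the cited result works and are best dropped. Flatness is not forced by nondegeneracy of the (possibly non-minimizing) model $\tilde u$; it comes from a compactness argument that produces an exactly $(n-2)$-symmetric minimizer on a boundary-free cone, classified via \cite{AL15} in \Cref{prop:almostregularpoints}. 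Likewise, propagation to all smaller scales is the density-gap argument of \Cref{prop:reifenbergprop}, not openness. Since you invoke \Cref{cor:Reifenberg} as a black box, this does not affect correctness.
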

\par
Now assume in addition that the ambient spaces satisfy two-sided Ricci curvature bounds. Combining the above theorem with a result of the second author \cite{wang2026dimensionboundsingularset} and an $\varepsilon$-regularity theorem adapted from the Euclidean setting (\Cref{prop:epsregdoublericcibounds} and \Cref{cor:2sided_Ricci_FBP_mfd_struct}), we obtain
\begin{corollary}\label{cor:doublericciboundreg}
Assume additionally that $X$ is an $n$-dimensional noncollapsed limit space with two-sided Ricci curvature bounds.
Then there is a set $\tilde{\mathcal{S}}$ with Minkowski dimension at most $n-5$ and bounded Minkowski $(n-5 + \varepsilon)$-content, such that $\partial \{u>0\} \cap \Omega\backslash\tilde{\mathcal{S}}$ is a $C^{1,\beta}$ manifold for some $\beta>0$.
\end{corollary}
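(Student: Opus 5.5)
The plan is to reduce \Cref{cor:doublericciboundreg} to three inputs: \Cref{thm:main} with $k=n-5$; the dimension-reduction result of \cite{wang2026dimensionboundsingularset}; and the $\varepsilon$-regularity statement \Cref{prop:epsregdoublericcibounds}. The output of \Cref{cor:2sided_Ricci_FBP_mfd_struct} is then the desired manifold structure.

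First, I will identify the set to discard. Under two-sided Ricci bounds, the noncollapsed limit $X$ is smooth ($C^{1,\beta}$ harmonic coordinates) away from a closed set of codimension $4$. More importantly, every tangent cone splitting off $\mathbb{R}^{n-3}$ or more is Euclidean. In \cite{wang2026dimensionboundsingularset} one shows, for limit solutions, that any $1$-homogeneous blow-up pair $(\mathbb{R}^{k}\times C(Z),\tilde u)$ with $k\ge n-4$ is exactly $(\mathbb{R}^n,(x_n)_+)$ up to isometry and scaling by $Q(x)$. The reason is that $C(Z)$ must be a Euclidean factor (using the codimension-$4$ theorem), and low-dimensional homogeneous one-phase minimizers are flat because $k^*\ge 5$.

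Next, I will use a compactness–contradiction argument to fix $\varepsilon=\varepsilon(n,\mathrm{v},L,\Lambda,\alpha)>0$ with the following property. If $x\in\partial\{u>0\}$ and some $B_s(x)$ is $(n-4,\varepsilon)$-symmetric, then $B_{s/2}(x)$ is $\varepsilon_0$-close, in the pGH sense for the space and in $L^2$ for the function, to the flat model $(\mathbb{R}^n,Q(x)(x_n)_+)$. Here $\varepsilon_0$ is the threshold in \Cref{prop:epsregdoublericcibounds}. If no such $\varepsilon$ existed, a sequence with $\varepsilon_i\to0$ would converge, by the compactness of limit solutions and the uniform Lipschitz bounds, to an exactly $(n-4)$-symmetric limit solution. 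By the classification above, that limit is flat, which gives the contradiction.

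Then \Cref{prop:epsregdoublericcibounds} implies that $\partial\{u>0\}\cap B_{s/4}(x)$ is a $C^{1,\beta}$ graph. This is the Euclidean Alt--Caffarelli/De Silva improvement of flatness, run in harmonic coordinates, where the metric is $C^{1,\beta}$ close to Euclidean. Consequently, every free-boundary point outside $\tilde{\mathcal S}:=\mathcal S^{n-5}_{\varepsilon}$ lies in the regular part.

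It remains to check the size of $\tilde{\mathcal S}$. By \Cref{thm:main} with $k=n-5$, we get $\Vol(T_r(\tilde{\mathcal S})\cap B_1)\le C r^{5-\eta}$ for every $\eta>0$. Hence $\dim_{\mathcal M}\tilde{\mathcal S}\le n-5$, and the Minkowski $(n-5+\eta)$-content is bounded. Covering $\Omega$ by finitely many balls then globalizes the statement on compact subsets.

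The main obstacle is the uniform choice of $\varepsilon$. The compactness must preserve both the two-sided Ricci hypothesis and the limit-solution structure, so that the limit is a genuine flat one-phase minimizer on $\mathbb{R}^n$. Without this, one would only know that the blow-up is an $\mathbb{R}^{n-4}$-invariant homogeneous function. I would first establish this rigidity using \cite{wang2026dimensionboundsingularset} together with Cheeger–Naber codimension-$4$ regularity.
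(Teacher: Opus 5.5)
Your overall route is the paper's. You take $\tilde{\mathcal S}=\mathcal S^{n-5}_\varepsilon$, feed the rigidity theorem of \cite{wang2026dimensionboundsingularset} into the compactness argument of \Cref{prop:almostregularpoints} to show that almost $(n-4)$-symmetric balls are almost regular, invoke \Cref{prop:epsregdoublericcibounds} for the $C^{1,\beta}$ structure, and read off the size of $\tilde{\mathcal S}$ from \Cref{thm:main} with $k=n-5$. With the rigidity taken as a black box from \cite[Theorem~1.4]{wang2026dimensionboundsingularset}, exactly as the paper does, the argument goes through.

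What needs correcting is your explanation of the rigidity step, since it fails at the one level that matters, $k=n-4$. The codimension-$4$ theorem of Cheeger--Naber forces $C(Z)$ to be Euclidean only when $k\geq n-3$. For $k=n-4$, noncollapsed limits with bounded Ricci curvature genuinely have tangent cones $\mathbb{R}^{n-4}\times\mathbb{R}^4/\Gamma$ with $\Gamma$ nontrivial, e.g.\ $\mathbb{R}^{n-4}$ times rescaled limits of Eguchi--Hanson metrics. So ``$C(Z)$ is Euclidean, and homogeneous minimizers are flat since $k^*\geq 5$'' does not classify $\mathbb{R}^{n-4}$-invariant $1$-homogeneous minimizers.

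One must additionally rule out that the vertex of a nontrivial orbifold cone $\mathbb{R}^4/\Gamma$ lies on the free boundary of a $1$-homogeneous minimizer. This is the four-dimensional analogue of the \cite{AL15} step used for $C(S_a)$, $a<1$, in \Cref{prop:structurestrata}, and it is the real content the paper imports from the cited theorem. If you tried to ``first establish this rigidity'' from Cheeger--Naber and $k^*\geq 5$ alone, you would only get that almost $(n-3)$-symmetric balls are almost regular. That would give $\tilde{\mathcal S}=\mathcal S^{n-4}_\varepsilon$ and Minkowski dimension $n-4$, not $n-5$.
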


\subsection{Structure of Article}

In \Cref{sec:prelim_and_limit_sol}, we review the one-phase problem in the Euclidean and in the nonsmooth setting, and collect preliminary results about compactness, convergence, and heat kernels that are used throughout the paper.
In \Cref{sec:QuantitiveStrataTwoTypes}, we establish some basic properties about the quantitative stratification and prove some regularity results of the top stratum.
In \Cref{sec:paraweiss}, we derive the truncated parabolic Weiss formula.
In \Cref{sec:vol_est}, we first prove the quantitative rigidity, then establish the version of the cone-splitting principle needed here, and finally carry out the covering argument that leads to the main volume estimates.

\subsection*{Acknowledgment} 
The first author would like to thank his advisor, Aaron Naber, for his guidance and many helpful discussions. Part of the research in this article was done when the second author was visiting Aaron Naber at IAS during the academic year 2024--2025. 
He would like to thank them for fruitful discussions and warm hospitality.
The authors would also like to thank Dennis Kriventsov for his comments on free boundary problems.
The research leading to these results is part of a project that has received funding from the European Research Council (ERC) under the European Union's Horizon 2020 research and innovation programme (grant agreement No 101001677).

\section{Preliminaries}\label{sec:prelim_and_limit_sol}

In this section, we collect the background material needed throughout the paper. 
In \Cref{subsec:onephase_euc} we recall the one-phase problem in the Euclidean setting.
In \Cref{subsec:onephase_rcd}, we review the formulation and basic regularity theory of the one-phase problem on $\mathsf{ncRCD}$ spaces.
In \Cref{subsec:pmgh_convg_and_cptness}, we introduce pmGH convergence and the compactness and blow-up results that will be used repeatedly in our arguments. In \Cref{subsec:lim_sol}, we define the class of limit solutions considered in this work.
Finally in \Cref{subsec:heat_kernel}, we recall the heat kernel estimates and convergence results needed for the parabolic Weiss quantity.

\subsection{One-phase Problem in Euclidean Setting}\label{subsec:onephase_euc} 
In this section, we review several results of the one-phase problem \eqref{1-phase_Ber} in the Euclidean setting, where $(X,d,\mu)=(\mathbb{R}^n, d_{\text{Euc}}, \mathscr{L}^n)$, with the primary aim of keeping them in mind when thinking about the nonsmooth setting. As commented in the introduction, the fundamental questions about the minimizers were answered in \cite{AC81}.
Regarding the regularity of the free boundary, they established:

\begin{theorem}[{\cite{AC81}}]
Let $u$ be a minimizer of the one-phase problem \eqref{1-phase_Ber}, with strictly positive $Q\in C^{\alpha}(\Omega)$. Then the free boundary $\partial\{u>0\}\cap \Omega$ is a $C^{1,\alpha}$-manifold away from a relative closed subset $\mathcal{S}$ with $\mathcal{H}^{n-1}(\mathcal{S})=0$.
\end{theorem}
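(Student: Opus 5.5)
The plan is the classical Alt--Caffarelli route, in four steps: existence and Lipschitz regularity, nondegeneracy, flatness-implies-regularity, and finally a density argument showing that nonflat points form an $\mathcal{H}^{n-1}$-null set.

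\textbf{Step 1: existence, Lipschitz bound, harmonicity.} Existence of a minimizer follows from the direct method in $W^{1,2}$, since the term $1_{v>0}$ is lower semicontinuous under $L^1$ convergence. Comparing $u$ with its harmonic replacement on small balls gives a Morrey/Campanato estimate, which shows that $u$ is continuous and harmonic in $\{u>0\}$. Using the harmonic replacement again, together with the measure penalty, we get the key estimate
\[
\frac{1}{r}\fint_{\partial B_r(x)} u \le C \quad\text{whenever } B_r(x)\cap\{u=0\}\neq\emptyset .
\]
From this we deduce the local Lipschitz bound on $u$.

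\textbf{Step 2: nondegeneracy and density.} Next we show nondegeneracy: $\sup_{B_r(x)}u \ge c\,r$ for every $x\in\partial\{u>0\}$. We then derive uniform positive density bounds for both $\{u>0\}$ and $\{u=0\}$ at free boundary points. Consequently $\Delta u$ is a nonnegative Radon measure $\lambda$ supported on $\partial\{u>0\}$. Moreover, $\lambda$ is comparable to $\mathcal{H}^{n-1}\llcorner\partial\{u>0\}$, so the free boundary is locally of finite perimeter. Its reduced boundary $\partial^*\{u>0\}$ has full $\mathcal{H}^{n-1}$-measure in the free boundary, by De Giorgi's structure theorem and the density bounds. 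Finally, we identify $|\nabla u|=Q$ on the free boundary in the weak (measure) sense, using domain variations of $E$.

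\textbf{Step 3: flatness implies $C^{1,\alpha}$ regularity.} This is the main obstacle. Suppose $u$ is $\sigma$-flat in $B_r(x)$, meaning that $\partial\{u>0\}$ lies in a slab of width $\sigma r$ and $u$ is close to $Q(x)(y\cdot\nu)^+$. We want to show that in $B_{r/2}(x)$ the free boundary is flatter by a geometric factor in a slightly rotated direction; iterating then gives a $C^{1,\alpha}$ graph. The improvement will come from a compactness/linearization argument: normalize the deviation from the linear profile and let $\sigma\to0$. The limits should satisfy a Neumann-type harmonic problem on a half-ball, whose $C^{1,\alpha}$ estimates give the improvement of flatness. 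I would try Alt--Caffarelli's original harmonic-comparison approach first. De Silva's viscosity Harnack approach is an alternative that handles the Hölder coefficient $Q$ via perturbation, with error $r^{\alpha}$.

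\textbf{Step 4: regular points and the singular set.} At a point of $\partial^*\{u>0\}$, blow-ups are half-plane solutions $Q(x)(y\cdot\nu)^+$, by the measure-theoretic normal and Step 2. Hence every reduced boundary point is $\sigma$-flat at small scales, and Step 3 shows it has a $C^{1,\alpha}$ neighborhood. Set $\mathcal{S} := \partial\{u>0\}\setminus\partial^*\{u>0\}$. This set is relatively closed, because the regular set is open by the flatness criterion. It satisfies $\mathcal{H}^{n-1}(\mathcal{S})=0$ by Step 2, which completes the argument.
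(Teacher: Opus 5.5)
Your outline is correct and is essentially the original Alt--Caffarelli argument: Lipschitz bound and nondegeneracy via harmonic replacement, two-sided density estimates giving $\Delta u$ comparable to $\mathcal{H}^{n-1}\llcorner\partial\{u>0\}$ and finite perimeter, half-plane blow-ups at reduced boundary points, then improvement of flatness. The paper does not reprove this statement but simply quotes it from \cite{AC81}, which proceeds along the same lines; the only points to tighten are that $\mathcal{H}^{n-1}(\partial\{u>0\}\setminus\partial^*\{u>0\})=0$ comes from Federer's theorem on the essential boundary together with the density bounds, and that identifying the blow-up as $Q(x_0)(y\cdot\nu)^+$ needs a Liouville step for positive Lipschitz harmonic functions in a half-space plus minimality of the blow-up (e.g.\ your domain variation applied to $a(y\cdot\nu)^+$) to force $a=Q(x_0)$.
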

Based on this theorem, one defines the regular set of the free boundary as the set of points where the free boundary is locally a $C^{1,\alpha}$-manifold, and the singular set as the complement, which is the subset $\mathcal{S}$ in the theorem. We also mention that a compactness theorem is given in \cite{AC81}, allowing blow-up analysis. The next step towards understanding the singular set was taken in \cite{Wei99}, where a monotone quantity was introduced.
For the sake of comparison with our \Cref{thm:parabolic_weiss}, we quote the original Weiss monotonicity formula:

\begin{theorem}[\cite{Wei99}, Theorem 1.2]\label{thm:orginal_weiss}
Let $u$ be as above. Fix $x\in\partial\{u>0\}\cap\Omega$. For
$0<r<d(x,\partial\Omega)$, define
\begin{equation}
W_{u,Q,x}(r)=W(r):=
r^{-n}\int_{B_r(x)}|\nabla u|^2
+Q^2(x)1_{u>0}\dd{} \mathscr{L}^n
-r^{-n-1}\int_{\partial B_r(x)}u^2\dd\sigma.
\end{equation}
Then, for every
$0<s<r<R<d(x,\partial\Omega)$,
\begin{equation}
\int_{B_r(x)\setminus B_s(x)}
\frac{|u(y)-(y-x)\cdot\nabla u(y)|^2}
{|y-x|^{n+2}}\dd{} \mathscr{L}^n(y)
\leq
W(r)-W(s)
+
C(n,\alpha)r^\alpha[Q]_{\alpha,B_R(x)}.
\end{equation}
In particular, when $Q$ is constant, $W(r)$ is increasing,
and it is constant if and only if $u$ is $1$-homogeneous.
\end{theorem}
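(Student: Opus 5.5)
We prove the classical Weiss formula by differentiating $W$ in $r$ and then using the Euler--Lagrange information coming from minimality. Fix $x=0\in\partial\{u>0\}$ and rescale by setting $u_r(y):=u(ry)/r$. Then
\[
W(r)=\int_{B_1}|\nabla u_r|^2+Q^2(0)1_{u_r>0}\,dy-\int_{\partial B_1}u_r^2\,d\sigma .
\]
Since $u$ is Lipschitz, $r\mapsto W(r)$ is absolutely continuous. Differentiating the unscaled expression, and writing $\partial_\nu u$ for the radial derivative, gives
\[
W'(r)=-\frac{n}{r^{n+1}}\int_{B_r}\bigl(|\nabla u|^2+Q^2(0)1_{u>0}\bigr)+\frac{1}{r^n}\int_{\partial B_r}\bigl(|\nabla u|^2+Q^2(0)1_{u>0}\bigr)+\frac{n-1+2}{r^{n+2}}\int_{\partial B_r}u^2-\frac{1}{r^{n+1}}\int_{\partial B_r}\bigl(2u\,\partial_\nu u\bigr).
\]
This holds for a.e.\ $r$.

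The key input is the domain-variation identity. Take inner variations $u_\tau(y)=u(y+\tau\psi(y)X(y))$ with $X(y)=y$ and $\psi$ approximating $1_{B_r}$, and use the minimality of $u$. Because $Q$ is not constant, the variation of $\int Q^2 1_{u>0}$ produces a term involving $\nabla Q^2$, which is not available when $Q$ is only $C^\alpha$. To avoid this, compare instead with the frozen coefficient $Q^2(0)$. The resulting identity is
\[
\int_{B_r}\bigl[(n-2)|\nabla u|^2+nQ^2(0)1_{u>0}\bigr]=r\int_{\partial B_r}\bigl[|\nabla u|^2-2(\partial_\nu u)^2+Q^2(0)1_{u>0}\bigr]+\mathrm{Err}(r),
\]
where $\mathrm{Err}(r)$ collects the contribution of $Q^2-Q^2(0)$.

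We bound $\mathrm{Err}(r)$ using $|Q^2(y)-Q^2(0)|\le C[Q]_\alpha|y|^\alpha$ together with the density estimate $\mathcal H^{n-1}(\partial\{u>0\}\cap B_r)\le Cr^{n-1}$, which follows from Alt--Caffarelli nondegeneracy and Lipschitz bounds. This gives $|\mathrm{Err}(r)|\le C[Q]_\alpha r^{n+\alpha}$.

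We also use harmonicity of $u$ in $\{u>0\}$ together with $u=0$ on the free boundary. This gives $\int_{B_r}|\nabla u|^2=\int_{\partial B_r}u\,\partial_\nu u$.

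Substituting both identities into $W'$ removes the volume integrals. Completing the square, we obtain for a.e.\ $r$
\[
W'(r)\ge \frac{2}{r^{n}}\int_{\partial B_r}\Bigl(\partial_\nu u-\frac{u}{r}\Bigr)^2d\sigma-C[Q]_\alpha r^{\alpha-1}.
\]
Integrating from $s$ to $r$ and using the coarea formula, the left side of the claimed inequality is
\[
\int_s^r\rho^{-n-2}\int_{\partial B_\rho}|u-y\cdot\nabla u|^2\,d\sigma\,d\rho .
\]
This is exactly controlled by $\int_s^r W'$. The error integrates to $C(n,\alpha)r^\alpha[Q]_\alpha$, which proves the inequality.

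When $Q$ is constant the error vanishes, so $W$ is nondecreasing. If $W$ is constant, then $u=y\cdot\nabla u$ a.e., which is Euler's relation and makes $u$ $1$-homogeneous. Conversely, for a $1$-homogeneous $u$ the rescalings $u_r=u$ are independent of $r$, so $W$ is constant.

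The main obstacle is justifying the domain-variation identity, for two reasons. First, the variation of the measure term $\int 1_{u>0}$ requires the free boundary density bounds. Second, $Q$ is only Hölder, so one must vary against the frozen coefficient and show that the error really scales like $r^{n+\alpha}$. We would do this by comparing $E_Q(u)\le E_Q(u_\tau)$ directly and expanding the change of variables to first order in $\tau$.
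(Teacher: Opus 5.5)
Your proposal is correct and follows the standard argument. The paper itself only cites \cite{Wei99} for this statement, but your scheme is exactly the Euclidean, sharp-cutoff version of how the paper proves its parabolic analogue in \Cref{lem:parabolic_weiss_compu}, \Cref{lem:free_boundary_upper_ahlfors} and \Cref{lem:parabolic_weiss_error}: inner variation with $Q^2$ frozen at the base point, the $Q^2-Q^2(x)$ free-boundary error controlled by an upper Ahlfors bound on $\partial\{u>0\}$, the identity $\int_{B_r}|\nabla u|^2=\int_{\partial B_r}u\,\partial_\nu u$ from $u\Delta u=0$, and completing the square.

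One slip to fix: in your formula for $W'(r)$ the coefficient of $r^{-n-2}\int_{\partial B_r}u^2$ must be $(n+1)-(n-1)=2$, not $n+1$. You dropped the term $\frac{n-1}{r}\int_{\partial B_r}u^2\,d\sigma$ coming from $\frac{d}{dr}\int_{\partial B_r}u^2\,d\sigma$. Only with the value $2$ does the square complete to $W'(r)=2r^{-n}\int_{\partial B_r}(\partial_\nu u-u/r)^2\,d\sigma$ plus the error, as you then correctly state.
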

The quantity $W$ is clearly bounded since $u$ is locally Lipschitz.
Hence the preceding monotonicity formula implies that $W(r)$ has a
limit as $r\to 0^+$.
Moreover, under blow-up operations at $x$ along some $t_i\to 0$, the coefficient $Q$ freezes to the constant $Q(x)$, and the corresponding blow-up of $u$:
\begin{equation}
    u_{x}(y):=\lim_{t_i\to 0} \frac{1}{t_i}u(x+t_iy)
\end{equation}
has $W_{u_x,Q(x),0}\equiv W_{u,Q,x}(0^+)$.
The rigidity of $W$ then implies that $u_x$ is a $1$-homogeneous global minimizer on $\mathbb{R}^n$ with $Q\equiv Q(x)$. Together with Federer's dimension-reduction argument, \cite{Wei99} proved that the Hausdorff dimension of the singular set $\mathcal{S}$ of $\pt\{u>0\}\cap\Omega$ can be bounded by $n-d^*$, where $d^*$ is the lowest dimension such that there exist singular non-linear $1$-homogeneous global minimizers on $\mathbb{R}^{d^*}$.
Thanks to \cite{caffarelli2004global}, \cite{jerison2015some}, \cite{de2009singular}, we know $d^* \in \{5,6,7\}$, although the exact value is still an open question.
\par

A more refined structure theory for the singular set was achieved in \cite{EE18}.
Inspired by \cite{CN13}, the authors introduced a quantitative stratification of the singular set.
Using a corona-type covering argument and a rectifiable version of a Reifenberg-type theorem from \cite{NV17}, they improved the Hausdorff dimension bound to Hausdorff measure estimates on the singular strata, and additionally established their rectifiability.
Their results actually provided quantitative bounds on both the Minkowski content and the packing numbers of the quantitative singular strata.
\subsection{One-phase Problem in Nonsmooth Setting}\label{subsec:onephase_rcd}
In \cite{CZZ22}, the authors extended the study of free boundary problems to the class of nonsmooth spaces known as $\mathsf{RCD}$ spaces.
Under an additional noncollapsed assumption, \cite{CZZ22} addressed several fundamental questions and established some regularity results for the free boundary. In this section, we recall the formulation of the problem in this nonsmooth setting and summarize the main results from \cite{CZZ22}.
See also \cite{lin2025notebernoullitypefree} for partial generalizations to the collapsed setting.

\subsubsection{RCD Spaces, Sobolev Spaces, Measure-valued Laplacian}
\noindent\textit{$\mathsf{RCD}$ spaces:} 

\noindent Let $(X,d,\mu)$ be a metric measure space; that is, a complete separable (proper) metric space $(X,d)$ equipped with a Radon measure $\mu$ of full support. We say that $X$ is an $\mathsf{RCD}(K,N)$ space if it has Riemannian Ricci curvature bounded below by $K$ and dimension bounded above by $N$, in a suitable synthetic sense.
The theory of $\mathsf{RCD}$ spaces provides an intrinsic framework for studying limits of smooth manifolds and allows the development of second-order calculus.
Classical results in Riemannian geometry, such as Laplacian comparison and volume comparison, admit generalizations on such $\mathsf{RCD}$ spaces.
One key feature is that the class of $\mathsf{RCD}(K,N)$ spaces is closed under pointed measured Gromov--Hausdorff convergence (see \Cref{def:GH_pmGH}). Canonical examples of such spaces include Ricci limit spaces arising in the Cheeger--Colding theory, and finite-dimensional Alexandrov spaces with curvature bounded from below (see \cite{Petrunin2011Alexandrov}). 
We refer the reader to the survey \cite{ambrosio2018calculus} and the references therein for a comprehensive overview of the $\mathsf{RCD}$ theory.
In the following, we recall only those relevant to our purposes, most of which are either contained in or referenced in \cite{CZZ22}.
\par
In this article, we work exclusively with noncollapsed $\mathsf{RCD}(K,N)$ spaces ($\mathsf{ncRCD}(K,N)$ spaces for short). 
Such spaces were introduced by De Philippis and Gigli in \cite{DPG2018} as a synthetic counterpart of noncollapsed limits of Riemannian manifolds with Ricci curvature bounded below.
By definition, the reference measure satisfies $\mu=\mathcal{H}_d^N$, and it was shown (\cite[Theorem~1.12]{DPG2018}) that $N$ is necessarily an integer, which we denote by $n$ from now on.

Within the noncollapsed category, the metric splitting of cones is compatible with the corresponding Hausdorff measure structure.
We record this in the following lemma, which follows from \cite{Ketterer2015} and \cite{DPG2018}, and is explained in detail in \cite[Lemma 4.1]{KapovitchMondino2021}.

\begin{lemma}[Metric-measure structure of cones]\label{lem:ncRCD_cone_struct}
Let $(Y,d,\mathcal H^n)=(\mathbb R^k\times C(Z),d,\mathcal H^n)$ be an $\mathsf{ncRCD}(0,n)$ cone, and assume that $n-k\geq 2$. Then $(Y,d,\mathcal H^n)\cong(\mathbb R^k\times C(Z),d,\mathcal L^k\otimes\mathcal H^{n-k})$, and $(C(Z),d_C,\mathcal H^{n-k})$ is an $\mathsf{ncRCD}(0,n-k)$ cone. Moreover, $(Z,d_Z,\mathcal H^{n-k-1})$ is an $\mathsf{ncRCD}(n-k-2,n-k-1)$ space, and $\mathcal H^{n-k}_{C(Z)}=r^{n-k-1}\,dr\otimes\mathcal H^{n-k-1}_Z$.
\end{lemma}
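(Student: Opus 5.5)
The plan is to prove \Cref{lem:ncRCD_cone_struct} by assembling three known structural facts about $\mathsf{ncRCD}$ spaces and cones, rather than computing anything directly.

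\textbf{Step 1: the measure on the product is a product measure.} Since $(Y,d,\mathcal H^n)$ is $\mathsf{ncRCD}(0,n)$ and contains $k$ lines through the vertex, I will apply the splitting theorem for $\mathsf{RCD}(0,N)$ spaces. It gives an isomorphism of metric measure spaces
\[
Y\cong \mathbb R^k\times Y',
\]
where the product measure is $\mathcal L^k\otimes\mu'$ and $(Y',d',\mu')$ is $\mathsf{RCD}(0,n-k)$. Since $Y$ is also a metric cone, $Y'$ is isometric to $C(Z)$. To identify $\mu'$, I will use two facts: the Hausdorff measure of a Euclidean product satisfies $\mathcal H^n=\mathcal L^k\otimes\mathcal H^{n-k}$ on $\mathbb R^k\times C(Z)$ whenever $C(Z)$ is $(n-k)$-rectifiable, and $\mathcal H^n$ is unique up to this decomposition. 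Together these force $\mu'=\mathcal H^{n-k}$. Hence $C(Z)$ is $\mathsf{ncRCD}(0,n-k)$, which is the statement in \cite[Lemma 4.1]{KapovitchMondino2021}. I would cite the De Philippis--Gigli result that in the noncollapsed splitting the factor is again noncollapsed \cite{DPG2018}.

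\textbf{Step 2: the cross section.} For $(C(Z),\mathcal H^{n-k})$, I will invoke Ketterer's theorem \cite{Ketterer2015}. It states that a Euclidean $(0,N)$-cone $C(Z)$ with measure $r^{N-1}dr\otimes m_Z$ is $\mathsf{RCD}(0,N)$ if and only if $(Z,m_Z)$ is $\mathsf{RCD}(N-2,N-1)$ with $\diam Z\le\pi$. This is the point where the hypothesis $n-k\ge 2$ is needed, since Ketterer's theorem requires $N\ge2$.

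\textbf{Step 3: the main obstacle, identifying the cone measure.} Ketterer's theorem needs the measure of $C(Z)$ to already be in cone form, so I must show that $\mathcal H^{n-k}$ on $C(Z)$ is of the form $r^{n-k-1}dr\otimes m_Z$. I will argue this in two parts.

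1. The dilation $\delta_\lambda$ scales $\mathcal H^{n-k}$ by $\lambda^{n-k}$.
2. Disintegrating $\mathcal H^{n-k}$ along the radial coordinate $r=d(\cdot,o)$ therefore gives a measure of the form $r^{n-k-1}dr\otimes m_Z$. I will use the coarea formula for the 1-Lipschitz function $r$, whose gradient has norm one almost everywhere.

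Finally, I will identify $m_Z=\mathcal H^{n-k-1}_Z$ by comparing $\mathcal H^{n-k}$ of thin annuli $\{1<r<1+\epsilon\}$ with $\epsilon\,\mathcal H^{n-k-1}$ on the sphere $\{r=1\}\cong Z$. The cone metric is asymptotically a product near $r=1$, so the Hausdorff measures match in the limit $\epsilon\to0$.

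With this established, Ketterer's theorem yields that $(Z,\mathcal H^{n-k-1})$ is $\mathsf{RCD}(n-k-2,n-k-1)$. Since its measure is the Hausdorff measure of the correct dimension, it is noncollapsed, which completes the proof.
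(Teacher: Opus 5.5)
The paper gives no proof of its own; it defers to \cite{Ketterer2015}, \cite{DPG2018} and \cite[Lemma~4.1]{KapovitchMondino2021}. Your architecture is the one behind those references:
\begin{itemize}
\item split off the given $\mathbb R^k$ by Gigli's theorem, applied to the lines $t\mapsto(te_i,z_*)$;
\item put the measure on $C(Z)$ into cone form by scaling;
\item apply Ketterer's cone theorem, which is indeed where $n-k\geq 2$ enters.
\end{itemize}
The scaling part of Step~3 is fine and needs no coarea formula. Since $\mathcal L^k\otimes\mu'=\mathcal H^n$ scales by $\lambda^n$, $\mu'$ scales by $\lambda^{n-k}$ under cone dilations. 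So in the coordinate $s=\log r$ on $C(Z)\setminus\{z_*\}\cong(0,\infty)\times Z$, the Radon measure $r^{-(n-k)}\mu'$ is translation invariant in $s$. Hence it equals $ds\otimes m_Z$, i.e.\ $\mu'=r^{n-k-1}dr\otimes m_Z$. Alternatively, use \cite{DPG16}.

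The gap is exactly where the lemma has content beyond the metric statements: the identifications $\mu'=\mathcal H^{n-k}$ in Step~1 and $m_Z=\mathcal H^{n-k-1}_Z$ in Step~3.

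\textbf{Why your justifications fail.} As you justify them, both rest on a product formula $\mathcal H^{a+b}(I\times E)=\mathcal L^a(I)\,\mathcal H^b(E)$ for $\ell^2$-products with a Euclidean factor. This is false in general metric spaces: only comparability up to dimensional constants holds, and equality needs $E$ to be $b$-rectifiable. For example, take $E=\{0,1\}^{\mathbb N}$ with $d(x,y)=2^{-\min\{i:\,x_i\neq y_i\}}$; then $\mathcal H^1(E)=\tfrac12$ but $\mathcal H^2([0,1]\times E)\geq\tfrac{\pi}{4}$.
\begin{itemize}
\item In Step~3, ``the cone metric is asymptotically a product near $r=1$'' does not give ``the Hausdorff measures match.'' It only reduces the annulus to $(1,1+\epsilon)\times Z$ with a product metric. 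The identity you actually need there is the product formula, and for every Borel $E\subset Z$, not just for $Z$.
\item In Step~1 you state the rectifiability hypothesis but never verify it, and ``$\mathcal H^n$ is unique up to this decomposition'' is not an argument. A priori you do not even know that $\mathcal H^{n-k}$ is locally finite on $C(Z)$.
\end{itemize}

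\textbf{The missing input} is the structure theory of noncollapsed spaces.
\begin{itemize}
\item By \cite{DPG2018}, $\mathcal H^n$-a.e.\ point of $Y$ has tangent cone $\mathbb R^n$.
\item The tangent cones of $\mathbb R^k\times C(Z)$ at $(a,p)$, and of $C(Z)$ at $(t,z)$ with $t>0$, are $\mathbb R^k\times T_pC(Z)$ and $\mathbb R\times T_zZ$. Hence the $\mathsf{RCD}$ spaces $(C(Z),\mu')$ and $(Z,m_Z)$ have essential dimensions $n-k$ and $n-k-1$.
\item The rectifiability and absolute-continuity theorems for $\mathsf{RCD}$ spaces then give $\mu'=\theta\,\mathcal H^{n-k}\llcorner R$ with $R$ rectifiable.
\item The product formula on $R$ forces $\theta=1$.
\item Eilenberg's inequality gives the general lower bound $\mathcal H^{a+b}(I\times E)\geq c\,\mathcal L^a(I)\,\mathcal H^b(E)$. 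This shows that $\mathcal H^{n-k}$ is locally finite on $C(Z)$ and does not charge $C(Z)\setminus R$, since $\mathcal H^n(\mathbb R^k\times(C(Z)\setminus R))=0$.
\item The same argument, with the cone metric in place of the product metric, identifies $m_Z$.
\end{itemize}
With these ingredients supplied, or cited as the paper does, your proof goes through.
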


\noindent \textit{Sobolev spaces:} 

Several notions of first-order Sobolev spaces on metric measure spaces have been introduced. One of the most commonly used approaches originates in Cheeger's work \cite{cheeger1999differentiability}. Alternative formulations based on weak upper gradients and test plans were developed in \cite{Sha00,AmbrosioGigliSavare2013,AmbrosioGigliSavare2014}; see also \cite{HK00} for a related Haj{\l}asz-type approach. In the setting of metric measure spaces with $\sigma$-finite nonnegative Borel measures, these formulations agree; see \cite{AmbrosioGigliSavare2013,AmbrosioGigliSavare2014,GigliPasqualetto2020}. 
Since we only work under the $\mathsf{RCD}$ setting, we may equivalently define these spaces as the completion of Lipschitz functions with respect to the Sobolev norm. We briefly recall this approach below, together with some additional properties that will be useful later.
\par
Let $(X,d,\mu)$ be an $\mathsf{RCD}(K,N)$ space, $\Omega\subset X$ an open domain, and $f\in\Lip_{\loc}(\Omega)$. The \emph{pointwise Lipschitz constant}, also
called the \emph{local Lipschitz constant}, of $f$ at $x\in\Omega$ is
defined by $$\Lip f(x) := \limsup_{y \to x} \frac{|f(y)-f(x)|}{d(x,y)}.$$ 
Clearly, the map $x\mapsto\Lip f(x)$ is $\mu$-measurable. 
\par
Fix $1<p<\infty$. For
$f\in\Lip_{\loc}(\Omega)$ such that
$f,\Lip f\in L^p(\Omega)$, set
\[
\|f\|_{W^{1,p}(\Omega)}
:=
\|f\|_{L^p(\Omega)}
+
\|\Lip f\|_{L^p(\Omega)}.
\]
The Sobolev space $W^{1,p}(\Omega)$ is defined as the completion, with respect to this norm, of the space of locally Lipschitz functions having finite $W^{1,p}(\Omega)$-norm. Similarly, $W^{1,p}_0(\Omega)$ is defined as the closure of $\Lip_c(\Omega)$ in $W^{1,p}(\Omega)$, where $\Lip_c(\Omega)$ denotes the space of Lipschitz functions compactly supported in $\Omega$. Finally, a function $f$ belongs to $W^{1,p}_{\loc}(\Omega)$ if $f|_{\Omega'}\in W^{1,p}(\Omega')$ for every open set $\Omega'\Subset\Omega$. We collect below the properties of these spaces that will be used throughout the paper.

\begin{proposition}\label{prop:sobolev}
Under the settings above, we have:
\begin{enumerate}
\item For each $f \in W^{1,p}(\Omega)$, there is a function in $L^p(\Omega)$,
denoted by $|\nabla f|$, such that
$\|f\|_{W^{1,p}(\Omega)} = \|f\|_{L^p(\Omega)} + \||\nabla f|\|_{L^p(\Omega)}$.
If $f \in \Lip_{\loc}(\Omega)$, then $|\nabla f| = \Lip f$ holds $\mu$-a.e.\ in $\Omega$.
\item If $f_j \in W^{1,p}(\Omega)$ and $f_j \to f$ in $L^p(\Omega)$, then
$
\liminf_{j\to\infty} \|\nabla f_j\|_{L^p(\Omega)}
\ge \|\nabla f\|_{L^p(\Omega)}.$
\item
If $f,g \in W^{1,p}(\Omega)$ and $f=g$ $\mu$-a.e.\ on some Borel set
$A \subset \Omega$, then $|\nabla f|(x)=|\nabla g|(x)$ $\mu$-a.e.\ in $A$.
\item
The space $W^{1,2}(\Omega)$ is a Hilbert space, and the inner product
$\langle \nabla f,\nabla g\rangle \in L^1(\Omega)$ for $f,g \in W^{1,2}(\Omega)$
can be defined by the polarization identity:
\begin{equation}\label{eq:polarization}
\langle \nabla f,\nabla g\rangle
= \frac14\bigl(|\nabla(f+g)|^2 - |\nabla(f-g)|^2\bigr).
\end{equation}
\end{enumerate}
We remark that $W^{1,2}(\Omega)=H^{1,2}(\Omega)$, with $H^{1,2}(\Omega)$ as defined and explained in \cite{AH18}.
\end{proposition}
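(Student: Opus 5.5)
The plan is to reduce every item to the identification of the completion-defined space $W^{1,p}(\Omega)$ with the Cheeger/Ambrosio--Gigli--Savar\'e Sobolev space. That space is defined through relaxation, or equivalently through minimal weak upper gradients. The key structural input is that an $\mathsf{RCD}(K,N)$ space is locally doubling and supports a local $(1,1)$-Poincar\'e inequality. So Cheeger's theory \cite{cheeger1999differentiability} applies on $\Omega$, and with it the results of \cite{AmbrosioGigliSavare2013,AmbrosioGigliSavare2014} and the localized statements in \cite{AH18}.

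\textbf{Step 1: identification and item 1.} First I check that the completion is a genuine function space, i.e.\ that the norm $\|f\|_{L^p}+\|\Lip f\|_{L^p}$ is closable on locally Lipschitz functions. Suppose $f_j\to 0$ in $L^p$ and $(\Lip f_j)$ is Cauchy in $L^p$ with limit $g$. By Cheeger's theorem $\Lip f_j=|\nabla f_j|$ a.e., and $|\nabla(f_j-f_i)|=\Lip(f_j-f_i)$ a.e. Lower semicontinuity of the minimal weak upper gradient under $L^p$ convergence, applied to $f_j-f_i$ as $i\to\infty$, gives $\| |\nabla f_j| \|_{L^p}\le \limsup_i\|\Lip(f_j-f_i)\|_{L^p}$, which tends to $0$ as $j\to\infty$. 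Hence $g=0$.

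With closability in hand, I take for $|\nabla f|$ the $L^p$-limit of $\Lip f_j$ along any approximating sequence. This is well defined by the same argument, and it coincides with the minimal weak upper gradient, using the density in energy of Lipschitz functions \cite{AmbrosioGigliSavare2013}. The norm identity is then immediate. The equality $|\nabla f|=\Lip f$ for $f\in\Lip_{\loc}(\Omega)$ is Cheeger's theorem. It is stated on doubling--Poincar\'e spaces, and it localizes to $\Omega$ because both sides are local.

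\textbf{Items 2 and 3.} Item 2 is lower semicontinuity of Cheeger's energy, which holds by construction as a relaxation. Since the relaxed energy agrees with our norm by Step 1, it transfers directly. Item 3 is the locality of minimal weak upper gradients \cite{AmbrosioGigliSavare2013}: if $f=g$ a.e.\ on $A$, then $|\nabla(f-g)|=0$ a.e.\ on $A$. Combined with the subadditivity $\bigl||\nabla f|-|\nabla g|\bigr|\le|\nabla(f-g)|$, this gives $|\nabla f|=|\nabla g|$ a.e.\ on $A$.

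\textbf{Item 4.} This is precisely infinitesimal Hilbertianity, which is part of the definition of $\mathsf{RCD}$. The Cheeger energy is a quadratic form, so the parallelogram identity holds for $\int|\nabla f|^2$. By \cite{AmbrosioGigliSavare2014} (see also \cite{AH18}) it also holds pointwise a.e.:
\[|\nabla(f+g)|^2+|\nabla(f-g)|^2=2|\nabla f|^2+2|\nabla g|^2.\]
Hence \eqref{eq:polarization} defines a symmetric bilinear form with values in $L^1$, and $W^{1,2}(\Omega)$ is Hilbert. Finally, $W^{1,2}=H^{1,2}$ is the $H=W$ result in \cite{AH18}.

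I expect the main obstacle to be the closability and identification step in item 1 at the level of the domain $\Omega$, rather than on all of $X$. The global results assume the whole space, so I would first localize with Lipschitz cutoffs and then invoke the global theorems. The remaining items are essentially citations once that step is done.
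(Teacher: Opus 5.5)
Your proposal is correct and follows essentially the same route as the paper, which gives no separate proof: it relies on the agreement, in the $\mathsf{RCD}$ setting, of the completion-defined space with the Cheeger/Ambrosio--Gigli--Savar\'e/Newtonian Sobolev space, and then on Cheeger's identification $|\nabla f|=\Lip f$, lower semicontinuity, locality, infinitesimal Hilbertianity, and the $H=W$ result of Ambrosio--Honda for the individual items. Two small points need tightening: in the closability step the hypothesis should be that $(f_j)$ is Cauchy in the $W^{1,p}$-norm, since your bound via $\Lip(f_j-f_i)$ uses exactly this and $\Lip f_j$ being Cauchy alone does not suffice; and for $p\neq 2$, showing that every function of finite Cheeger energy lies in the completion (needed in item 2) requires density in norm, e.g.\ via reflexivity of $W^{1,p}$ on doubling spaces together with Mazur's lemma, rather than density in energy alone.
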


\noindent \textit{Measure-valued Laplacian}: 

\noindent The following concept was developed in \cite{gigli2015differential}; see also \cite{GigliPasqualetto2020}.
Let $(X,d,\mu)$ be an $\mathsf{RCD}(K,N)$ space, $\Omega\subset X$ a domain, and $f\in W^{1,2}_{\loc}(\Omega)$.
We say that $f$ has a measure-valued Laplacian, denoted by $f \in D(\bLap)$, if there is a finite signed Radon measure $\lambda$ on $\Omega$ such that
$$\int_\Omega g\ d\lambda = - \int_\Omega \nabla g\cdot \nabla f \ d\mu,$$
for every $g \in \Lip_{c}(\Omega)$.

\subsubsection{Existence, Regularity and Euler-Lagrange Equation}

Let $(X,d,\mu)$ be an $\mathsf{RCD}(K,N)$ space and $\Omega\subset X$ a bounded domain. 
In \cite{CZZ22}, the authors studied\footnote{More generally, \cite{CZZ22} considered the vector-valued problem for maps $u:\Omega\to[0,\infty)^m$ and \emph{local} minimizers.
Also, note that \cite{CZZ22} used $Q$ instead of $Q^2$ for the coefficient. Their results have been rewritten to be consistent with our notation.} 
the one-phase problem \eqref{1-phase_Ber} in this $\mathsf{RCD}$ setting and answered the main questions regarding existence and regularity of minimizers. In what follows, we quote some of the results in a simplified form or with simplified hypotheses.

\begin{theorem}[{\cite[Proposition 1.3, Theorem 1.4, 1.6]{CZZ22}}]\label{thm:mainCZZ22}
 Let $(X,d,\mathcal H^n)$ be an $\mathsf{ncRCD}(K,n)$ space and $\Omega\subset X$ a bounded domain.
 \begin{enumerate}[leftmargin=25pt, topsep=0pt]
     \item[1.] (Existence) If $\diam(\Omega) \leq \diam(X)/3$, then for each boundary data $u_0 \in W^{1,2}(\Omega, [0,\infty))$, there exists a minimizer $u \in W^{1,2}(\Omega)$.
 \end{enumerate}
 \begin{enumerate}[leftmargin=25pt,topsep=0pt]
     \item[2.] (Local Lipschitz regularity) For each ball $B_{R}(x)\subset \Omega$, there exists a constant $L$, depending on $n,K,\Omega,R,\|Q\|_{L^\infty},$ and $\|u\|_{L^1(B_R(x))}$, such that $\abs{u(y)-u(z)}\leq L \cdot d(y,z) \ \forall y,z \in B_{R/4}(x)$.
     \item[3.] (Local finiteness) $\{u>0\}\cap\Omega$ is a set of locally finite perimeter in $\Omega$.
     Moreover, we have $\pt^e\{u>0\}=\pt\{u>0\}$ in $\Omega$, where $\partial^e$ is the essential boundary.
     Hence $\mathcal{H}^{n-1}(\partial\{u>0\}\cap \Omega') < +\infty$ for every $\Omega' \Subset\Omega$.
     \item[4.] (Euler--Lagrange equation) There is a nonnegative Borel function $q$ such that $$\bLap u = q\mathcal{H}^{n-1}\llcorner(\partial\{u>0\}\cap\Omega),$$ with $q(x) = Q(x)$ for $\mathcal{H}^{n-1}$-a.e.\@ $x \in \partial\{u>0\}\cap\Omega$.
 \end{enumerate}
\end{theorem}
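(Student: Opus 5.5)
We prove the four items separately, following the Alt--Caffarelli scheme \cite{AC81}. Throughout we use only the calculus available on $\mathsf{ncRCD}(K,n)$ spaces: local doubling and Poincaré inequalities, Rellich compactness, the Laplacian comparison, and the gradient estimates for harmonic functions. No domain variations are needed.

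\emph{Existence.} We use the direct method. The hypothesis $\diam(\Omega)\le\diam(X)/3$ guarantees that $X\setminus\Omega$ contains a ball of definite size. This yields a Poincaré/Sobolev inequality for $W^{1,2}_0(\Omega)$, so $E$ is coercive on $u_0+W^{1,2}_0(\Omega)$. For a minimizing sequence, Rellich compactness gives $L^2$ convergence along a subsequence and a.e.\ convergence. The Dirichlet term is lower semicontinuous by \Cref{prop:sobolev}(2). The term $\int Q^2 1_{v>0}$ is lower semicontinuous by Fatou, since $1_{v>0}\le\liminf 1_{v_j>0}$ pointwise a.e. Replacing a minimizer by its positive part gives $u\ge0$.

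\emph{Lipschitz regularity.* }We proceed in three steps.
\begin{enumerate}
\item Comparing $u$ with $\max(u-\varepsilon\varphi,0)$ shows that $u$ is subharmonic, so $u$ is locally bounded by the mean value inequality. Comparing with $u+\varepsilon\varphi$ shows that $u$ is harmonic on $\{u>0\}$ once continuity is known.
\item Let $h$ be the harmonic replacement of $u$ on $B_r(y)$. Minimality gives the key estimate
\[
\int_{B_r}|\nabla(u-h)|^2\le Q_{\max}^2\,\mathcal H^n(\{u=0\}\cap B_r).
\]
Combined with Poincaré, this yields Morrey/Campanato decay, hence Hölder continuity.
\item We then show linear growth: $\sup_{B_r(y)}u\le Cr$ for $y\in\partial\{u>0\}$. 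I would argue as in AC by contradiction: if the averages $\fint_{\partial B_r}u$ were much larger than $r$, the harmonic replacement $h$ would be strictly positive on $B_{r/2}$. The estimate of step 2 then forces $\{u=0\}\cap B_{r/2}=\emptyset$, contradicting $y\in\partial\{u>0\}$. Here the Euclidean Poisson-kernel estimate must be replaced by the Harnack inequality and the doubling property on $\mathsf{ncRCD}$ spaces.
\end{enumerate}
In the interior of $\{u>0\}$ we apply the Cheng--Yau type gradient estimate for harmonic functions on $\mathsf{RCD}(K,n)$ (e.g.\ Jiang's Lipschitz estimate) on balls of radius comparable to the distance to the free boundary. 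Together with the linear growth, this gives $|\nabla u|\le L$ on $B_{R/4}$.

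\emph{Nondegeneracy, finiteness, and the Euler--Lagrange equation.* }We proceed in three steps.
\begin{enumerate}
\item \emph{Nondegeneracy.} By a competitor $\min(u,\psi)$ with a radial barrier built from the Laplacian comparison, we get $\sup_{B_r(y)}u\ge cr$ for $y\in\partial\{u>0\}$. Together with the Lipschitz bound, this gives two-sided density estimates for $\{u>0\}$ and for $\{u=0\}$ in balls centered on the free boundary.
\item \emph{The Laplacian measure.} Since $u$ is subharmonic and harmonic off the free boundary, $\bLap u=\lambda\ge0$ is a Radon measure supported on $\partial\{u>0\}$. Testing with cutoffs gives $\lambda(B_r(y))\le Cr^{n-1}$ from Lipschitz, and $\lambda(B_r(y))\ge cr^{n-1}$ from nondegeneracy. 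Since $\mathcal H^n$ is Ahlfors $n$-regular, a covering argument makes $\lambda$ comparable to $\mathcal H^{n-1}\llcorner\partial\{u>0\}$. This gives local finiteness of $\mathcal H^{n-1}$. The density estimates, combined with the perimeter bound from coarea applied to $\{u>\varepsilon\}$, give locally finite perimeter and $\partial^e\{u>0\}=\partial\{u>0\}$.
\item \emph{Identifying the density.} We write $\lambda=q\,\mathcal H^{n-1}\llcorner\partial\{u>0\}$. It remains to show $q=Q$ a.e.
\end{enumerate}

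The last identification is the main obstacle. I would blow up at $\mathcal H^{n-1}$-a.e.\ point $y$ that satisfies three conditions simultaneously:
\begin{enumerate}
\item $y$ is $\mathcal H^{n-1}$-a.e.\ regular for the ambient space, so its tangent cone is $\mathbb R^n$; this uses the fact that the singular set of an $\mathsf{ncRCD}$ space has codimension $\ge2$;
\item $y$ lies in the reduced boundary;
\item $y$ is a Lebesgue point of $q$ and a continuity point of $Q$.
\end{enumerate}
Blow-ups converge in the sense of \Cref{subsec:pmgh_convg_and_cptness}, using compactness of minimizers under pmGH convergence, to a global minimizer on $\mathbb R^n$ with constant coefficient $Q(y)$. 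The positivity set of this limit is a half-space. Blow-ups of the reduced boundary are half-spaces, and by the density estimates the limit positivity set must coincide with that half-space. By the Liouville theorem for nonnegative harmonic functions vanishing on a hyperplane, the limit is $a\,x_n^+$. The Laplacian measure passes to the limit with density $q(y)$, so $a=q(y)$. Minimality of $a\,x_n^+$ among one-dimensional competitors forces $a=Q(y)$. The delicate points are the weak convergence of $\bLap u$ along pmGH blow-ups and the stability of minimality in the limit. I would handle both using the uniform Lipschitz and nondegeneracy bounds.
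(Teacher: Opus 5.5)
The paper does not prove this statement; it quotes it from \cite{CZZ22}, so there is no internal argument to compare against. Your outline, the scheme of \cite{AC81} transplanted to $\mathsf{ncRCD}$ spaces, is the natural route. Existence, subharmonicity, the H\"older step, nondegeneracy and the construction of $\bLap u$ are fine as a sketch. The genuine gap is Step~3 of the Lipschitz argument, the linear growth, which is the heart of item~2. You run the one-scale argument of \cite{AC81} with the Poisson kernel replaced by Harnack and doubling. Set $Z:=\{u=0\}$ and $w:=h-u\in W^{1,2}_0(B_r)$. Using $w=h$ on $Z$, the Poincar\'e inequality and your key estimate, you only get
\[
\Bigl(\inf_{B_{r/2}}h\Bigr)^2\,\mathcal H^n(Z\cap B_{r/2})\le\int_{B_r}w^2\le Cr^2\int_{B_r}|\nabla w|^2\le Cr^2Q_{\max}^2\,\mathcal H^n(Z\cap B_r).
\]
The zero set in the annulus $B_r\setminus B_{r/2}$ stays on the right-hand side, and interior Harnack gives no lower bound on $h$ there. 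So a large $\inf_{B_{r/2}}h$ does not force $Z\cap B_{r/2}$ to be empty, or even null. In \cite{AC81}, all of $Z\cap B_r$ is moved to the left by the Poisson-kernel bound $h\gtrsim(1-|x|/r)\fint_{\partial B_r}u$ together with integration along rays. That is a boundary (Hopf-type) estimate, and Harnack and doubling do not supply it on metric balls.

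You can close the step with interior estimates only, by comparing $u$ and $h$ pointwise instead of through $\mathcal H^n(Z)$.
\begin{enumerate}
\item If linear growth fails at $y$, a stopping time over dyadic radii gives $r$ with $\fint_{B_r(y)}u\ge Mr$ and $\fint_{B_{2r}(y)}u\le 2Mr$. Let $h$ be the harmonic replacement on $B_{2r}(y)$.
\item Harnack gives $h\ge C_H^{-1}Mr$ on $B_r(y)$ and $\sup_{B_r}h\le C(M+Q_{\max})r$. Hence $|\nabla h|\le C(M+Q_{\max})$ on $B_{r/2}(y)$.
\item The mean value inequality, Caccioppoli and your Campanato step give $|u(z)-u(z')|\le C(M+Q_{\max})r^{1-\beta}d(z,z')^\beta$ on $B_{r/2}(y)$.
\item The key estimate with Poincar\'e gives $\fint_{B_{2r}}|u-h|^2\le CQ_{\max}^2r^2$.
\item Since $u(y)=0$, averaging over $B_{\eta r}(y)$ yields
\[
C_H^{-1}Mr\le h(y)\le C\bigl(\eta^{-n/2}Q_{\max}+(M+Q_{\max})\eta^{\beta}\bigr)r,
\]
which is impossible once $\eta$ is chosen small and then $M$ large.
\end{enumerate}

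In item~4, two justifications need replacing. First, the singular set of an $\mathsf{ncRCD}$ space has codimension $\ge2$ only when $\partial X=\emptyset$. The statement allows boundary, and then $\partial X$ is $(n-1)$-dimensional. Second, uniform Lipschitz and nondegeneracy bounds give only two-sided Ahlfors bounds. A blow-up limit of $\mathcal H^{n-1}\llcorner\partial\{u_j>0\}$ is then merely comparable to $\mathcal H^{n-1}\llcorner\{x_n=0\}$, which yields $a\in[c\,q(y),C\,q(y)]$ rather than $a=q(y)$.

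Both points are supplied by the structure theory of sets of finite perimeter on $\mathsf{RCD}$ spaces (Ambrosio--Bru\`e--Semola, Bru\`e--Pasqualetto--Semola). On $\mathsf{ncRCD}$ spaces the perimeter measure equals $\mathcal H^{n-1}$ on the reduced boundary. At a.e.\ such point the tangent is $\mathbb R^n$, and the blow-ups of the set converge to a half-space with convergence of perimeter measures. Your density estimates identify $\partial\{u>0\}$ with the essential boundary. Combined with the perimeter theory, this gives your conditions 1--2 and the needed measure convergence at $\mathcal H^{n-1}$-a.e.\ free-boundary point. After that, deriving $a=Q(y)$ from the minimality of $a\,x_n^+$ on $\mathbb R^n$ is fine.
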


\subsubsection{Free Boundary and Its Regularity}

The next question addressed in \cite{CZZ22} concerns the regularity of the free boundary in the nonsmooth $\mathsf{ncRCD}$ setting. 
Recall that, in the Euclidean setting, the singular set of the free boundary has codimension \(k^* \in \{5,6,7\}\), while the regular set is a \(C^{1,\alpha}\)-manifold.
In the nonsmooth setting, however, one must first clarify what \emph{regular} and \emph{singular} parts mean. 
The key distinction is that singularities may originate from the ambient space itself.
Recall that for noncollapsed lower Ricci limit spaces, or more generally $\mathsf{ncRCD}$ spaces, one has a well-defined notion of tangent cones, obtained by blowing up the space at a given point. Since only lower Ricci curvature bounds are available, 
almost Euclidean behavior at small scales does not in general imply smooth regularity on a neighborhood. 
Consequently, it may happen that a point has an Euclidean tangent cone, while the space is not a smooth manifold in any neighborhood of the point. In fact, the singularities of the space itself may be dense.
\par

Turning back to the one-phase problem, \cite{CZZ22} developed the necessary tools to introduce blow-ups of minimizers (see \Cref{thm:compactness}).
They then define the regular set $\mathcal{R}$ as the collection of points $x \in \partial\{u>0\}\cap \Omega$ such that every tangent cone at $x$ is $(\mathbb{R}^n,d_{\mathrm{Euc}})$, and every blow-up limit of $\partial\{u>0\}\cap\Omega$
at $x$ is an affine hyperplane in $\mathbb{R}^n$. The singular set $\mathcal{S}$ is then defined as the complement of $\mathcal{R}$ in the free boundary.

Two basic examples illustrate the singular phenomena that may arise in this nonsmooth setting.

\begin{example}[Codimension $3$ singularities, {\cite[Example 1.8]{CZZ22}}]
There exists a $2$-dimensional Alexandrov space $Y$ with nonnegative curvature, constructed by doubling a triangle. Under this construction, the edges of the triangle are identified and disappear, while the vertices persist as isolated singular points. On the product space $(-1,1)_t\times Y$, the function
\[
f(t,y):=\begin{cases}
t, & t\ge 0,\\
0, & t<0,
\end{cases}
\]
is a minimizer of the one-phase problem with $Q\equiv 1$, and its free boundary is given by $\{0\}\times Y$. Consequently, the free boundary contains exactly the three vertices of $Y$ as singular points.
\end{example}

\begin{example}[Dense singularities in the free boundary, {\cite[Remark 1.12]{CZZ22}}]
The same construction applies if one replaces $Y$ by a two-dimensional Alexandrov space without boundary whose singular set is dense. Such a space was constructed in \cite{otsu1994riemannian}. In this case, the resulting free boundary has a dense singular set.
\end{example}

The first example tells us that the best we can hope for is an $(n-3)$-dimensional bound on the singular set. The second example tells us that we cannot expect a manifold structure on the whole regular set as defined above.
Indeed, we have the following sharp results, again stated in a simplified form.

\begin{theorem}[{\cite[Theorem 1.9, Corollary 1.11]{CZZ22}}]
Let $u$ be a minimizer to the one-phase problem \eqref{1-phase_Ber} on an $\mathsf{ncRCD}(K,n)$ space $(X,d,\mathcal{H}^n)$ without boundary\footnote{For an $\mathsf{ncRCD}(K,n)$ space, the boundary is defined as $\partial X:=\overline{\mathcal S^{n-1}\setminus\mathcal S^{n-2}}$, where $\mathcal S^k:=\{x\in X:\text{ no tangent cone at $x$ splits off an $\mathbb R^{k+1}$ factor}\}$. See \cite{DPG2018,BrueNaberSemola2022}.}. Then
\begin{enumerate}[topsep=0pt, ]
    \item (Hausdorff dimension bound on the singular set) $\dim_{\mathcal{H}}\mathcal{S} \leq n-3$.
    \item (Holder Regularity away from a codim $3$ set) There is an open set $\mathcal{O}\subset\mathcal{R}$ which is $C^\beta$-biHölder homeomorphic to an $(n-1)$-dimensional manifold for some $\beta>0$, and we have $\dim_{\mathcal{H}}(\mathcal{R}\backslash\mathcal{O}) \leq n-3$.
\end{enumerate}
\end{theorem}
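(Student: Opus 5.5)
The plan is Federer-type dimension reduction, driven by the blow-up compactness of \cite{CZZ22} (\Cref{thm:compactness}) together with the monotonicity formula available on $\mathsf{ncRCD}(0,n)$ cones \cite[Lemma A.3]{CZZ22}.

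\textbf{Step 1 (iterated blow-ups are cones with homogeneous minimizers).} At $x\in\partial\{u>0\}$, blow up the pair $(X,u)$ along $t_i\to0$ by $(X,t_i^{-1}d,x)$ and $t_i^{-1}u$. By \Cref{thm:compactness} and the uniform Lipschitz bound of \Cref{thm:mainCZZ22}, a subsequence converges to a global minimizer $u_\infty$ on a tangent cone $C_x X$, with $Q$ frozen to $Q(x)$. On the cone, the Weiss-type quantity centred at the vertex is monotone. I would blow up $u_\infty$ once more at the vertex; rigidity of that quantity then yields a $1$-homogeneous global minimizer on an $\mathsf{ncRCD}(0,n)$ cone. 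Taking further blow-ups at points away from the vertex and using the cone-splitting principle, one reaches the standard reduction. If $\mathcal H^{s}(\mathcal S)>0$ for some $s>n-3$, we obtain a model $(\mathbb R^{k}\times C(Z),\tilde u)$ with $k\ge n-2$, exactly $k$-symmetric, whose vertex is a singular free-boundary point. This uses upper semicontinuity of singularity under the convergence. By the usual Federer density argument, it then suffices to rule out $k\ge n-2$.

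\textbf{Step 2 (classifying the symmetric models).} Suppose $k=n-1$. Since $X$ has no boundary, no tangent cone is a half-space, so $C(Z)=\mathbb R$ and the cone is $\mathbb R^n$. A $1$-homogeneous $\mathbb R^{n-1}$-invariant minimizer on $\mathbb R^n$ is $Q(x)\,t_+$, whose free boundary is a hyperplane; this is regular, a contradiction.

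Suppose $k=n-2$. Then $C(Z)=C(S^1_\ell)$ with $\ell\le 2\pi$, and $\tilde u$ reduces to a $1$-homogeneous one-phase minimizer on a two-dimensional cone of angle $\ell$. Harmonicity plus $1$-homogeneity forces $\{\tilde u>0\}$ to be a sector of opening $\pi$ with $\tilde u=Q\,r\sin\theta$ there. What remains is to show that such a function is not a minimizer unless $\ell=2\pi$ (the flat case, which is regular). This is the main obstacle. I would first try a domain-variation or energy-comparison competitor near the vertex. One option is to exploit that the zero phase has opening $\ell-\pi<\pi$: rotating one free ray, or gluing in a slightly enlarged positive phase, strictly lowers $E$ at first order. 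An alternative is a perimeter or density argument: minimality forces $\{\tilde u>0\}$ to have density exactly $1/2$ at the vertex, while an opening of $\pi$ gives density $\pi/\ell>1/2$. The needed density lower/upper bounds are the nondegeneracy estimates of \cite{CZZ22}. Either route gives $\dim_{\mathcal H}\mathcal S\le n-3$.

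\textbf{Step 3 (biH\"older structure).} I would prove an $\varepsilon$-regularity statement: if $B_r(y)$ is $(n-1,\eta)$-symmetric in the sense of \Cref{def:quant_sym}, then for all smaller scales and nearby free-boundary points the free boundary is $\delta$-GH close to a hyperplane in an almost-Euclidean ball. This follows by a contradiction and compactness argument using Step 2 together with \Cref{prop:almostsymualmostfree}. The Cheeger--Colding Reifenberg theorem, applied intrinsically to $\partial\{u>0\}$, then gives a $C^\beta$-biH\"older chart. Let $\mathcal O$ be the open set of points admitting such a good ball. Its complement in the free boundary consists of points where no ball is $(n-1,\eta)$-symmetric, and Step 1 applied to those points shows $\dim_{\mathcal H}(\mathcal R\setminus\mathcal O)\le n-3$.
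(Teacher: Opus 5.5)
The decisive step of your argument is missing. Everything hinges on the case $k=n-2$ with cone angle $\ell<2\pi$. There you need that no $1$-homogeneous global minimizer on $C(S^1_\ell)$ has the vertex on its free boundary. The paper, following \cite{CZZ22} (see the proof of \Cref{prop:structurestrata}), takes exactly this from \cite{AL15}, and neither of your routes supplies it.

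Route (b) rests on a false premise: minimality does not force the positive phase to have density $1/2$ at a free boundary point. For a $1$-homogeneous minimizer, $W_u(0,1)=Q^2\,|\{u>0\}\cap B_1|$. So by \Cref{prop:densitygap}, every nonflat homogeneous minimizer in $\mathbb R^n$ (e.g.\ the singular cone of \cite{de2009singular}) has density strictly above $1/2$. The nondegeneracy estimates of \cite{CZZ22} only keep the density away from $0$ and $1$.

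Route (a) has no first-order mechanism. The sector solution $Q\,r\sin\theta$ is harmonic in its positive phase and satisfies $|\nabla u|=Q$ on both rays. Hence it is stationary for all inner variations whose fields vanish at the vertex, including rotations and dilations (approximate by fields supported away from the vertex). Moreover, any competitor that differs from $u$ only in $B_\varepsilon$ changes $E$ by a quantity of order $\varepsilon^2$. By $1$-homogeneity, deciding the sign of that change is just the original minimality question on $B_1$. A genuinely global argument is needed, and that is the content of \cite{AL15}.

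Step 1 also needs repair, in two places.

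First, upper semicontinuity of $\mathcal S$ under convergence fails in this setting. Take $u=t^+$ on $\mathbb R\times C(S^1_{a_i})$ with $a_i\uparrow 1$ (the codimension-$3$ example with $Y$ a cone). The free boundary points over the vertex are singular, yet they converge to regular points of $t^+$ on $\mathbb R^3$. So the Federer argument must be run on the effective strata $\mathcal S^{n-3}_\varepsilon$, which do pass to limits (\Cref{prop:basicpropstrata}), after first proving $\operatorname{Sing}(u)\subset\mathcal S^{n-3}$.

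Second, re-blowing up $u_\infty$ at the vertex buys homogeneity at the price of the measure. The vertex need not be an $\mathcal H^s$-density point of the limiting singular set, and at non-vertex points of the cone no monotone quantity is available. The reduction must be arranged so that homogeneity is used only at cone vertices, as in \cite{wang2026dimensionboundsingularset} following \cite{FMS25}. For limit solutions, the paper instead bypasses this via the Minkowski bound of \Cref{thm:main}.

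In Step 3, propagating flatness to all smaller scales is not a consequence of compactness plus Step 2. The limit at the first bad scale is a non-homogeneous global minimizer on $\mathbb R^n$. The paper closes this with the Euclidean Weiss monotonicity and the density gap (\Cref{prop:reifenbergprop}). Note also that your $\mathcal O$ contains singular points (vertices with angle close to $2\pi$ on a transverse free boundary), so it must be intersected with $\mathcal R$.
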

Note the following interesting point.
In general, in $\mathsf{ncRCD}(K,n)$ spaces without boundary, the singular set is of codimension two.
The theorem above says that, in a heuristic sense, the free boundary interacts transversely with the singularities of the ambient space, rather than fully inheriting their dimension.
This can be explained based on the proof of the theorem: after dimension reduction, it boils down to the fact that for any $1$-homogeneous global minimizer on a $2$-dimensional circular cone other than $\mathbb{R}^2$, the vertex point is not in the free boundary; see \cite{AL15} and also the proof of \Cref{prop:structurestrata}.

\subsection{pmGH-convergence and Compactness Results}\label{subsec:pmgh_convg_and_cptness}
\subsubsection{Pointed Measured Gromov--Hausdorff Convergence}  

In this work, we consider noncollapsing sequences of pointed Riemannian manifolds of fixed dimension $n$ with a uniform lower bound on the Ricci curvature.
Their limits, when equipped with the $n$-dimensional Hausdorff measure $\mathcal{H}^n$, naturally fit into the framework of $\mathsf{RCD}$ spaces.
Accordingly, we recall the notion of \emph{pointed measured Gromov--Hausdorff (pmGH) convergence}.
See e.g. \cite{GigliMondinoSavare2015} and \cite{gromov2007metric} for various definitions about convergence for metric measure spaces.

\begin{definition}\label{def:GH_pmGH}
Let $K\in\mathbb{R}$ and $N\in[1,\infty)$. Consider a sequence of
$\mathsf{RCD}(K,N)$ metric measure spaces $(X_j,d_j,\mu_j)$ with base
points $p_j\in X_j$. We say that $(X_j,d_j,\mu_j,p_j)$
\emph{converges in the pointed measured Gromov--Hausdorff sense} to a
pointed metric measure space $(X_\infty,d_\infty,\mu_\infty,p_\infty)$,
and write
\[
(X_j,d_j,\mu_j,p_j)
\xrightarrow{\mathrm{pmGH}}
(X_\infty,d_\infty,\mu_\infty,p_\infty),
\]
if there exist a proper metric space $(Z,d_Z)$ and isometric embeddings
$\Phi_j:X_j\to Z$ and $\Phi_\infty:X_\infty\to Z$ such that:
\begin{enumerate}
\item $\Phi_j(p_j)\to\Phi_\infty(p_\infty)$ in $Z$;

\item For every $R>0$, we have the convergence in Hausdorff distance:
    \[
    \Phi_j(B_R(p_j))
    \xrightarrow{\mathrm{H}}
    \Phi_\infty(B_R(p_\infty))
    \quad\text{in }Z;
    \]

\item The push-forward measures $(\Phi_j)_\#\mu_j$ converge weakly to $(\Phi_\infty)_\#\mu_\infty$ in duality with $C_c(Z)$.
\end{enumerate}
\end{definition}
We refer to such a choice of $(Z,d_Z)$ and of the embeddings $\Phi_j,\Phi_\infty$ as a \emph{pmGH realization} of the convergence.
Once a pmGH realization is fixed, it makes sense to speak of a sequence of points $x_j\in X_j$ converging to a point $x\in X_\infty$, by viewing all the spaces as subsets of the common ambient space $Z$. Similarly, convergence o f subsets may be understood through the Hausdorff distance in $Z$. In particular, whenever we write $B_R(x_i)\to B_R(x)$, this is understood as shorthand for the local convergence of these balls induced by a pmGH realization of the ambient spaces; moreover, we assume $x_i \to x$ under this notation.
\par
Equivalently, the pmGH convergence can be described intrinsically by Borel
$\varepsilon_j$-pGH approximations $F_j:X_j\to X_\infty$, with $\varepsilon_j\to0$ on every fixed bounded ball, which are compatible with the convergence of the measures: \[
\int_{X_j} \varphi \circ F_j d\mu_i \to \int_{X_\infty} \varphi \ d\mu_\infty \quad \text{for $\varphi \in C_c(X_\infty)$}.\]
Given a pmGH realization as above, such maps $F_j$ may be chosen so that, for every fixed $R>0$, $d_Z(\Phi_j(y),\Phi_\infty(F_j(y)))\leq\varepsilon_j(R)$ for every $y\in B_R(p_j)$, where $\varepsilon_j(R)\to 0$. In particular, for any sequence $x_j\in B_R(p_j)$ and any $x\in X_\infty$, we have $x_j\to x$ with respect to the pmGH realization if and only if $F_j(x_j)\to x$ in $X_\infty$. Thus, passing between the extrinsic realization and compatible Borel pGH approximations does not change the notion of convergence of points, and the choice between the two formulations is therefore largely a matter of convenience and notation.
\begin{remark}
    Under the pmGH convergence above, the limit space $(X_\infty, d_\infty, \mu_\infty)$ is also an $\mathsf{RCD}(K,N)$ space; see \cite{EKS15,GigliMondinoSavare2015} and the references therein. 
\end{remark}
\par
We now introduce the convergence of functions, mainly following \cite{AH18}; see also \cite{ambrosio2021embedding,GigliMondinoSavare2015,ambrosio2016weakstrongconvergencederivations,HondaRicciLpConvergence,KuwaeShioya2003,kuwae2005variationalconvergencemetricspaces} and the references therein. As with sets and points, convergence of functions on varying spaces makes sense only once a realization of the underlying convergence has been chosen.
\begin{definition}[Convergence of functions]
Let
  \[
(X_i,d_i,\mu_i,p_i)
\xrightarrow{\mathrm{pmGH}}
(X,d,\mu,p)
\]
as $\mathsf{RCD}(K,N)$ spaces, realized by embeddings $\Phi_i, \Phi$ into $(Z,d_Z)$.  Let $R>0$.
\begin{enumerate}[leftmargin=*]
    \item (Uniform convergence)
    Let $f_i, f$ be measurable functions on $B_R(p_i), B_R(p)$, respectively. We say that $f_i \to f$ uniformly on $B_R(p_i)$ if for any $\varepsilon>0$ there is $N(\varepsilon) \in \mathbb{N}$ and $\delta(\varepsilon)>0$ such that 
    \[
\sup_{\substack{x \in B_R(p_i),\ y \in B_R(p) \\ d_Z(\Phi_i(x), \Phi(y)) < \delta}}
\left| f_i(x) - f(y) \right| < \varepsilon, \quad \forall i \ge N(\varepsilon).
\]
\item ($L^2$ convergence) 
Let $f_i \in L^2(X_i, \mu_i)$ and $f \in L^2(X, \mu)$.
We say that $f_i$ \emph{$L^2$-weakly converge} to $f$ if $\sup_i \|f_i\|_{L^2} < \infty$ and $\int_{X_i} h f_i \dd\mu_i \to \int_X h f \dd\mu$, for all $h \in C_c(Z)$.
Moreover, we say that $f_i$ \emph{$L^2$-strongly converge}
to $f$ if $f_i$ $L^2$-weakly converge to $f$ and $\limsup_i \|f_i\|_{L^2} \le \|f\|_{L^2}$.
\item ($W^{1,2}$ convergence) We say that $f_i \in W^{1,2}(X_i, d_i, \mu_i)$ \emph{$W^{1,2}$-weakly converge}
to $f \in W^{1,2}(X, d, \mu)$ if $f_i$ $L^2$-weakly converge to $f$ and $\sup_i \||\nabla f_i|\|_{L^2} < \infty$. Strong convergence in $W^{1,2}$ is defined by requiring $L^2$-strong convergence
of the functions and $\lim_i \||\nabla f_i|\|_{L^2}=\||\nabla f|\|_{L^2}$.
\item ($L^2$ convergence on balls) Let $f_i\in L^2(B_R(p_i),\mu_i)$ and $f\in L^2(B_R(p),\mu)$.
We say that $f_i$ converge weakly (respectively, strongly) in $L^2(B_R(p_i))$ to $f$ if their zero extensions
$f_i\,1_{B_R(p_i)}\in L^2(X_i,\mu_i)$ and $f\,1_{B_R(p)}\in L^2(X,\mu)$ satisfy the weak (respectively, strong) $L^2$-convergence as in item~2.
\item ($W^{1,2}$ convergence on balls) Let $f_i\in W^{1,2}(B_R(p_i))$ and $f\in W^{1,2}(B_R(p))$. We say that $f_i$ converge weakly in $W^{1,2}(B_R(p_i))$ to $f$ if $f_i\to f$ weakly in $L^2(B_R(p_i))$ and $\sup_i \||\nabla f_i|\|_{L^2(B_R(p_i))}<\infty$. We say that $f_i$ converge strongly in $W^{1,2}(B_R(p_i))$ to $f$ if $f_i\to f$ strongly in $L^2(B_R(p_i))$ and $\lim_i \||\nabla f_i|\|_{L^2(B_R(p_i))}=\||\nabla f|\|_{L^2(B_R(p))}$.
\end{enumerate}
\end{definition}

\begin{remark}\label{rmk:L2pullback}
Suppose $B_R(p_i)\to B_R(p)$ with respect to given ambient pmGH convergence, and $f_i\to f$ strongly in $L^2(B_R(p))$. Then there exist Borel $\varepsilon_i$-pGH approximations
$G_i:B_R^X(p)\to B_R^{X_i}(p_i)$, with $\varepsilon_i\to0$, such that
\begin{enumerate}
    \item $(G_i)_\#\mu\ll\mu_i$;
    \item $f_i\circ G_i\to f$ in $L^2(B_R^X(p),\mu_\infty)$.
\end{enumerate}
Here the convergence of the balls is understood with respect to the realization induced by the ambient pmGH convergence. In our applications, the functions $f_i$ are continuous and therefore have canonical pointwise representatives. Hence, in our applications, item~(1) may be dispensed with, since the compositions \(f_i\circ G_i\) are pointwise well defined. Moreover, if $f_i\to f$ uniformly with respect to the pmGH convergence, then item~(2) follows immediately for any compatible choice of reverse pGH approximations $G_i$.
\end{remark}

\begin{lemma}[Local uniform convergence implies local strong $L^2$ convergence]
\label{lem:uniform_implies_L2}
Let $(X_i,d_i,\mu_i,p_i)$ and $(X,d,\mu,p)$ be $\mathsf{RCD}(K,N)$ spaces such that $(X_i,d_i,\mu_i,p_i) \xrightarrow{\mathrm{pmGH}} (X,d,\mu,p)$.
Let $v_i:X_i\to\mathbb R$ be locally bounded Borel functions converging uniformly on compact sets to a continuous function $v:X\to\mathbb R$, with respect to the above pmGH convergence. Then, for every $R>0$,  $v_i$ converge strongly in $L^2(B_R(p_i))$ to
$v$. 
\end{lemma}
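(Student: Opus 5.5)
We will show that $v_i\to v$ weakly in $L^2(B_R(p_i))$ and that the $L^2$ norms converge, $\|v_i\|_{L^2(B_R(p_i))}\to\|v\|_{L^2(B_R(p))}$. The tools are uniform convergence combined with weak convergence of the measures. Fix $R>0$ and a pmGH realization $(Z,d_Z,\Phi_i,\Phi)$. Uniform convergence on $\bar B_{R+1}$ together with continuity of $v$ on the compact set $\bar B_{R+1}(p)$ gives a uniform bound $\sup_i\sup_{B_{R+1}(p_i)}|v_i|\le M<\infty$ for large $i$. Volume comparison gives $\sup_i\mu_i(B_{R+1}(p_i))<\infty$. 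Together these give $\sup_i\|v_i1_{B_R(p_i)}\|_{L^2}<\infty$.

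\textbf{Extension and uniform approximation.} First we extend $v|_{\bar B_{R+1}(p)}$, viewed on $\Phi(\bar B_{R+1}(p))\subset Z$, to a bounded continuous function $\hat v\in C_c(Z)$ by Tietze, with a cutoff. The definition of uniform convergence, combined with the Hausdorff convergence $\Phi_i(B_{R+1}(p_i))\to\Phi(B_{R+1}(p))$, gives
\[
\sup_{x\in B_{R+1/2}(p_i)}\bigl|v_i(x)-\hat v(\Phi_i(x))\bigr|\to0 .
\]
To see this, for each $x$ choose $y\in B_{R+1}(p)$ with $d_Z(\Phi_i(x),\Phi(y))$ small. Then $|v_i(x)-v(y)|<\varepsilon$, and $|\hat v(\Phi_i x)-\hat v(\Phi y)|$ is small by uniform continuity of $\hat v$. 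This reduces every statement to the fixed continuous function $\hat v$, up to an error $o(1)\cdot\mu_i(B_{R+1}(p_i))^{1/2}$.

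\textbf{Handling the indicator of the ball.} The main obstacle is that $1_{B_R}$ is not continuous, so weak convergence of measures does not directly apply to integrals of $h\,\hat v\,1_{B_R}$ or $\hat v^2 1_{B_R}$. We will sandwich $1_{B_R(p_i)}$ between continuous cutoffs of $d_Z(\cdot,\Phi_i(p_i))$. Since $\Phi_i(p_i)\to\Phi(p)$, we may replace this by cutoffs $\chi^\pm_\delta$ of $d_Z(\cdot,\Phi(p))$ that equal $1$ on $B_{R-\delta}$, resp. $B_R$, and vanish outside $B_R$, resp. $B_{R+\delta}$, at the cost of changing $\delta$ for large $i$. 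For any $g\in C_c(Z)$, weak convergence $(\Phi_i)_\#\mu_i\rightharpoonup\Phi_\#\mu$ gives $\int g\chi^\pm_\delta\,d\mu_i\to\int g\chi^\pm_\delta\,d\mu$. The discrepancy is bounded by $\|g\|_\infty\,\mu(B_{R+\delta}\setminus B_{R-\delta})$, and the same bound holds in the limsup over $i$ by the same weak convergence applied to a cutoff of the annulus. Finally, $\mu(\partial B_R(p))=0$ in $\mathsf{RCD}(K,N)$ spaces, by Bishop--Gromov continuity of $r\mapsto\mu(B_r)$ (or, in our noncollapsed case, by coarea). Hence the annulus term tends to $0$ as $\delta\to0$.

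\textbf{Conclusion.} Applying the previous step with $g=h\hat v$ for arbitrary $h\in C_c(Z)$ yields the weak convergence $v_i1_{B_R(p_i)}\rightharpoonup v1_{B_R(p)}$. Applying it with $g=\hat v^2$ yields $\int_{B_R(p_i)}v_i^2\,d\mu_i\to\int_{B_R(p)}v^2\,d\mu$. In both cases we also use the uniform approximation from the extension step and the uniform bound on $\mu_i(B_{R+1}(p_i))$. Together these give strong $L^2$ convergence on $B_R$.
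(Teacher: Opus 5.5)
Your proof is correct and follows the route the paper has in mind: the paper states this lemma without a separate proof, but each time it is invoked (in Lemma~\ref{lem:ft_convergence} and in the proof of Proposition~\ref{prop:basicpropstrata}) it cites exactly your ingredients, namely locally uniform convergence tested against the weakly converging measures, plus the fact that spheres $\{d(\cdot,p)=R\}$ are $\mu$-null by Bishop--Gromov. Two small points to tidy. First, the uniform bound on $\mu_i(B_{R+1}(p_i))$ should come from testing the weak convergence of $(\Phi_i)_\#\mu_i$ against a compactly supported cutoff, since Bishop--Gromov alone only bounds it relative to $\mu_i(B_1(p_i))$. Second, the finitely many small indices in $\sup_i\|v_i1_{B_R(p_i)}\|_{L^2}<\infty$ are handled by the local boundedness of each $v_i$.
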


\subsubsection{Compactness of Solutions and Blow-Ups}
Here we state the compactness result on which we rely to take limits of minimizers.
This result will be used both in the blow-up analysis and in the study of limit solutions (see \cref{subsec:lim_sol}). A more general version of the following theorem is proved in \cite[Theorem 7.1]{CZZ22}.

\begin{theorem}\label{thm:compactness}
Let $(X_j,d_j,\mathcal{H}^n,p_j)$ be a sequence of $\mathsf{ncRCD}(k,n)$ spaces such that
\[
(X_j,d_j,\mathcal H^n,p_j) \xrightarrow[]{\mathrm{pmGH}} (X,d,\mathcal{H}^n,p_\infty).
\]
Let $Q_j\in C(\overline{B_R(p_j)})$ and
$Q_\infty\in C(\overline{B_R(p_\infty)})$ satisfy
\[
0<Q_{\min}\le Q_j\le Q_{\max}<\infty,
\]
and assume that
\[
Q_j\to Q_\infty
\quad\text{uniformly on }B_R(p_j)
\]
with respect to the pmGH convergence. Let $u_j$ be a sequence of minimizers of $E_{Q_j}$ on $B_R(p_j)$. Assume that the sequence $\{u_j\}$ is uniformly bounded and $L$-Lipschitz on $B_R(p_j)$. Then there exists a subsequence, still denoted by $\{u_j\}$, and a $L$-Lipschitz function $u_\infty$ on $B_R(p_\infty)$ such that $u_j \to u_\infty$ uniformly on $B_R(p_j)$ as $j \to \infty$. Moreover, for every $R' \leq R$, the limit function $u_\infty$ is a minimizer of $E_{Q_\infty}$ on $B_{R'}(p_\infty)$. In addition, the following properties hold:
\begin{align}
\lim_{j \to \infty} \int_{B_{R'}(p_j)} |\nabla u_j|^2 \, d\mathcal H^n
&=
\int_{B_{R'}(p_\infty)} |\nabla u_\infty|^2 \, d\mathcal H^n,
\\[0.5em]
\lim_{j \to \infty} \int_{B_{R'}(p_j)} Q_j^2 1_{\{u_j>0\}} \, d\mathcal H^n
&=
\int_{B_{R'}(p_\infty)} Q_\infty^2 1_{\{u_\infty>0\}} \, d\mathcal H^n,
\\[0.5em]
\partial\{u_j>0\} \cap B_{R'}(p_j)
&\xrightarrow{}
\partial\{u_\infty>0\} \cap B_{R'}(p_\infty),\\[0.5em]
\mathcal H^n\!\left(B_{R'}(p_j)\cap\{u_j>0\}\right)
&\to
\mathcal H^n\!\left(B_{R'}(p_\infty)\cap\{u_\infty>0\}\right). \label{eq:freeBoundaryvolumeconv}
\end{align}
\end{theorem}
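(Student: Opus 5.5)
The plan is a direct-method argument in varying spaces, carried out in four steps: extract a uniform limit, prove lower semicontinuity of both energy terms, build recovery competitors to get minimality (and, as a by-product, convergence of each term), and finally use nondegeneracy to handle the free boundaries.

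\emph{Step 1: uniform limit.} Since the $u_j$ are uniformly bounded and $L$-Lipschitz on $B_R(p_j)$, I would fix a pmGH realization in $(Z,d_Z)$ and apply the Arzelà--Ascoli theorem for functions on varying spaces. Concretely, I would first define the limit on a countable dense subset of $B_R(p_\infty)$ by a diagonal argument, then extend it. This gives a subsequence and an $L$-Lipschitz $u_\infty$ with $u_j\to u_\infty$ uniformly.

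\emph{Step 2: lower semicontinuity.} By \Cref{lem:uniform_implies_L2}, the convergence is also strong in $L^2$ on balls. The $\Gamma$-$\liminf$ inequality for Cheeger energies under pmGH convergence (Ambrosio--Honda) then gives
\[
\int_{B_{R'}(p_\infty)}|\nabla u_\infty|^2\le\liminf_j\int_{B_{R'}(p_j)}|\nabla u_j|^2 .
\]
For the volume term, $\{u_\infty>0\}$ is open, and uniform convergence implies that every compact $K\subset\{u_\infty>0\}$ is eventually "covered" by $\{u_j>0\}$. Combined with weak convergence of $\mathcal H^n$ and uniform convergence of $Q_j$, this yields
\[
\int Q_\infty^2 1_{\{u_\infty>0\}}\le\liminf_j\int Q_j^2 1_{\{u_j>0\}} .
\]

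\emph{Step 3: minimality and energy convergence (the main obstacle).} Take a competitor $v$ with $v=u_\infty$ outside $B_\rho(p_\infty)$, $\rho<R'$.
\begin{enumerate}
\item By density, I may assume $v$ is Lipschitz.
\item I would replace $v$ by $v^\varepsilon:=\min\{v,(v-\varepsilon)_+ + \text{correction}\}$, or more simply work with $(v-\varepsilon)_+$ inside $B_\rho$. The point is that the positivity set is then contained in the closed set $\{v\ge\varepsilon\}$, so its measure is upper semicontinuous along approximations.
\item I would transplant $v^\varepsilon$ to $X_j$ via a $\Gamma$-$\limsup$ (recovery) sequence $v_j$, converging strongly in $W^{1,2}$ and uniformly; for Lipschitz data this can be done with Lipschitz extensions composed with pGH approximations, followed by heat-flow smoothing.
\item I would glue $w_j:=\psi v_j+(1-\psi)u_j$ with a cutoff $\psi$ supported in $B_{\rho+\tau}$ and equal to $1$ on $B_\rho$. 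The cross term $|\nabla\psi|^2|v_j-u_j|^2$ tends to $0$ because $v^\varepsilon\approx u_\infty$ on the annulus, and the positivity set added on the annulus has measure at most $\mathcal H^n(B_{\rho+\tau}\setminus B_\rho)$, which is small.
\end{enumerate}
Minimality gives $E_{Q_j}(u_j)\le E_{Q_j}(w_j)$. Sending $j\to\infty$, then $\tau,\varepsilon\to0$, and using Step 2 yields $E_{Q_\infty}(u_\infty)\le E_{Q_\infty}(v)$. Choosing $v=u_\infty$ gives $\limsup_j E_{Q_j}(u_j)\le E_{Q_\infty}(u_\infty)$. Together with the separate $\liminf$ inequalities of Step 2, this forces each of the two terms to converge, which are the first two stated limits. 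I expect this gluing and recovery construction to be the delicate point, especially keeping the volume term upper semicontinuous.

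\emph{Step 4: free boundaries and volumes.} For the Hausdorff convergence of free boundaries I would use two ingredients. First, the uniform nondegeneracy of minimizers from \cite{CZZ22}: $\sup_{B_r(y)}u_j\ge c\,r$ for $y\in\partial\{u_j>0\}$, with $c$ uniform in $j$. Second, uniform convergence. Nondegeneracy shows that limits of free boundary points of $u_j$ lie in $\partial\{u_\infty>0\}$ (the limit is $0$ but not identically $0$ nearby). Conversely, near a point of $\partial\{u_\infty>0\}$, the functions $u_j$ are positive somewhere and small somewhere, and nondegeneracy rules out $u_j>0$ throughout, so a free boundary point of $u_j$ is nearby. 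Finally, \eqref{eq:freeBoundaryvolumeconv} follows from the convergence of $\int Q_j^21_{\{u_j>0\}}$, applied with $Q_j$ replaced by indicators, i.e.\ by rerunning the $\liminf$ argument for $1_{\{u_j>0\}}$. For the matching upper bound, I would use that $\partial\{u_\infty>0\}$ has zero $\mathcal H^n$-measure (local finite perimeter) together with the Hausdorff convergence of free boundaries: the zero sets then converge in measure.
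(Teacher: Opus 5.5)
The paper gives no proof of this statement; it quotes \cite[Theorem~7.1]{CZZ22}, so your proposal has to be judged on its own. The overall plan is sound: Arzel\`a--Ascoli, separate lower semicontinuity of the two terms, a recovery-and-gluing competitor, then squeezing the two $\liminf$ inequalities against one $\limsup$. However, there is a genuine gap at exactly the step you flagged as delicate.

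\textbf{The gap in Step 3.} You truncate on the limit space, $v^\varepsilon=(v-\varepsilon)_+$, and only then transplant to $X_j$ by Lipschitz extension plus heat-flow smoothing. The inclusion $\{v^\varepsilon>0\}\subset\{v\ge\varepsilon\}$ gives no control on $\{v_j>0\}$, for two reasons. First, uniform closeness to $v^\varepsilon$ still lets $v_j$ be slightly positive on the whole region corresponding to $\{v^\varepsilon=0\}$. Second, heat-flow smoothing of a nonnegative nonzero function is strictly positive everywhere, because the heat kernel is. Hence $\int_{B_\rho}Q_j^2 1_{\{w_j>0\}}=\int_{B_\rho}Q_j^2$, and the comparison with $u_j$ carries no information. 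In addition, taking $(v-\varepsilon)_+$ only inside $B_\rho$ creates a jump across $\partial B_\rho$ wherever $u_\infty>0$, so that function is not in $W^{1,2}$.

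\textbf{How to repair it.} Truncate on the approximating side instead.
\begin{enumerate}
\item Take $v_j\to v$ strongly in $W^{1,2}$ on $B_{\rho+\tau}$, using recovery sequences on balls as in \cite{AH18}. You do not need to reduce to Lipschitz $v$. That reduction is not free anyway, because $v\mapsto\int Q^2 1_{\{v>0\}}$ is not upper semicontinuous under $W^{1,2}$ approximation.
\item Glue $w_j:=\psi(v_j-\varepsilon)_+ +(1-\psi)u_j$. Then $|\nabla(v_j-\varepsilon)_+|\le|\nabla v_j|$.
\item Let $\theta_\varepsilon$ be a Lipschitz ramp vanishing on $(-\infty,\varepsilon/2]$ and equal to $1$ on $[\varepsilon,\infty)$, so that $1_{\{v_j>\varepsilon\}}\le\theta_\varepsilon(v_j)$. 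Strong convergence is stable under Lipschitz composition, so $\limsup_j\int_{B_\rho(p_j)}Q_j^2 1_{\{v_j>\varepsilon\}}\le\int_{B_\rho(p_\infty)}Q_\infty^2 1_{\{v\ge\varepsilon/2\}}$. This decreases to $\int_{B_\rho(p_\infty)}Q_\infty^2 1_{\{v>0\}}$ as $\varepsilon\to0$.
\item On the annulus, $(v_j-\varepsilon)_+-u_j\to(u_\infty-\varepsilon)_+-u_\infty=O(\varepsilon)$. So the cutoff term is $O(\varepsilon^2\tau^{-2})$ times the annulus measure and does \emph{not} vanish as $j\to\infty$. You must send $\varepsilon\to0$ before $\tau\to0$.
\end{enumerate}
Since gluing needs $\rho+\tau<R$, this gives minimality and energy convergence on $B_{R'}$ for $R'<R$.

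\textbf{Two smaller problems in Step 4.}
\begin{itemize}
\item For the inclusion of $\partial\{u_\infty>0\}$ in the limit of the $\partial\{u_j>0\}$, nondegeneracy does not exclude $u_j>0$ on all of $B_r(y_j)$; the Harnack inequality does. In that case $u_j$ would be positive and harmonic there, so $\sup_{B_{r/2}(y_j)}u_j\le C\,u_j(y_j)\to u_\infty(y)=0$. This contradicts $u_\infty>0$ at points arbitrarily close to $y$.
\item For the volume upper bound, local finite perimeter does not imply $\mathcal H^n(\partial\{u_\infty>0\})=0$. The topological boundary of a finite-perimeter set can have positive measure; you would need the two-sided density estimates, i.e.\ $\partial=\partial^e$ as in \cite{CZZ22}. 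Also, Hausdorff convergence of free boundaries alone does not force $u_j\equiv0$ near compact subsets of $\mathrm{int}\{u_\infty=0\}$; that requires nondegeneracy played against $u_j\to0$.
\end{itemize}

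You can bypass the volume argument entirely. Since $Q_j^2\ge Q_{\min}^2$, write $Q_j^2=(Q_j^2-c)+c$ with $0<c<Q_{\min}^2$. Your Step~2 argument gives the $\liminf$ inequality for each of the two resulting integrals. The corrected Step~3 gives convergence of their sum, so each integral converges. The constant piece is exactly the convergence of $\mathcal H^n(B_{R'}(p_j)\cap\{u_j>0\})$.
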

\begin{corollary}[Existence of blow-ups]\label{cor:blowupmaps}
Let $(X,d,\mathcal{H}^n,p)$ be an $\mathsf{ncRCD}(K,n)$ space,
$u$ a minimizer of $E_Q$ on $\Omega\subset X$, with
$Q\in C(\Omega)$ positive, and $p\in \pt\{u>0\}\cap \Omega$.
Consider the
sequence $r_j^{-1}u$ on $(X,r_j^{-1}d,\mathcal{H}^n_{r_j^{-1}d},p)$, for some $r_j\to0$.
Then, up to subsequences,
\begin{enumerate}[topsep=0pt]
    \item $(X,r_j^{-1}d,\mathcal{H}^n_{r_j^{-1}d},p)
    \xrightarrow[]{\mathrm{pmGH}}(Y,\rho,\mathcal{H}^n,y)$, and $r_j^{-1}u$ converges
    uniformly on compact sets to some $u_\infty$ on $(Y,\rho,\mathcal{H}^n,y)$.
    
    \item $(Y,\rho,\mathcal{H}^n,y)$ is an $\mathsf{ncRCD}(0,n)$ cone, with $y$ the vertex.
    Such $Y$ is called a \emph{tangent cone} of $X$ at $p$.
    
    \item $u_\infty$ is a non-trivial global minimizer of $E_{Q(p)}$ on $Y$, with $y\in \pt\{u_\infty>0\}$.
    We call $u_\infty$ a \emph{blow-up} of $u$ at $p$.

\end{enumerate}
\end{corollary}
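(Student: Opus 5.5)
We obtain \Cref{cor:blowupmaps} by combining Gromov precompactness, the structure of tangent cones of $\mathsf{ncRCD}$ spaces, and the compactness \Cref{thm:compactness}, applied along the rescaled sequence. Fix $p\in\partial\{u>0\}\cap\Omega$ and $r_j\to0$, and set $X_j:=(X,r_j^{-1}d,\mathcal H^n_{r_j^{-1}d},p)$, $u_j:=r_j^{-1}u$, $Q_j:=Q$ viewed on $X_j$. Since $X$ is $\mathsf{ncRCD}(K,n)$, the rescaled space $X_j$ is $\mathsf{ncRCD}(r_j^2K,n)$, so for $j$ large it is $\mathsf{ncRCD}(-1,n)$. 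Volume comparison gives uniform doubling, so Gromov precompactness together with the stability of $\mathsf{RCD}$ under pmGH convergence yields a subsequence converging to some $\mathsf{RCD}(0,n)$ space $(Y,\rho,\mu,y)$.

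\textbf{The limit is a noncollapsed cone.} Noncollapsing $\mathcal H^n(B_r(p))\ge c\,r^n$ for small $r$ holds by Bishop--Gromov, and it is scale invariant. Volume convergence for noncollapsed sequences in $\mathsf{ncRCD}$ \cite{DPG2018} then gives $\mu=\mathcal H^n$. The cone structure with vertex $y$ is the tangent-cone theorem of \cite{DPG2018} (or \cite{CC96} in the Ricci limit case): the volume ratio $\mathcal H^n(B_r(p))/r^n$ is almost monotone and has a limit $\theta$ as $r\to0$. Hence every ball in $Y$ centered at $y$ has volume ratio exactly $\theta$, and the volume-cone-implies-metric-cone rigidity gives that $Y$ is an $\mathsf{ncRCD}(0,n)$ metric cone with vertex $y$. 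This proves item 2.

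\textbf{Convergence of the functions.} By \Cref{thm:mainCZZ22}(2), $u$ is $L$-Lipschitz on some $B_{R_0}(p)\Subset\Omega$. Since $u(p)=0$, each $u_j$ is $L$-Lipschitz on $B_{R_0/r_j}(p)$ and bounded by $LR$ on $B_R(p)$ for each fixed $R$. Moreover, $u_j$ minimizes $E_{Q}$ on the rescaled space, because the functional is invariant under this scaling: both the Dirichlet term and the $Q^2$ term scale by the same factor $r_j^{-n}$. Continuity of $Q$ gives $Q_j\to Q(p)$ uniformly on $B_R$-balls, since these correspond to $B_{r_jR}(p)$ in the original metric. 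We now apply \Cref{thm:compactness} on $B_R$ for $R=1,2,\dots$ and take a diagonal subsequence. This gives $u_j\to u_\infty$ locally uniformly, with $u_\infty$ an $L$-Lipschitz minimizer of $E_{Q(p)}$ on every ball, hence a global minimizer. Since $p\in\partial\{u_j>0\}$ for every $j$, the Hausdorff convergence of free boundaries in \Cref{thm:compactness} gives $y\in\partial\{u_\infty>0\}$. In particular $u_\infty\not\equiv0$, which proves items 1 and 3.

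\textbf{Main obstacle.} The delicate point is item 2, specifically justifying $\mu=\mathcal H^n$ in the limit, i.e.\ volume continuity under noncollapsed convergence, together with the cone rigidity. Both are cited from \cite{DPG2018}, so we do not reprove them. The nontriviality of $u_\infty$ could also be obtained from nondegeneracy $\sup_{B_r(p)}u\ge cr$ (\cite{CZZ22}), but the free-boundary convergence already suffices.
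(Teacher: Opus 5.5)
Your proof is correct and follows essentially the same route as the paper. Both arguments rescale, use $\mathsf{ncRCD}$ compactness with volume convergence from \cite{DPG2018}, get the cone structure from volume-cone-implies-metric-cone, and apply \Cref{thm:compactness} on balls of increasing radius with a diagonal argument. The one minor difference is nontriviality of $u_\infty$: the paper takes it from the nondegeneracy estimate of \cite{CZZ22}, while you deduce it from $y\in\partial\{u_\infty>0\}$ via the free-boundary convergence in \Cref{thm:compactness}. That is legitimate, since that convergence already encodes nondegeneracy.
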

\begin{proof}
Set $d_j:=r_j^{-1}d$ and $u_j:=r_j^{-1}u$. The rescaled spaces
$(X,d_j,\mathcal H^n_{d_j},p)$ are $\mathsf{ncRCD}(r_j^2K,n)$ spaces.
By the compactness and volume convergence \cite{DPG2018}, after passing to a subsequence, we have
\[
(X,d_j,\mathcal H^n_{d_j},p)
\xrightarrow{\mathrm{pmGH}}
(Y,\rho,\mathcal H^n,y).
\]
Item 2 is
a consequence of the adaptation of almost volume cone implies almost
metric cone in the nonsmooth context, given in \cite{DPG16}. We now consider the function level. Fix $R>0$. For all sufficiently large $j$, $B^d_{Rr_j}(p)\Subset \Omega$. Since $u$ is locally Lipschitz and $u(p)=0$, the functions $u_j$ are uniformly Lipschitz and uniformly bounded on $B^{d_j}_R(p)$. Moreover,
\[
\sup_{B^{d_j}_R(p)} |Q-Q(p)|
=
\sup_{B^d_{Rr_j}(p)} |Q-Q(p)|
\to 0,
\]
so the rescaled coefficients converge locally uniformly to the constant $Q(p)$. Hence, by \Cref{thm:compactness} and a diagonal argument, after passing to a further subsequence, $u_j$ converges locally uniformly to a function $u_\infty$ on $Y$, which minimizes $E_{Q(p)}$ on every ball and is therefore a global minimizer.
Finally, the nondegeneracy estimate \cite[Theorem~6.1]{CZZ22} implies that $u_\infty$ is nontrivial, and the convergence of the free boundary in \Cref{thm:compactness} implies that $y\in \pt\{u_\infty>0\}$. 
\end{proof}
Note that we do not assert any symmetry on $u_\infty$.
In fact, without a monotone quantity in this general nonsmooth context, it is unknown whether blow-ups are always $1$-homogeneous on the tangent cone.

\begin{remark}
Based on the Weiss quantity on $\mathsf{ncRCD}$ cones in \cite[Appendix A]{CZZ22}, we only know that double-blow-ups are indeed $1$-homogeneous on the corresponding (double) tangent cone.
This implies that some, but not all, blow-ups of $u$ are $1$-homogeneous; namely, those that can be realized as double-blow-ups. 
Alternatively, the existence of homogeneous blow-ups follows from \Cref{thm:parabolic_weiss} and \Cref{lem:alm_rigid}, since they allow us to find arbitrarily small and arbitrarily good scales.
Classically, having some symmetry on $u_\infty$ is the first step for the dimension-reduction arguments to give a Hausdorff dimension bound on the singular strata. 
Still, even when such symmetry of blow-ups is not available, in \cite{wang2026dimensionboundsingularset}, inspired by \cite{FMS25}, it was shown that such monotonicity on cones is sufficient for the dimension-reduction argument.
\end{remark}

\subsection{Limit Solutions}\label{subsec:lim_sol}
    In this article, we only consider \textit{limit solutions} on noncollapsed Ricci limit spaces.
    These are limits of solutions to the one-phase problem \eqref{1-phase_Ber} on smooth manifolds that GH-converge to the limit spaces.
    More precisely, we consider pointed metric measure spaces $(X,d,\mathcal{H}^n,p)$, bounded domains $\Omega \subset X$ with $p\in \Omega$, $u:\Omega \to [0,\infty)$,  and strictly positive $Q\in C^\alpha(\Omega)$ satisfying the following:

    \begin{enumerate}[leftmargin=15pt]
        \item At the space level: There is a sequence of pointed $n$-dimensional complete Riemannian manifolds $(M_i,g_i,p_i)$ satisfying $\Ric_{g_i}\geq -(n-1)$ and the noncollapsing condition $\Vol_i\big(B_1^{M_i}(p_i)\big)\geq \mathrm{v}'$ for some constant $\mathrm{v}'>0$, such that $(M_i,g_i,p_i) \xrightarrow[]{\mathrm{pGH}} (X,d,p)$. 
        $(X,d,p)$ is a so-called \emph{noncollapsed Ricci limit space}.
        Note that by the celebrated volume convergence theorem \cite{colding1997ricci}, the Riemannian volume measures on $M_i$ converge to the $n$-dimensional Hausdorff measure $\mathcal{H}^n$ on $X$, and hence we actually have (see also \cite{DPG2018})
        \begin{equation}
        (M_i,g_i,\vol_i,p_i) \xrightarrow[]{\mathrm{pmGH}} (X,d,\mathcal{H}^n,p).
        \end{equation}
        In addition, $(X,d,\mathcal{H}^n,p)$ belongs to $\mathsf{ncRCD}(-(n-1),n)$ and satisfies $\mathcal{H}^n(B_1(p))\geq \mathrm v'$.\footnote{There exist $\mathsf{ncRCD}$ spaces that are not noncollapsed Ricci limit spaces; see \S 3.3, Remark 4 in \cite{Top20}.}
        We also assume that there is a sequence of (uniformly) bounded domains $\Omega_i\subset M_i$ with $p_i\in \Omega_i$ such that $\Omega_i\to\Omega$ with respect to the pmGH convergence.
        
        \item At the function and coefficient level: 
        There are a strictly positive sequence $Q_i\in C^\alpha(\Omega_i)$ and a sequence of minimizers $u_i$ of $E_{Q_i}$ on $\Omega_i$ such that $Q_i\to Q$ and $u_i\to u$ uniformly, with respect to the pmGH convergence above.
        Moreover, for each $B_R(x) \Subset \Omega$, there are $B_R(x_i)\to B_R(x)$ with $x_i\to x$, and constants $L, \Lambda$ such that for large enough $i$, we have $B_R(x_i)\Subset \Omega_i$ and
    \begin{equation}\label{eq:lip_bounded}
    \begin{aligned}
    &\|Q_i\|_{C^0(B_R(x_i))}
    +\|Q_i^{-1}\|_{C^0(B_R(x_i))}
    +R^\alpha[Q_i]_{\alpha,B_R(x_i)}
    \leq \Lambda,\\
    &u_i|_{B_R(x_i)}
    \text{ is $L$-Lipschitz and uniformly bounded along the sequence.}
    \end{aligned}
    \end{equation}
    Lastly, if $x\in\partial\{u>0\}$, then, by \Cref{thm:compactness}, the approximating centers $x_i$ can be chosen in $\partial\{u_i>0\}$ for all sufficiently large $i$.
    \end{enumerate}

As a consequence of \Cref{thm:compactness}, the limit solution $u$ minimizes the one-phase problem on each ball $B_R(x) \Subset\Omega$, although it is not clear that $u$ minimizes the problem on the whole domain $\Omega$.
In fact, it is not clear whether $u \in W^{1,2}(\Omega)$. These do not create issues for us since our main results are essentially local. On the other hand, it is not clear whether every minimizer on a noncollapsed Ricci limit space is a limit solution, i.e., approximable by minimizers on smooth manifolds in a locally uniform fashion.
One reason for this is the non-uniqueness nature of the one-phase problem.

\begin{remark}
For $B_{2R}(x_i)\Subset\Omega_i$ with $x_i\in\partial\{u_i>0\}$, the restrictions $u_i|_{B_R(x_i)}$ are uniformly $L$-Lipschitz, where $L$ depends only on $n,R,\Lambda$ and a positive lower bound for
\[
\frac{\Vol_i(B_{2R}(x_i))}{(2R)^n},
\]
which follows from our noncollapsing condition. Here $\Lambda$ denotes the coefficient bound on the $2R$-balls. This Lipschitz estimate follows from the arguments of \cite[Lemmas~5.6--5.7 and the proof of Theorem~1.4]{CZZ22}, using that the $u_i$ are minimizers and $d(B_R(x_i), \pt\Omega_i)\geq R$.
Moreover, since $u_i(x_i)=0$, we also have
\[
\sup_{B_R(x_i)}u_i\leq LR.
\]
Thus, on such balls, the uniform Lipschitz and boundedness
requirements in \eqref{eq:lip_bounded} follow from the geometric
and coefficient bounds, provided the approximating centers
lie on the free boundaries.
No additional control of the boundary data is needed.
\end{remark}

\subsection{Heat Kernel Estimates and Convergence}\label{subsec:heat_kernel}
Here we summarize some heat kernel estimates that will be used in this work, along with some convergence results. There is a well-developed theory of heat flows on $\mathsf{RCD}(K,N)$ spaces; see \cite{AmbrosioGigliSavare2014}. As a consequence of Sturm's work on the general theory of Dirichlet forms on PI spaces, there exists a locally H\"older continuous representative of the heat kernel $\rho$, see \cite{Sturm1995DirichletII,Sturm1996DirichletIII}. Moreover, in \cite{JiangLiZhang2016}, it is shown that on $\mathsf{RCD}(K,N)$ spaces the heat kernel $\rho$ admits a locally Lipschitz representative satisfying Gaussian estimates.
Since we are concerned with limit solutions, it will be sufficient to restrict our attention to the smooth manifold setting.
\par
Let $M^n$ be a complete Riemannian manifold, and denote by $\rho_t(x,y)$ the heat kernel on \( M^n \). As in the introduction, we define the function $f_t(x,y)$ by
\begin{equation}\label{eq:4.14}
f_t := -\log\!\big((4\pi t)^{n/2}\rho_t\big).
\end{equation}
The following are classical heat kernel estimates for manifolds with lower Ricci curvature bounds (see \cite{LY86,SZ06,SY94,Ham93,Kot07,CY81}). We state them in the form presented in \cite{CJN21}, where we also refer the reader for comments on the proofs and further related references. See also \cite{brena2025perelman} for the corresponding result in the $\mathsf{RCD}(K,N)$ setting.

\begin{theorem}[Heat Kernel Estimates]\label{thm:heat-kernel}
Suppose $(M^n,g,p)$ satisfy $\Ric_{g} \ge -(n-1)\delta^2$ and
$\Vol(B_r(p)) \ge \mathrm{v} r^n > 0$ for $r \le \delta^{-1}$.
Then for any $0 < t \le 10\delta^{-2}$ and
$\varepsilon > 0$ with $x,y \in B_{10\delta^{-1}}(p)$,
\begin{enumerate}[topsep=0em]
\item $-C(n,v,\varepsilon) + \frac{d^2(x,y)}{(4+\varepsilon)t} \le f_t \le C(n,v,\varepsilon) + \frac{d^2(x,y)}{(4-\varepsilon)t} $
\item $t|\nabla f_t|^2 \le C(n,v,\varepsilon)
+ \frac{d^2(x,y)}{(4-\varepsilon)t}$
\item $-C(n,v,\varepsilon) - \frac{d^2(x,y)}{(4+\varepsilon)t}
\le t\Delta f_t
\le C(n,v,\varepsilon) + \frac{d^2(x,y)}{(4-\varepsilon)t} .$
\end{enumerate}
\end{theorem}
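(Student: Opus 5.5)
The plan is to rescale and then assemble the three items from the classical parabolic estimates for positive solutions of the heat equation: the Li--Yau Gaussian bounds, Hamilton's gradient estimate (extended by Kotschwar to the complete noncompact case), Li--Yau's differential Harnack inequality, and Hamilton's second-order (Laplacian) estimate. All statements are scale invariant under $g\mapsto \delta^2 g$, $t\mapsto\delta^2 t$, $f_t\mapsto f_{\delta^2 t}$. So we may assume $\delta=1$, i.e.\ $\Ric\ge-(n-1)$, $\Vol(B_r(p))\ge \mathrm v r^n$ for $r\le1$, $t\le 10$ and $x,y\in B_{10}(p)$. By Bishop--Gromov, noncollapsing at $p$ propagates to $\Vol(B_r(z))\ge c(n,\mathrm v)r^n$ for all $z\in B_{20}(p)$ and $r\le 1$; note that $\sqrt{t}\le\sqrt{10}$, so radii up to $\sqrt{10}$ are handled by comparison too. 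Throughout, $\rho=\rho_t(x,\cdot)$ with $x$ fixed and $f=-\log((4\pi t)^{n/2}\rho)$, so that $\nabla f=-\nabla\log\rho$ and $\Delta f=-\Delta\log\rho$.

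\textbf{Item 1.} I would use the Li--Yau two-sided Gaussian bounds with arbitrary $\varepsilon$:
\[
\frac{C^{-1}}{\sqrt{\Vol B_{\sqrt t}(x)\Vol B_{\sqrt t}(y)}}\,e^{-\frac{d^2}{(4-\varepsilon)t}}\ \le\ \rho_t(x,y)\ \le\ \frac{C}{\sqrt{\Vol B_{\sqrt t}(x)\Vol B_{\sqrt t}(y)}}\,e^{-\frac{d^2}{(4+\varepsilon)t}},
\]
valid for $t\le 10$ with $C=C(n,\varepsilon)$; the $e^{Kt}$ factors are absorbed since $t$ is bounded. Upper and lower volume bounds $\Vol B_{\sqrt t}\sim t^{n/2}$ (Bishop--Gromov and noncollapsing) turn this directly into item 1 after taking $-\log$.

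The $\varepsilon$-sharp constant $4\pm\varepsilon$ is the delicate part. For the lower bound I would integrate the Li--Yau Harnack inequality with parameter $\alpha=1+\varepsilon'$ along a minimal geodesic, starting from the on-diagonal lower bound at time $t/2$. For the upper bound I would use the Davies/Li--Yau integrated maximum principle, choosing the exponent weight close to $4$. Both contribute an $\varepsilon$-dependent additive constant, which is exactly what the statement allows.

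\textbf{Item 2.} I would apply Hamilton's gradient estimate on $M\times[t/2,t]$ to $u=\rho_s(x,\cdot)$. For a positive solution with $u\le A$ on $\Ric\ge-K$, it gives $s|\nabla\log u|^2\le(1+Ks)\log(A/u)$. The upper bound of item 1, applied at times $s\in[t/2,t]$, gives $A\le C t^{-n/2}$. Hence $\log(A/\rho_t)\le C+f_t$, and inserting the upper bound for $f_t$ from item 1 yields $t|\nabla f_t|^2\le C+(1+10(n-1))\,\bigl(C+\tfrac{d^2}{(4-\varepsilon)t}\bigr)$.

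This carries an extra multiplicative factor $(1+Kt)$. The sharp version is obtained by localizing the estimate to times $s\le \tau$ with $\tau$ small and using the semigroup property. Alternatively, I would use the refined constant in Kotschwar's version, where the factor is $1+O(Kt)$ and can be absorbed by shrinking $\varepsilon$ after first proving the estimate for $t\le\tau(\varepsilon)$. For $t\in[\tau,10]$ the bound holds trivially with a worse constant, because $d$ is bounded on $B_{10}(p)$.

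\textbf{Item 3.} For the upper bound on $t\Delta f$, I would use Li--Yau with $\alpha>1$:
\[
|\nabla\log\rho|^2-\alpha\,\partial_t\log\rho\le \frac{n\alpha^2}{2t}+C(n,\alpha)K.
\]
Combining this with $\Delta\log\rho=\partial_t\log\rho-|\nabla\log\rho|^2$ gives
\[
t\Delta f\le C+(1-\alpha^{-1})\,t|\nabla f|^2,
\]
and item 2 then closes the bound. In fact this yields an even smaller coefficient of $d^2/t$.

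For the lower bound on $t\Delta f$, I would invoke Hamilton's Laplacian estimate (Kotschwar in the noncompact setting): $t\,\Delta u/u\le C(n)\bigl(1+\log(A/u)\bigr)$ for $0<u\le A$ and $t\le 1$ on $\Ric\ge-K$. With the same $A$ and item 1, this gives
\[
-t\Delta f=t\Delta\log\rho+t|\nabla\log\rho|^2\le C+C'\,\frac{d^2}{(4-\varepsilon)t},
\]
using item 2.

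\textbf{Main obstacle.} The hardest step is controlling the coefficient in front of $d^2/t$ in this last lower bound: Hamilton's estimate has a dimensional constant there, not exactly $1/(4-\varepsilon)$. I would first try to obtain it by running Hamilton's maximum principle argument for $t\Delta u/u+ \beta|\nabla u|^2/u^2$ with $\beta$ tuned close to optimal. If that fails, I would accept a constant $C(n,\varepsilon)$ in front of $d^2/t$ in the lower bound of item 3; this is harmless for all later uses, which integrate against the Gaussian weight $\rho$.
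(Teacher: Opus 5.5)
\textbf{The statement's lower bound in item 3 cannot be proved, and the constants in items 1--2 slip by a factor 2.} Your route (Li--Yau Gaussian bounds and Harnack inequality, Hamilton's gradient estimate with Kotschwar's extension, Li--Yau for the upper Laplacian bound, Hamilton's Laplacian estimate for the lower one) is exactly the classical material the paper quotes without proof. The scaling reduction, the split into $t\le\tau(\varepsilon)$ and $t\in[\tau,10]$, and the upper bound in item 3 via Li--Yau with $\alpha>1$ are fine. The plan nevertheless does not deliver the stated constants, for two separate reasons.

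\textbf{Item 3, lower bound.} You aim at $-t\Delta f_t\le C+\frac{d^2}{(4-\varepsilon)t}$, but the statement asks for the strictly stronger $\frac{d^2}{(4+\varepsilon)t}$. That stronger bound is false, so no tuning of Hamilton's argument will give it.
\begin{itemize}
\item Use the paper's convention $\partial_t\rho=\Delta\rho$, so $t\Delta f_t\equiv\frac n2$ on $\mathbb R^n$. Take the flat cylinder $M=(\mathbb R/2\pi\mathbb Z)\times\mathbb R^{n-1}$, which satisfies the hypotheses with $\delta=1$ and $\mathrm{v}=\omega_n$. Let $x=(0,0)$ and $y=(\pi,0)$, so $d(x,y)=\pi$.
\item Then $f_t(x,(\theta,w))=-\log S_t(\theta)+\frac{|w|^2}{4t}$ with $S_t(\theta)=\sum_{k\in\mathbb Z}e^{-(\theta+2\pi k)^2/(4t)}$. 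At $\theta=\pi$ we have $S_t'=0$, and every exponent satisfies $(\pi+2\pi k)^2\ge\pi^2$. Hence
\[
t\Delta f_t(x,y)=\frac{n-1}{2}-t\,\frac{S_t''(\pi)}{S_t(\pi)}\le\frac n2-\frac{d^2(x,y)}{4t}.
\]
\item This violates $t\Delta f_t\ge -C-\frac{d^2}{(4+\varepsilon)t}$ once $t<\frac{\varepsilon\pi^2}{4(4+\varepsilon)(C+n/2)}$. So the coefficient must be at least $\frac14$; presumably $4-\varepsilon$ was intended.
\end{itemize}
Your mechanism cannot beat $\frac14$ in any case.
\begin{itemize}
\item The correct identity is $-t\Delta f_t=t\Delta\log\rho=t\frac{\Delta\rho}{\rho}-t|\nabla\log\rho|^2$; you wrote a plus sign. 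You still get $-t\Delta f_t\le t\Delta\rho/\rho$, and item 2 is not needed there.
\item On $\mathbb R^n$, $t\Delta\rho/\rho=t\partial_t\log\rho=-\frac n2+\frac{d^2}{4t}$, so this bound already costs the full $\frac14$. Hamilton's estimate then puts a dimensional constant in front of $d^2/t$.
\end{itemize}
Your fallback, a constant $C$ in front of $d^2/t$, is therefore the right thing to prove with these tools. It is also all the paper uses: item 3 only enters integrated against $\rho_{t,x}$, for $\mathcal W_{0^+}=0$ and $-\mathcal W_{\bar{r}^2}\le C$, where any polynomial factor in $d^2/t$ is absorbed by the Gaussian.

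\textbf{Items 1 and 2, the $\varepsilon$-sharp coefficients.} As written, both arguments lose a factor $2$ that shrinking $t$ does not remove.
\begin{itemize}
\item Running the Li--Yau Harnack inequality from $(x,t/2)$ to $(y,t)$ gives $\rho_t\ge c\,t^{-n/2}e^{-\alpha d^2/(2t)}$, that is, $f_t\le C+\alpha\frac{d^2}{2t}$.
\item Hamilton's estimate must have its time origin at $t/2$, because $\rho_s(x,\cdot)$ is unbounded as $s\downarrow0$. It therefore gives $\frac t2|\nabla\log\rho_t|^2\le(1+Kt)\log(A/\rho_t)$, not your $t|\nabla\log\rho_t|^2\le\dots$. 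The coefficient of $d^2/t$ in item 2 becomes about $\frac{2}{4-\varepsilon}$.
\end{itemize}
The fix in both cases is to start at $\theta t$ with $\theta=\theta(\varepsilon)$ small.
\begin{itemize}
\item The Harnack chain from $\rho_{\theta t}(x,x)\ge c(\theta t)^{-n/2}$ produces $\exp\bigl(-\frac{\alpha d^2}{4(1-\theta)t}\bigr)$. Alternatively, Cheeger--Yau comparison with the hyperbolic heat kernel gives exponent $\frac{d^2}{4t}$ directly.
\item Hamilton on $[\theta t,t]$ gives $t|\nabla f_t|^2\le\frac{1+2Kt}{1-\theta}\bigl(C+\frac n2\log\frac1\theta+f_t\bigr)$. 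After that, your small-$t$/large-$t$ split works.
\item Hamilton--Kotschwar needs $u\le A$ on all of $M$, not only on $B_{10}(p)$ where item 1 is stated. The global bound $\sup_{z\in M}\rho_s(x,z)\le C(n,\mathrm{v})s^{-n/2}$ follows from the Li--Yau upper bound and Bishop--Gromov relative comparison. Relative comparison allows $\operatorname{Vol}B_{\sqrt s}(z)$ to decay at most exponentially in $d(x,z)$, and the Gaussian factor absorbs this.
\end{itemize}
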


In the arguments below, we will deal with a sequence of heat kernels on varying spaces. We refer the reader to the preliminaries in \cite{ambrosio2021embedding} for an exposition of convergence results for heat kernels on $\mathsf{RCD}(K,N)$ spaces. The next proposition is stated and proved in full generality in \cite{ambrosio2021embedding}, and relies on the $W^{1,2}$ convergence of functions with $L^2$ bounds on the Laplacian, see \cite{AH18} for a proof in full generality, and \cite{Ding2004} for a proof in a simplified setting. We also refer the reader to the preliminaries in \cite{CJN21}, where a presentation more focused on the smooth setting is given.

\begin{theorem}[\(W^{1,2}\)-strong convergence of heat kernels, {\cite[Theorem 2.19]{ambrosio2021embedding}}]\label{thm:heatkernelconvergence}
Suppose $(X_i,d_i,\mu_i)$ are $\mathsf{RCD}(K,N)$ spaces with $K\in\mathbb R$
and $1<N<\infty$ fixed such that $(X_i,d_i,\mu_i,x_i) \xrightarrow[]{\mathrm{pmGH}}(X,d,\mu,x)$.
For any sequences $t_i\to t \in (0,\infty)$ and $y_i\in X_i\to y\in X$, we have that
$\rho_{t_i}(\cdot,y_i)$
converge \(W^{1,2}\)-strongly to \(\rho_{t}(\cdot,y)\in W^{1,2}(X,d,\mu)\).
\end{theorem}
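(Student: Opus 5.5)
The plan is to reduce $W^{1,2}$-strong convergence to $L^2$-strong convergence, using the fact that heat kernels have $L^2$-controlled Laplacians. This is the mechanism of \cite{AH18}: if $g_i\to g$ $L^2$-strongly and $\sup_i\|\Delta g_i\|_{L^2}<\infty$, then $g_i\to g$ $W^{1,2}$-strongly. We apply it to $g_i=\rho_{t_i}(\cdot,y_i)$. For these functions $\Delta g_i=\partial_t\rho_{t}(\cdot,y_i)|_{t=t_i}$. Analyticity of the semigroup, or the Gaussian bounds on $\partial_t\rho$ valid on $\mathsf{RCD}(K,N)$ spaces, together with the uniform volume doubling, give $\|\Delta g_i\|_{L^2}\le C(K,N,t)\,\mu_i(B_{\sqrt t}(y_i))^{-1/2}$. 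This is uniformly bounded because the measures converge weakly and balls around converging points have volumes bounded away from $0$. The identity $\int|\nabla g_i|^2=-\int g_i\Delta g_i$, combined with weak convergence of $\Delta g_i$ (after a subsequence) and strong convergence of $g_i$, then yields convergence of the Dirichlet energies, hence strong $W^{1,2}$ convergence once we know the limit Laplacian is $\Delta\rho_t(\cdot,y)$. That identification is the closedness of the Laplacian under Mosco convergence of Cheeger energies, available from \cite{GigliMondinoSavare2015,AH18}.

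The main step is therefore $L^2$-strong convergence of $\rho_{t_i}(\cdot,y_i)$ to $\rho_t(\cdot,y)$. I would proceed in three steps.

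\emph{Compactness.} The Li--Yau type Gaussian upper and lower bounds on $\mathsf{RCD}(K,N)$ spaces and the Lipschitz bound $|\nabla_x\rho_s(x,y)|\le C s^{-1/2}\mu(B_{\sqrt s}(y))^{-1}e^{-d^2/(5s)}$ of \cite{JiangLiZhang2016} are uniform in $i$ for $s$ in a compact subset of $(0,\infty)$. The Hölder continuity in time is also uniform. By a generalized Arzelà--Ascoli argument along the pmGH realization, a subsequence converges locally uniformly, jointly in $(s,x)$, to some continuous $p_s(x)$.

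\emph{Identification.} By Mosco convergence of Cheeger energies \cite{GigliMondinoSavare2015}, the heat semigroups converge: $h^i_s\varphi_i\to h_s\varphi$ $L^2$-strongly whenever $\varphi_i\to\varphi$ strongly. Testing against $\varphi\in C_c(Z)$ and using symmetry of the kernel, $\int \rho_{t_i}(y_i,z)\varphi(z)\,d\mu_i = h^i_{t_i}\varphi(y_i)$. With the uniform local Lipschitz bounds on $h^i_{t_i}\varphi$, this converges to $h_t\varphi(y)=\int\rho_t(y,z)\varphi\,d\mu$. Hence $p_t=\rho_t(\cdot,y)$ and the whole sequence converges.

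\emph{Tails.} Local uniform convergence gives $L^2$-strong convergence on every ball by \Cref{lem:uniform_implies_L2}. The uniform Gaussian upper bound, combined with Bishop--Gromov volume growth, makes $\int_{X_i\setminus B_R(y_i)}\rho_{t_i}(\cdot,y_i)^2\,d\mu_i$ small uniformly in $i$ as $R\to\infty$. This upgrades the local convergence to $\limsup\|g_i\|_{L^2}\le\|g\|_{L^2}$.

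I expect the main obstacle to be the identification step when both the time $t_i$ and the base point $y_i$ vary, since pointwise evaluation of $h^i_{t_i}\varphi$ is needed rather than mere $L^2$ convergence. I would handle it by first fixing the time via the uniform time-Hölder bound. I would then pass from $L^2$-strong to pointwise convergence at $y_i$ using the semigroup property $h^i_{t_i}=h^i_{t_i/2}h^i_{t_i/2}$. The outer operator is represented by a kernel that is equi-Lipschitz in $x$, so $L^2$-strong convergence of the inner functions becomes locally uniform convergence of the outputs.
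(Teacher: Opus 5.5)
Your proposal is correct and follows essentially the route the paper relies on: the paper does not reprove the statement but cites \cite[Theorem 2.19]{ambrosio2021embedding}, whose argument rests on the principle from \cite{AH18} that $L^2$-strong convergence together with uniform $L^2$ bounds on the Laplacian (here $\Delta\rho_t=\partial_t\rho_t$, controlled via analyticity and the on-diagonal bound) upgrades to $W^{1,2}$-strong convergence. As in your plan, the $L^2$-strong convergence of the kernels comes from convergence of the heat semigroups under Mosco convergence of the Cheeger energies, combined with the uniform Gaussian and gradient bounds of \cite{JiangLiZhang2016}, which supply equicontinuity and uniform control of the tails.
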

We recall that if $(C(Z),d,\mathcal H^n,z_*)$ is an $\mathsf{ncRCD}(0,n)$ cone with vertex $z_*$, then its heat kernel satisfies
\begin{equation}
\rho_t^{C(Z)}(z_*,y)
=
\frac{\Vol(S^{n-1})}{\Vol(Z)}
\frac{e^{-d^2(z_*,y)/(4t)}}{(4\pi t)^{n/2}}, \label{eq:heatkernelcone}
\end{equation}
where $\Vol(Z):=\mathcal H^{n-1}(Z)$. More generally, the corresponding formula for $N$-metric-measure cones is proved in \cite[Proposition~2.10]{HondaPeng2025}, based on \cite[Proposition~4.10]{KuwadaLi2021}. 

The following lemma is useful in the proof of the Almost Rigidity \Cref{lem:alm_rigid}. Since we did not find a reference where this consequence is stated explicitly, we provide the proof.

\begin{lemma}[Local $W^{1,2}$ convergence of $f_{t,x}$]
\label{lem:ft_convergence}
Let $(X_i,d_i,\mu_i,x_i)\xrightarrow{\mathrm{pmGH}}(X,d,\mu,x)$
be a sequence of $\mathsf{RCD}(K,N)$ spaces, with $K\in\mathbb R$
and $1<N<\infty$ fixed. Fix $t>0$, $R>0$, and a fixed integer $n$.
Let $\rho_{t,x_i}$ and $\rho_{t,x}$ denote the corresponding heat
kernels, and set
\[f_{t,x_i}
:=
-\log\bigl((4\pi t)^{n/2}\rho_{t,x_i}\bigr),
\qquad
f_{t,x}
:=
-\log\bigl((4\pi t)^{n/2}\rho_{t,x}\bigr).
\]
Then
\[
f_{t,x_i}\to f_{t,x}
\qquad\text{strongly in }W^{1,2}(B_R(x)).
\]
\end{lemma}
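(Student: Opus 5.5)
The plan is to deduce the statement from \Cref{thm:heatkernelconvergence} together with uniform two-sided Gaussian bounds on the heat kernels on $B_R$. The logarithm is smooth away from $0$, so the only real work is to keep the heat kernels uniformly away from zero on the ball.

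\textbf{Step 1: uniform bounds.} The Li--Yau type Gaussian bounds on $\mathsf{RCD}(K,N)$ spaces (\cite{JiangLiZhang2016}, or the smooth version \Cref{thm:heat-kernel}) depend only on $K,N,t$ and a lower bound on $\mu_i(B_{\sqrt t}(x_i))$. That volume lower bound is uniform in $i$ by measure convergence, since the limit ball has positive measure. We therefore expect constants $0<c\le C$, independent of $i$, such that
\[
c\le \rho_{t,x_i}\le C \quad\text{on } B_{2R}(x_i),\qquad |\nabla \rho_{t,x_i}|\le C \quad\text{on } B_{2R}(x_i),
\]
and the same bounds for $\rho_{t,x}$. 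The gradient bound comes from the Lipschitz estimate of \cite{JiangLiZhang2016}. Consequently $f_{t,x_i}$ and $f_{t,x}$ are uniformly bounded and uniformly Lipschitz on these balls.

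\textbf{Step 2: convergence of $f$ in $L^2$.} By \Cref{thm:heatkernelconvergence}, with constant sequences $t_i=t$ and $y_i=x_i$, we have $\rho_{t,x_i}\to\rho_{t,x}$ strongly in $W^{1,2}$. The uniform Lipschitz bound, Arzel\`a--Ascoli and uniqueness of the limit upgrade this to locally uniform convergence. Composing with the map $s\mapsto -\log((4\pi t)^{n/2}s)$, which is Lipschitz on $[c,C]$, gives $f_{t,x_i}\to f_{t,x}$ uniformly on $B_R$. \Cref{lem:uniform_implies_L2} then yields strong $L^2(B_R)$ convergence.

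\textbf{Step 3: convergence of the energies (main obstacle).} We must show
\[
\int_{B_R(x_i)}|\nabla f_{t,x_i}|^2\,d\mu_i\to\int_{B_R(x)}|\nabla f_{t,x}|^2\,d\mu.
\]
Note that $|\nabla f|^2=|\nabla\rho|^2/\rho^2$. We plan to use the standard fact (\cite{AH18,ambrosio2021embedding}) that strong $W^{1,2}$ convergence of $g_i\to g$ together with uniform convergence of bounded $h_i\to h$ gives
\[
\int h_i|\nabla g_i|^2\,d\mu_i\to\int h|\nabla g|^2\,d\mu .
\]
We apply it with $h_i=\rho_{t,x_i}^{-2}\chi$ and $g_i=\rho_{t,x_i}$. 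Here $\chi$ cannot be the sharp indicator of the ball. Instead we take Lipschitz cutoffs $\chi_\varepsilon$ that approximate $1_{B_R}$ from above and below. The resulting error is bounded by $C\,\mu_i(B_{R+\varepsilon}\setminus B_{R-\varepsilon})$, which vanishes as $\varepsilon\to0$ uniformly in $i$ by measure convergence and $\mu(\partial B_R)=0$. Should $\mu(\partial B_R)>0$ fail for the given $R$, we would need to restrict to a.e.\ $R$ or adjust the argument.

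If that localization fact is not directly available, the fallback is to derive it by polarization. Strong $W^{1,2}$ convergence gives convergence of $\int\langle\nabla g_i,\nabla\varphi_i\rangle$ against strongly convergent test functions. The weighted integral can be expanded as
\[
\int h_i|\nabla g_i|^2=\int\langle\nabla g_i,\nabla(h_ig_i)\rangle-\int g_i\langle\nabla g_i,\nabla h_i\rangle,
\]
and each term is handled by strong-times-weak pairing. The delicate point is that $h_i$ must itself converge strongly in $W^{1,2}$. This holds because $h_i$ is a smooth function of $\rho_{t,x_i}$ on the range $[c,C]$, multiplied by a fixed cutoff, so the chain rule and Step 1 give it.
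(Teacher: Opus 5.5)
Your proposal is correct and follows the paper's proof step for step: uniform Gaussian and gradient bounds on $\rho_{t,x_i}$, then locally uniform and hence strong $L^2$ convergence of $f_{t,x_i}$ via \Cref{lem:uniform_implies_L2}, then convergence of $\int_{B_R}|\nabla f_{t,x_i}|^2=\int_{B_R}\rho_{t,x_i}^{-2}|\nabla\rho_{t,x_i}|^2$ from the strong $L^1$ convergence of $|\nabla\rho_{t,x_i}|^2$ \cite{AH18} together with uniform convergence of $\rho_{t,x_i}^{-2}$; your Lipschitz cutoffs only make explicit the localization that the paper gets directly from $\mu(\partial B_R)=0$, which always holds in $\mathsf{RCD}(K,N)$ by Bishop--Gromov, so you do not need to restrict to a.e.\ $R$. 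Your polarization fallback is unnecessary and, as written, circular: strong $W^{1,2}$ convergence of $\rho_{t,x_i}^{-2}\chi$ is itself a weighted-energy convergence statement of exactly the kind you are trying to prove.
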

\begin{proof}
We drop the $t,x$ from the notation. By \Cref{thm:heatkernelconvergence}, $\rho_i\to\rho$ strongly in $W^{1,2}$. We first recall that the heat kernels converge locally uniformly.
The two-sided Gaussian estimates and the gradient estimate, recorded in
\cite[(2.13)--(2.14)]{ambrosio2021embedding} and originating from
\cite{JiangLiZhang2016}, together with volume comparison and
the pmGH convergence, imply that for every fixed $R<\infty$ there exist
constants $c=c(K,N,R,t), C=C(K,N,R,t)>0$ such that, for all sufficiently large $i$, we have $c\leq \rho_i\leq C$ and $|\nabla\rho_i|\leq C $ on $B_{R+1}(x_i)$. This gives the uniform convergence of $\rho_i$ on compact sets. 

The Gaussian lower bound also gives, on every fixed ball, a uniform positive lower bound for $\rho_i$. Thus $-\log$ is uniformly Lipschitz on the relevant range and therefore $f_i \to f$ locally uniformly. Since the boundary of every ball has zero measure in the
$\mathsf{RCD}(K,N)$ setting, the locally uniform convergence, implies $f_i\to f$ strongly in $L^2(B_R(x_i))$; see \Cref{lem:uniform_implies_L2}.
\par
It remains to prove the convergence of the Cheeger energies of $f_i$.
By \cite[Theorem~2.8(ii)]{AH18}, the global strong $W^{1,2}$
convergence of the heat kernels implies
\[
|\nabla\rho_i|^2
\to
|\nabla\rho|^2
\qquad\text{strongly in }L^1.
\]
In particular, this convergence holds strongly in $L^1(B_R(x_i))$, since $\mu_i(\partial B_R(x_i))=\mu(\pt B_R(x))=0$. On the other hand, the local uniform convergence and the uniform positive lower bound imply $\rho_i^{-2}\to\rho^{-2}$ uniformly on $B_R(x_i)$. By the Sobolev chain rule,
\[
|\nabla f_i|^2
=
\frac{|\nabla\rho_i|^2}{\rho_i^2},
\qquad
|\nabla f|^2
=
\frac{|\nabla\rho|^2}{\rho^2}.
\]
The strong $L^1$ convergence of $|\nabla\rho_i|^2$, together with the
uniform convergence and uniform boundedness of $\rho_i^{-2}$, therefore
gives
\[
\int_{B_R(x_i)}
|\nabla f_i|^2\,d\mu_i
\to
\int_{B_R(x)}
|\nabla f|^2\,d\mu.
\]
Together with the strong $L^2$ convergence proved above, this shows $f_i\to f$ strongly in $W^{1,2}(B_R(x_i))$.
\end{proof}

\section{Quantitative Singular Strata of Two Aspects}\label{sec:QuantitiveStrataTwoTypes}

This section is devoted to the singular strata as defined in \Cref{def:quant_sing_stra} and the following paragraph there. In \Cref{subsec:stab_sym_and_strata}, we establish the main properties of the quantitative strata, including their connection to the classical singular strata, which are characterized by the symmetry of blow-ups. In \Cref{subsec:reifenberg}, we introduce regular and singular points and establish the Reifenberg property.
In \Cref{subsec:strata_regularity}, we study the structure of the strata and deduce a locally bi-H\"older manifold structure away from the top quantitative stratum.

\subsection{Stability of Quantitative Symmetry and Strata}\label{subsec:stab_sym_and_strata}

We start by studying the symmetric models $(B_r(0,z_*),\tilde{u})$ from the function point of view. The lemma below shows that $\tilde{u}$ can be chosen to be almost Lipschitz with effective control depending on the Lipschitz constant of $u$.
This will be crucial for obtaining compactness of the symmetric models.

\begin{lemma}\label{lem:almost_lip_sym_model}
    Let $(B_r(x),u)$ be $(k,\eta)$-symmetric with a symmetry model $(B_r(0,z_*), \tilde{u}))$ on the cone $\mathbb{R}^k\times C(Z)$.
    If $u$ is $L$-Lipschitz on $B_r(x)$, then there is a function $\hat{u}$, defined on the same cone, that is $1$-homogeneous and $\mathbb{R}^k$-invariant on the whole cone, and \emph{$(C(n)L,\eta r)$-almost Lipschitz} on $B_r(0,z_*)$, in the sense that
    \begin{equation}
        \abs{\hat{u}(p_1)-\hat{u}(p_2)} \leq C(n)L(d(p_1,p_2) + \eta r)
    \end{equation}
    for any $p_1, p_2\in B_r(0,z_*)$.
Furthermore, we have
\begin{equation}
    r^{-n-2}\int_{B_r(0,z_*)} \abs{ u\circ F - \hat{u}}^2 \dd{}\mathcal{H}^n < C(n)\eta,
\end{equation}
for some dimensional constant $C(n)$.
In other words, up to such a constant, we can assume that $\tilde{u}$ is $(L,\eta r)$-almost Lipschitz on the corresponding ball.
\end{lemma}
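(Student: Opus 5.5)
Throughout, rescale so that $r=1$, using \Cref{rmk:rescaling}; both conclusions scale correctly. The plan has two steps. First, observe that $v:=u\circ F$ is itself almost Lipschitz. Second, build $\hat u$ by averaging $v$ along the symmetry directions. This produces a homogeneous, invariant function that inherits the almost-Lipschitz bound of $v$, and Jensen's inequality keeps it $L^2$-close to $\tilde u$.

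\textbf{Step 1 (almost Lipschitz bound for $v$).} Since $F$ is an $\eta$-pGH approximation and $u$ is $L$-Lipschitz, for $p_1,p_2\in B_1(0,z_*)$ we have
\[
|v(p_1)-v(p_2)|\le L\,d(F(p_1),F(p_2))\le L\bigl(d(p_1,p_2)+\eta\bigr).
\]
Moreover, $F(0,z_*)$ lies within $\eta$ of $x$ and $u(x)=0$. Hence $|v(p)|\le L(d(p,(0,z_*))+2\eta)$.

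\textbf{Step 2 (polar description of the model).} Write points of the cone as $p=(a,s\theta)$ with $a\in\mathbb R^k$, $s\ge0$, $\theta\in Z$. Any $1$-homogeneous, $\mathbb R^k$-invariant function then has the form $w(a,s\theta)=s\,g_w(\theta)$ for some function $g_w$ on $Z$. In particular $\tilde u=s\,g(\theta)$.

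Fix the region
\[
A:=B^k_{1/4}(0)\times\{s\theta: s\in[1/4,1/2],\ \theta\in Z\}\subset B_1(0,z_*).
\]
By \Cref{lem:ncRCD_cone_struct}, its measure is the product $\mathscr L^k\otimes s^{n-k-1}ds\otimes\mathcal H^{n-k-1}_Z$. Define
\[
\hat g(\theta):=\fint_{B^k_{1/4}}\fint_{1/4}^{1/2} s^{-1}v(a,s\theta)\,ds\,da,
\qquad
\hat u(a,s\theta):=s\,\hat g(\theta).
\]
This is Borel by Fubini, and by construction it is $1$-homogeneous and $\mathbb R^k$-invariant on the whole cone. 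The degenerate cases $n-k\le1$, where $C(Z)$ is a ray or a line and $Z$ consists of one or two points, are handled by the same formula with $Z$ finite.

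\textbf{Step 3 ($L^2$ estimate).} Because $g$ is itself the same average of $s^{-1}\tilde u$, Jensen's inequality and the product structure of the measure give
\[
|g-\hat g|^2(\theta)\le C\fint\!\!\fint |\tilde u-v|^2(a,s\theta)\,ds\,da,
\]
and hence $\int_Z|g-\hat g|^2\le C(n)\int_A|\tilde u-v|^2\le C\eta$. Integrating $s^2|g-\hat g|^2$ over $B_1(0,z_*)$, whose $s$-range lies in $[0,1]$, yields $\int_{B_1}|\tilde u-\hat u|^2\le C\eta$. The triangle inequality then gives $\int_{B_1}|v-\hat u|^2< C(n)\eta$.

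\textbf{Step 4 (almost Lipschitz bound for $\hat u$).} From Step 1, $|\hat g|\le C(n)L$. For $\theta_1,\theta_2\in Z$, the bound of Step 1 together with $d(s\theta_1,s\theta_2)\le s\,d_Z(\theta_1,\theta_2)$ gives
\[
|\hat g(\theta_1)-\hat g(\theta_2)|\le C L\bigl(d_Z(\theta_1,\theta_2)+\eta\bigr).
\]
Now take $p_i=(a_i,s_i\theta_i)\in B_1$ and assume $s_1\le s_2$. Then
\[
|\hat u(p_1)-\hat u(p_2)|\le |s_1-s_2|\,|\hat g(\theta_2)|+s_1|\hat g(\theta_1)-\hat g(\theta_2)|
\le CL\bigl(|s_1-s_2|+s_1d_Z(\theta_1,\theta_2)+\eta\bigr).
\]
It remains to bound $|s_1-s_2|+s_1 d_Z(\theta_1,\theta_2)$ by $C\,d(p_1,p_2)$. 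For $n-k\ge2$, $Z$ is $\mathsf{RCD}(n-k-2,n-k-1)$, so $\operatorname{diam}Z\le\pi$. The cone law of cosines then gives $d\ge|s_1-s_2|$ and
\[
d^2\ge 4s_1s_2\sin^2\!\bigl(d_Z/2\bigr)\ge (2/\pi)^2 s_1^2d_Z^2.
\]
The projection onto $C(Z)$ is $1$-Lipschitz, so these bounds hold for the full distance $d(p_1,p_2)$.

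\textbf{Main obstacle.} The step I expect to be most delicate is this last comparison of cone distances, together with its degenerate cases. When $\operatorname{diam}Z$ could exceed $\pi$, one must check that $d_Z$ is capped appropriately in the cone metric; when $n-k\le1$, one checks directly on the line or ray.
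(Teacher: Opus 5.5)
Your argument is correct, but it builds $\hat u$ differently from the paper. The paper selects a single good slice. By Fubini it finds $y_*\in B^k_{1/2}(0)$, and by the coarea formula a radius $r_*\in[1/2,\sqrt{1-|y_*|^2})$, such that $\int_{\partial B^{C(Z)}_{r_*}(z_*)}|u\circ F(y_*,\cdot)-\tilde u(y_*,\cdot)|^2\,d\mathcal H^{n-k-1}\le C(n)\eta$. It then sets $\hat u:=u\circ F$ on $\{y_*\}\times\partial B^{C(Z)}_{r_*}(z_*)$ and extends by homogeneity and $\mathbb R^k$-invariance. The almost-Lipschitz bound comes from the distortion of $F$ after radially projecting both points onto that sphere, using $d(w_1,\tfrac{|w_1|}{|w_2|}w_2)\le d(w_1,w_2)$ for $|w_1|\le|w_2|$. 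The $L^2$ bound follows by scaling the slice estimate.

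You instead average $s^{-1}\,u\circ F$ over the full-dimensional region $A$. This removes the two-step selection and makes the $L^2$ estimate a single Jensen inequality. The almost-Lipschitz property also passes through the average for free. The cost is the explicit cone-distance comparison in Step 4.

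Two small repairs are needed there.
\begin{itemize}
\item For $n-k=2$ the link is only $\mathsf{RCD}(0,1)$, which does not by itself bound the diameter. So $\operatorname{diam}Z\le\pi$ has to come from the cone structure (Ketterer's theorem \cite{Ketterer2015}, as the paper uses elsewhere). You can also avoid the diameter altogether. Set $\bar d:=\min\{d_Z(\theta_1,\theta_2),\pi\}$; then $d_{C(Z)}(s\theta_1,s\theta_2)=2s\sin(\bar d/2)$, so $|\hat g(\theta_1)-\hat g(\theta_2)|\le L\bigl(2\sin(\bar d/2)+4\eta\bigr)$. For $s_1\le s_2$ the law of cosines gives $2s_1\sin(\bar d/2)\le d_{C(Z)}(s_1\theta_1,s_2\theta_2)$.
\item The bound $|\hat g|\le C(n)L$, as you derive it from Step 1, tacitly uses $\eta\lesssim 1$. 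It holds unconditionally: $F$ takes values in $B_1(x)$ and $u(x)=0$, so $|u\circ F|\le L$ on $B_1(0,z_*)$. This is exactly how the paper bounds its term $I_2$.
\end{itemize}
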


\begin{proof}
By rescaling, we may assume that $r=1$.
    By definition, we have a Borel $\eta$-GH approximation $F:B_1(0,z_*)\to B_1(x)$, such that
        \begin{equation}
            \int_{B_1(0,z_*)} |u\circ F-\tilde{u}|^2 \dd\vol\leq \eta.
        \end{equation}
    By Fubini and \Cref{lem:ncRCD_cone_struct}, we can rewrite this as
    \begin{equation}
        \int_{B^k_1(0)} \int_{B^{C(Z)}_{\sqrt{1-|y|^2}}(z_*)}|u\circ F(y,\cdot)-\tilde{u}(y,\cdot)|^2 \dd{} \mathcal{H}^{n-k}\dd{}\mathcal{H}^k(y)\leq \eta.
    \end{equation}
    Then there is $y_*\in B^k_{1/2}(0)$ such that, for some constant $C(n)$,
    \begin{equation}
            \int_{B^{C(Z)}_{\sqrt{1-|y_*|^2}}(z_*)}|u\circ F(y_*,\cdot)-\tilde{u}(y_*,\cdot)|^2 \dd{} \mathcal{H}^{n-k}\leq C(n)\eta.
    \end{equation}
Next, by the coarea formula (for $\mathsf{RCD}$ spaces, see \cite{Ambrosio2019rigidity}) with the distance function to $z_*$ (or alternatively by Fubini and \Cref{lem:ncRCD_cone_struct}), there is $r_*\in [1/2, \sqrt{1-|y_*|^2}) $ such that, for another constant $C(n)$,
\begin{equation}
    \int_{\pt B^{C(Z)}_{r_*}(z_*)}|u\circ F(y_*,\cdot)-\tilde{u}(y_*,\cdot)|^2 \dd{} \mathcal{H}^{n-k-1}\leq C(n)\eta.
\end{equation}
Now, we define $\hat{u}$ on $\{y_*\}\times \pt B^{C(Z)}_{r_*}(z_*)$ to be equal to $u\circ F(y_*, \cdot)$, and then extend it by $1$-homogeneity and $\mathbb{R}^k$-invariance to the whole $\mathbb{R}^k\times C(Z)$.
\par
We first verify that $\hat{u}$ is $(C(n)L,\eta)$-almost Lipschitz.
For $(y_1, w_1), (y_2,w_2)\in B_1(0,z_*)\subset \mathbb{R}^k\times C(Z)$, we have 
\begin{align}
    |\hat{u}(y_1, w_1)-\hat{u}(y_2, w_2)|
    &=\left|\frac{|w_1|}{r_*}\hat{u}(y_*,\frac{r_*}{|w_1|}w_1)-\frac{|w_2|}{r_*}\hat{u}(y_*,\frac{r_*}{|w_2|}w_2)\right|\\
    &=\left|\frac{|w_1|}{r_*}u\circ F(y_*, \frac{r_*}{|w_1|}w_1)-\frac{|w_2|}{r_*}u\circ F(y_*,\frac{r_*}{|w_2|}w_2)\right|\\
    &\leq \left|\frac{|w_1|}{r_*}u\circ F(y_*, \frac{r_*}{|w_1|}w_1)-\frac{|w_1|}{r_*}u\circ F(y_*,\frac{r_*}{|w_2|}w_2)
    \right|\\
    &\quad+\left|\frac{|w_1|}{r_*}u\circ F(y_*, \frac{r_*}{|w_2|}w_2)-\frac{|w_2|}{r_*}u\circ F(y_*,\frac{r_*}{|w_2|}w_2)
    \right|=:I_1+I_2,
\end{align}
where $|w|$ denotes $d_{C(Z)}(w, z_*)$; without loss of generality, we assume that $w_1, w_2$ are not $z_*$.
For the first term $I_1$, since $u$ is $L$-Lipschitz and $F$ is a $\eta$-GH approximation, we have
\begin{equation}
    I_1
    \leq \frac{|w_1|L}{r_*}\left(d_{C(Z)}(\frac{r_*}{|w_1|}w_1, \frac{r_*}{|w_2|}w_2)+\eta\right).
\end{equation}
Without loss of generality, we may assume $|w_1|\leq |w_2|$.
Then by the definition of cone metrics, we have
\begin{equation}
    d_{C(Z)}(\frac{r_*}{|w_1|}w_1, \frac{r_*}{|w_2|}w_2)
    =\frac{r_*}{|w_1|}d_{C(Z)}(w_1, \frac{|w_1|}{|w_2|}w_2)
    \leq \frac{r_*}{|w_1|}d_{C(Z)}(w_1,w_2).
\end{equation}
Hence we have
\begin{equation}
    I_1\leq Ld_{C(Z)}(w_1,w_2)+ \frac{|w_1|L}{r_*}\eta
    \leq Ld((y_1, w_1), (y_2, w_2))+C(n)L\eta.
\end{equation}
For the second term $I_2$, using the above properties of $u,F$ and that $u(x)=0$, we have
\begin{equation}
    I_2\leq \frac{1}{r_*}||w_1|-|w_2||L\cdot d(x, F(y_*, \frac{r_*}{|w_2|}w_2))
    \leq 2Ld((y_1, w_1), (y_2, w_2)).
\end{equation}
This concludes that $\hat{u}$ is $(C(n)L,\eta)$-almost Lipschitz on $B_1(0,z_*)$.
\par
We here derive the $L^2$-closeness of $u\circ F$ and $\hat{u}$.
First we have
\begin{equation}
    \int_{B_1(0,z_*)} \abs{ u\circ F - \hat{u}}^2 \dd\vol
    \leq 2\int_{B_1(0,z_*)} \abs{ u\circ F - \tilde{u}}^2 \dd\vol
    +2\int_{B_1(0,z_*)} \abs{ \tilde{u} - \hat{u}}^2 \dd\vol.
\end{equation}
By definition, the first integral is already bounded by $\eta$.
For the second integral, using the $\mathbb{R}^k$-invariance, the homogeneity, and the coarea formula, we have
\begin{align}
    \int_{B_1(0,z_*)} \abs{ \tilde{u} - \hat{u}}^2 \dd\vol
    &=\int_0^1 \int_{\pt B^{C(Z)}_r(z_*)} |\tilde{u}(y_*,\zeta) -\hat{u}(y_*,\zeta)|^2\dd{}\mathcal{H}^{n-k-1}(\zeta) (1-r^2)^\frac{k}{2}|B^k_1(0)| \dd r\\
    &\kern -4em\leq \int_0^1 \int_{\pt B^{C(Z)}_{r_*}(z_*)} |\tilde{u}(y_*,\zeta) -\hat{u}(y_*,\zeta)|^2\dd{}\mathcal{H}^{n-k-1}(\zeta)(\frac{r}{r_*})^{n-k+1} \dd r\cdot C(n).
\end{align}
Now, using our construction of $\hat{u}$ and the choice of $y_*, r_*$, we have
\begin{align}
    &\int_{\pt B^{C(Z)}_{r_*}(z_*)} |\tilde{u}(y_*,\zeta) -\hat{u}(y_*,\zeta)|^2\dd{}\mathcal{H}^{n-k-1}(\zeta)\\
    &=\int_{\pt B^{C(Z)}_{r_*}(z_*)}|\tilde{u}(y_*,\cdot)-u\circ F(y_*,\cdot)|^2 \dd{} \mathcal{H}^{n-k-1}\leq C(n)\eta.
\end{align}
This concludes the proof.
\end{proof}

The next Lemma follows from the usual argument, see for instance: discontinuous Ascoli-Arzel\`a’s theorem {\cite[Theorem 6.2]{droniou_uniform--time_2016}}.
\begin{lemma}\label{lem:AAarg}
Let $X_i$ be a sequence of compact metric spaces that $\mathrm{GH}$-converge to $X$.
Suppose $f_i$ is a sequence of uniformly bounded, $(L,\eta_i)$-almost Lipschitz functions on $X_i$ with $\eta_i\to 0$.
Then up to subsequences, there is a $L$-Lipschitz function $f$ on $X$ such that $f_i\to f$ uniformly.
\end{lemma}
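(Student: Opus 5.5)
The idea is to reduce to the classical Arzelà--Ascoli argument by passing to a fine discrete net. First realize the GH convergence $X_i\to X$ inside a common compact metric space $(Z,d_Z)$; equivalently, fix $\varepsilon_i$-GH approximations $\Psi_i:X\to X_i$ with $\varepsilon_i\to0$. Fix a countable dense subset $D=\{x^1,x^2,\dots\}\subset X$, and for each $x^m$ choose points $x^m_i\in X_i$ with $x^m_i\to x^m$ in $Z$.

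Since the $f_i$ are uniformly bounded, the sequence $(f_i(x^m_i))_i$ is bounded for each $m$. A diagonal argument then gives a subsequence along which $f_i(x^m_i)$ converges for every $m$; call the limit $f(x^m)$. The $(L,\eta_i)$-almost Lipschitz bound gives
\[
|f_i(x^m_i)-f_i(x^l_i)|\le L\bigl(d_i(x^m_i,x^l_i)+\eta_i\bigr).
\]
Letting $i\to\infty$ and using $d_i(x^m_i,x^l_i)\to d(x^m,x^l)$ and $\eta_i\to0$, we get $|f(x^m)-f(x^l)|\le L\,d(x^m,x^l)$. So $f$ is $L$-Lipschitz on $D$, and it extends uniquely to an $L$-Lipschitz function on $X$.

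It remains to show uniform convergence in the sense of the paper's definition. Given $\varepsilon>0$, pick a finite $\delta$-net $\{x^1,\dots,x^M\}\subset D$ of $X$ (possible by compactness) and $i_0$ such that for $i\ge i_0$ we have
\[
|f_i(x^m_i)-f(x^m)|<\varepsilon \quad\text{and}\quad d_Z(x^m_i,x^m)<\delta \qquad (m\le M),
\]
together with $\eta_i<\delta$. Now take $y\in X_i$ and $x\in X$ with $d_Z(y,x)<\delta$, and choose $m$ with $d(x,x^m)<\delta$. Then $d_i(y,x^m_i)<3\delta$, and
\[
|f_i(y)-f(x)|\le |f_i(y)-f_i(x^m_i)|+|f_i(x^m_i)-f(x^m)|+|f(x^m)-f(x)|\le L(3\delta+\eta_i)+\varepsilon+L\delta .
\]
Choosing $\delta$ small relative to $\varepsilon/L$ gives the required bound.

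The only real subtlety is that the $f_i$ are not continuous, so no pointwise equicontinuity is available. The error $\eta_i$ must therefore be absorbed at each step, and it is essential that $\eta_i\to0$; this is exactly what makes the limit Lipschitz with the same constant $L$ rather than merely almost Lipschitz. Otherwise the argument is routine, in the spirit of the discontinuous Ascoli--Arzelà theorem cited above.
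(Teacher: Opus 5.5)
Your proposal is correct and is the ``usual argument'' the paper has in mind. The paper gives no proof beyond citing the discontinuous Ascoli--Arzel\`a theorem, and your argument is the standard proof of that result: diagonal extraction on a countable dense set, passing the $(L,\eta_i)$-bound to the limit, and a finite-net argument for uniform convergence. The one addition in your write-up is that you handle the varying domains $X_i$ explicitly through a common compact embedding $Z$, which the cited fixed-space theorem does not literally cover.
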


The following proposition provides the details we need about the singular strata.

\begin{proposition}\label{prop:basicpropstrata}
Let $(X,d,\mathcal H^n)$ be an $\mathsf{ncRCD}(K,n)$ space, $\Omega\subset X$ a bounded domain, and $u:\Omega\to[0,\infty)$  a locally Lipschitz function. 
Consider the quantitative strata $\mathcal{S}^k_{\eta,r}(u)$ as in \Cref{def:quant_sing_stra}. 
The following observations hold. 

\begin{enumerate}

\item[(i)] \textit{(Inclusions)}
$\mathcal S^k_{\eta,r}(u)\subset
\mathcal S^{k'}_{\eta',r'}(u)$ whenever
$k'\geq k$, $r'\geq r$, and $\eta'\leq\eta$.

\item[(ii)] \textit{(Blow-ups)} $\mathcal{S}^k(u) = \{ x : \text{no blow-up at } x \text{ is } (k+1)\text{-symmetric} \}$.

\item[(iii)] \textit{(Symmetry and limits)}
Let $(X_i,d_i,\mathcal H^n,x_i)$ be a sequence of $\mathsf{ncRCD}(K,n)$ spaces such that $(X_i,x_i) \xrightarrow[]{\mathrm{pmGH}} (X,x)$.
Take 
\[
B_s(x_i)\xrightarrow[]{}B_s(x)\subset \Omega,
\]
and let $u_i:B_s(x_i)\to[0,\infty)$ be uniformly $L$-Lipschitz
functions converging uniformly to $u$ with respect to this
convergence. Assume $\mathcal{H}^n(B_1(x_i))\geq \mathrm{v}$ for some constant $\mathrm{v}>0$.
Then:
\begin{enumerate}

\item If $B_s(x_i)$ is $(k,\eta_i)$-symmetric with $\eta_i\to0$, then $B_s(x)$ is $k$-symmetric. 
\par
More precisely, for any fixed sequence of $(k,\eta_i)$-symmetry models ($B_s(0,z_{*,i})\subset \mathbb{R}^k\times C(Z_i),\tilde{u}_i)$ with their corresponding Borel pGH approximations $F_i$, after modifying $\tilde{u}_i$ and passing to a subsequence, $\mathbb{R}^k\times C(Z_i)$ converge in the pmGH sense to an $\mathsf{ncRCD}(0,n)$ cone $\mathbb{R}^k\times C(Z_\infty)$, and $\tilde{u}_i$ converge locally uniformly to an $L$-Lipschitz, $1$-homogeneous, $\mathbb{R}^k$-invariant $\tilde{u}_\infty$ on $\mathbb{R}^k\times C(Z_\infty)$.
In addition, $F_i$ converge
to an isometry $F:B_s(0,z_{*,\infty})\subset \mathbb{R}^k\times C(Z_\infty)\to B_s(x)$, with $u\circ F=\tilde{u}_\infty$.
In particular, the corresponding approximate symmetry planes $F_i(\mathbb{R}^k\times \{z_{*,i}\})$
converge to $\mathcal L^k:=F(\mathbb R^k\times\{z_{*,\infty}\})$.
\item
More generally, if $B_s(x_i)$ is $(k,\eta)$-symmetric for every $i$,
then $B_s(x)$ is $(k,2\eta)$-symmetric.
\end{enumerate}

\item[(iv)] \textit{(Approximation through smooth manifolds)}
Suppose $u$ is a limit solution as in \Cref{subsec:lim_sol}.
Let $B_{4}(x)\subset \Omega$,
$q\in\mathcal{S}^k_{\eta,r}(u)\cap B_1(x)$,
and $u_i$ be the approximating minimizing solutions converging to $u$ along $B_4(x_i)\to B_4(x)$.
Then there exist $q_i\in \mathcal{S}^k_{\eta/2,r}(u_i)\cap B_1(x_i)$ such that $q_i\to q$.
\end{enumerate}
\end{proposition}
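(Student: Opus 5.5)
We prove the four items in order, the main work being the compactness statement (iii)(a), from which (ii), (iii)(b) and (iv) follow by contradiction or diagonal arguments.

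Item (i) is immediate from the definition. A ball $B_s(x)$ that is $(k'+1,\eta')$-symmetric is also $(k+1,\eta)$-symmetric: we regard $\mathbb{R}^{k'+1}\times C(Z)$ as $\mathbb{R}^{k+1}\times C(\mathbb{R}^{k'-k}\times C(Z))$, and $\mathbb{R}^{k'+1}$-invariance implies $\mathbb{R}^{k+1}$-invariance. The inequality $\eta'\le\eta$ only weakens the condition. Finally, the range of scales $[r',\bar d(x)]$ is contained in $[r,\bar d(x)]$.

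For (iii)(a), we first replace each model $\tilde u_i$ by the $\hat u_i$ of \Cref{lem:almost_lip_sym_model}. These are $(C(n)L,\eta_i s)$-almost Lipschitz and $1$-homogeneous, and they are uniformly bounded on $B_s$ because $u_i(x_i)=0$ (indeed $u_i(x_i)=0$ whenever $x_i$ lies on a free boundary; in general, $|\hat u_i|\le C(n)Ls+\sup u_i$). The cones $\mathbb{R}^k\times C(Z_i)$ are $\mathsf{ncRCD}(0,n)$ spaces with $\mathcal H^n(B_1)$ bounded below. This lower bound follows from $\mathcal{H}^n(B_1(x_i))\ge \mathrm{v}$, the $\eta_i s$-GH closeness, volume convergence \cite{DPG2018}, and Bishop--Gromov. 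By Gromov compactness and \cite{DPG2018}, they subconverge in pmGH to a noncollapsed space. Since cones with an $\mathbb{R}^k$ factor and vertex are preserved under pmGH limits, the limit is an $\mathsf{ncRCD}(0,n)$ cone $\mathbb{R}^k\times C(Z_\infty)$. \Cref{lem:AAarg}, applied on balls of every radius via homogeneity, gives a locally uniform limit $\tilde u_\infty$ that is $C(n)L$-Lipschitz, $1$-homogeneous and $\mathbb{R}^k$-invariant; these properties pass to uniform limits. The maps $F_i$ are $\eta_i s$-GH approximations between convergent sequences of balls, so a standard Arzelà--Ascoli-type argument gives a subsequential limit isometry $F$.

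The delicate point in (iii)(a) is the identity $u\circ F=\tilde u_\infty$. By uniform convergence and the Lipschitz bound, $u_i\circ F_i$ converges uniformly to $u\circ F$. The quantity $\int|u_i\circ F_i-\hat u_i|^2\,d\mathcal H^n$ tends to zero, and both functions converge uniformly under the pmGH convergence of the cone balls. Hence, by \Cref{lem:uniform_implies_L2} and volume convergence, $\int_{B_s}|u\circ F-\tilde u_\infty|^2=0$. Both functions are continuous and the measure has full support, so they agree everywhere, and $F$ is an isometry realizing $k$-symmetry. The planes $F_i(\mathbb{R}^k\times\{z_{*,i}\})$ converge to $F(\mathbb{R}^k\times\{z_{*,\infty}\})$ by construction. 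For (iii)(b) we run the same argument with $\eta$ fixed. We obtain a limit cone and models, and on the limit cone the integral is at most $\liminf$ of the integrals, which is at most $\eta$ (or at most $C(n)\eta$ after the modification). To reach exactly $2\eta$ we use the original $\tilde u_i$ where possible, or compose $F$ with near-isometries. This constant bookkeeping is the step I expect to be most fiddly. The GH error of the limit map is at most $\eta s$, and nearby sequences in $i$ give strict inequality with room $2\eta$.

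For (ii), suppose first that a blow-up at $x$ along $r_j\to0$ is $(k+1)$-symmetric. Then (iii)(b) in reverse applies: closeness of the rescaled balls to the symmetric blow-up, meaning pmGH convergence together with uniform convergence of $r_j^{-1}u$, shows that $B_{r_j}(x)$ is $(k+1,\eta)$-symmetric for large $j$ and every $\eta$. Hence $x\notin\mathcal S^k$. Conversely, if $x\notin\mathcal S^k_\eta$ for every $\eta$, then for each $\eta=1/j$ there is a scale $s_j$ at which $B_{s_j}(x)$ is $(k+1,1/j)$-symmetric. If $s_j\not\to0$, then $B_s(x)$ is exactly symmetric at some fixed scale, and a cone structure at a positive scale yields a symmetric blow-up by homogeneity. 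Otherwise we pass to blow-ups along $s_j$ using \Cref{cor:blowupmaps} and apply (iii)(a).

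For (iv) we argue by contradiction. Suppose that for some $\delta>0$ no $q_i\in\mathcal S^k_{\eta/2,r}(u_i)$ lies within $\delta$ of $q$. Take $y_i\in\partial\{u_i>0\}$ converging to $q$, which is possible by free boundary convergence (\Cref{thm:compactness}). Each $y_i$ then has a scale $s_i\in[r,\bar d(y_i)]$ at which $B_{s_i}(y_i)$ is $(k+1,\eta/2)$-symmetric. Passing to $s_i\to s\in[r,\bar d(q)]$, (iii)(b) together with continuity in the radius shows that $B_s(q)$ is $(k+1,\eta)$-symmetric, contradicting $q\in\mathcal S^k_{\eta,r}(u)$. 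Here the strictness of the inequalities and the change from $\bar d$ to $d(q,\partial\Omega)$ need a small margin. Finally, $q_i\in B_1(x_i)$ for large $i$ because $q\in B_1(x)$ is open.
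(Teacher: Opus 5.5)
Your arguments for (i), (ii), (iii)(a) and (iv) follow the paper's proof. The only organizational difference is that the paper proves the compactness claim (limit cone, and limit model via \Cref{lem:almost_lip_sym_model} and \Cref{lem:AAarg}) inside (ii) and reuses it for (iii)(a), while you prove (iii)(a) first and deduce (ii) from it. Your identification $u\circ F=\tilde u_\infty$, obtained by passing $\int|u_i\circ F_i-\hat u_i|^2\to 0$ to the limit via \Cref{lem:uniform_implies_L2}, is slightly more direct than the paper's $L^2$-then-uniform upgrade. Two small corrections:
\begin{itemize}
\item In (i), the correct regrouping is $\mathbb R^{k'+1}\times C(Z)=\mathbb R^{k+1}\times C(Z')$, with $Z'$ the spherical join of $S^{k'-k-1}$ and $Z$. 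It is not a cone over $\mathbb R^{k'-k}\times C(Z)$.
\item In (iv), no margin is needed. Since $B_4(x)\subset\Omega$ and $q\in B_1(x)$, you have $\bar d(q)=1\geq \bar d(y_i)\geq s_i$.
\end{itemize}

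The genuine gap is (iii)(b), the step you yourself flag. The route you describe, running the same argument with $\eta$ fixed to get a limit cone, limit model and limit map, does not work:
\begin{itemize}
\item With $\eta$ fixed, the modified models $\hat u_i$ are only $(C(n)L,\eta s)$-almost Lipschitz with a defect that does not vanish, so \Cref{lem:AAarg} gives no limit.
\item The $F_i$ have non-vanishing error $\eta s$, so they do not converge to an isometry, and there is no evident limit map with error at most $\eta s$.
\item The modification in \Cref{lem:almost_lip_sym_model} costs a factor $C(n)$, so at best you would obtain $(k,C(n)\eta)$-symmetry.
\end{itemize}
The exact factor $2$ matters, because (iv) must turn $(k+1,\eta/2)$-symmetry of $B_{s_i}(y_i)$ into $(k+1,\eta)$-symmetry of $B_s(q)$.

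The missing observation is that no limit should be taken at all; this is what the paper means by ``we don't need to take sublimits on the symmetry models.'' Take Borel $\varepsilon_i$-pGH approximations $H_i:B_s(x_i)\to B_s(x)$ with $\sigma_i:=\sup_{B_s(x_i)}|u\circ H_i-u_i|\to0$. Keep the cone $\mathbb R^k\times C(Z_i)$ and the original model $\tilde u_i$, and use $H_i\circ F_i$ for one fixed large $i$. Its pGH error is at most $\eta s+2\varepsilon_i<2\eta s$. By Minkowski's inequality and $\mathcal H^n(B_s(0,z_{*,i}))\le\omega_n s^n$ (Bishop--Gromov on the $\mathsf{ncRCD}(0,n)$ cone),
\[
\int_{B_s(0,z_{*,i})}\bigl|u\circ H_i\circ F_i-\tilde u_i\bigr|^2\,d\mathcal H^n
\le\Bigl(\sigma_i\sqrt{\omega_n s^n}+\sqrt{\eta\, s^{n+2}}\Bigr)^2<2\eta\, s^{n+2}
\]
for $i$ large. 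Moreover $u(x)=\lim u_i(x_i)=0$, so $B_s(x)$ is $(k,2\eta)$-symmetric with model $(B_s(0,z_{*,i}),\tilde u_i)$ itself. With this in place, your (iv) goes through as written, after rescaling $B_{s_i}(y_i)$ to unit size as the paper does.
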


\begin{proof}

Item (i) is immediate. The proof of item (ii) is standard, but the stratification involves functions defined on varying ambient spaces, which is a new feature in the present setting. Consequently, one has to keep track simultaneously of the convergence of the ambient spaces and of the functions defined on them. In particular, the argument relies on the behavior under convergence of the notion of closeness underlying the stratification. It is therefore useful to spell out this point.
\par
Let $x \in \mathcal{S}^k_{\eta}(u)$.
We claim that no blow-up map at $x$ can be $(k+1,\eta/2)$-symmetric, and in particular $(k+1)$-symmetric.
Suppose otherwise that there is some blow-up $(B_1(p), u_x)$ at $x$ that is $(k+1,\eta/2)$-symmetric.
    Then there exists $s_i\to 0$ such that 
    \begin{align}
        B_1^{\hat{d_i}}(x) &\xrightarrow[]{\mathrm{pGH}} B_1(p)\subset T_x X, \\
        \hat{u}_i&\to u_x \quad \text{uniformly on $B_1^{\hat{d_i}}(x)$},
    \end{align}
    where $\hat{d_i}=s_i^{-1}d$, $\hat{u}_i=s_i^{-1}u$, and 
\begin{enumerate}
        \item  There is a Borel $\eta/2$-pGH approximation map $F:B_1(0,z_*)\to B_1(p)$, where $B_1(0,z_*)\subset \mathbb{R}^{k+1}\times C(Z)$ is a ball in an $\mathsf{ncRCD}(0,n)$ cone.
        \item $\int_{B_1(0,z_*)}|u_x\circ F-\tilde{u}|^2 d\mathcal{H}^n<\eta/2$ for some $1$-homogeneous and $\mathbb{R}^{k+1}$-invariant $\tilde{u}$.
    \end{enumerate}
The uniform convergence follows from the fact that the rescaling
preserves the Lipschitz bound. In particular, $u_x$ is Lipschitz continuous. Moreover, by volume convergence, the
above convergence is in fact pmGH with respect to the corresponding
Hausdorff measures.
\par
Let $G_i:B_1(p)\to B^{\hat{d_i}}_1(x)$ be $\varepsilon_i$-pGH approximations with $\varepsilon_i\to 0$ such that $\abs{\hat{u}_i \circ G_i - u_x}\to 0$ uniformly on $B_1(p)$.
Then $G_i\circ F:B_1(0,z_*)\to B^{\hat{d}_i}_1(x)$ is an $\eta$-pGH for $i$ large enough. At the function level, we have
\begin{equation}
    \begin{split}
                \left(\int_{B_{1}(0,z_*)}|\hat{u}_i\circ G_i\circ F-\tilde{u}|^2d\mathcal H^n\right)^{1/2}
        &\leq \left(\int_{B_{1}(0,z_*)}|\hat{u}_i\circ G_i\circ F- u_x\circ F|^2d\mathcal H^n\right)^{1/2}\\
        &\quad +\left(\int_{B_{1}(0,z_*)}|u_x\circ F-\tilde{u}|^2d\mathcal H^n\right)^{1/2}.
    \end{split}
\end{equation}
The first term goes to zero due to the uniform convergence, and the second term is less than $\sqrt{\eta/2}$.
These imply that for $i$-large enough, $B_{s_i}(x)$ is $(k+1,\eta)$-symmetric, giving a contradiction. This shows that $\mathcal{S}^k_{\eta} \subset \{\text{no blow-up is $(k+1)$-symmetric}\}$.
\par
In the other direction, suppose no blow-up at $x$ is $(k+1)$-symmetric.
We have to prove that there is some $\eta > 0$ for which $x \in \mathcal{S}^k_{\eta}$. 
Assume this is not the case. Then for any $\eta>0$, there is some $s(\eta)\in (0,1)$ such that $B_s(x)$ is $(k+1,\eta)$-symmetric.
Take a sequence $\eta_i\to 0$ and let $s_i=s(\eta_i)$, $\hat{d_i}=s_i^{-1}d$, and $\hat{u}_i=s_i^{-1}u$.
Then we have 
\begin{enumerate}
    \item There are Borel $\eta_i$-pGH approximation maps $F_i:B_1(0,z_{*,i})\to B^{\hat{d}_i}_1(x)$, where $B_1(0,z_{*,i})\subset \mathbb{R}^{k+1}\times C(Z_i)$ is a ball in an $\mathsf{ncRCD}(0,n)$ cone.
    \item There exist $1$-homogeneous and $\mathbb{R}^{k+1}$-invariant functions $\tilde v_i$ on $\mathbb{R}^{k+1}\times C(Z_i)$ such that $\int_{B_1(0,z_{*,i})}|\hat u_i\circ F_i-\tilde v_i|^2\,d\mathcal H^n<\eta_i$.
\end{enumerate}
Notice that by \Cref{lem:almost_lip_sym_model}, we may assume that the functions $\tilde v_i$ are $(C(n)L,\eta_i)$-almost Lipschitz on $B_1(0,z_{*,i})$. 

\noindent \textsc{Claim.} \emph{After passing to a subsequence, there exist an $\mathsf{ncRCD}(0,n)$ cone $\mathbb{R}^{k+1}\times C(Z_\infty)$ and a function $\tilde v_\infty$ on $\mathbb{R}^{k+1}\times C(Z_\infty)$, such that}
\begin{align}
    &\mathbb{R}^{k+1}\times C(Z_i) \xrightarrow[]{\mathrm{pmGH}}  \mathbb{R}^{k+1}\times C(Z_\infty);\\
    &\tilde v_i \to \tilde v_\infty \ \text{uniformly on compact subsets}, \text{along this pmGH convergence.} \label{eq:xz}
\end{align}
\emph{Moreover, $\tilde v_\infty$ is Lipschitz continuous, $1$-homogeneous and $\mathbb{R}^{k+1}$-invariant.}
\par
\noindent\textit{Proof of Claim}:
By \Cref{lem:ncRCD_cone_struct} and Gromov precompactness for $\mathsf{RCD}(K,n)$ spaces, up to a subsequence, the compact links $Z_i$ GH-converge to some $Z_\infty$, and hence the corresponding pointed cones $Y_i:=\mathbb{R}^{k+1}\times C(Z_i)$ pGH-converge to the pointed cone $Y_\infty:=\mathbb{R}^{k+1}\times C(Z_\infty)$. We claim that this sequence is noncollapsing.
By Item 1.\@ above, $d_{\mathrm{GH}}\bigl(B_1^{\hat d_i}(x),B_1^{Y_i}(0,z_{*,i})\bigr)\to 0$.
Moreover, the ambient spaces and the model cones belong, for all sufficiently large $i$, to a common class $\mathsf{RCD}(K,n)$ for some fixed $K\in\mathbb{R}$. Hence, by the continuity of the $n$-dimensional Hausdorff measure on $\mathsf{RCD}(K,n)$ balls (\cite[Theorem 1.3]{DPG2018}), we have
$$\abs{\mathcal{H}^n\bigl(B_1^{Y_i}(0,z_{*,i})\bigr) - \mathcal{H}^n\bigl(B_1^{\hat d_i}(x)\bigr)} \to 0.$$
Since $\hat{d}_i$ is a rescaling of $d$, we have $\mathcal{H}^n\bigl(B_1^{\hat d_i}(x)\bigr)\geq \mathrm{v}$ for some $\mathrm{v}>0$ and therefore $\mathcal{H}^n\bigl(B_1^{Y_i}(0,z_{*,i})\bigr)\geq \mathrm{v}/2$ for all sufficiently large $i$. By volume convergence (\cite[Theorem 1.2]{DPG2018}), we have
$$
(Y_i,d_i,\mathcal{H}^n,(0,z_{*,i}))
\xrightarrow{\mathrm{pmGH}}
(Y_\infty,d_\infty,\mathcal{H}^n,(0,z_{*,\infty})),
$$
and $Y_\infty$ is an $\mathsf{ncRCD}(0,n)$ cone. This proves the claim at the space level.
\par
Moreover, choosing measured GH approximations $\sigma_i:Z_i\to Z_\infty$ realizing the convergence of the links, we may realize the above pmGH convergence of the model cones by the induced maps $S_i:\mathbb R^{k+1}\times C(Z_i)\to\mathbb R^{k+1}\times C(Z_\infty)$ with $S_i(a,(r,z)):=(a,(r,\sigma_i(z)))$. On every bounded ball, these maps are pGH approximations with error tending to zero, and they are equivariant with respect to cone dilations and translations along the $\mathbb R^{k+1}$-factor. Moreover, they are measured approximations, namely, $\int_{Y_i} \varphi\circ S_i\,d\mathcal H^n \to \int_{Y_\infty} \varphi\,d\mathcal H^n$ for every $\varphi\in C_c(Y_\infty)$.

For the function level, for every $R>0$, by the $1$-homogeneity, we have that $\tilde v_i$ are $(C(n)L, R\eta_i)$-almost Lipschitz on $B_R(0,z_{*,i})$.
Since $\tilde{v}_i(0,z_{*,i})=0$, they are also uniformly bounded on $B_R(0,z_{*,i})$.
Hence, by a diagonal argument with \Cref{lem:AAarg}, after passing to a subsequence, $\tilde v_i$ converge uniformly to a $C(n)L$-Lipschitz function $\tilde v_\infty$ along $B_R(0,z_{*,i})\to B_R(0,z_{*,\infty})$ with respect to the above pmGH convergence, for any $R>0$.
More precisely, let $\tilde S_i:B_R(0,z_{*,\infty})\to B_R(0,z_{*,i})$ be approximate inverses of the maps $S_i$ above, chosen equivariantly with respect to cone dilations and translations along the $\mathbb R^{k+1}$-factor. Then the functions $\tilde v_i\circ\tilde S_i$ converge uniformly to $\tilde v_\infty$ over $B_R(0,z_{*,i})$.
Since each $\tilde v_i\circ\tilde S_i$ is $1$-homogeneous and $\mathbb R^{k+1}$-invariant, we conclude that $\tilde v_\infty$ has the same properties.\footnote{Uniform convergence is formally expressed using the maps $S_i$, whereas here we work with approximate inverses $\tilde S_i$. This is harmless because $S_i$ and $\tilde S_i$ are approximate inverses and the family $\tilde v_i$ is asymptotically equicontinuous. Indeed, $d_i\bigl(\tilde S_i(S_i(x_i)),x_i\bigr)\to0$, and hence $\bigl|\tilde v_i(\tilde S_i(S_i(x_i)))-\tilde v_i(x_i)\bigr|\to0$. Thus the two ways of expressing the uniform convergence are equivalent.} This proves the claim.
\hfill {$\square\square$}
\par
Now note that by Bishop-Gromov volume comparison, the boundary of any ball has zero measure. Therefore, applying \Cref{lem:uniform_implies_L2}, it follows that $\tilde v_i$ converges in $L^2(B_1(0,z_{*,i}))$ to $\tilde v_\infty$. As $\norm{\hat{u}_i\circ F_i - \tilde{v}_i}_{L^2(B_1(0,z_{*,i}))}\to 0$, it follows that $\hat{u}_i\circ F_i \to \tilde{v}_\infty$ in $L^2(B_1(0,z_{*,i}))$.
With the Arzel\`a--Ascoli-type precompactness from \Cref{lem:AAarg}, we then get $\hat{u}_i\circ F_i$ to $\tilde{v}_\infty$ uniformly.
\par
Finally, we have $F_i \circ \tilde{S}_i:B_1(0,z_{*,\infty})\to B_1^{\hat{d_i}}(x)$ $(\eta_i+\varepsilon_i)$-pGH such that $\abs{\hat{u}_i \circ F_i \circ \tilde{S}_i - \tilde{v}_\infty}$ converges uniformly to zero. Recall that $\hat{d_i} = s_i^{-1}d$. Now the proof continues as usual: if $\liminf_i s_i=0$, then, up to a subsequence, this is exactly saying that a blow-up of $u$ at $x$ is $(k+1)$-symmetric, and hence it is a contradiction. If this is not the case, up to a subsequence we can assume $s_i \to s_\infty$. In this case, the whole ball $B_{s_\infty/2}(x)$ is $(k+1)$-symmetric, and in particular any blow-up will be.
\par
To prove Item (iii)-(a), let $H_i:B_s(x_i) \to B_s(x)$ be an $\varepsilon_i$-pGH approximation such that $\abs{u\circ H_i - u_i}$ converges uniformly to zero.
Note that by the Proof of Claim above, we can assume the existence of a sequence of $s\eta_i$-pGH approximations $T_i: B_s(0,z_*)\subset \mathbb{R}^{k}\times C(Z) \to B_s(x_i)$ and an $L$-Lipschitz, $1$-homogeneous, $\mathbb{R}^k$-invariant $\tilde{v}$ on $\mathbb{R}^k\times C(Z)$, such that $\abs{u_i \circ T_i - \tilde{v}}$ converge uniformly to zero on $B_s(0,z_*)$. After passing to a subsequence, the maps $H_i\circ T_i$ converge uniformly to an isometry $F:B_s(0,z_*)\to B_s(x)$. Moreover, the uniform convergence of the functions gives $u\circ F=\tilde v$. Consequently, the corresponding approximate symmetry planes converge to $\mathcal L^k=F(\mathbb R^k\times\{z_*\})$.
The proof of Item (iii)-(b) is easier, as we don't need to take sublimits on the symmetry models $\tilde{v}_i$.

For item (iv), by the compactness theorem and the convergence of the free boundaries in \Cref{thm:compactness}, there exist points $q_i\in \partial\{u_i>0\}\cap B_1(p_i)$ such that $q_i\to q$. Suppose, by contradiction, that $q_i\notin S^k_{\eta/2,r}(u_i)$ for infinitely many $i$.
Then there exist $s_i\in[r,1]$ such that $B_{s_i}(q_i)$ is $(k+1,\eta/2)$-symmetric. After passing to a subsequence, we may assume that $s_i\to s_\infty\in[r,1]$.
Rescaling the balls $B_{s_i}(q_i)$ to unit size and applying Item (iii)-(b), we conclude that the corresponding ball centered at $q$ is $(k+1,\eta)$-symmetric.
This contradicts $q\in S^k_{\eta,r}(u)$.
\end{proof}

The next proposition shows that the symmetry of a minimizer $u$ translates into symmetry of its free boundary. \begin{proposition}[Almost symmetry of $u$ gives almost symmetry of the free boundary] \label{prop:almostsymualmostfree} Given $\rv,L,\Lambda>0$ and $\alpha\in(0,1)$, for every $\eta'>0$ there exists $\eta=\eta(\eta',n,r^2K,\mathrm{v},L,\Lambda,\alpha)>0$ such that the following holds.
Let $X$ be an $\mathsf{ncRCD}(K,n)$ space and $u:B_{2r}(x)\subset X\to[0,\infty)$ an $L$-Lipschitz minimizing solution of $E_Q$ with $x\in\pt\{u>0\}$. Assume $\mathcal H^n(B_{2r}(x))\geq \mathrm{v}r^n$, and $\|Q\|_{C^0}+\|Q^{-1}\|_{C^0}+r^\alpha[Q]_\alpha\leq\Lambda$ on $B_{2r}(x)$. If $B_r(x)$ is $(k,\eta)$-symmetric, then $\partial\{u>0\}\cap B_{r/2}(x)$ is $(k,\eta')$-symmetric, in the sense that there exists $(n-k-1)$-dimensional metric cone $C(Z)$ such that
\begin{equation}
d_{\mathrm{GH}}\left(
\partial\{u>0\}\cap B_{r/2}(x),
B_{r/2}(0,z_*)
\right)
\leq \frac r2\,\eta',
\end{equation}
where $B_{r/2}(0,z_*)\subset\mathbb R^k\times C(Z)$.
\end{proposition}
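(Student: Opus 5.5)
By rescaling (\Cref{rmk:rescaling}) we may take $r=1$, so that $K$ is replaced by $r^2K$. The proof is a contradiction and compactness argument.

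Suppose the statement fails for some $\eta'>0$. Then there are $\mathsf{ncRCD}(K_i,n)$ spaces $X_i$ with $K_i$ bounded below, and $L$-Lipschitz minimizers $u_i$ on $B_2(x_i)$ with $x_i\in\pt\{u_i>0\}$, $\mathcal H^n(B_2(x_i))\ge \mathrm v$ and coefficients $Q_i$ bounded by $\Lambda$. Each $B_1(x_i)$ is $(k,\eta_i)$-symmetric with $\eta_i\to0$, yet $\partial\{u_i>0\}\cap B_{1/2}(x_i)$ is not $\frac12\eta'$-GH close to $B_{1/2}(0,z_*)$ in any cone $\mathbb R^k\times C(Z)$.

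\textbf{Step 1 (passing to the limit).} After a subsequence, Gromov precompactness and volume convergence \cite{DPG2018} give $X_i\to X$ in the pmGH sense, with $X$ an $\mathsf{ncRCD}$ space. The $C^\alpha$ bound and Arzel\`a--Ascoli give $Q_i\to Q$ uniformly. \Cref{thm:compactness} then gives $u_i\to u$ uniformly, with $u$ an $L$-Lipschitz minimizer of $E_Q$ on $B_{3/2}(x)$ and $x\in\pt\{u>0\}$. It also gives Hausdorff convergence of the free boundaries
\[
\partial\{u_i>0\}\cap \overline{B_{1/2}(x_i)}\to \partial\{u>0\}\cap \overline{B_{1/2}(x)} .
\]
We may work on slightly larger balls to avoid boundary effects at radius $1/2$; the non-degeneracy \cite[Theorem 6.1]{CZZ22} ensures that no free boundary points appear or disappear in the limit.

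\textbf{Step 2 (exact symmetry of the limit).} By \Cref{prop:basicpropstrata}(iii)(a), $B_1(x)$ is $k$-symmetric. That is, there is an isometry $F:B_1(0,z_*)\subset\mathbb R^k\times C(Z)\to B_1(x)$ such that $\tilde u:=u\circ F$ is $1$-homogeneous and $\mathbb R^k$-invariant.

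\textbf{Step 3 (the free boundary is a cone).} The zero set and positivity set of $\tilde u$ are invariant under dilations about $(0,z_*)$ and under $\mathbb R^k$-translations. Hence
\[
\Sigma:=\partial\{\tilde u>0\}=\mathbb R^k\times C(\Sigma_Z)
\]
for some closed $\Sigma_Z\subset Z$, namely the free boundary of the restriction of $\tilde u$ to the link. This is a metric cone with the restricted extrinsic metric, and $F$ maps $\Sigma\cap B_{1/2}$ isometrically onto $\partial\{u>0\}\cap B_{1/2}(x)$.

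\textbf{Step 4 (dimension of the link).} We need to show $\dim C(\Sigma_Z)=n-k-1$. It suffices that $\Sigma_Z$ is a closed subset of the $(n-k-1)$-dimensional $Z$ with Hausdorff dimension $n-k-2$, which holds by local finiteness of the perimeter in \Cref{thm:mainCZZ22} together with the density estimates of \cite{CZZ22}. If the definition requires $C(Z)$ to be the model cone itself, we simply take the cone over $\Sigma_Z$ with the restricted metric as the required "metric cone". This identification of the dimension/model is the main obstacle: it has to be consistent with the paper's intended meaning, which is only GH-closeness to some metric cone split by $\mathbb R^k$. The GH estimate itself is soft.

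\textbf{Step 5 (contradiction).} Composing the GH approximations from Steps 1 and 3, $\partial\{u_i>0\}\cap B_{1/2}(x_i)$ is $o(1)$-GH close to $B_{1/2}(0,z_*)\cap(\mathbb R^k\times C(\Sigma_Z))$. This contradicts the choice of the sequence once $o(1)<\eta'/2$.
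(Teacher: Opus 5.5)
Your proposal is correct and follows the paper's proof: a contradiction/compactness argument, exact $k$-symmetry of the limit ball via item (iii)(a) of the stability proposition, the observation that the limit free boundary is a dilation- and $\mathbb R^k$-invariant cone, and Hausdorff convergence of the free boundaries. Your Steps 3--4 add details the paper only asserts. These are identifying that cone as $\mathbb R^k\times C(\Sigma_Z)$ with the restricted metric, and checking its dimension via the perimeter and density bounds.
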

\begin{proof}
By rescaling, we may assume $r=1$. Suppose the statement is false. Then there exist $\eta'_0>0$, a sequence $\eta_i\to0$, $\mathsf{ncRCD}(K,n)$ spaces $X_i$, and $L$-Lipschitz minimizing solutions $u_i:B_2(x_i)\subset X_i\to[0,\infty)$ of $E_{Q_i}$, satisfying the same uniform noncollapsed and coefficient bounds, such that $B_1(x_i)$ is $(k,\eta_i)$-symmetric, but
\begin{equation}
d_{\mathrm{GH}}\left(
\partial\{u_i>0\}\cap B_{1/2}(x_i),
B_{1/2}(0,z_*)
\right)>\eta'_0
\end{equation}
for every $(n-k-1)$-dimensional cone $C(Z)$. After passing to a subsequence, we have $(X_i,x_i)\xrightarrow[]{\mathrm{pmGH}} (X_\infty, x_\infty)$, $B_2(x_i)\to B_2(x_\infty)$, and $Q_i\to Q_\infty$ locally uniformly. By \Cref{thm:compactness}, $u_i\to u_\infty$ locally uniformly, where $u_\infty$ minimizes $E_{Q_\infty}$ on $B_2(x_\infty)$, and the free boundaries converge with respect to the pmGH convergence. Since $B_1(x_i)$ is $(k,\eta_i)$-symmetric and $\eta_i\to0$, \Cref{prop:basicpropstrata}(iii) implies that $B_1(x_\infty)$ is $k$-symmetric. Hence, for some $\mathsf{ncRCD}(0,n)$ cone $\mathbb R^k\times C(Y)$ and isometry $F$, the function $u_\infty\circ F$ is $1$-homogeneous and $\mathbb R^k$-invariant. Therefore its free boundary is a cone invariant under translations along the $\mathbb R^k$-factor, and thus
\begin{equation}
\partial\{u_\infty>0\}\cap B_{1/2}(x_\infty)
=
F\left(B_{1/2}(0,z_*)\right)
\end{equation}
for some $(n-k-1)$-dimensional metric cone $C(Z)$. The convergence of the free
boundaries gives a contradiction.
\end{proof}

\subsection{Regular and Singular Points, and the Reifenberg Property}\label{subsec:reifenberg}

In this subsection, we introduce regular and singular points and
establish a Reifenberg property: sufficiently regular balls remain
almost regular at all smaller scales around nearby free boundary
points.
This property is claimed in \cite{CZZ22} for the regular set under their definition \cite[Definition 8.3]{CZZ22}.
For the sake of completeness, we provide here an independent argument, inspired by the ideas of \cite{mondino_weak_2025}.
In Euclidean space, the Reifenberg property typically follows from the fact that almost-minimal pinching of a monotone quantity implies closeness to the model of maximal symmetry; the monotonicity also allows one to propagate this pinching across infinitely many scales.
This mechanism is not available in the nonsmooth setting here due to the lack of bounded monotonicity. The key observation in \cite{mondino_weak_2025} is that the Reifenberg property can still be recovered provided one has, in addition, a density gap for homogeneous solutions in $\mathbb{R}^n$.  
\par
We begin with the relevant definitions. By \Cref{prop:basicpropstrata} and \Cref{cor:blowupmaps}, it follows that $\mathcal{S}^{n-1}$ is the whole free boundary (also see \Cref{prop:structurestrata} below).
Now inside $\mathcal{S}^{n-1}$, we define the regular points and singular points as follows.
\begin{definition}[Regular and singular points, and $\delta$-regular balls]
We say $x \in \partial\{u>0\}\cap \Omega$ is a \emph{regular point} if both
\begin{enumerate}[nosep]
    \item The tangent cone of the space at $x$ is $\mathbb{R}^n$; 
    \item Every blow-up $u_x$ of $u$ at $x$, is $(n-1)$-symmetric and hence, up to rotation, a half-plane solution $u_x(y)=Q(x)y_n^+$ (see the proof of \cite[Corollary~7.2]{CZZ22} and the references therein).
\end{enumerate}
We say $x$ is a \emph{singular point} if it is not a regular point. 
We denote by $\text{Sing}(u)$ the set of singular points. 
We also say $B_r(x)\subset \Omega$ is $\delta$-regular if it is $(n-1,\delta)$-symmetric, with a $\delta r$-GH approximation from $B_r(0^n)\subset \mathbb{R}^n$.
\end{definition}

\par
To prove the Reifenberg property, we use the following Weiss-type formula on $X$ from \cite[(8.1)]{CZZ22}:
\[
W_u(x,s):= \frac{1}{s^n}\int_{B_s(x)} |\nabla u|^2 + Q^2(x)1_{u>0}\dd{}\mathcal{H}^n - \frac{1}{s^{n+1}}\int_{\partial B_s(x)} u^2 \dd|D1_{B_s(x)}|.
\]
Here $|D1_{B_s(x)}|$ is the perimeter measure of the ball (defined for almost every $s$; see e.g.\@ \cite{Miranda2003,Ambrosio2019rigidity}).
Under the assumptions of \Cref{thm:compactness}, $W_{u_i}(x_i,s)\to W_u(x,s)$ for almost every $s$, provided $x_i\to x$. Indeed, $Q_i(x_i)\to Q(x)$, while the remaining terms converge by \Cref{thm:compactness} together with the continuity of the perimeter measure; see \cite[Lemma~2.16 and Lemma~8.2]{CZZ22}.
Recall that by \Cref{thm:orginal_weiss}, the Weiss formula is monotone when $X=\mathbb{R}^n$ and $Q$ is constant.

\begin{lemma}\label{lem:density_regular}
Let $K\in\mathbb R$, and let $u$ be an $L$-Lipschitz minimizing
solution of $E_Q$ on $B_2(x)$, with $x\in\pt\{u>0\}$,
in an $\mathsf{ncRCD}(K,n)$ space. Assume
$\|Q\|_{C^0}+\|Q^{-1}\|_{C^0}+[Q]_\alpha\leq\Lambda$ on $B_2(x)$.
For every $\varepsilon>0$, there exists
$\delta=\delta(\varepsilon,n,K,L,\Lambda,\alpha)>0$ such that, if
$B_2(x)$ is $\delta$-regular, then there exists $s\in(1,2)$ for which
$B_s(x)$ has finite perimeter and
\begin{equation}
\frac12-\varepsilon
\leq
\frac{W_u(x,s)}{Q^2(x)\omega_n}
\leq
\frac12+\varepsilon.
\label{eq:densityest}
\end{equation}
\end{lemma}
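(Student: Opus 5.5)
We argue by contradiction and compactness. Suppose the statement fails for some $\varepsilon>0$. Then there are $\delta_i\to0$, $\mathsf{ncRCD}(K,n)$ spaces $X_i$, and $L$-Lipschitz minimizers $u_i$ of $E_{Q_i}$ on $B_2(x_i)$ with $x_i\in\pt\{u_i>0\}$ and the stated coefficient bounds. Each $B_2(x_i)$ is $\delta_i$-regular, yet for every $s\in(1,2)$ at which $B_s(x_i)$ has finite perimeter, \eqref{eq:densityest} fails.

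We first pass to a limit. Since $B_2(x_i)$ is $\delta_i$-GH close to $B_2(0^n)\subset\mathbb R^n$, volume continuity \cite{DPG2018} gives a uniform noncollapsing bound. Hence, after a subsequence, $B_2(x_i)\to B_2(0)\subset\mathbb R^n$ in the pmGH sense. The coefficients are equi-Hölder and bounded, so $Q_i\to Q_\infty$ uniformly; in particular $Q_i(x_i)\to Q_\infty(0)=:q$. By \Cref{thm:compactness}, $u_i\to u_\infty$ uniformly, $u_\infty$ minimizes $E_{Q_\infty}$ on $B_{R'}(0)$ for $R'<2$, and the Dirichlet energies, the positivity volumes and the free boundaries all converge. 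By \Cref{prop:basicpropstrata}(iii)(a), after an isometry of $\mathbb R^n$, $u_\infty$ is $1$-homogeneous and $\mathbb R^{n-1}$-invariant on $B_2(0)$, so $u_\infty(y)=a\,y_n^+$ for some $a\ge0$.

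Next we identify the slope $a=Q_\infty(0)$ and compute the density. The point $0$ lies in the free boundary and $u_\infty$ is nondegenerate \cite[Theorem 6.1]{CZZ22}, so $a>0$. Because $u_\infty$ is a minimizer, the free boundary condition (the Euler--Lagrange equation of \Cref{thm:mainCZZ22}(4) applied on the flat plane, or equivalently the classical fact that a half-plane minimizer must have slope equal to the coefficient) forces $a=Q_\infty$ along $\{y_n=0\}$. Continuity then gives $a=q$, and a posteriori $Q_\infty\equiv q$ on the plane. For the half-plane solution we compute directly: $s^{-n}\int_{B_s}(|\nabla u_\infty|^2+q^21_{u>0})=q^2\omega_n$, while $s^{-n-1}\int_{\pt B_s}u_\infty^2=q^2\,\frac{1}{2}\int_{S^{n-1}}y_n^2=q^2\omega_n/2$. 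Hence $W_{u_\infty}(0,s)=q^2\omega_n/2$ for every $s\in(0,2)$.

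Finally we transfer this value back to $u_i$. By the remark preceding the lemma, $W_{u_i}(x_i,s)\to W_{u_\infty}(0,s)$ for a.e. $s\in(1,2)$. The balls $B_s(x_i)$ have finite perimeter for a.e. $s$ (coarea), so we may pick a single $s\in(1,2)$, outside a countable union of null sets, at which all the $B_s(x_i)$ have finite perimeter and the convergence holds. Since $Q_i(x_i)\to q>0$, we get $W_{u_i}(x_i,s)/(Q_i^2(x_i)\omega_n)\to 1/2$, which contradicts the failure of \eqref{eq:densityest} for large $i$.

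The main obstacle is the a.e. convergence of the boundary term $\int_{\pt B_s}u_i^2\,d|D1_{B_s}|$ along varying spaces. For this we rely on \cite[Lemma 2.16, Lemma 8.2]{CZZ22}, as quoted in the paper. A cleaner route to the same conclusion is to integrate in $s$ over $(1,2)$, where the convergence follows from the coarea formula and the uniform convergence of $u_i^2$. The slope identification $a=Q_\infty(0)$ is the other delicate point; it follows from minimality by comparing $a\,y_n^+$ with its inward and outward domain perturbations.
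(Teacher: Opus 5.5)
Your proposal is correct and follows the paper's proof: a contradiction and compactness argument producing a limit that is $1$-homogeneous and $\mathbb{R}^{n-1}$-invariant on $B_2(0^n)$, the explicit value $W_{u_\infty}(0,s)=\tfrac12 Q_\infty^2(0)\omega_n$, and transfer back to $u_i$ through the a.e.\ continuity of the Weiss density \cite[Lemma~8.2]{CZZ22}. The one variation is in fixing the slope: you use the free boundary condition of \Cref{thm:mainCZZ22}(4), whereas the paper blows up $u_\infty$ at $0$, freezing the coefficient at $Q_\infty(0)$, and quotes the classification of $(n-1)$-symmetric constant-coefficient minimizers.

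There is one step you need to add. Homogeneity and invariance only give $u_\infty=a\,y_n^++b\,y_n^-$, and the two-plane profile satisfies that Euler--Lagrange equation whenever $a+b=Q_\infty$ on $\{y_n=0\}$, with density ratio $1$ instead of $\tfrac12$. So you must rule out $b>0$ separately, e.g.\ by $\partial^e\{u>0\}=\partial\{u>0\}$ from \Cref{thm:mainCZZ22}(3), or by harmonic replacement. Also, your suggested alternative of integrating over $s\in(1,2)$ only gives convergence of averages, so it would not by itself produce a single good radius.
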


\begin{proof}
Arguing by contradiction, there exist $\delta_i\to0$ and minimizing solutions $u_i$ of $E_{Q_i}$ on $B_2(x_i)\subset (X_i,x_i)$ satisfying the uniform assumptions above,
such that $B_2(x_i)$ is $\delta_i$-regular, while
\eqref{eq:densityest} fails for every $s\in(1,2)$ for which the Weiss
density is defined. After passing to a subsequence, $(X_i, x_i)$ converge in pmGH to some $\mathsf{ncRCD}(K,n)$ $(X_\infty,x_\infty)$ and $B_2(x_i)\to B_2(0)\subset\mathbb R^n$. 
Moreover, $Q_i\to Q_\infty$ locally uniformly and, by
\Cref{thm:compactness}, $u_i\to u_\infty$ locally uniformly. Since
$\delta_i\to0$, by \Cref{prop:basicpropstrata} the limit $u_\infty$ is $(n-1)$-symmetric on $B_2(0^n)$. 
\par
Blowing up $u_\infty$ at $0$, we obtain a global $(n-1)$-symmetric minimizer $u_0$ of $E_{Q_\infty(0)}$ on $\mathbb R^n$. Since $u_\infty$ is already $1$-homogeneous on $B_2(0)$, $u_0$ agrees with $u_\infty$ on this ball. Thus, up to rotation, $u_0(y)=Q_\infty(0)y_n^+$, and the same holds for $u_\infty$ on $B_2(0)$.
\par
By the continuity of the Weiss density under the above convergence \cite[Lemma 8.2]{CZZ22},
for almost every $s\in(1,2)$ we have
\[
W_{u_i}(x_i,s)\to W_{u_\infty}(0,s)
=\frac12Q_\infty^2(0)\omega_n.
\]
Since $Q_i(x_i)\to Q_\infty(0)$, for any such $s$ we obtain
\[
\frac{W_{u_i}(x_i,s)}{Q_i^2(x_i)\omega_n}\to\frac12,
\]
contradicting the choice of the sequence.
\end{proof}

The following density gap result is well known in Euclidean space.
\begin{proposition}[Density gap in Euclidean space
(\cite{AC81,CK98,Wei99})]\label{prop:densitygap}
There exists $\varepsilon_n>0$ such that the following holds. Let
$Q_0>0$ be constant and let $u$ be a
$1$-homogeneous global minimizer of $E_{Q_0}$ on $\mathbb{R}^n$, with
$0\in\partial\{u>0\}$. Then
\[
W_u(0,1)\geq Q_0^2\frac{\omega_n}{2}.
\]
Moreover, if
\[
W_u(0,1)
<
Q_0^2\frac{\omega_n}{2}(1+\varepsilon_n),
\]
then $u$ is a regular half-plane solution.
In other words, there is a density gap between the half-plane solution and all the other $1$-homogeneous global minimizers of the same constant-coefficient problem on $\mathbb{R}^n$.
\end{proposition}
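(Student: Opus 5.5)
The first claim, $W_u(0,1)\ge Q_0^2\omega_n/2$, I would derive from homogeneity. For a $1$-homogeneous minimizer one has $u=y\cdot\nabla u$. Integrating by parts on the positive phase, using $\Delta u=0$ there and $u=0$ on the free boundary, gives
\[
\int_{B_1}|\nabla u|^2=\int_{\partial B_1}u\,\partial_r u\,d\sigma=\int_{\partial B_1}u^2\,d\sigma .
\]
Hence $W_u(0,1)=Q_0^2|\{u>0\}\cap B_1|$. The statement then reduces to the density lower bound $|\{u>0\}\cap B_1|\ge \omega_n/2$ for homogeneous minimizers, i.e.\ the positive phase of a minimizing cone occupies at least half the ball. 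I would take this from the cited literature (\cite{AC81,CK98,Wei99}): the Alt--Caffarelli blow-up/domain variation argument shows that the positivity cone cannot have density below that of a half-space, with equality only for the half-plane. If I had to supply an argument myself, I would compare the Dirichlet eigenvalue of the spherical cap $\{u>0\}\cap\partial B_1$, which must equal $n-1$ for a $1$-homogeneous harmonic function, with that of a hemisphere via Faber--Krahn on the sphere. This gives cap measure at least that of a hemisphere, with equality only for the hemisphere.

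For the gap, I would argue by contradiction and compactness. Suppose $u_j$ are $1$-homogeneous global minimizers of $E_{Q_0}$ (normalize $Q_0=1$ by scaling $u$), none of them half-plane solutions, with $W_{u_j}(0,1)\to \omega_n/2$. By the uniform Lipschitz bound for minimizers, and since $u_j(0)=0$, \Cref{thm:compactness} (in the Euclidean case, \cite{AC81}) gives a subsequence converging locally uniformly to a $1$-homogeneous global minimizer $u_\infty$ with $0\in\partial\{u_\infty>0\}$. The Dirichlet energies and the volumes of the positive phases converge, so $W_{u_\infty}(0,1)=\omega_n/2$.

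The rigidity case of the first part then shows that $u_\infty$ is a half-plane solution. By the flatness-implies-regularity $\varepsilon$-regularity theorem of \cite{AC81}, for large $j$ the free boundary of $u_j$ is $C^{1,\alpha}$ near $0$. Since $u_j$ is homogeneous, its free boundary is a cone, and a cone that is $C^1$ at its vertex is a hyperplane. So $u_j$ is itself a half-plane solution, a contradiction.

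The main obstacle is the rigidity step, namely that density exactly $\omega_n/2$ forces a half-plane. I would cite the classical result, via the spherical Faber--Krahn equality case, rather than reprove it.
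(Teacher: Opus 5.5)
Your argument is correct. The paper itself gives no proof of this proposition; it only cites \cite{AC81,CK98,Wei99}, so the comparison is with the classical argument.

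Your treatment of the gap matches the standard one. That covers the contradiction and compactness for $1$-homogeneous minimizers under the universal Lipschitz bound, passing $W_u(0,1)=Q_0^2|\{u>0\}\cap B_1|$ to the limit, flatness-implies-$C^{1,\alpha}$ from \cite{AC81}, and the observation that a conical free boundary which is $C^1$ at the vertex is a hyperplane.

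Where you genuinely differ is the lower bound and its equality case. The argument one usually finds in the Weiss framework stays with the monotonicity formula:
\begin{itemize}
\item Regular free-boundary points carry full $\mathcal H^{n-1}$-measure \cite{AC81}, and at each of them the density is exactly $Q_0^2\omega_n/2$.
\item For such a point $y\neq0$ of the cone, the blow-down of $u$ at $y$ is $u$ itself. Monotonicity then gives $Q_0^2\omega_n/2=W_u(y,0^+)\le W_u(y,\infty)=W_u(0,1)$; equivalently, use upper semicontinuity of $y\mapsto W_u(y,0^+)$.
\item In the equality case $W_u(y,\cdot)$ is constant, so $u$ is homogeneous about every regular point. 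Hence $u$ is translation invariant along their span, which is $(n-1)$-dimensional, and $u$ is a half-plane solution. This is the Euclidean version of the mechanism in \Cref{lemma:addingsym}.
\end{itemize}

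Your Sperner--Faber--Krahn route gains generality for the inequality. It uses only that $u$ is nonnegative, $1$-homogeneous and harmonic where positive, through $\lambda_1(\{u>0\}\cap\partial B_1)\le n-1$ from the Rayleigh quotient of the trace. It needs neither minimality nor the partial regularity of \cite{AC81}. The price is that rigidity now rests on the equality case of Faber--Krahn on the sphere for arbitrary open sets. That holds up to polar sets, which is harmless here: the trace is then the first eigenfunction of a hemisphere, so $u=c(x\cdot e)^+$, and minimality enters only to fix $c=Q_0$. The monotonicity route makes the rigidity elementary and reuses the same tool that drives the rest of the theory.

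Two minor points.
\begin{itemize}
\item $\{u>0\}\cap\partial B_1$ is an arbitrary open subset of the sphere, not a cap, and you only need $\lambda_1\le n-1$, not equality.
\item In the final step, say why exactly one side of the hyperplane is the zero phase before concluding $u_j=Q_0(x\cdot e)^+$. This follows from the flatness, or from the positive density of $\{u_j=0\}$ at free-boundary points, which excludes $c|x\cdot e|$.
\end{itemize}
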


We now prove the Reifenberg property of the free boundary.

\begin{proposition}[Reifenberg property]\label{prop:reifenbergprop}
Given $\varepsilon,L,\Lambda>0$ and $\alpha\in(0,1)$, there exists
$\delta=\delta(n,L,\Lambda,\alpha,\varepsilon)>0$ with the following
property.
Let $u$ be an $L$-Lipschitz minimizing solution of $E_Q$ on $B_4(x)$, with $x\in\partial\{u>0\}$, in an $\mathsf{ncRCD}(-\delta,n)$ space $(X,d,\mathcal{H}^n)$.
Assume that
$\|Q\|_{C^0}+\|Q^{-1}\|_{C^0}+[Q]_\alpha\leq\Lambda$ on $B_4(x)$, and $B_2(x)$ is $\delta$-regular.
Then every $B_r(y)\subset B_2(x)$,
with $y\in\partial\{u>0\}\cap B_1(x)$ and $r\in(0,1)$, is
$\varepsilon$-regular.
\end{proposition}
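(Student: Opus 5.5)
We argue by contradiction and compactness, following the mechanism of \cite{mondino_weak_2025}: the density gap of \Cref{prop:densitygap}, combined with the fact that the Weiss density at the top scale is close to $\tfrac12 Q^2\omega_n$ (\Cref{lem:density_regular}), forces every small-scale blow-up limit to be a half-plane solution on $\mathbb R^n$.

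Suppose the statement fails for some $\varepsilon>0$. Then there are $\delta_i\to0$, $\mathsf{ncRCD}(-\delta_i,n)$ spaces $X_i$, minimizers $u_i$ on $B_4(x_i)$ with $B_2(x_i)$ $\delta_i$-regular, and points $y_i\in\partial\{u_i>0\}\cap B_1(x_i)$ with radii $r_i\in(0,1)$ such that $B_{r_i}(y_i)$ is not $\varepsilon$-regular. By \Cref{prop:basicpropstrata}(iii) and continuity, balls comparable to $B_1$ near $x_i$ are eventually regular. Hence we may assume $r_i\to0$, after first replacing $r_i$ by the largest non-$\varepsilon$-regular scale at $y_i$ in a dyadic sense: every $B_s(y_i)$ with $s\in[2r_i,1]$ is $\varepsilon$-regular, up to a harmless modification of $\varepsilon$.

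The first step is to control the Weiss density at scale $r_i$ from above. By \Cref{lem:density_regular}, applied at centers near $x_i$ and transferred to $y_i$ using the Lipschitz bounds and the convergence of $B_2(x_i)$ to a Euclidean half-plane configuration, there is $s\in(1/2,1)$ with
\[
\frac{W_{u_i}(y_i,s)}{Q_i^2(y_i)\omega_n}\leq \frac12+\varepsilon_i',\qquad \varepsilon_i'\to0.
\]
We then propagate this upper bound down to scale $r_i$. The $\mathsf{ncRCD}(-\delta_i,n)$ spaces become almost Euclidean at every scale in $[r_i,1]$, and the Weiss formula on cones \cite[Lemma A.3]{CZZ22} (or \Cref{thm:orginal_weiss} in $\mathbb R^n$) is monotone there. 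So I would argue by a second compactness at each intermediate scale: the almost-monotonicity defect of $W$ between scales $s$ and $s/2$ tends to zero uniformly as the ball approaches a Euclidean ball. This yields $W_{u_i}(y_i,\rho)\leq Q^2\omega_n(\tfrac12+\varepsilon_n/2)$ for $\rho\in[r_i,1]$.

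This propagation is the main obstacle. The errors must be summable across unboundedly many scales, but compactness alone only gives smallness at each scale, not summability. My first attempt would avoid summation altogether. At the first scale where the density bound fails, blow up. The rescaled spaces converge to $\mathbb R^n$, since all larger scales are $\varepsilon$-regular and $\delta_i\to0$, so the limit is a global minimizer on $\mathbb R^n$ with constant $Q$. The Euclidean monotonicity of the limit, together with density at most $\tfrac12(1+\varepsilon_n)$ at large radii (inherited from the larger good scales), forces its blow-down, and hence every blow-up, to be homogeneous with density below the gap. By \Cref{prop:densitygap} each such homogeneous minimizer is a half-plane solution, so the limit is flat. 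Since $W$ is monotone in $\mathbb R^n$, its density is then pinched to exactly $\tfrac12$ at all scales, and the limit is itself a half-plane solution.

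In the contradiction, rescaling $B_{r_i}(y_i)$ to unit size therefore gives convergence to a half-plane solution on $\mathbb R^n$. The ambient space converges to $\mathbb R^n$ because $B_{2r_i}(y_i)$ is $\varepsilon$-regular and, for $\varepsilon$ small, almost-Euclidean balls stay almost Euclidean at comparable scales. By \Cref{thm:compactness} the functions converge uniformly. Then $B_{r_i}(y_i)$ is $\varepsilon$-regular for large $i$, contradicting the choice of $r_i$.
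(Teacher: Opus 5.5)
Your argument is correct, and its core is the paper's proof. Both contradict at essentially the largest non-$\varepsilon$-regular scale, rescale so that the spaces converge to $\mathbb R^n$ and the coefficients freeze, and obtain a global constant-coefficient minimizer $u_\infty$ whose Weiss density is close to $\tfrac12 Q_\infty^2\omega_n$ at all large radii. Euclidean monotonicity together with \Cref{prop:densitygap}, applied to the blow-down and the blow-up at $0$, then forces $W_{u_\infty}(0,\cdot)\equiv\tfrac12 Q_\infty^2\omega_n$, so $u_\infty$ is a half-plane solution.

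The difference is how the large-radius density bound is obtained.

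The paper gets it directly from the choice of $r_j$. Every scale above $r_j$ is $\varepsilon$-regular, so \Cref{lem:density_regular}, applied to the limit at each scale $R\ge 1$, gives a radius $s_R\in(R,2R)$ with density within $\varepsilon'(\varepsilon)$ of $\tfrac12$. Euclidean monotonicity does the rest, and nothing is propagated across scales.

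You use \Cref{lem:density_regular} only at the top scale and push the bound down with a second ``first failing scale'' blow-up.
\begin{itemize}
\item This inner argument is sound. You are right to drop the per-scale almost-monotonicity idea of your third paragraph, for exactly the summability reason you give.
\item It gives density control at every scale independently of regularity, so your outer selection of the largest bad scale becomes redundant.
\item The price is a second compactness argument and some care. Since $W(y,\cdot)$ is only defined, and only converges, for a.e.\ radius, the first failing scale must be defined robustly, e.g.\ via averages of $W$ over $(\rho,2\rho)$, or in the ``there exists $s\in(\rho,2\rho)$'' form of \Cref{lem:density_regular}.
\item Your threshold $Q^2\omega_n(\tfrac12+\varepsilon_n/2)$ equals $Q^2\frac{\omega_n}{2}(1+\varepsilon_n)$, but \Cref{prop:densitygap} needs a strict inequality, so lower it slightly.
\end{itemize}

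One justification in your last paragraph needs correcting. $\varepsilon$-regularity of $B_{2r_i}(y_i)$ with $\varepsilon$ fixed only makes the rescaled limit $C\varepsilon$-close to Euclidean. That is not enough to invoke the Euclidean Weiss formula or the density gap. Convergence of the rescaled spaces to $\mathbb R^n$ comes from $\delta_i\to 0$, through volume convergence, Bishop--Gromov and almost-volume rigidity (the noncollapsed Reifenberg theorem). You state this correctly in your fourth paragraph, and it is how the paper argues.
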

\begin{proof}
This property is given in \cite{CZZ22}. As mentioned above, we provide
here a proof based on the arguments of \cite{mondino_weak_2025}.
It is enough to prove the result for $\varepsilon$ sufficiently small. Suppose the statement is false. Then there exist $\delta_j\to0$, pointed $\mathsf{ncRCD}(-\delta_j,n)$ spaces $(X_j,d_j,\mathcal H^n,x_j)$, coefficients $Q_j$, and minimizing solutions $u_j$ satisfying the uniform assumptions above, such that $B_2(x_j)$ is $\delta_j$-regular, but there exist $y_j\in\partial\{u_j>0\}\cap B_1(x_j)$ and $r_j\in(0,1)$ such that
\begin{enumerate}[topsep=0pt]
    \item $B_r(y_j)$ is $\varepsilon$-regular for every $r_j<r<1$;
    \item $B_{r_j/2}(y_j)$ is not $\varepsilon$-regular.
\end{enumerate}
\par
Clearly we have $r_j\to 0$. Rescaling by $(u,d,Q_j)\mapsto (r_j^{-1}u,\hat{d}_j:=r_j^{-1}d_j,Q_j)$, and not changing the notation, we obtain balls $B_{1/2}(y_j)$ that are not $\varepsilon$-regular, while $B_r(y_j)$ is $\varepsilon$-regular for all $1<r<r_j^{-1}\to\infty$. 
We also have that $B_{r_j^{-1}}(y_j)\subset B_{2r_j^{-1}}(x_j)$ is $2r_j^{-1}\delta_j$-GH close to a ball in $\mathbb R^n$. Since $\delta_j\to0$ and $r_j^{-1}\to\infty$, the Reifenberg theorem for noncollapsed spaces (see \cite{DPG2018} and \cite{KapovitchMondino2021} for the nonsmooth case) implies that $(X_j, \hat{d}_j, y_j)$ converge in pmGH to $(\mathbb{R}^n,0)$.
Regarding the coefficients, with respect to the rescaled metric, for every fixed $R<3r_j^{-1}$, we have $[Q_j]_{\alpha,B_R(y_j)}\leq \Lambda r_j^\alpha
\to0$. Therefore, after passing to a subsequence, we may assume that $Q_j$ converge to some constant $Q_\infty\in[\Lambda^{-1},\Lambda]$ locally uniformly.
Hence, by \Cref{thm:compactness}, after
passing to a further subsequence, $u_j$ converges locally uniformly to
a global minimizer $u_\infty:\mathbb R^n\to[0,\infty)$ of the constant-coefficient problem $E_{Q_\infty}$, with $0\in\partial\{u_\infty>0\}$.

By \Cref{lem:density_regular}, if $\varepsilon$ is sufficiently small, there exists
$\varepsilon'=\varepsilon'(\varepsilon\mid n,L,\Lambda,\alpha)$, with
$\varepsilon'\to0$ as $\varepsilon\to0$, such that
\[
\frac12-\varepsilon'
\leq
\frac{W_{u_\infty}(0,r)}{Q_\infty^2\omega_n}
\leq
\frac12+\varepsilon'
\qquad\text{for every }r>2.
\]
Indeed, applying \Cref{lem:density_regular} to the rescalings of $u_\infty$ at every scale $R\geq1$ yields a good radius $s_R\in(R,2R)$. The claimed bounds then follow from the monotonicity of the Euclidean Weiss functional. Consider now a blow-down of $u_\infty$ at the origin, denoted by $v:\mathbb R^n\to[0,\infty)$. Since $Q_\infty$ is constant, by \Cref{thm:orginal_weiss} $v$ is a $1$-homogeneous global minimizer of $E_{Q_\infty}$, and
\[
W_v(0,1)
=
\lim_{r\to\infty}W_{u_\infty}(0,r)
\leq
Q_\infty^2\omega_n\left(\frac12+\varepsilon'\right).
\]
Choosing $\varepsilon$ sufficiently small so that $2\varepsilon'<\varepsilon_n$,
where $\varepsilon_n$ is the constant in \Cref{prop:densitygap}, we have
\[
W_v(0,1)
<
Q_\infty^2\frac{\omega_n}{2}(1+\varepsilon_n).
\]
Hence, by \Cref{prop:densitygap}, $v$ is a regular half-plane solution.
Consequently,
\[
\lim_{r\to\infty}W_{u_\infty}(0,r)
=W_v(0,1)
=Q_\infty^2\frac{\omega_n}{2}.
\]
On the other hand, let $u_{\infty,0}$ be a blow-up of $u_\infty$ at $0$.
Then again by \Cref{thm:orginal_weiss}, $u_{\infty,0}$ is a $1$-homogeneous global minimizer of
$E_{Q_\infty}$.
Hence by \Cref{prop:densitygap}, we have
\[
Q_\infty^2\frac{\omega_n}{2}
\leq W_{u_{\infty,0}}(0,1)
=
\lim_{r\to0}W_{u_\infty}(0,r).
\]
Since $W_{u_\infty}(0,\cdot)$ is monotone, we now have
\[
Q_\infty^2\frac{\omega_n}{2}
\leq
\lim_{r\to0}W_{u_\infty}(0,r)
\leq
\lim_{r\to\infty}W_{u_\infty}(0,r)
=
Q_\infty^2\frac{\omega_n}{2}.
\]
Thus $W_{u_\infty}(0,\cdot)$ is constant.
By \Cref{thm:orginal_weiss} with constant $Q_\infty$, $u_\infty$ is $1$-homogeneous, and by
\Cref{prop:densitygap} again, it is a regular half-plane solution, with slope
$Q_\infty$. Therefore, by the convergence above,
$B_{1/2}(y_j)$ is $\varepsilon$-regular for all sufficiently large $j$,
contradicting item~2.
\end{proof}

\subsection{Structure and Regularity of the Strata}\label{subsec:strata_regularity}

We now study the structure of the strata and apply the Reifenberg property to obtain regularity of the free boundary. We first show that, when the ambient space has no boundary, $\operatorname{Sing}(u)\subset\mathcal S^{n-3} =\mathcal S^{n-2}$. We then prove that almost $(n-2)$-symmetric balls are almost regular. Combined with the Reifenberg property, this yields a Hölder manifold structure on the complement of $\mathcal S^{n-3}_\delta$ in the free boundary, for some $\delta>0$ depending on the local data.

\begin{proposition}\label{prop:structurestrata}
Let $(X,d,\mathcal{H}^n)$ be an $\mathsf{ncRCD}(K,n)$ space without boundary, and $u:\Omega\subset X\to [0,\infty)$ a minimizer of $E_Q$.
The stratification satisfies
\begin{enumerate}[nosep]
    \item $\cS^{n-1} = \cS^n = \partial\{u>0\}\cap\Omega$;
    \item $\mathrm{Sing}(u) \subset \cS^{n-3}=\cS^{n-2}$.
\end{enumerate}
\end{proposition}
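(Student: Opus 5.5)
The plan is to work entirely through the blow-up characterization in \Cref{prop:basicpropstrata}(ii): a point $x$ lies in $\mathcal S^k$ if and only if no blow-up of $u$ at $x$ is $(k+1)$-symmetric. Every claimed inclusion then becomes a statement about $1$-homogeneous global minimizers on $\mathsf{ncRCD}(0,n)$ cones, provided by \Cref{cor:blowupmaps}.

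For item 1, the inclusion $\mathcal S^{n-1}\subset\mathcal S^n$ is trivial, and $\mathcal S^n$ is contained in the free boundary by definition. It remains to show that every free boundary point lies in $\mathcal S^{n-1}$, i.e.\ that no blow-up $u_x$ at $x\in\partial\{u>0\}$ is $n$-symmetric. If it were, the tangent cone would be $\mathbb R^n$ and $u_x$ would be $1$-homogeneous and invariant in all directions, hence constant. Since $u_x(0)=0$, this forces $u_x\equiv0$, contradicting the nontriviality in \Cref{cor:blowupmaps}(3).

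For item 2 the key step is the following rigidity claim. Let $u_x$ be an $(n-2)$-symmetric blow-up of a minimizer at a free boundary point, living on $\mathbb R^{n-2}\times C(Z)$. Then $C(Z)$ is $\mathbb R^2$ and $u_x$ is a half-plane solution.

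To prove the claim, write $u_x(a,w)=v(w)$. By $\mathbb R^{n-2}$-invariance and the product structure of \Cref{lem:ncRCD_cone_struct}, $v$ is a $1$-homogeneous global minimizer on the $2$-dimensional $\mathsf{ncRCD}(0,2)$ cone $C(Z)$, with the vertex on its free boundary. I would check minimality by restricting competitors on product cylinders $B^{n-2}_R\times B_\rho$ and letting $R\to\infty$; this is the usual dimension reduction. Here $Z$ is a circle of length $\le2\pi$, or, if $X$ has boundary, an interval.

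The absence of boundary in $X$ should exclude the interval case. A tangent cone splitting $\mathbb R^{n-2}\times C([0,\ell])$ would put $x$ in $\mathcal S^{n-1}\setminus\mathcal S^{n-2}$ of the space, hence in $\partial X$; this needs some care, but it follows from the definition of $\partial X$ in \cite{DPG2018,BrueNaberSemola2022}. So $Z=S^1_\ell$ with $\ell\le2\pi$. A $1$-homogeneous harmonic function on the positive set of $C(S^1_\ell)$ vanishing on the boundary has the form $c\,r\sin(\theta-\theta_0)$ on an arc of angular length exactly $\pi$. The free boundary condition $|\nabla v|=Q$ on both rays, together with minimality (or the cited result of \cite{AL15}), forbids the vertex from being a free boundary point unless $\ell=2\pi$. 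In that case the cone is $\mathbb R^2$ and $v$ is a half-plane solution. This is the step I expect to be the main obstacle: justifying the nonexistence of such minimizers on a genuine cone of angle $<2\pi$, including the degenerate case where the positive set has two components. For that I would rely on \cite{AL15}, as indicated after Theorem~1.9 of \cite{CZZ22}.

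Given the claim, an $(n-2)$-symmetric blow-up is in fact $(n-1)$-symmetric on $\mathbb R^n$. Therefore $\mathcal S^{n-2}\subset\mathcal S^{n-3}$ via (ii), and the reverse inclusion is item (i) of \Cref{prop:basicpropstrata}, giving $\mathcal S^{n-3}=\mathcal S^{n-2}$.

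For $\mathrm{Sing}(u)\subset\mathcal S^{n-2}$, take $x\notin\mathcal S^{n-2}$. Then some blow-up is $(n-1)$-symmetric, so it is a half-plane solution on $\mathbb R^n$, and in particular the tangent cone is $\mathbb R^n$. To conclude that \emph{every} blow-up (and tangent cone) is of this form, I would note that the existence of one such blow-up means that some small ball $B_s(x)$ is $\delta$-regular. \Cref{prop:reifenbergprop}, applied after rescaling (the lower Ricci bound becomes $-\delta$), then makes every smaller ball at $x$ $\varepsilon$-regular for arbitrarily small $\varepsilon$. Hence all blow-ups are half-planes on $\mathbb R^n$, and $x$ is regular.
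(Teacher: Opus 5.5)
Your argument follows the paper's proof: the blow-up characterization from \Cref{prop:basicpropstrata}(ii), dimension reduction of an $(n-2)$-symmetric blow-up to a $1$-homogeneous minimizer on a $2$-dimensional cone, \cite{AL15} for cone angles below $2\pi$ and \cite{AC81} in $\mathbb R^2$, and \Cref{prop:reifenbergprop} to pass from one half-plane blow-up to all blow-ups. The only reorganization is that you obtain the $(n-1)$-symmetric case from the $2$-dimensional classification, viewing $\mathbb R^{n-1}\times C(Z_1)$ as $\mathbb R^{n-2}\times C(Z')$. The paper instead treats that case directly, noting that the one-dimensional factor must be a line. Both work.

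One step fails as written: the exclusion of an interval link. You claim that a tangent cone $\mathbb R^{n-2}\times C([0,\ell])$ at $x$ puts $x$ in $\mathcal S^{n-1}\setminus\mathcal S^{n-2}$ of $X$. This holds only for $\ell=\pi$, where the cone is $\mathbb R^{n-1}\times[0,\infty)$. For $\ell<\pi$ the sector $C([0,\ell])$ contains no line, so this tangent cone splits off only $\mathbb R^{n-2}$. Its volume density $\ell/(2\pi)<1/2$ is shared by all tangent cones at $x$, so none of them splits off $\mathbb R^{n-1}$, and in fact $x\in\mathcal S^{n-2}$ of $X$. The definition $\partial X=\overline{\mathcal S^{n-1}\setminus\mathcal S^{n-2}}$ alone also does not show that such points accumulate at $x$.

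The fix, which is what the paper does, is to locate the boundary in the cone rather than at the point. By \cite[Theorem~1.6]{BrueNaberSemola2022}, boundary-freeness is preserved under pmGH limits. Applied to the rescalings of $X$, this shows every tangent cone of $X$ is boundary free. But $\mathbb R^{n-2}\times C([0,\ell])$ has nonempty boundary: points on its two faces, away from the edge $\mathbb R^{n-2}\times\{z_*\}$, have tangent cone $\mathbb R^{n-1}\times[0,\infty)$. With this replacement the rest of your proof goes through.
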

\begin{proof}
As commented above, the first statement follows from \Cref{prop:basicpropstrata} and \Cref{cor:blowupmaps}, since blowups cannot be $n$-symmetric, namely, trivial constant functions.
\par
 Here we prove the second statement. 
Since having an empty boundary is preserved under pmGH convergence for $\mathsf{ncRCD}$ spaces \cite[Theorem~1.6]{BrueNaberSemola2022}, every tangent cone of $X$ also has empty boundary. 
Now, suppose $x\in \mathcal{S}^{n-1}\backslash\mathcal S^{n-2}$. By \Cref{prop:basicpropstrata}, there exists
an $(n-1)$-symmetric blow-up $u_x$ at $x$.
The corresponding tangent cone at $x$ splits off an $\mathbb R^{n-1}$-factor. Since this cone has no boundary, the remaining one-dimensional factor is a line, and hence the tangent cone is $\mathbb R^n$.
Consequently, $u_x$ is an $(n-1)$-symmetric global minimizer of $E_{Q(x)}$ on $\mathbb{R}^n$ and hence, up to rotation, the regular half-plane solution.
\par
We next show that every blow-up of $u$ at $x$ is $(n-1)$-symmetric.
Fix $\varepsilon>0$, and let $\delta>0$ be given by \Cref{prop:reifenbergprop}, using fixed local bounds for $u$ and $Q$. By the convergence to the half-plane blow-up $u_x$, we can choose $r>0$ sufficiently small along the sequence defining this blow-up so that, after rescaling $(d,u)$ to $(r^{-1}d,r^{-1}u)$, the ball $B_2(x)$ is $\delta$ regular and all the hypotheses of \Cref{prop:reifenbergprop} are satisfied. In particular, we choose $r$ so that $B_{4r}(x)\subset\Omega$ and $r^2K\geq-\delta$. The proposition then implies that, in the original metric, $B_s(x)$ is $\varepsilon$-regular for every $s\in(0,r)$. Since $\varepsilon>0$ is arbitrary, every tangent cone at $x$ is $\mathbb R^n$ and every blow-up of $u$ at $x$ is $(n-1)$-symmetric. Thus $x$ is a regular point.
\par
It remains to show that
$\mathcal S^{n-2}\setminus\mathcal S^{n-3}$ is empty. Suppose, by contradiction, that there is $x\in\mathcal S^{n-2}\setminus\mathcal S^{n-3}$. Then there exists an $(n-2)$-symmetric blow-up $u_x$ of $u$ at $x$. In particular, $u_x$ is a $1$-homogeneous global minimizer of $E_{Q(x)}$ on a tangent cone
\[
C_xX\simeq\mathbb R^{n-2}\times C(Z),
\]
and it is invariant along the $\mathbb R^{n-2}$-factor. Moreover, $Z$
is a one-dimensional $\mathsf{ncRCD}$ space with diameter at most
$\pi$ (\cite[Corollary~1.3]{Ketterer2015}). Since $X$ has no boundary, $Z$ is a circle $S_a$ of radius
$a\leq1$ \cite{kitabeppu2016characterizationlowdimensionalrcdkn}. As in \cite[Lemma 8.15]{CZZ22}, compare also \cite{Wei99}, the restriction
$\bar u:=u_x|_{\{0\}\times C(S_a)}$ is a $1$-homogeneous global
minimizer of $E_{Q(x)}$ on $C(S_a)$.
We distinguish the following two cases.
\begin{itemize}[nosep]
    \item If $a=1$, then $C(S_a)\simeq\mathbb R^2$. By \cite{AC81}, $\bar u$
is a regular half-plane solution, with slope $Q(x)$ (see also \cite[Proposition 9.13]{Vel23}). Hence $\bar u$,
and therefore $u_x$, is invariant along one additional Euclidean
direction. Thus $u_x$ is $(n-1)$-symmetric, contradicting
$x\in\mathcal S^{n-2}$.
    \item If $a<1$, then by \cite{AL15} the vertex of $C(S_a)$ cannot belong to
the free boundary of $\bar u$. This is a contradiction to \Cref{cor:blowupmaps}.
\end{itemize}
We conclude that
$\mathcal S^{n-2}\setminus\mathcal S^{n-3}=\varnothing$ and hence $\mathcal{S}^{n-2}=\mathcal{S}^{n-3}$.
Together with the paragraph above, this shows that every point outside
$\mathcal S^{n-3}$ is regular, and hence $\operatorname{Sing}(u)\subset\mathcal S^{n-3}$.
\end{proof}

The following proposition shows that almost $(n-2)$-symmetric balls are almost regular.

\begin{proposition}\label{prop:almostregularpoints}
Given $\varepsilon,\mathrm{v},L,\Lambda>0$ and $\alpha\in(0,1)$, there exists
$\delta=\delta(n,r^2K,\mathrm{v},L,\Lambda,\alpha,\varepsilon)>0$ with the following property. Let $(X,d,\mathcal H^n)$ be an $\mathsf{ncRCD}(K,n)$ space without boundary, and let $u:B_{2r}(x)\to[0,\infty)$ be an $L$-Lipschitz minimizing solution of $E_Q$, with $x\in\partial\{u>0\}$ and $\|Q\|_{C^0}+\|Q^{-1}\|_{C^0}+r^\alpha[Q]_\alpha\leq\Lambda$ on $B_{2r}(x)$. If $B_r(x)$ is $(n-2,\delta)$-symmetric and $\mathcal H^n(B_r(x))\geq \rv r^n$, then $B_r(x)$ is $\varepsilon$-regular.
\end{proposition}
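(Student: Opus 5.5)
We argue by contradiction and compactness, following the pattern of \Cref{prop:almostsymualmostfree} and \Cref{lem:density_regular}. After rescaling by \Cref{rmk:rescaling} we may take $r=1$. Suppose the statement fails for some $\varepsilon>0$. Then there are $\delta_i\to0$, $\mathsf{ncRCD}(K,n)$ spaces $X_i$ without boundary with $\mathcal H^n(B_1(x_i))\geq \rv$, coefficients $Q_i$ with the uniform bounds, and $L$-Lipschitz minimizers $u_i$ on $B_2(x_i)$ with $x_i\in\pt\{u_i>0\}$. Each $B_1(x_i)$ is $(n-2,\delta_i)$-symmetric but not $\varepsilon$-regular.

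Passing to a subsequence, $(X_i,x_i)\to(X_\infty,x_\infty)$ in pmGH. By the noncollapsing assumption and volume convergence, the limit is an $\mathsf{ncRCD}(K,n)$ space. Moreover $Q_i\to Q_\infty$ uniformly by Arzelà–Ascoli, since the $C^\alpha$ bound holds. \Cref{thm:compactness} then gives $u_i\to u_\infty$ uniformly, where $u_\infty$ is a minimizer of $E_{Q_\infty}$ on $B_2(x_\infty)$. The free boundaries also converge, so $x_\infty\in\pt\{u_\infty>0\}$.

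By \Cref{prop:basicpropstrata}(iii)(a), $B_1(x_\infty)$ is $(n-2)$-symmetric. That is, there is an isometry $F:B_1(0,z_*)\subset\mathbb R^{n-2}\times C(Z)\to B_1(x_\infty)$ with $u_\infty\circ F$ $1$-homogeneous and $\mathbb R^{n-2}$-invariant. Pulling back, $u_\infty\circ F$ restricted to the unit ball is a minimizer of $E_{Q_\infty}$. Its coefficient is no longer constant, so we pass to a blow-up at the vertex. Because $u_\infty\circ F$ is already homogeneous and the cone is self-similar, this blow-up coincides with the $1$-homogeneous extension $\bar u$ to the whole cone, as in the proof of \Cref{lem:density_regular}. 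Hence $\bar u$ is a global minimizer of $E_{Q_\infty(x_\infty)}$ on $\mathbb R^{n-2}\times C(Z)$ that is $\mathbb R^{n-2}$-invariant, with the vertex in its free boundary. One should check that the $Q$-variation does not spoil minimality of the extension; this follows because the rescalings converge to it and \Cref{cor:blowupmaps} applies.

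The main obstacle is showing $C(Z)=\mathbb R^2$. For this we need that the limit cone model has no boundary. Absence of boundary is stable under noncollapsed pmGH convergence and passage to tangent cones \cite[Theorem~1.6]{BrueNaberSemola2022}. Hence $X_\infty$, and therefore the cone $\mathbb R^{n-2}\times C(Z)$, which is isometric to $B_1(x_\infty)$ and hence equal to the tangent cone at $x_\infty$, has empty boundary. So $Z$ is a circle $S_a$ with $a\le1$, as in \Cref{prop:structurestrata}. We then run the two cases of that proof on $\bar u$ restricted to $\{0\}\times C(S_a)$:
\begin{itemize}
\item if $a<1$, \cite{AL15} forbids the vertex from lying on the free boundary, a contradiction;
\item hence $a=1$, and by \cite{AC81} $\bar u$ is a half-plane solution on $\mathbb R^n$.
\end{itemize}
Consequently $B_1(x_\infty)$ is isometric to $B_1(0^n)$, and $u_\infty\circ F=Q_\infty(x_\infty)y_n^+$ up to rotation.

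It follows that $B_1(x_\infty)$ is $(n-1)$-symmetric with Euclidean model. The uniform convergences $u_i\to u_\infty$ and $X_i\to X_\infty$ then show that $B_1(x_i)$ is $\varepsilon$-regular for all large $i$. This is the same $L^2$-closeness argument used in \Cref{prop:basicpropstrata}(ii), composing pGH approximations. It contradicts the choice of the sequence.
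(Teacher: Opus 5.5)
Your proposal is correct and follows the paper's proof essentially step for step. Both arguments run by contradiction and compactness, obtain $(n-2)$-symmetry of the limit ball from \Cref{prop:basicpropstrata}(iii), get boundary-freeness of the limit and its tangent cones from \cite[Theorem~1.6]{BrueNaberSemola2022}, blow up at the vertex to freeze the coefficient, and then use the circle-link classification of \Cref{prop:structurestrata} to force $\mathbb R^n$ and a half-plane solution; the only difference is that you write out the cases $a<1$ and $a=1$, where the paper simply invokes that classification.
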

\begin{proof}
By scaling, we may assume $r=1$. Suppose the statement is false. Then
there exist $\delta_i\to0$, boundary-free pointed $\mathsf{ncRCD}(K,n)$ spaces
$(X_i,d_i,x_i)$, coefficients $Q_i$, and minimizing solutions $u_i$
satisfying the uniform assumptions above, such that $B_1(x_i)$ is
$(n-2,\delta_i)$-symmetric but is not $\varepsilon$-regular. After passing to a subsequence, $X_i$ pmGH converge to some pointed $\mathsf{ncRCD}(K,n)$ space $(X_\infty, d_\infty, x_\infty)$, and $B_2(x_i)\to B_2(x_\infty)$. By volume convergence, the convergence is in fact
pmGH. Moreover, since each $X_i$ has empty boundary,
\cite[Theorem~1.6]{BrueNaberSemola2022} implies that $X_\infty$ has
empty boundary as well. The same applies to every tangent cone of
$X_\infty$, since the rescalings of $X_\infty$ are boundary free and
their pmGH limits are again boundary free by the same result. After passing to a further subsequence, the uniform H\"older bounds give
$Q_i\to Q_\infty$ uniformly on $B_2(x_\infty)$, and
\Cref{thm:compactness} gives $u_i\to u_\infty$ uniformly for some
$u_\infty$ minimizing $E_{Q_\infty}$ on $B_2(x_\infty)$.
Since $B_1(x_i)$ is $(n-2,\delta_i)$-symmetric and $\delta_i\to0$, \Cref{prop:basicpropstrata}(iii) implies that $B_1(x_\infty)$ is $(n-2)$-symmetric.
\par
Now, we blow up $u_\infty$ at $x_\infty$.
Since $B_1(x_\infty)$ is already conical and $u_\infty$ is already $1$-homogeneous, we get a global $(n-2)$-symmetric minimizer of $E_{Q_\infty(x_\infty)}$ on the whole boundary-free $(n-2)$-symmetric cone that agrees with $u_\infty$ on $B_1(x_\infty)$. We then apply the classification argument in \Cref{prop:structurestrata}, to conclude that the cone is $\mathbb R^n$ and $u_\infty$ is a regular half-plane solution on $B_1(x_\infty)$, with slope $Q_\infty(x_\infty)$. Thus $B_1(x_\infty)$ is regular. By the convergence above, for all sufficiently large $i$ the ball $B_1(x_i)$ is $\varepsilon$-regular, contradicting our assumption.
\end{proof}

\begin{corollary}\label{cor:Reifenberg}
Let $\rv,L,\Lambda>0$, and $\alpha\in(0,1)$.
There exists $\delta=\delta(n,\mathrm{v},L,\Lambda,\alpha)>0$ with the following property. Let $(X,d,\mathcal H^n)$ be an $\mathsf{ncRCD}(-\delta,n)$ space, and $u$ an $L$-Lipschitz minimizing solution of $E_Q$ on $B_4(x)\subset X$, with $x\in \pt\{u>0\}$. Assume that $\|Q\|_{C^0}+\|Q^{-1}\|_{C^0}+[Q]_\alpha\leq\Lambda$ on $B_4(x)$ and $\mathcal H^n(B_2(x))>\rv$.
If $B_2(x)$ is $(n-2,\delta)$-symmetric, then $\partial\{u>0\}\cap B_{1/2}(x)$ is an $(n-1)$-dimensional topological manifold with locally bi-H\"older coordinate charts.
\end{corollary}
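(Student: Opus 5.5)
The plan is to combine \Cref{prop:almostregularpoints} with the Reifenberg property \Cref{prop:reifenbergprop}, and then apply a Reifenberg-type parametrization theorem. The hypothesis is that $B_2(x)$ is $(n-2,\delta)$-symmetric, and we want regularity at every point $y\in\partial\{u>0\}\cap B_{1/2}(x)$.

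First, I will transfer the almost symmetry from $x$ to nearby points. Fix $\varepsilon>0$ small, to be chosen from the Reifenberg theorem. Let $\delta_1=\delta(n,\mathrm{v}',L,\Lambda,\alpha,\varepsilon)$ be given by \Cref{prop:reifenbergprop}, and let $\delta_2$ be given by \Cref{prop:almostregularpoints} applied with $\delta_1$ in place of $\varepsilon$. The ambient space may have boundary, so I will first argue that it does not near $x$. Indeed, a ball $(n-2,\delta)$-close to $\mathbb R^{n-2}\times C(Z)$ containing a free boundary point forces, by the limiting argument in \Cref{prop:almostregularpoints}, a limit cone with free boundary at the vertex. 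This rules out $C(Z)$ being a half-line/boundary cone only through the classification of \Cref{prop:structurestrata}, so I will likely need the boundary-free assumption implicitly; I would add it or handle the boundary case via \cite{BrueNaberSemola2022}.

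Next, for each $y\in\partial\{u>0\}\cap B_{1/2}(x)$, I will show that $B_{1}(y)$ is $(n-2,\delta_2)$-symmetric once $\delta$ is small. This is a compactness argument. If it failed, there would be $\delta_i\to0$ with $B_2(x_i)$ being $(n-2,\delta_i)$-symmetric while some $B_1(y_i)$ is not $(n-2,\delta_2)$-symmetric. In the limit, by \Cref{prop:basicpropstrata}(iii), $B_2(x_\infty)$ is $(n-2)$-symmetric, and the argument of \Cref{prop:almostregularpoints} shows that $u_\infty$ is the half-plane solution on $\mathbb R^n$. Every point of its free boundary then has $B_1(y)$ being exactly $(n-1)$-symmetric, which contradicts the choice of $y_i$ via \Cref{prop:basicpropstrata}(iii)(b). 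A simpler route is to run this same compactness directly and conclude that $B_2(x)$ itself is $\delta_1$-regular. I would then apply \Cref{prop:reifenbergprop} to get that every $B_r(y)$, with $y\in\partial\{u>0\}\cap B_1(x)$ and $r<1$, is $\varepsilon$-regular. Here $\mathsf{ncRCD}(-\delta,n)$ with $\delta\le\delta_1$ matches that proposition's curvature hypothesis.

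Finally, $\varepsilon$-regularity at all scales and all centers gives two facts. By \Cref{prop:almostsymualmostfree}, with $\varepsilon$ chosen relative to the desired $\eta'$, for every $y,r$ the set $\partial\{u>0\}\cap B_{r/2}(y)$ is $\eta' r$-GH close to a ball in $\mathbb R^{n-1}$. Moreover, the ambient ball is $\varepsilon r$-close to $\mathbb R^n$. Thus the free boundary is a metric space that is Reifenberg flat of dimension $n-1$ at all scales below $1/2$. I will then apply the intrinsic Reifenberg theorem of Cheeger--Colding (Appendix of \cite{CC96}, metric version) to obtain a bi-H\"older homeomorphism from a neighborhood of each point onto a ball in $\mathbb R^{n-1}$. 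I expect the main obstacle to be the intrinsic-versus-extrinsic flatness. Cheeger--Colding's theorem requires GH closeness of the balls in the induced metric of the set, together with a uniform choice of $\eta'$ small depending only on $n$. Restricted distance is the ambient one, so GH closeness as above suffices, but I must check that the center $y$ maps near $0$, so that the pointed version applies. I also need that $\varepsilon\to0$ forces $\eta'\to0$ uniformly, which is exactly the compactness in \Cref{prop:almostsymualmostfree}.
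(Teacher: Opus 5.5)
Your final (``simpler'') route is exactly the paper's proof. \Cref{prop:almostregularpoints} upgrades the $(n-2,\delta)$-symmetry of $B_2(x)$ to $\delta'$-regularity, and \Cref{prop:reifenbergprop} (also taking $\delta\le\delta'$ for the curvature hypothesis) makes every $B_r(y)$ with $y\in\partial\{u>0\}\cap B_1(x)$ and $r<1$ $\varepsilon'$-regular. \Cref{prop:almostsymualmostfree} with $k=n-1$ turns this into $r\eta$-GH flatness of $\partial\{u>0\}\cap B_{r/2}(y)$ in the restricted metric, and the Cheeger--Colding metric Reifenberg theorem (the relevant reference is the appendix of \cite{CC97}) concludes.

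Your worry about the boundary is well founded and is not a defect of your argument. The paper's proof also silently uses that $X$ has no boundary, because this is a hypothesis of \Cref{prop:almostregularpoints}. The statement genuinely needs it: for $u=Qy_1^+$ on the half-space $\{y_n\ge 0\}$, the ball $B_2(0)$ is exactly $(n-2)$-symmetric, yet $\partial\{u>0\}=\{y_1=0,\ y_n\ge 0\}$ is a manifold with boundary. So the right fix is to add the no-boundary hypothesis, which holds automatically for noncollapsed Ricci limits. Handling the boundary case via \cite{BrueNaberSemola2022} is not an option.
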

\begin{proof}
Fix $\varepsilon'>0$, to be chosen sufficiently small below. By \Cref{prop:almostregularpoints}, after choosing $\delta$ sufficiently small depending on $n,\mathrm{v},L,\Lambda,\alpha,\varepsilon'$, the $(n-2,\delta)$-symmetry of $B_2(x)$ implies that $B_2(x)$ is $\delta'$-regular, where $\delta'$ is sufficiently small for \Cref{prop:reifenbergprop} with $\varepsilon'$. Choosing also $\delta\leq\delta'$, we conclude that every $B_r(y)\subset B_2(x)$, with $y\in\partial\{u>0\}\cap B_1(x)$ and $r\in(0,1)$, is $\varepsilon'$-regular.
\par
By \Cref{prop:almostsymualmostfree}, if we choose $\varepsilon'$ sufficiently small, depending additionally on $\eta>0$, then for every such $y$ and $r$ the set $\partial\{u>0\}\cap B_{r/2}(y)$, equipped with the distance induced by $d$, is $r\eta$-GH close to $B_{r/2}(0^{n-1})$. Choosing $\eta$ dimensionally small enough, the metric Reifenberg theorem \cite{CC97}, applied locally to the free boundary, implies that $\partial\{u>0\}\cap B_{1/2}(x)$ is an $(n-1)$-dimensional topological manifold with locally bi-H\"older coordinate charts.
\end{proof}

\begin{proof}[Proof of \Cref{cor:intro_biholder}]
    Choose $\tilde{\mathcal{S}}=\mathcal{S}^{n-3}_\delta$, with $\delta$ as in \Cref{cor:Reifenberg}. Then, the Corollary follows from \Cref{cor:Reifenberg}, the Minkowski content bound \eqref{eq:vol1}, and the dimension bound \eqref{eq:vol3} in our main theorem for $k=n-3$.
\end{proof}

\subsubsection{Higher Regularity under Two-sided Ricci Bounds}

Here we briefly explore the case where $X$ is a noncollapsed limit with two-sided Ricci curvature bounds. In this case, the top stratum has higher regularity, and we have the following substitute for \Cref{prop:reifenbergprop}.

\begin{proposition}[$\varepsilon$-regularity under two-sided Ricci curvature bounds] \label{prop:epsregdoublericcibounds} Let $L,\Lambda>0$ and $\alpha\in(0,1)$. There exists $\varepsilon=\varepsilon(n,L,\Lambda,\alpha)>0$ such that the following holds. Suppose that $(X,d,\mathcal H^n)$ is a noncollapsed limit of $n$-dimensional manifolds with two-sided Ricci curvature bounds $|\Ric|\leq\varepsilon$.
Let $u$ be an $L$-Lipschitz minimizing solution of $E_Q$ on $B_4(x)\subset X$, with $x\in\partial\{u>0\}$.
Assume $\|Q\|_{C^0}+\|Q^{-1}\|_{C^0}+[Q]_\alpha\leq\Lambda$ on $B_4(x)$ and $B_4(x)$ is $\varepsilon$-regular.
Then $B_1(x)$ is a $C^{1,\gamma}$ manifold and $\partial\{u>0\}\cap B_1(x)$ is a $C^{1,\gamma}$ submanifold for some $\gamma=\gamma(n,L,\Lambda,\alpha)\in (0,1)$.
\end{proposition}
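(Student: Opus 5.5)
The proof splits into two independent regularity statements, one for the ambient space and one for the free boundary, each obtained from an $\varepsilon$-regularity theorem available in the smooth literature and transferred through the approximation.

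\emph{Ambient space.} Since $X$ is a noncollapsed limit with $|\Ric|\le\varepsilon$, and $B_4(x)$ is $\varepsilon$-regular, $B_4(x)$ is $\varepsilon$-GH close to a Euclidean ball. Anderson's $\varepsilon$-regularity for two-sided Ricci bounds (in the Cheeger--Colding form) then gives harmonic coordinates on $B_2(x)$. In these coordinates the approximating metrics $g_i$ are uniformly bounded in $C^{1,\gamma'}\cap W^{2,p}$ for every $\gamma'<1$ and $p<\infty$. Passing to the limit, $B_2(x)$ is a $C^{1,\gamma'}$ manifold, and its metric $g$ is $C^{0,\gamma'}$ close to $\delta_{ij}$ in the $C^0$ norm, with a $C^{\gamma'}$ bound.

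\emph{Free boundary.} In the harmonic coordinates, $u$ minimizes
\[
\int g^{ij}\partial_iu\,\partial_ju\,\sqrt{g}+Q^2\sqrt{g}\,1_{u>0},
\]
an Alt--Caffarelli type functional with Hölder continuous coefficient matrix $A=g^{ij}\sqrt{g}$ and Hölder weight $Q^2\sqrt g$. For such variable-coefficient one-phase problems there is an $\varepsilon$-regularity theorem, due to De Silva (viscosity improvement of flatness) and, for minimizers with Hölder coefficients, De Silva--Ferrari--Salsa and Spolaor--Velichkov type results. It says that if the solution is $\varepsilon$-flat in a ball, i.e. uniformly close to a one-plane solution $Q(x)(y\cdot\nu)^+$, and the coefficients are $\varepsilon$-close to constant, then the free boundary is $C^{1,\gamma}$ in the half ball.

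To verify flatness, I use that $\varepsilon$-regularity of $B_4(x)$ means $u\circ F$ is $L^2$-close to a half-plane solution. By \Cref{lem:almost_lip_sym_model} and the Lipschitz bound, this upgrades to uniform closeness, of size $C\varepsilon^{\theta}$, with $\theta=\theta(n)>0$ coming from interpolating the $L^2$ bound against the Lipschitz bound. Nondegeneracy then gives Hausdorff flatness of the free boundary.

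The main obstacle is transferring the flatness from the GH approximation $F$ to the harmonic chart. Since the chart is $C\varepsilon^{\theta}$-close to an isometry, composing the two maps costs only a small error. The remaining task is to check that the viscosity/minimizing framework applies to $u$ on the limit: $u$ is a minimizer there by \Cref{thm:compactness}. Alternatively, one can apply the estimates to the $u_i$ on the smooth $M_i$, where the $C^{1,\gamma}$ bounds are uniform, and pass to the limit.

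Finally, I fix $\gamma<\min(\gamma',\alpha/2,\gamma_{\mathrm{FB}})$, where $\gamma_{\mathrm{FB}}$ is the exponent from the free-boundary $\varepsilon$-regularity theorem. I then choose $\varepsilon$ small enough for all of the smallness conditions above and rescale from $B_2(x)$ to $B_1(x)$.
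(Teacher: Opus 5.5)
Your architecture matches the paper's: Anderson harmonic coordinates, then rewriting $E_Q$ as a divergence-form one-phase problem with matrix $g^{ij}\sqrt{\det g}$ and free-boundary weight $Q^2\sqrt{\det g}$, then an improvement-of-flatness theorem. There is, however, a genuine gap in the step ``choose $\varepsilon$ small enough for all of the smallness conditions above.'' The hypotheses give only $[Q]_\alpha\le\Lambda$ on $B_4(x)$, with no smallness. The parameter $\varepsilon$ controls the Ricci bound, the Gromov--Hausdorff closeness and the closeness of $u$ to a model, but it does not control the oscillation of $Q$. So at unit scale $Q^2\sqrt{\det g}$ is not close to a constant in $C^{0,\alpha}$, and no choice of $\varepsilon$ puts you in the perturbative regime of a De Silva-type theorem; rescaling from $B_2(x)$ to $B_1(x)$ changes nothing.

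The missing idea is a two-scale argument with the quantifiers in the right order. At each $y\in\partial\{u>0\}\cap B_1(x)$, make an affine change so that $g(y)=I$. Then blow up by a fixed factor $\rho=\rho(n,L,\Lambda,\alpha)$, setting $v(z)=u(\rho z)/(\rho Q(y))$, so that $[q_\rho]_{C^{0,\alpha}}\lesssim\rho^\alpha\Lambda$ and the seminorm of $G(\rho\,\cdot)$ fall below the threshold of the flatness theorem. Only after that do you choose $\varepsilon$, depending on $\rho$. Closeness of size $\eta(\varepsilon)$ at scale $4$ then becomes flatness of size $\eta(\varepsilon)/\rho$ at scale $\rho$, uniformly in $y$, and covering the free boundary in $B_1(x)$ by these balls finishes the proof. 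This is exactly what the paper does.

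A smaller gap is in your flatness check. By definition, an $\varepsilon$-regular ball is $L^2$-close to some $1$-homogeneous, $\mathbb R^{n-1}$-invariant function, that is, to $a z_n^+ + b z_n^-$ with arbitrary $a,b\ge0$, not to a half-plane solution. \Cref{lem:almost_lip_sym_model} plus interpolation gives uniform closeness to this model. But ruling out $b>0$ and forcing $a\approx Q(y)$, which the normalized flatness $(z_n-\eta)^+\le v\le(z_n+\eta)^+$ requires, uses the minimality of $u$. The paper does this by compactness via \Cref{prop:almostsymualmostfree} and \Cref{thm:compactness}: the zero set has positive density, and the free boundary condition fixes the slope. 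This yields only $\eta(\varepsilon)\to0$ rather than a rate $\varepsilon^\theta$, which is harmless.

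Finally, your ``alternative'' of working with approximating $u_i$ on $M_i$ is not available. In this proposition $u$ is only assumed to be a minimizer on $X$, not a limit solution, so working directly with $u$ in the limit harmonic chart is the correct route.
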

\begin{proof}
By hypothesis, we have $d_{\mathrm{GH}}(B_4(x),B_4(0^n))<C(n)\varepsilon$, while the approximating manifolds satisfy $|\Ric_{g_i}|_{g_i}\leq\varepsilon$. By volume convergence \cite{colding1997ricci}, for $\varepsilon$ sufficiently small the volume of $B_4(x)$ is arbitrarily close to that of the Euclidean ball. Hence the harmonic radius is uniformly bounded from below \cite{Anderson1990}. Moreover, after decreasing $\varepsilon$ if necessary, $B_3(x)$ admits a harmonic coordinate chart in which the metric coefficients are arbitrarily close to the Euclidean ones in $C^{1,\gamma}$, for some $\gamma(n,L,\Lambda,\alpha)\in(0,1)$. Let $U\subset\mathbb R^n$ denote the image of this chart, and write $u$ and $Q$ also for their coordinate representations. In these coordinates, the Alt--Caffarelli functional becomes
\[
\int_U
\left(
g^{ij}(z)\partial_i u(z)\partial_j u(z)
+Q^2(z)1_{\{u>0\}}(z)
\right)\sqrt{\det g(z)}\,dz.
\]
Setting the matrix $G:=(g^{ij})\sqrt{\det g}$, we obtain a uniformly elliptic $C^{1,\gamma}$ coefficient matrix, arbitrarily close to the identity. Minimizers of this functional are then viscosity solutions of the free boundary problem on $U \subset \mathbb{R}^n$ (see, for instance, \cite{Caffarelli1987} and its sequels):
\begin{equation}
    \begin{cases}
        \div(G\nabla u)=0
        &\text{in $\{u>0\}\cap U$};\\
        \langle G\nabla u,\nabla u\rangle
        =Q^2\sqrt{\det g}
        &\text{on $\partial\{u>0\}\cap U$}.
    \end{cases}
\end{equation}

Fix $y\in\partial\{u>0\}\cap B_1(x)$.
After an affine change of harmonic coordinates, we may
assume that $y=0$ and $g(0)=I$. We keep the notation $u,Q,g$ for their expressions in these coordinates and again set $G:=(g^{ij})\sqrt{\det g}$.
Choose a fixed small scale $\rho>0$ and set
\[
v(z):=\frac{u(\rho z)}{\rho Q(0)},
\qquad z\in B_1(0).
\]
The matrix and free boundary coefficient of the rescaled
problem are
\[
G_\rho(z):=G(\rho z),
\qquad
q_\rho(z):=\frac{Q^2(\rho z)}{Q^2(0)}
\sqrt{\det g(\rho z)}.
\]
They satisfy $G_\rho(0)=I$, $q_\rho(0)=1$, and
\[
[G_\rho]_{C^{0,1}(B_1)}\leq C\rho,
\qquad
[q_\rho]_{C^{0,\alpha}(B_1)}\leq C\rho^\alpha.
\]
We choose $\rho=\rho(n,L,\Lambda,\alpha)$ sufficiently small
to satisfy the coefficient assumptions of \cite[Theorem~6.2]{LT15}, uniformly in $y$. On the other hand, since $B_4(x)$ is
$(n-1,\varepsilon)$-symmetric, by
\Cref{prop:almostsymualmostfree} and
\eqref{eq:freeBoundaryvolumeconv}, together with
\Cref{thm:compactness}, we have
\[
(z_n-\eta)^+\leq v(z)\leq(z_n+\eta)^+
\qquad\text{for }z\in B_1(0),
\]
possibly after a rotation, where $\eta\to0$ as
$\varepsilon\to0$ for the fixed scale $\rho$, uniformly
with respect to the choice of $y$.

We are now in the perturbative regime of the improvement-of-flatness theorem \cite[Theorem~6.2]{LT15}. Thus, when $\varepsilon=\varepsilon(n,L,\Lambda,\alpha)$ is sufficiently small, $\partial\{v>0\}\cap B_{1/2}(0)$ is a $C^{1,\gamma}$ hypersurface, after decreasing $\gamma$ if necessary. Returning to the original coordinates, this gives a $C^{1,\gamma}$ hypersurface in a neighborhood of $y$. Since $y\in\partial\{u>0\}\cap B_1(x)$ was arbitrary, it follows that $\partial\{u>0\}\cap B_1(x)$ is a $C^{1,\gamma}$ submanifold.
\end{proof}
In the same manner as we proved \cref{cor:Reifenberg}, we can prove the following corollary, this time using \cref{prop:epsregdoublericcibounds} instead of \cref{prop:reifenbergprop}.

\begin{corollary}\label{cor:2sided_Ricci_FBP_mfd_struct} 
Let $\mathrm{v},L,\Lambda>0$, and $\alpha\in(0,1)$.
There exists $\delta=\delta(n,\mathrm{v},L,\Lambda,\alpha)>0$ such that the following holds. Let $X$ be a noncollapsed limit of manifolds with two-sided Ricci curvature bounds $|\Ric|<\delta$, and let $u$ be an $L$-Lipschitz minimizing solution of $E_Q$ on $B_4(x)\subset X$, with $x\in\partial\{u>0\}$.
Assume $\|Q\|_{C^0}+\|Q^{-1}\|_{C^0}+[Q]_\alpha\leq\Lambda$ on $B_4(x)$ and $\mathcal H^n(B_2(x))>\mathrm{v}$.
If $B_2(x)$ is $(n-2,\delta)$-symmetric, then $\partial\{u>0\}\cap B_{1/2}(x)$ is a $C^{1,\alpha}$ manifold.
\end{corollary}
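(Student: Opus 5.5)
The plan is to mimic the proof of \Cref{cor:Reifenberg}, with \Cref{prop:epsregdoublericcibounds} in place of the metric Reifenberg step. First I fix the target regularity constant $\varepsilon_0=\varepsilon(n,L,\Lambda,\alpha)$ given by \Cref{prop:epsregdoublericcibounds}. I then choose $\delta\le\varepsilon_0$ small enough that, by \Cref{prop:almostregularpoints} (applied with $r=2$, regularity parameter $\varepsilon_0/C$, and the volume lower bound $\mathrm{v}$), the $(n-2,\delta)$-symmetry of $B_2(x)$ forces $B_2(x)$ to be $\delta'$-regular. Here $\delta'$ is the threshold required in \Cref{prop:reifenbergprop} for output parameter $\varepsilon_0$. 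Two checks are needed to apply \Cref{prop:almostregularpoints}: the space has no boundary, and it is $\mathsf{ncRCD}(-\delta,n)$. Both hold because $X$ is a noncollapsed limit of smooth manifolds without boundary with $|\Ric|<\delta$, using \cite[Theorem~1.6]{BrueNaberSemola2022}. The Lipschitz and coefficient hypotheses transfer verbatim.

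Next, \Cref{prop:reifenbergprop} gives that every ball $B_r(y)\subset B_2(x)$ with $y\in\partial\{u>0\}\cap B_1(x)$ and $r\in(0,1)$ is $\varepsilon_0$-regular. I apply this at a fixed scale $r_0=r_0(n)$, say $r_0=1/4$, to each $y\in\partial\{u>0\}\cap B_{1/2}(x)$. I then rescale $B_{r_0}(y)$ to unit size, rescaling $u$ to $r_0^{-1}u$ while keeping $Q$. The rescaled ball $B_4(y)$ is then $\varepsilon_0$-regular, after absorbing the factor between radii $r_0$ and $4r_0$ into the choice of $\varepsilon_0$ by using \Cref{prop:reifenbergprop} with a smaller parameter. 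In the rescaled setting, $|\Ric|\le r_0^2\delta\le\varepsilon_0$, $u$ remains $L$-Lipschitz, and $[Q]_\alpha$ only decreases. Hence \Cref{prop:epsregdoublericcibounds} applies and shows that $\partial\{u>0\}$ is a $C^{1,\gamma}$ submanifold in a neighborhood $B_{r_0/4}(y)$. Covering $\partial\{u>0\}\cap B_{1/2}(x)$ by these neighborhoods gives the manifold structure. The exponent comes out as $\gamma(n,L,\Lambda,\alpha)$, which is $C^{1,\beta}$ regularity as in \Cref{cor:doublericciboundreg}; I would note that the statement's ``$C^{1,\alpha}$'' should be read with this exponent.

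The main obstacle is the bookkeeping at the scale change. \Cref{prop:epsregdoublericcibounds} needs $\varepsilon_0$-regularity of a radius-$4$ ball centered on the free boundary. \Cref{prop:reifenbergprop}, however, only certifies balls contained in $B_2(x)$ with radius below $1$. I would resolve this by working at the fixed scale $r_0$ as above, so that the radius-$4$ ball after rescaling corresponds to $B_{4r_0}(y)=B_1(y)$. To make this legitimate, I would first apply \Cref{prop:reifenbergprop} to the rescaled configuration $(2^{-1}d,\,2^{-1}u)$, so that $B_1(y)$ lies inside the certified range; this costs only dimensional constants in $\delta$.
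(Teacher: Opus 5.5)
Your argument is correct in substance, but it takes a detour that the paper avoids. You keep \Cref{prop:reifenbergprop} and use \Cref{prop:epsregdoublericcibounds} in place of the metric Reifenberg step. The paper instead drops \Cref{prop:reifenbergprop} altogether and lets \Cref{prop:epsregdoublericcibounds} replace it.

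\Cref{prop:almostregularpoints} with $r=2$ makes $B_2(x)$ an $\varepsilon_0$-regular ball. After the rescaling $(d,u,Q)\mapsto(2d,2u,Q)$, this ball is exactly an $\varepsilon_0$-regular radius-$4$ ball centred on the free boundary. In the rescaled picture $u$ is defined on the radius-$8$ ball, $|\Ric|<\delta/4$, and $[Q]_\alpha$ has only decreased. A single application of \Cref{prop:epsregdoublericcibounds} then gives the $C^{1,\gamma}$ structure on the rescaled unit ball, which is $B_{1/2}(x)$.

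The localization to each free-boundary point $y$ that you extract from \Cref{prop:reifenbergprop} is already contained in the proof of \Cref{prop:epsregdoublericcibounds}. There, flatness at the fixed scale $\rho$ holds uniformly in $y$ by compactness. So your route gains nothing over the paper's. It costs you the Reifenberg property, with its density-gap argument, and an extra constraint $\delta\le\delta'$. That constraint is needed because \Cref{prop:reifenbergprop} requires an $\mathsf{ncRCD}(-\delta',n)$ ambient space, so you should state it explicitly. \Cref{prop:almostregularpoints} itself has no such requirement.

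Your scale bookkeeping at the end is off. With $r_0=1/4$ you need $B_1(y)$ to be regular, which \Cref{prop:reifenbergprop} does not certify. Your proposed fix via $(2^{-1}d,2^{-1}u)$ goes the wrong way. Halving distances would require $u$ on $B_8(x)$ in the original metric and regularity of $B_4(x)$, and neither is available. The fix is simply to take $r_0<1/4$, e.g.\ $r_0=1/5$. For $y\in B_{1/2}(x)$ one has $B_{4r_0}(y)\subset B_{3/2}(x)\subset B_2(x)$ with $4r_0<1$, so this ball is certified directly and no absorption of constants is needed.

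Your remark that the exponent produced is $\gamma(n,L,\Lambda,\alpha)$ rather than literally $\alpha$ is correct. It applies equally to the paper's argument.
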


\begin{proof}[Proof of \Cref{cor:doublericciboundreg}]
    By the recent work \cite{wang2026dimensionboundsingularset} of the second author, \Cref{cor:2sided_Ricci_FBP_mfd_struct} holds with $n-4$ instead of $n-2$.
    Indeed, the rigidity result in \cite[Theorem~1.4]{wang2026dimensionboundsingularset} allows us to repeat the compactness argument in \Cref{prop:almostregularpoints} and conclude that almost $(n-4)$-symmetric balls are almost regular.
    The local $C^{1,\gamma}$-manifold structure then follows from \Cref{prop:epsregdoublericcibounds}. 
    Combining this strengthened version of \Cref{cor:2sided_Ricci_FBP_mfd_struct} with the Minkowski content bound \eqref{eq:vol1} and the dimension bound \eqref{eq:vol3} in our main theorem for $k=n-5$, we prove \Cref{cor:doublericciboundreg}.
\end{proof}

\section{Truncated Parabolic Weiss Formula}\label{sec:paraweiss}

Here we introduce the truncated Parabolic Weiss quantity, $W_{t_0,t}(x)$, obtained by replacing the Euclidean radial vector field with a vector field constructed from the heat kernel $\rho_t$.
Its derivative in $t$ controls quantitatively the deviation of $u$ from $1$-homogeneity, while the parameter $t_0$ determines the range of scales where we have some bound on $W_{t_0,t}$. The lower cutoff $t_0\leq t$ is essential since it is not clear that the resulting quantity remains finite as $t_0\downarrow 0$. Consequently, with the estimates available, the formula does not provide information at the infinitesimal scale.
\par
Note that this parabolic Weiss quantity is actually defined for solutions on smooth manifolds, as it involves smooth differential calculus in deriving the inner variation formula. Still, this is enough for our purpose since it suffices to prove Theorem \ref{thm:main} about limit solutions in this smooth setting.
\par

In our notation, $\rho_{t,x}$ denotes the heat kernel on $M$ with the first argument fixed at $x$, and the normalization is $\int \rho_{t,x} \dd\vol=1$.
We also introduce the function:
\begin{equation}\label{def:f_t}
	f_t:=-\log((4\pi t)^{\frac{n}{2}}\rho_t),
\end{equation}
so that $2tf_t(x,y)=\frac{1}{2}d(x,y)^2$ in the Euclidean space.
Likewise, $f_{t,x}$ denotes the function with $x$ fixed.
We also denote
\begin{equation}\label{def:H_t}
     H_{t,x}=-\rho_{t,x} \Hess f_{t,x}.
\end{equation}
\par
The main goal of this section is to prove \Cref{thm:parabolic_weiss}, which we restate below for convenience. For the cutoff function $\phi$ used below, recall that such functions exist by \cite{CC96}; see also \cite{MN14} for the more general $\mathsf{RCD}$ setting. 

\begin{theorem}[Truncated Parabolic Weiss Quantity]\label{thm:parabolic_weiss_restated}
Let $u$ be a minimizer of the one-phase problem \eqref{1-phase_Ber} with $X$ being a smooth manifold $(M,g)$. 
Fix some ball $B_{4\bar{r}}(\bar{x})\Subset\Omega$. Assume $\bar{r}\leq \delta^{-1}$, $\Ric \geq -(n-1)\delta^2$, and $\Vol(B_{\bar r}(\bar x))\ge \mathrm{v}\,\bar r^n$, for some $0<\delta\leq 1$ and $\mathrm{v}>0$. Assume also that $\abs{Q+1/Q}_{C^0,B_{4\bar r}(\bar x)} + \overline{r}^{\alpha}[Q]_{\alpha, B_{4\bar r}(\bar x)}<\Lambda$, and that $u$ is $L$-Lipschitz on $B_{4\bar r}(\bar x)$. Pick some $t_0 < \bar{r}^2/2$. Let $\phi:M\to [0,1]$ be a cutoff function with support in $B_{4\bar{r}}(\bar{x})$, satisfying
\begin{equation}\label{cutoff_restated}
    \phi|_{B_{2\bar{r}}(\bar{x})}\equiv 1, \quad |\nabla \phi|^2+|\Delta \phi|\leq C(n)\bar{r}^{-2}.
\end{equation} 
For $x\in B_{\bar{r}}(\bar{x})\cap \pt\{u>0\}$ and $t\in (t_0,\overline{r}^2]$, define
\begin{equation}
    \begin{split}
        W_{t_0,t}(u,g,Q,x):=&\int_M \left[2(|\nabla u|^2+Q^2(x)1_{u>0})-t^{-1}u^2\right] \phi \rho_{t,x}\dd\vol\\
    +&4\int_{t_0}^t \int_M \left|(H_{s,x} +\frac{\rho_{s,x}}{2s}g)(\nabla u, \nabla u)\right| \phi\dd\vol \dd s
    +\mathcal{E}_{t}(u,g,Q,x),
    \end{split}
\end{equation}
    where $\mathcal{E}_t(u,g,Q,x)\geq 0$ is an error term defined by
\begin{equation}\label{eq:parabolic_weiss_error_restated}
        \begin{split}
            \mathcal{E}_t(u,g,Q,x)
:=&
\int_0^t \int_M
\left[
(6|\nabla u|^2+2Q^2(x))|\nabla f_{s,x}||\nabla\phi|
+s^{-1}u^2|\Delta\phi|
\right]
\rho_{s,x}\dd\vol\dd s
\\
&
+2\int_0^t \int_{\partial\{u>0\}}
|\delta_x Q^2|\,|\nabla f_{s,x}|\,
\rho_{s,x}\phi
\dd{}\mathcal{H}^{n-1}\dd s,
        \end{split}
    \end{equation}
where $\delta_{x}Q^2(y):=Q^2(y)-Q^2(x)$. Then $ t \mapsto W_{t_0,t}(u,g,x)$ belongs to $C^1([t_0,\overline{r}^2])$. Moreover, we have 
\begin{align}
\frac{d}{dt}W_{t_0,t}(u,g,x) & \geq t^{-2}\int_M \left[ u-\nabla u\cdot \nabla(2tf_{t,x})\right]^2 \phi \rho_{t,x}\dd\vol. \label{eq:monder_res} \\
|W_{t_0,t}(u,g,x)| & 
\le C(n,\mathrm{v},L,\Lambda,\alpha)
\left(
1+\left|\log\frac{t}{t_0}\right|^{1/2}
+[Q]_{\alpha,B_{4\bar r}(\bar x)}t^{\alpha/2}
\right), \label{eq:Wboundgen_restated}\\
\mathcal E_t(u,g,x)
&\leq
C(n,\mathrm{v},L,\Lambda)\exp\left(-\frac{\bar r^2}{10t}\right)
+
C(n,\mathrm{v},L,\Lambda,\alpha)[Q]_{\alpha, B_{4\overline{r}}(\overline{x})}
t^{\alpha/2}  \label{eq:errorbound}.
\end{align}
\end{theorem}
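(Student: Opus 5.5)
The plan is to differentiate the localized Gaussian energy $D(t):=\int_M[2(|\nabla u|^2+Q^2(x)1_{u>0})-t^{-1}u^2]\phi\rho_{t,x}\,d\vol$ in $t$, and to express $\partial_t$ of the first part through an inner (domain) variation of $E_Q$ along the time-dependent vector field $X_t:=\phi\,\nabla(2tf_{t,x})=-2t\phi\nabla\rho_{t,x}/\rho_{t,x}\cdot\rho$-weighting. Concretely, I would first record the two basic identities for $u$. The inner variation (first variation of $E_Q$ under the flow of a smooth compactly supported vector field $Y$) gives
\[
\int_M\bigl[(|\nabla u|^2+Q^2 1_{u>0})\div Y-2\nabla u\cdot\nabla_{\nabla u}Y\bigr]d\vol+\int_M 1_{u>0}\,Y(Q^2)\,d\vol=0,
\]
valid on smooth $M$ for minimizers. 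The outer variation, $\Delta u=Q\,\mathcal H^{n-1}\llcorner\partial\{u>0\}$ (Theorem~\ref{thm:mainCZZ22}(4)), gives $\int|\nabla u|^2\psi=-\int u\nabla u\cdot\nabla\psi$ for Lipschitz $\psi$. I would apply the inner variation with $Y=\phi\rho_{t,x}\nabla(2tf_{t,x})=-2t\phi\nabla\rho_{t,x}$, for which $\div Y=-2t\phi\Delta\rho_{t,x}-2t\nabla\phi\cdot\nabla\rho=-2t\phi\partial_t\rho-\dots$ and $\nabla Y=2t\phi H_{t,x}+(\text{terms with }\nabla\phi)$. Splitting $Q^2=Q^2(x)+\delta_xQ^2$, the $Y(\delta_x Q^2)$ term is converted by integration by parts into a boundary integral over $\partial\{u>0\}$ weighted by $|\delta_xQ^2||\nabla f|\rho\phi$, which is exactly the last term of $\mathcal E_t$.

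Next, I would compute $\frac{d}{dt}D(t)$ using $\partial_t\rho=\Delta\rho$ and integrating by parts, and then substitute the inner-variation identity for $\int(|\nabla u|^2+Q^2(x)1_{u>0})\phi\Delta\rho$. The Hessian term appears as $4\int\nabla^2(2tf)(\nabla u,\nabla u)\phi\rho$-type contributions. I would write $-\rho\,\Hess f=H$ and isolate $H_{t,x}+\frac{\rho}{2t}g$, which vanishes on cones. The part $\frac{\rho}{2t}g$ combines with the $t^{-1}u^2$ and outer-variation terms, via $\int u\nabla u\cdot\nabla\rho$ manipulations, to complete the square
\[
t^{-2}\int\bigl[u-\nabla u\cdot\nabla(2tf)\bigr]^2\phi\rho.
\]
This is the standard Weiss/Huisken computation, with $\frac12 d^2$ replaced by $2tf$. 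The remainder $4\int(H+\frac{\rho}{2t}g)(\nabla u,\nabla u)\phi$ has no sign, so it is bounded below by minus its absolute value. That deficit is exactly compensated by the $t$-derivative of the integral $4\int_{t_0}^t|\cdot|$. All terms containing $\nabla\phi$ or $\Delta\phi$, together with the $\delta_xQ^2$ term, are dominated by $\frac{d}{dt}\mathcal E_t$, since $\mathcal E_t$ is defined as the time integral of their absolute values with sufficiently large constants $6,2,1$. This yields \eqref{eq:monder_res}. The $C^1$ regularity follows from smoothness of $\rho$ for $t>0$ and the Lipschitz bound on $u$.

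For \eqref{eq:errorbound}, I would use that $\nabla\phi,\Delta\phi$ are supported in $B_{4\bar r}\setminus B_{2\bar r}(\bar x)$, which is at distance $\geq\bar r$ from $x$. There Theorem~\ref{thm:heat-kernel} gives $\rho\lesssim s^{-n/2}e^{-\bar r^2/(4+\epsilon)s}$ and $|\nabla f|\lesssim s^{-1/2}(1+\bar r/\sqrt s)$, so the time integral is $\lesssim e^{-\bar r^2/10t}$. For the boundary term I would use $|\delta_xQ^2|\lesssim[Q]_\alpha d^\alpha$, the density bound $\mathcal H^{n-1}(\partial\{u>0\}\cap B_\rho)\lesssim\rho^{n-1}$ (from local finiteness, Theorem~\ref{thm:mainCZZ22}(3), with uniform constants), and Gaussian decay, giving $\int_0^t s^{\alpha/2-1}ds\sim t^{\alpha/2}$.

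The main obstacle is \eqref{eq:Wboundgen_restated}, specifically the bound on $\int_{t_0}^t\int|H_{s,x}+\frac{\rho}{2s}g||\nabla u|^2\phi$. The first term of $W$ is $O(1)$ by the Lipschitz bound and the Gaussian bounds, since $t^{-1}u^2\lesssim L^2d^2/t$. For the Hessian term, I would use $|\nabla u|\le L$ and Cauchy--Schwarz in $s$ with measure $ds/s$:
\[
\int_{t_0}^t\int|\cdot|\le\Bigl(\log\tfrac{t}{t_0}\Bigr)^{1/2}\Bigl(\int_{t_0}^t s\int|H+\tfrac{\rho}{2s}g|^2\rho^{-1}\,d\vol\,ds\Bigr)^{1/2}L^2.
\]
I would then invoke the local-entropy $L^2$-Hessian estimate of \cite[Theorem 4.22]{CJN21}, namely $\int_{t_0}^{t}\int s|\Hess f-\frac{g}{2s}|^2\rho\,ds\le C(\mathcal W_{t_0}-\mathcal W_t)+C\delta$, where the entropy is bounded in terms of $n$ and $\mathrm v$ by noncollapsing. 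This gives the $|\log(t/t_0)|^{1/2}$ growth. A further concern is making the cutoff localization compatible with the cited estimate.
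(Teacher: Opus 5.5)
Your outline has the same structure as the paper's proof: the inner variation along $\phi\nabla\rho_{t,x}$ (which is $-(2t)^{-1}$ times your $Y$), the identity $\int|\nabla u|^2\psi=-\int u\,\nabla u\cdot\nabla\psi$, completing the square, absorbing the signless Hessian term into $4\int_{t_0}^t|\cdot|$, Gaussian bounds for $\mathcal E_t$, and Cauchy--Schwarz in $ds/s$ combined with the localized entropy of \cite{CJN21}. However, two steps fail as you wrote them.

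\textbf{The volume term.} Your inner-variation identity contains $\int 1_{u>0}\,Y(Q^2)$, which presupposes $Q\in C^1$. Here $Q$ is only $C^\alpha$, so that identity is not available, even though the boundary formula you reach after integrating by parts is the right one. You need to differentiate $\int Q^2\,1_{\{u>0\}}\circ\Phi_\tau\,\dd\vol$ directly, as in \Cref{lem:1st_var}. Because $\{u>0\}$ has locally finite perimeter, the difference quotients of $1_{\{u>0\}}\circ\Phi_\tau$ converge weakly-$*$ to a directional derivative of $1_{\{u>0\}}$. By Gauss--Green this derivative is $g(V,\nu)\,\mathcal H^{n-1}\llcorner\partial^*\{u>0\}$ (up to the sign convention of the flow), and pairing it with the merely continuous $Q^2$ gives the boundary term. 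Alternatively, approximate $Q^2$ uniformly by smooth functions and control the error by $\|1_{\{u>0\}}\circ\Phi_\tau-1_{\{u>0\}}\|_{L^1}\le C\tau$; this again uses the perimeter bound.

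\textbf{The formula for $\nabla Y$.} It is not true that $\nabla Y=2t\phi H_{t,x}+(\nabla\phi\text{-terms})$. Since $\Hess\rho_{t,x}=\rho_{t,x}\nabla f_{t,x}\otimes\nabla f_{t,x}+H_{t,x}$, the correct expression is $\nabla Y=-2t\phi\,(H_{t,x}+\rho_{t,x}\nabla f_{t,x}\otimes\nabla f_{t,x})+(\nabla\phi\text{-terms})$. The piece you dropped is exactly what produces $4(\nabla u\cdot\nabla f_{t,x})^2\phi\rho_{t,x}$, the quadratic part of the square. The shift by $\frac{\rho_{t,x}}{2t}g$ only absorbs the term $2t^{-1}\int|\nabla u|^2\phi\rho_{t,x}$ that comes from differentiating $-t^{-1}\int u^2\phi\rho_{t,x}$. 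With your formula the square does not close.

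\textbf{The density bound.} This is a smaller correction, used in $\mathcal E_t$. The estimate $\mathcal H^{n-1}(\partial\{u>0\}\cap B_s(y))\le C(n,L,\Lambda)s^{n-1}$ does not follow from local finiteness of the perimeter (Theorem~\ref{thm:mainCZZ22}(3)), which carries no scale-invariant constant. It follows from the Euler--Lagrange equation. Test $\bLap u=Q\,\mathcal H^{n-1}\llcorner\partial\{u>0\}$ against $\psi_s\in\Lip_c(B_{2s}(y))$ with $\psi_s\equiv1$ on $B_s(y)$ and $|\nabla\psi_s|\le C/s$. This gives $\Lambda^{-1}\mathcal H^{n-1}(\partial\{u>0\}\cap B_s(y))\le\int|\nabla u||\nabla\psi_s|\,\dd\vol\le CLs^{-1}\Vol(B_{2s}(y))\le C(n,L)s^{n-1}$. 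Testing against $\phi\psi_s$ (or against $\phi$ when $s\gtrsim\bar r$) gives the $\phi$-weighted version you actually need, since the boundary integral in $\mathcal E_t$ runs over all of $M$.

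\textbf{The cutoff concern.} Your worry about compatibility with the cited estimate is resolved by keeping $\phi$ in the Cauchy--Schwarz weight, i.e.\ using $\phi\rho_{s,x}\,\dd\vol\,ds/s$. Then invoke the localized pointed entropy of \cite[Section~4.6]{CJN21} built with the same cutoff, for which $\mathcal W_{0^+}=0$ and $-\mathcal W_{\bar r^2}\le C(n,\mathrm v)$. With these repairs your argument coincides with the paper's.
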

    \begin{remark}
    When there is no room for confusion, we write $W_{t_0,t}$ instead of $W_{t_0,t}(u,g,Q,x)$. The same applies for the other quantities.
    \end{remark}
    \begin{remark}[Dependence on the lower cutoff]
    Notice that \eqref{eq:Wboundgen_restated} does not provide control on $W$ as $t_0\to 0$, and therefore we cannot deduce infinitesimal information from this quantity. 
    \end{remark}
\begin{remark}[Scaling]
Let $\lambda>0$. The quantity $W$ is invariant under the scaling
\[
(t_0,t,u,g,Q, \bar{r})\mapsto (\lambda^{-2}t_0,\lambda^{-2}t,\lambda^{-1}u,\lambda^{-2}g,Q, \lambda^{-1}\bar{r})
\] 
with the same (in the set-theoretic sense) cutoff $\phi$.
Indeed, since
$\rho^{\lambda^{-2}g}_{\lambda^{-2}t,x}\,d\vol_{\lambda^{-2}g}
=
\rho^g_{t,x}\,d\vol_g$
and $f^{\lambda^{-2}g}_{\lambda^{-2}t,x}=f^g_{t,x}$, it follows that
\[
W_{\lambda^{-2}t_0,\lambda^{-2}t}
\bigl(\lambda^{-1}u,\lambda^{-2}g,Q,x\bigr)
=
W_{t_0,t}(u,g,Q,x).
\]
Also note that $$[Q]_{\alpha,B_1^{\lambda^{-2}g}} = [Q]_{\alpha,B_\lambda^{g}}\lambda^{\alpha} \xrightarrow[]{\lambda\to0} 0.$$
\end{remark}
\begin{remark}[Error term]
The error term $\mathcal{E}_t$ has two sources: the cutoff function $\phi$ and the nonconstancy of $Q$. Regarding the first, ideally one would take $\phi\equiv 1$, in which case the corresponding contribution vanishes. This is the way the reader should think about this term. Indeed, under the rescaling $g\mapsto\lambda^{-2}g$, the cutoff radius becomes $\lambda^{-1}\bar r\to\infty$ as $\lambda\to0$, so that $\phi$ becomes identically one on larger and larger balls. At any fixed rescaled time, the cutoff contribution therefore tends to zero. The contribution arising from the nonconstancy of $Q$ also vanishes, since its H\"older seminorm with respect to the rescaled metric tends to zero. Both effects are quantified by \eqref{eq:errorbound}, which shows directly, without rescaling, that the error tends to zero as $t\to 0$.
\end{remark}
\begin{remark}[Hessian Term]
The appearance of the Hessian term is well known and usually originates from the fact that, on a general manifold, the vector field $\nabla d(x,\cdot)^2$ is not a homothetic Killing field. The same phenomenon occurs here with $f_{t,x}$. This Hessian term is precisely what causes the $t_0$-dependence of the bound in \eqref{eq:Wboundgen_restated}.
On a fixed smooth manifold, such a term is usually controlled by the regularity scale, and hence $W_{t_0,t}$ can provide an almost monotonicity with effective bounds even when $t_0\to 0$; see for instance \cite{SchoenUhlenbeck1982} or \cite{NV17}. This approach is not available here, as the regularity scale might go to zero when singularities form along the GH convergence.
\end{remark}
\begin{remark}[$\delta$-parameter]
In the applications later, after rescaling we will mainly work with $\overline{r}=1$. In this case,
the assumptions of \Cref{thm:parabolic_weiss_restated} allow us to take
$\delta=1$. The resulting bound is recorded in the following
corollary.

\end{remark}

\begin{corollary}
Under the assumptions of \Cref{thm:parabolic_weiss_restated} with $ \overline{r}=1$ and $\Ric\geq -(n-1)$. Then, for $t_0\le t\le1$, and $t_0 \leq 1/2$, we have
\begin{equation}
    |W_{t_0,t}|
    \le C(n,\mathrm{v},L,\Lambda,\alpha)
    \bigl|\log t_0|^{1/2}.\label{eq:Wbound_restated}
\end{equation}
\end{corollary}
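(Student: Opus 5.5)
This corollary is a direct specialization of \eqref{eq:Wboundgen_restated} in \Cref{thm:parabolic_weiss_restated}. The plan is to absorb the three terms on the right-hand side into a single multiple of $|\log t_0|^{1/2}$, using $t_0\le 1/2$.

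\textbf{Step 1: checking the hypotheses.} With $\bar r=1$ and $\Ric\ge -(n-1)$, the hypotheses of \Cref{thm:parabolic_weiss_restated} hold with $\delta=1$, since $\bar r=1\le\delta^{-1}$. The theorem then applies for $t\in[t_0,1]$. Strictly, it requires $t_0<\bar r^2/2=1/2$. If $t_0=1/2$ exactly is to be included, one can either apply the theorem with a slightly smaller cutoff or note that the constants are stable; this is a harmless endpoint issue.

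\textbf{Step 2: bounding the three terms.} Since $t_0\le t\le 1$, we have $|\log(t/t_0)|=\log t-\log t_0\le|\log t_0|$. The Hölder term satisfies $[Q]_{\alpha,B_4}t^{\alpha/2}\le[Q]_{\alpha,B_4}$ because $t\le1$. The assumption $\bar r^\alpha[Q]_\alpha<\Lambda$ with $\bar r=1$ gives $[Q]_{\alpha,B_4}\le\Lambda$. Hence
\[
|W_{t_0,t}|\le C(n,\mathrm v,L,\Lambda,\alpha)\bigl(1+\Lambda+|\log t_0|^{1/2}\bigr).
\]

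\textbf{Step 3: absorbing the constants.} Since $t_0\le1/2$, we have $|\log t_0|\ge\log 2$, so $1\le(\log 2)^{-1/2}|\log t_0|^{1/2}$. Therefore $1+\Lambda\le(1+\Lambda)(\log 2)^{-1/2}|\log t_0|^{1/2}$. Enlarging $C$, which still depends only on $(n,\mathrm v,L,\Lambda,\alpha)$, gives \eqref{eq:Wbound_restated}.

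There is no real obstacle here. The only points needing care are the endpoint $t_0=1/2$ in the hypothesis $t_0<\bar r^2/2$, and using $t_0\le1/2$ to bound $|\log t_0|$ from below, which is what lets the additive constants be absorbed.
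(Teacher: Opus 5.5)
Your proposal is correct and matches the paper's approach. The paper records this corollary without a separate proof, as the specialization $\bar r=1$, $\delta=1$ of \eqref{eq:Wboundgen_restated}, using $|\log(t/t_0)|\le|\log t_0|$, $[Q]_{\alpha}t^{\alpha/2}\le\Lambda$, and $|\log t_0|\ge\log 2$ to absorb the additive constants. Your handling of the endpoint $t_0=1/2$ is also fine: the proof of the theorem (Cauchy--Schwarz in $s$, producing the factor $\tfrac12\log(t/t_0)$) never uses the strict inequality $t_0<\bar r^2/2$.
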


We start the proof of Theorem \ref{thm:parabolic_weiss_restated} above with the first inner variation formula:

\begin{lemma}\label{lem:1st_var}
    Let $E$ be the Alt-Cafarelli functional, \Cref{1-phase_Ber}, on a bounded domain $\Omega$ in a smooth Riemannian manifold $M$. Consider domain variations of $u$ given by a smooth compactly supported field $V$. Then we have
    \begin{equation}
        \delta E(u)[V] = \int_\Omega \left(|\nabla u|^2\operatorname{div}_gV-2g(\nabla_{\nabla u}V,\nabla u)\right)
\,\mathrm{dvol}_g
+
{\int_{\partial\{u>0\}}
Q^2\,g(V,\nu)\dd{}\mathcal H_g^{n-1}}.
    \end{equation}
\end{lemma}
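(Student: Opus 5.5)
The plan is the classical inner-variation computation: define $u_\varepsilon := u\circ\Phi_\varepsilon^{-1}$, where $\Phi_\varepsilon$ is the flow of $V$ at time $\varepsilon$. Since $V$ is compactly supported in $\Omega$, $u_\varepsilon$ is an admissible competitor with the same boundary data for small $|\varepsilon|$. We then differentiate $E(u_\varepsilon)$ at $\varepsilon=0$, splitting $E(u_\varepsilon)=D(\varepsilon)+A(\varepsilon)$ into the Dirichlet part $D(\varepsilon)=\int|\nabla u_\varepsilon|^2\dd\vol$ and the volume part $A(\varepsilon)=\int Q^2 1_{u_\varepsilon>0}\dd\vol$. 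In both integrals we change variables $y=\Phi_\varepsilon(z)$.

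For the Dirichlet part, write $D(\varepsilon)=\int_\Omega \bigl|(d\Phi_\varepsilon)^{-T}\nabla u(z)\bigr|^2_{g(\Phi_\varepsilon z)} J_\varepsilon(z)\dd\vol(z)$, where $J_\varepsilon$ is the Jacobian. We use $\frac{d}{d\varepsilon}J_\varepsilon|_0=\div_g V$. We also use that the derivative of $g_{\Phi_\varepsilon(z)}\bigl((d\Phi_\varepsilon)^{-T}\xi,(d\Phi_\varepsilon)^{-T}\xi\bigr)$ at $\varepsilon=0$ equals $-(\mathcal L_V g)(\nabla u,\nabla u)=-2g(\nabla_{\nabla u}V,\nabla u)$. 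The cleanest way to see this is to note that pulling back by $\Phi_\varepsilon$ is the same as keeping $u$ fixed and replacing $g$ by $\Phi_\varepsilon^*g$, whose $\varepsilon$-derivative is $\mathcal L_Vg$, and that the inverse metric has derivative $-\mathcal L_V g$ with indices raised. Since $u$ is Lipschitz, the difference quotients are dominated by $C|\nabla u|^2$, so differentiation under the integral sign is justified. This gives the first integral.

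For the volume part, change variables to get $A(\varepsilon)=\int_{\{u>0\}}Q^2(\Phi_\varepsilon z)J_\varepsilon(z)\dd\vol(z)$. Differentiating gives $\int_{\{u>0\}}\bigl(\langle\nabla Q^2,V\rangle+Q^2\div V\bigr)=\int_{\{u>0\}}\div(Q^2V)$. This step needs $Q^2$ differentiable, so we first prove it for smooth $Q$. For general $Q\in C^\alpha$ we write $A(\varepsilon)=\int Q^2 1_{\Phi_\varepsilon(\{u>0\})}$, i.e.\ the $Q^2$-weighted volume of the transported set, and differentiate that quantity directly. By the divergence theorem for sets of finite perimeter, with $\{u>0\}$ having locally finite perimeter and $\partial^e\{u>0\}=\partial\{u>0\}$ by \Cref{thm:mainCZZ22} (valid on smooth manifolds), we get $\int_{\partial\{u>0\}}Q^2 g(V,\nu)\dd\mathcal H^{n-1}$, where $\nu$ is the measure-theoretic outward unit normal. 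To pass from smooth $Q$ to $C^\alpha$ we mollify $Q$, which changes the weighted-volume derivative continuously: the derivative of $\int Q^2 1_{\Phi_\varepsilon(E)}$ equals $\int_{\partial^*\Phi_\varepsilon E}Q^2 V\cdot\nu$, which is continuous in $Q$ uniformly in $\varepsilon$. Adding the two parts yields the formula.

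\textbf{Main obstacle.} The main obstacle is this differentiability of the volume term for merely continuous $Q$, and identifying it with a boundary integral over the topological free boundary. The finite-perimeter structure from \Cref{thm:mainCZZ22}(3) resolves the identification, and the mollification argument with the uniform-in-$\varepsilon$ continuity of the first variation of weighted volume handles the regularity of $Q$.
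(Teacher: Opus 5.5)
Your proposal is correct. The Dirichlet part is the same as the paper's: move the flow onto the metric, so the derivative falls on $\Phi_\varepsilon^*g$ and on the volume form. The final identification of $\partial^*\{u>0\}$ with the topological free boundary, via Federer's theorem and Item~3 of \Cref{thm:mainCZZ22}, is also the same.

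The genuine difference is in the volume term. The paper never differentiates a weight. It quotes (the proof of) Ambrosio--Ghezzi--Magnani: the difference quotients $t^{-1}(1_{\Omega_+}\circ\Phi_t-1_{\Omega_+})\,\dd\vol_g$ converge weakly-$*$ to the directional derivative measure of $1_{\Omega_+}$ along $-V$. Because the limit is a measure, it can be tested directly against the merely continuous $Q^2$. The Gauss--Green formula and De Giorgi's structure theorem then identify it with $g(V,\nu)\,\mathcal H^{n-1}\llcorner\partial^*\Omega_+$.

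You instead compute for smooth weights by change of variables and the divergence theorem, and then remove the regularity of $Q$ by uniform approximation. This forces you to work at every time $\varepsilon$, not only at $\varepsilon=0$, and needs three ingredients:
\begin{enumerate}
\item the derivative formula for each transported set $\Phi_\varepsilon(\{u>0\})$, which again has locally finite perimeter;
\item the bound $\sup_{|\varepsilon|\le1}P(\Phi_\varepsilon(\{u>0\});\operatorname{supp}V)\le C\,P(\{u>0\};\operatorname{supp}V)<\infty$, which is what your ``uniformly in $\varepsilon$'' actually rests on, and which holds because the $\Phi_\varepsilon$ are uniformly bi-Lipschitz and map $\operatorname{supp}V$ to itself;
\item the classical fact that pointwise convergence of the functions plus uniform convergence of their derivatives passes differentiability to the limit.
\end{enumerate}
Also make sure the logic runs in that order. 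The divergence theorem is applied only to the smooth weights $Q_k^2$, and the formula for continuous $Q$ is obtained only after the limit.

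Your route is more elementary and self-contained: it uses only the classical divergence theorem for sets of finite perimeter, and it even shows that the volume term is $C^1$ in $\varepsilon$. The paper's route is shorter because the cited weak-$*$ convergence lemma already packages exactly this approximation step.
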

\begin{proof}
Consider the flow $\Phi_t$ given by a compactly supported field $-V$. The Alt-Cafarelli functional is composed of the Dirichlet energy and the volume part. For the Dirichlet energy, one has
\begin{equation}
\int_\Omega \abs{\nabla^g (u\circ \Phi_t)}^2_g \dd\vol_g = \int_\Omega \abs{\nabla^{\Phi_{-t}^* g} u}^2_{\Phi_{-t}^*g} \ \dd\vol_{\phi_{-t}^*g}. \label{eq:usualvariation}
\end{equation}
Now the derivative hits $g$, which is differentiable.
Hence, rather than differentiating $u$ along the flow inside the integral, one performs a change of variables and differentiates the volume measure and the metric instead.
The divergence term comes from the variation of the measure, while the other term comes from the Lie derivative of the metric. In this way, no additional regularity of $u$ is required beyond what is already needed to formulate the one-phase problem. We omit the details here since this is a standard computation.
\par
We now turn to the volume term. Set $\Omega_+:=\{u>0\}$. Since $ 1_{\{u\circ\Phi_t>0\}} = 1_{\Omega_+}\circ\Phi_t$,
the term under consideration is
\[
\mathcal V(t)
:=
\int_\Omega
Q^2\,( 1_{\Omega_+}\circ\Phi_t)\,\mathrm{dvol}_g.
\]
The preceding change of variable argument cannot be applied directly, since it would require differentiating $Q^2$ along the flow, while $Q$ is only Hölder continuous. Instead, we use the fact that ${\Omega_+}$ has locally finite perimeter (see e.g. \Cref{thm:mainCZZ22}) to get the following formula:

\begin{equation}
\left.\frac{d}{dt}\right|_{t=0}\mathcal V(t)
=
\int_{\partial^*{\Omega_+}}
Q^2\, g(V,\nu)\dd{}\mathcal H_g^{n-1},\label{eq:lgoal}
\end{equation}
where $\nu$ denotes the measure-theoretic outward unit normal to ${\Omega_+}$, and $\partial^*{\Omega_+}$ is the reduced boundary of $\Omega_+$ (in $\Omega)$.
To see this, first by the proof of \cite[Theorem 2.5]{AmbrosioGhezziMagnani2015}, $\frac{1_{\Omega_+}\circ\Phi_t-1_{\Omega_+}}{t}\dd\vol_g \rightharpoonup^* D_{-X}1_{\Omega_+}$. Hence, testing against the continuous function $Q^2$ gives $\mathcal V'(0)=\int_\Omega Q^2\,dD_{-X}1_{\Omega_+}$.
Next, by the Gauss--Green formula for sets of locally finite perimeter on Riemannian manifolds (\cite[(1.70)]{Ritore2023}), we have $D_{-X}1_{\Omega_+}=g(X,\nu)\dd P(\Omega_+,\cdot)$, where $P(\Omega_+,\cdot)$ is the perimeter measure. Finally, \cite[Theorem 1.39]{Ritore2023} gives $P(\Omega_+,\cdot)=\mathcal H_g^{m-1}\llcorner\partial^*\Omega_+$, which yields the desired formula. Automatically, the same equation also holds with the essential boundary, and then with the free boundary due to Item~3 of \Cref{thm:mainCZZ22}.
\end{proof}

Next, we fix $x\in B_{\bar r}(\bar x)\cap \partial\{u>0\}$ and consider inner variations in the direction of $\nabla \rho_{t,x}$. Although the natural vector field is $\nabla(2t f_{t,x})$, the factor $2t$ can be omitted by linearity.
We also insert $\rho_{t,x}$ as weights in the integrals, since it provides a pseudo-localization on a ball centered at $x$ of radius $\sqrt{t}$ and converts the boundary integral into an $n$-dimensional integral; compare Weiss's quantity in \Cref{thm:orginal_weiss} with our parabolic version here.
Thus, we take $\nabla \rho_{t,x}=-\rho_{t,x}\nabla f_{t,x}$ as the variational vector field. To apply Lemma \ref{lem:1st_var}, we further localize this vector field by multiplying it with the cutoff function $\phi$ from \eqref{cutoff_restated}. This localization produces error terms, which are bounded by the quantity $\mathcal E_t$ defined in \eqref{eq:parabolic_weiss_error_restated}. The resulting computation is recorded in the following lemma, and the error term $\mathcal E_t$ will be estimated in \Cref{lem:parabolic_weiss_error}.

\begin{lemma}\label{lem:parabolic_weiss_compu}
For $x\in B_{\bar{r}}(\bar{x})\cap \pt\{u>0\}$ fixed, we define
\begin{align}
	I(t)&=\int_M \left(|\nabla u|^2+{Q^2(x)}1_{u>0}\right)\phi \rho_{t,x}\dd\vol,\\
	J(t)&=t^{-1}\int_M u^2 \phi \rho_{t,x}\dd\vol.
\end{align}
Then for $t\in (0,1]$, we have
\begin{align}
		2I'(t)-J'(t)+\mathcal{E}_{t}'\geq&t^{-2}\int_M \left[ u-g(\nabla u,\nabla(2tf_{t,x}))\right]^2 \phi \rho_{t,x}\dd\vol\\
		&+4\int_M \left[H_{t,x} +\frac{\rho_{t,x}}{2t}g\right](\nabla u, \nabla u) \phi \dd\vol.
\end{align}
\end{lemma}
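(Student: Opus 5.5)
The plan is to compute $2I'(t)-J'(t)$ directly from the heat equation $\partial_t\rho_{t,x}=\Delta\rho_{t,x}$. We then rewrite the Dirichlet part through the inner variation of \Cref{lem:1st_var} with the field $V:=\phi\nabla\rho_{t,x}$, and finish by completing a square. Every term in which a derivative falls on $\phi$, and every term produced by replacing $Q^2(y)$ with $Q^2(x)$, is bounded in absolute value by the corresponding integrand of $\mathcal E_t'$. This is why $\mathcal E_t'$ appears on the left-hand side of the claimed inequality.

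\textbf{Step 1 (differentiating $I$ and $J$).} Since the heat kernel is smooth in $t>0$ and decays fast, differentiation under the integral is legitimate, and
\[
I'(t)=\int_M \bigl(|\nabla u|^2+Q^2(x)1_{u>0}\bigr)\phi\,\Delta\rho_{t,x}\,d\vol .
\]
For $J$ we get $J'(t)=-t^{-2}\int u^2\phi\rho+t^{-1}\int u^2\phi\Delta\rho$. Integrating the second term by parts gives $-2t^{-1}\int u\nabla u\cdot\nabla\rho\,\phi$, plus an error $t^{-1}\int u^2\nabla\phi\cdot\nabla\rho$. That error can be rewritten as $\int u^2\rho\,\Delta\phi/t$ (up to terms in $\nabla\phi$), which is of the form controlled by $\mathcal E_t'$.

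\textbf{Step 2 (stationarity).} Because $u$ is a minimizer, $\delta E(u)[V]=0$ for $V=\phi\nabla\rho_{t,x}$. We expand $\operatorname{div}V=\phi\Delta\rho+\nabla\phi\cdot\nabla\rho$ and $\nabla_{\nabla u}V=\phi\,\Hess\rho(\nabla u,\cdot)+(\nabla u\cdot\nabla\phi)\nabla\rho$. Using $\nabla\rho=-\rho\nabla f$, we have
\[
\Hess\rho=-\rho\Hess f+\rho\,\nabla f\otimes\nabla f=H_{t,x}+\rho\,\nabla f\otimes\nabla f .
\]
Stationarity then yields
\[
\int|\nabla u|^2\phi\Delta\rho=2\int\phi\bigl[H_{t,x}(\nabla u,\nabla u)+\rho(\nabla u\cdot\nabla f)^2\bigr]-\int_{\partial\{u>0\}}Q^2\phi\nabla\rho\cdot\nu+\text{(cutoff errors)} .
\]
For the volume part of $I'$, we apply the Gauss--Green formula on the finite-perimeter set $\{u>0\}$ (\Cref{thm:mainCZZ22}(3)):
\[
\int Q^2(x)1_{u>0}\operatorname{div}(\phi\nabla\rho)=\int_{\partial\{u>0\}}Q^2(x)\phi\nabla\rho\cdot\nu .
\]
The two boundary integrals therefore cancel, up to $\int_{\partial\{u>0\}}|\delta_xQ^2||\nabla f|\rho\phi$ and a $Q^2(x)|\nabla\phi||\nabla f|\rho$ term, both of which appear in $\mathcal E_t'$.

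\textbf{Step 3 (completing the square).} After Steps 1--2,
\[
2I'-J'\approx 4\int\phi H(\nabla u,\nabla u)+4\int\phi\rho(\nabla u\cdot\nabla f)^2+t^{-2}\int u^2\phi\rho+2t^{-1}\int u\nabla u\cdot\nabla\rho\,\phi .
\]
Expanding the square (using $-\nabla f\,\rho=\nabla\rho$) gives
\[
t^{-2}\int[u-2t\nabla u\cdot\nabla f]^2\phi\rho=t^{-2}\int u^2\phi\rho+4t^{-1}\int u\nabla u\cdot\nabla\rho\,\phi+4\int(\nabla u\cdot\nabla f)^2\phi\rho .
\]
The mismatch is therefore $-2t^{-1}\int u\nabla u\cdot\nabla\rho\,\phi$. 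To remove it we test the Euler--Lagrange equation (\Cref{thm:mainCZZ22}(4)) with $u\rho\phi$. The boundary measure does not contribute because $u=0$ on $\partial\{u>0\}$, so $\int\nabla u\cdot\nabla(u\rho\phi)=0$. This gives
\[
\int u\nabla u\cdot\nabla\rho\,\phi=-\int|\nabla u|^2\rho\phi-\int u\rho\nabla u\cdot\nabla\phi .
\]
Hence the mismatch equals $2t^{-1}\int|\nabla u|^2\rho\phi$ plus a cutoff error, and this combines with $4\int\phi H$ into $4\int\phi\,(H+\tfrac{\rho}{2t}g)(\nabla u,\nabla u)$, which is exactly the claimed right-hand side.

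\textbf{Main obstacle.} The delicate part is the bookkeeping of the cutoff error terms. Each such term contains $\nabla\phi$ or $\Delta\phi$, and it must be dominated pointwise in $t$ by the integrand of $\mathcal E_t'$. The $u\rho\nabla u\cdot\nabla\phi$ term in particular needs to be converted, via $|\nabla\rho|=\rho|\nabla f|$ or a further integration by parts, into the $|\nabla u|^2|\nabla f||\nabla\phi|\rho$ and $u^2|\Delta\phi|\rho/t$ forms. A secondary point is justifying the Euler--Lagrange test with the merely Lipschitz function $u\rho\phi$, which is valid since $u\rho\phi\in W^{1,2}_0$ and $u$ is harmonic on $\{u>0\}$.
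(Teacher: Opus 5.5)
Your route is the paper's: the inner variation with $V=\phi\nabla\rho_{t,x}$, Gauss--Green for the $Q^2(x)1_{u>0}$ part, integration by parts in $J'$, the Euler--Lagrange equation tested with $u\phi\rho_{t,x}$, and completing the square. Steps 1--3 are correct as computations.

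The gap is in the step you flag as the main obstacle. The terms $t^{-1}\int u\rho\,\nabla u\cdot\nabla\phi$ cannot be absorbed into $\mathcal E_t'$ the way you propose.
\begin{itemize}
\item There is no pointwise bound of $u|\nabla u||\nabla\phi|\rho$ by $|\nabla u|^2|\nabla f||\nabla\phi|\rho$ or by $t^{-1}u^2|\Delta\phi|\rho$. Nothing controls $u/(|\nabla u||\nabla f|)$, and $\Delta\phi$ may vanish where $\nabla\phi$ does not.
\item The identity $|\nabla\rho|=\rho|\nabla f|$ is irrelevant here, since no $\nabla\rho$ appears in this term. A further integration by parts only reproduces $u^2\nabla\phi\cdot\nabla\rho$.
\item The budget in $\mathcal E_t$ is exact. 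The coefficients $6$ and $2$ are entirely used by the cutoff terms of $2I'$, namely $4|\nabla u\cdot\nabla\phi||\nabla u\cdot\nabla\rho|+2(|\nabla u|^2+Q^2(x))|\nabla\phi\cdot\nabla\rho|$. The free-boundary coefficient $2$ is used by $2\int_{\partial\{u>0\}}|\delta_xQ^2||\nabla\rho|\phi$.
\end{itemize}
Your opening claim that every cutoff term is individually dominated by the integrand of $\mathcal E_t'$ is therefore false for these terms.

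The missing observation is that these terms cancel exactly. The cutoff term in $J'$ is $-t^{-1}\int u^2\nabla\phi\cdot\nabla\rho$ (note the sign). Rewriting it contains $+2t^{-1}\int u\rho\,\nabla u\cdot\nabla\phi$, so $-J'$ contributes $-2t^{-1}\int u\rho\,\nabla u\cdot\nabla\phi$. Your Euler--Lagrange substitution in Step 3 contributes $+2t^{-1}\int u\rho\,\nabla u\cdot\nabla\phi$. Altogether,
\[
t^{-1}\int u^2\,\nabla\phi\cdot\nabla\rho+2t^{-1}\int u\rho\,\nabla u\cdot\nabla\phi=t^{-1}\int\nabla\phi\cdot\nabla(u^2\rho)=-t^{-1}\int u^2\rho\,\Delta\phi ,
\]
which is exactly the $s^{-1}u^2|\Delta\phi|\rho$ term of $\mathcal E_t'$.

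The paper organizes this by substituting the Euler--Lagrange identity directly into $J'$, so that only $t^{-1}\int u^2\Delta\phi\,\rho$ survives there before combining with $2I'$. With this correction, your computation yields the stated inequality with precisely the constants appearing in $\mathcal E_t$.
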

\begin{proof}
We omit the subscript $x$ from $\rho_{t,x}$ and $f_{t,x}$ throughout the proof since it is fixed, while we keep it only in the terms involving $Q$, where the distinction between $Q(\cdot)$ and its value $Q(x)$ at the base point is relevant.
Using Lemma \ref{lem:1st_var} with $V=\phi\nabla \rho_{t}$ and $ \delta E(u)[V]=0$ by minimality, we have
\begin{align}
    I'(t)&=\int \Bigl[2 \Hess \rho_{t}(\nabla u, \nabla u)\phi
    +2g(\nabla u, \nabla \phi)g(\nabla u,\nabla \rho_{t})\\
    &\quad\quad -(|\nabla u|^2+{Q^2(x)}1_{u>0})g(\nabla \phi, \nabla \rho_{t})\Bigr] \dd \vol {- \int_{\partial\{u>0\}} \delta_{x}Q^2 \ g(V,\nu) \dd{}\mathcal{H}^{n-1}_g } \\
    &=\int \Bigl[2H_t(\nabla u, \nabla u)\phi
    +2g(\nabla u, \nabla f_t)^2 \phi\rho_t
    -2g(\nabla u, \nabla \phi)g(\nabla u,\nabla f_{t})\rho_t \\
    &\quad \quad +(|\nabla u|^2+{Q^2(x)}1_{u>0})g(\nabla \phi, \nabla f_{t})\rho_t \Bigr]\dd \vol 
    - \int_{\partial\{u>0\}} \delta_{x}Q^2 \ g(V,\nu)  \dd{}\mathcal{H}^{n-1}_g ,
\end{align}
where $\delta_{x}Q^2(y):=Q^2(y)-Q^2(x)$. For $J'(t)$, integration by parts gives
\begin{align}
        J'(t)&=-t^{-2}\int u^2\phi \rho_t \dd \vol
    -t^{-1}\int 2ug(\nabla u, \nabla \rho_t)\phi \dd\vol\\
    &\quad +t^{-1}\int 2u g(\nabla u, \nabla \phi)\rho_t \dd\vol
    +t^{-1}\int u^2 \Delta \phi \rho_t \dd\vol.
\end{align}
An additional integration by parts with the equation $u\Delta u=0$ in the distribution sense (\Cref{thm:mainCZZ22} item 4) gives us
\begin{align}
    t^{-1}\int 2u g(\nabla u, \nabla \phi)\rho_t \dd\vol
    &=-t^{-1}\int 2|\nabla u|^2 \phi\rho_t \dd\vol
    -t^{-1}\int 2u g(\nabla u, \nabla \rho_t)\phi \dd\vol.
\end{align}
Hence we have
\begin{align}
        J'(t)&=-t^{-2}\int u^2\phi \rho_t \dd \vol
    +4t^{-1}\int ug(\nabla u, \nabla f_t)\phi \rho_t \dd\vol\\
    &\quad -2t^{-1}\int |\nabla u|^2\phi\rho_t \dd\vol
    +t^{-1}\int u^2 \Delta \phi \rho_t \dd\vol.
\end{align}
Combining the results above, we have
\begin{align}
    &2I'(t)-J'(t)\\
    =&t^{-2}\int \left[ u-g(\nabla u,\nabla(2tf_{t}))\right]^2 \phi \rho_{t}\,d\vol
	+4\int \left[H_t +\frac{\rho_t}{2t}g\right](\nabla u, \nabla u) \phi\,d\vol\\
        &+\int 2\big(|\nabla u|^2+{Q^2(x)}1_{u>0}\big)g(\nabla \phi, \nabla f_t)\rho_t
        -4 g(\nabla u, \nabla \phi)g(\nabla u, \nabla f_t)\rho_t
        -t^{-1}u^2 \Delta \phi \rho_t\,d\vol\\
        &{-2\int_{\partial\{u>0\}} \delta_{x}Q^2
        g(V,\nu)\,d\mathcal H_g^{n-1}}.
\end{align}
The lemma can now be concluded by the Cauchy--Schwarz inequality.
\end{proof}
Before estimating the error term $\mathcal{E}_t$, we give an upper Ahlfors regularity estimate for the free boundary.

\begin{lemma}[Local upper Ahlfors bound]\label{lem:free_boundary_upper_ahlfors}
Fix $\delta>0$ and $B_R(\bar x)\Subset\Omega$. Assume $\Ric_g\geq-(n-1)\delta^2$, and let $u$ be a minimizing solution of $E_Q$ on $\Omega$. Suppose that $u$ is $L$-Lipschitz and $Q\geq\Lambda^{-1}>0$ on $B_R(\bar x)$. Then, for every $x\in\partial\{u>0\}$ and every ball $B_{2s}(x)\subset B_R(\bar x)$ with $0<s\leq\delta^{-1}$, we have
\begin{equation}
\mathcal H^{n-1}\bigl(\partial\{u>0\}\cap B_s(x)\bigr)
\leq C(n,L,\Lambda)s^{n-1}.
\label{eq:AhlforsRegularity}
\end{equation}
In particular, $\mathcal H^{n-1}\llcorner\partial\{u>0\}$ satisfies a local upper Ahlfors bound. Moreover, assume $\delta R\leq4$, and let $\phi\in C_c^\infty(B_R(\bar x))$ satisfy $0\leq\phi\leq1$ and $|\nabla\phi|\leq C(n)/R$. Then, for every $x\in B_{R/4}(\bar x)$ and every $s>0$,
\begin{equation}
\int_{\partial\{u>0\}\cap B_s(x)}
\phi\,d\mathcal H^{n-1}
\leq C(n,L,\Lambda)s^{n-1}.
\label{eq:weighted_Ahlfors}
\end{equation}
\end{lemma}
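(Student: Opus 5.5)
The plan is to derive \eqref{eq:AhlforsRegularity} from the Euler--Lagrange equation (Item~4 of \Cref{thm:mainCZZ22}), which identifies $\Delta u$ with the surface measure $Q\,\mathcal H^{n-1}\llcorner\partial\{u>0\}$. Since $Q\geq\Lambda^{-1}$, it suffices to bound the mass of the measure $\Delta u$ on $B_s(x)$ by $C(n,L)s^{n-1}$.

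\textbf{Step 1: cutoff.} I would take a cutoff $\psi$ with $\psi\equiv1$ on $B_s(x)$, $\operatorname{supp}\psi\subset B_{2s}(x)$ and $|\nabla\psi|\leq C(n)/s$. On a smooth manifold the Lipschitz function $\psi=\min\{1,(2-d_x/s)^+\}$ already works, with no need for $\Delta\psi$. Testing the distributional equation against $\psi$ gives
\[
\Lambda^{-1}\mathcal H^{n-1}\bigl(\partial\{u>0\}\cap B_s(x)\bigr)\leq\int\psi\,Q\,d\mathcal H^{n-1}\llcorner\partial\{u>0\}=-\int\nabla\psi\cdot\nabla u\,d\vol\leq \frac{C(n)L}{s}\Vol(B_{2s}(x)).
\]

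\textbf{Step 2: volume bound.} Bishop--Gromov with $\Ric\geq-(n-1)\delta^2$ and $2s\leq2\delta^{-1}$ gives $\Vol(B_{2s}(x))\leq C(n)s^n$, since the model volume at scale $\leq 2\delta^{-1}$ is comparable to Euclidean. This yields \eqref{eq:AhlforsRegularity}. The only delicate point is justifying the test with a merely Lipschitz compactly supported $\psi$. This is permitted by the definition of the measure-valued Laplacian in \Cref{thm:mainCZZ22}, because $u\in W^{1,2}_{\loc}$ and $\psi\in\Lip_c(\Omega)$.

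\textbf{Step 3: weighted bound \eqref{eq:weighted_Ahlfors}.} Here $x\in B_{R/4}(\bar x)$ and $s>0$ is arbitrary, so I would split into cases.
\begin{itemize}
\item If $s\leq R/4$, then $B_{2s}(x)\subset B_{3R/4}(\bar x)\subset B_R(\bar x)$. Also $s\leq R/4\leq\delta^{-1}$ because $\delta R\leq4$. The bound then follows from Step~2 using $\phi\leq1$.
\item If $s>R/4$, I would instead test the equation with $\phi$ itself, or with $\phi$ times a cutoff, and bound the mass of $\phi\,\Delta u$ on all of $B_R(\bar x)$. This gives
\[
\int_{\partial\{u>0\}}\phi\,d\mathcal H^{n-1}\leq\Lambda\int|\nabla\phi||\nabla u|\,d\vol\leq \Lambda\frac{C(n)L}{R}\Vol(B_R(\bar x))\leq C(n,L,\Lambda)R^{n-1}.
\]
The volume bound $\Vol(B_R(\bar x))\leq C(n)R^n$ again uses Bishop--Gromov with $\delta R\leq 4$. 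Since $R<4s$, the right-hand side is at most $C s^{n-1}$.
\end{itemize}

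The main obstacle is minor: it is making sure the Laplacian comparison is never needed, which is why the argument uses only first derivatives of the cutoffs, together with the constant bookkeeping in the regime $s\sim\delta^{-1}$.
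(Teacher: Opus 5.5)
Your proposal is correct and follows essentially the paper's proof: you test the Euler--Lagrange equation with a Lipschitz cutoff $\psi_s$, bound the result by $L\,\|\nabla\psi_s\|_\infty\Vol(B_{2s}(x))$ via Bishop--Gromov, and split the weighted case at $s=R/4$, testing with $\phi$ itself when $s>R/4$. The only cosmetic difference is for $s\le R/4$: you use $\phi\le 1$ together with the unweighted bound, whereas the paper tests against $\phi\psi_s$. Your route is legitimate even though $x\in B_{R/4}(\bar x)$ need not lie on $\partial\{u>0\}$, because your Steps 1--2 never use that hypothesis.
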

\begin{proof}
Choose $\psi_s\in\operatorname{Lip}_c(B_{2s}(x))$ such that $0\leq\psi_s\leq1$, $\psi_s\equiv1$ on $B_s(x)$, and $|\nabla\psi_s|\leq C/s$. By the Euler--Lagrange equation (\Cref{thm:mainCZZ22} item 4) we have
\begin{align*}
\Lambda^{-1}\mathcal H^{n-1}\bigl(\partial\{u>0\}\cap B_s(x)\bigr)
&\leq
\int_{\partial\{u>0\}\cap B_R(\bar{x})}\psi_s Q\dd{}\mathcal H^{n-1}
=
-\int_{B_R(\bar{x})}\langle\nabla u,\nabla\psi_s\rangle\,d\mathcal H^n\\
&\leq
\frac{CL}{s}\mathcal H^n\bigl(B_{2s}(x)\bigr)
\leq
C(n,L)s^{n-1}.
\end{align*}
The weighted estimate follows similarly, testing against $\phi\psi_s$ when $s\leq R/4$, and against $\phi$ when $s\geq R/4$.
\end{proof}
The following lemma provides estimates on the error term $\mathcal{E}_t$.
\begin{lemma}[Error estimation]\label{lem:parabolic_weiss_error}
Assuming $\Ric_g\geq -(n-1)\delta^2$, $\bar r\leq \delta^{-1}$, and
$\Vol B_{\bar r}(\bar x)\geq \mathrm{v}\bar{r}^n$. For $\mathcal E_t$ defined in
\eqref{eq:parabolic_weiss_error_restated}, the cutoff and free-boundary
contributions satisfy, respectively,
\begin{align}
&\int_0^t\int_M
\left[(6|\nabla u|^2+2Q^2(x))|\nabla f_{s,x}||\nabla\phi|
+s^{-1}u^2|\Delta\phi|\right]\rho_{s,x}\dd\vol\dd s
\leq
C(n,\mathrm{v},L,\Lambda)\exp\left(-\frac{\bar r^2}{10t}\right),\\
&\int_0^t\int_{\partial\{u>0\}}
|\delta_xQ^2||\nabla f_{s,x}|\rho_{s,x}\phi
\dd{}\mathcal H^{n-1}\dd s
\leq
C(n,\mathrm{v},L, \Lambda,\alpha)[Q]_{\alpha, B_{4\overline{r}}(\overline{x})}
t^{\alpha/2}.
\end{align}
Consequently,
\begin{equation}
\mathcal E_t(u,g,x)
\leq
C(n,\mathrm{v},L,\Lambda)\exp\left(-\frac{\bar r^2}{10t}\right)
+
C(n,\mathrm{v},L,\Lambda,\alpha)[Q]_{\alpha, B_{4\overline{r}}(\overline{x})}
t^{\alpha/2}\label{eq:errorboundv2}.
\end{equation}
In particular, we have $\mathcal E_t\leq C(n,\mathrm{v},L,\Lambda,\alpha)(1+[Q]_{\alpha, B_{4\overline{r}}(\overline{x})}\overline{r}^{\alpha})$.
\end{lemma}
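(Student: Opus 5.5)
The plan is to bound the two contributions separately. In both we use the Gaussian heat kernel estimates of \Cref{thm:heat-kernel}, the uniform Lipschitz bound $|\nabla u|\le L$, the bound $u(y)\le L\,d(x,y)$ coming from $u(x)=0$, and the Ahlfors bound of \Cref{lem:free_boundary_upper_ahlfors}. By the scaling remark we may assume $\bar r=1$, so that $\delta\le1$, $t\le1$, and the constants of \Cref{thm:heat-kernel} apply on $B_{10}(\bar x)$. Points outside this ball contribute nothing, because $\phi$ is supported in $B_4(\bar x)$.

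\emph{Cutoff term.} The integrand vanishes away from $\operatorname{supp}\nabla\phi\cup\operatorname{supp}\Delta\phi\subset B_4(\bar x)\setminus B_2(\bar x)$. For $x\in B_1(\bar x)$, every $y$ in this annulus satisfies $d(x,y)\ge1$. We take $\varepsilon$ small in \Cref{thm:heat-kernel} and read off two bounds:
\[
\rho_{s,x}(y)\le C s^{-n/2}e^{-d^2/((4+\varepsilon)s)},\qquad
|\nabla f_{s,x}|\le C s^{-1/2}\bigl(1+d/\sqrt s\bigr).
\]
On the support we also have $u^2\le 25L^2$, $|\nabla u|\le L$, $Q\le\Lambda$, and $|\nabla\phi|+|\Delta\phi|\le C(n)$. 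The volume of the annulus is bounded by $C(n)$ via volume comparison. Combining these, the integrand is at most $C\,s^{-n/2-2}e^{-1/((4+\varepsilon)s)}$. Since $1/(4+\varepsilon)>1/10$, this is bounded by $C e^{-1/(9s)}$. Integrating in $s$ over $(0,t)$ then gives at most $C t\,e^{-1/(9t)}\le C e^{-1/(10t)}$.

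\emph{Free-boundary term.} Here $|\delta_xQ^2(y)|\le 2\Lambda[Q]_\alpha d(x,y)^\alpha$. We split $\partial\{u>0\}\cap\operatorname{supp}\phi$ into dyadic annuli $A_j=\{2^{j-1}\sqrt s\le d(x,\cdot)<2^j\sqrt s\}$, together with the ball $B_{\sqrt s}(x)$. By the weighted bound \eqref{eq:weighted_Ahlfors}, $\int_{A_j}\phi\,d\mathcal H^{n-1}\le C(2^j\sqrt s)^{n-1}$; this is valid for every radius because $x\in B_{1}(\bar x)\subset B_{R/4}(\bar x)$ with $R=4$. On $A_j$, the heat kernel and gradient estimates give
\[
|\nabla f_{s,x}|\,\rho_{s,x}\le C s^{-1/2}(1+2^j)\,s^{-n/2}e^{-4^{j}/20}.
\]
Summing over $j$, the $s$-slice is bounded by
\[
C[Q]_\alpha\sum_j (2^j\sqrt s)^{\alpha}(2^j\sqrt s)^{n-1}s^{-(n+1)/2}(1+2^j)e^{-4^j/20}\le C[Q]_\alpha s^{\alpha/2-1},
\]
since the series converges. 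Integrating over $s\in(0,t)$ yields $C\alpha^{-1}[Q]_\alpha t^{\alpha/2}$. Undoing the scaling turns $[Q]_\alpha$ into $\bar r^\alpha[Q]_\alpha$ and $t$ into $t/\bar r^2$, which reproduces the stated form. Finally, since $t\le\bar r^2$ and $e^{-\bar r^2/(10t)}\le1$, the crude bound $C(1+[Q]_\alpha\bar r^\alpha)$ follows.

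\emph{Expected obstacle.} The step to watch is the free-boundary term near $s\to0$. There the kernel factor is singular like $s^{-(n+1)/2}$, and the estimate closes only because the Hölder gain $d^\alpha$ and the $(n-1)$-dimensional Ahlfors bound together supply exactly $s^{(n-1+\alpha)/2}$. This leaves the integrable power $s^{\alpha/2-1}$. It is therefore essential to use the weighted Ahlfors estimate uniformly at all scales, including large ones where $B_s(x)$ exits $B_R$; this is why \eqref{eq:weighted_Ahlfors} was stated for every $s>0$.
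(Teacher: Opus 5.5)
Your proposal is correct and follows the paper's proof essentially step for step. The cutoff term is handled by Gaussian heat-kernel and gradient bounds on the annulus $B_{4\bar r}(\bar x)\setminus B_{2\bar r}(\bar x)$, where $d_x\ge\bar r$. The free-boundary term uses $|\delta_xQ^2|\le C(\Lambda)[Q]_\alpha d_x^\alpha$ together with the weighted Ahlfors bound over concentric annuli, which yields the integrable power $s^{\alpha/2-1}$. The differences are only presentational: you normalize to $\bar r=1$ and bound the cutoff time-integrand crudely instead of substituting $\tau=\bar r^2/(5s)$, and you write the paper's ``$r^{n-2}\,dr$'' annular decomposition explicitly as dyadic shells.
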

\begin{proof}
Again, we omit the subscript $x$.
For the contribution from $\phi$, using the Lipschitz bound of $u$, it suffices to bound the terms
    \begin{equation}
        \int_0^t\int_M |\nabla f_{s}||\nabla \phi| \rho_s\dd \vol\dd s\quad \text{and}\quad
        \int_0^t\int_M s^{-1}d_x^2 |\Delta \phi| \rho_{s} \dd\vol \dd s.
    \end{equation}
    For the first term, using \eqref{cutoff_restated} about $\phi$, the heat kernel estimates (\Cref{thm:heat-kernel}) and that $x\in B_{\bar{r}}(\bar{x})$, we have
\begin{align}
    \int_0^t\int_M |\nabla f_{s}||\nabla \phi| \rho_s\dd \vol\dd s
    \leq C(n,\mathrm{v})\bar{r}^{-1}\int_0^t\int_{A_{2\bar{r}, 4\bar{r}}(\bar{x})} (s^{\frac{-1}{2}}+\frac{d_x}{s})s^\frac{-n}{2}\exp(\frac{-d_x^2}{5s})\dd \vol\dd s\\
    \leq C(n,\mathrm{v})\bar{r}^{-1} |A_{2\bar{r}, 4\bar{r}}(\bar{x})|\int_0^t (s^{\frac{-1}{2}}+\frac{5\bar{r}}{s})s^\frac{-n}{2}\exp(\frac{-\bar{r}^2}{5s})\dd s,
\end{align}
where $A_{a,b}$ denotes the annulus with radii $a<b$. The last integral can
be estimated by the change of variables
$\tau=\bar r^2/(5s)$,
$$
\begin{aligned}
\int_0^t
\left(s^{-1/2}+\frac{5\bar r}{s}\right)
s^{-n/2}
\exp\left(-\frac{\bar r^2}{5s}\right)\dd s
&\leq
C(n)\,\bar r^{-n+1}
\int_{\bar r^2/(5t)}^\infty
\left(\tau^{-1/2}+1\right)
\tau^{n/2-1}e^{-\tau}\dd\tau  \\
&\leq
C(n)\,\bar r^{-n+1}
\exp\left(-\frac{\bar{r}^2}{10t}\right).
\end{aligned}
$$
Hence, we can conclude
\begin{equation}
    \int_0^t\int_M |\nabla f_s||\nabla \phi| \rho_s\dd\vol\dd s
    \leq C(n,\mathrm{v})\bar{r}^{-n} |A_{2\bar{r},4\bar{r}}(\bar{x})|
    \exp\left(-\frac{\bar{r}^2}{10t}\right)
    \leq C(n,\mathrm{v})\exp\left(-\frac{\bar{r}^2}{10t}\right).
\end{equation}
The second term can be estimated in a similar way:
\begin{align}
\int_0^t \int_M s^{-1}d_x^2|\Delta\phi|\rho_s\dd\vol\dd s
&\leq C(n)\bar r^{-2}
\int_0^t \int_{A_{2\bar r,4\bar r}(\bar x)}
s^{-1}d_x^2\rho_s\dd\vol\dd s \\
&\leq C(n,\mathrm{v})|A_{2\bar r,4\bar r}(\bar x)|
\int_0^t s^{-\frac n2-1}
\exp\left(-\frac{\bar r^2}{5s}\right)\dd s \\
&= C(n,\mathrm{v})|A_{2\bar r,4\bar r}(\bar x)|\bar r^{-n}
\int_{\bar r^2/(5t)}^\infty
\tau^{\frac n2-1}e^{-\tau}\dd\tau \\
&\leq C(n,\mathrm{v})|A_{2\bar r,4\bar r}(\bar x)|\bar r^{-n}
\exp\left(-\frac{\bar r^2}{10t}\right)
\leq C(n,\mathrm{v})\exp\left(-\frac{\bar r^2}{10t}\right).
\end{align}
It remains to estimate the free-boundary contribution. Since $Q\in C^{0,\alpha}$
and $\Lambda^{-1}\leq Q\leq\Lambda$, we have
\[
\abs{\delta_xQ^2(y)}
\leq C(\Lambda)[Q]_{C^{0,\alpha}}d(x,y)^\alpha.
\]
Using this and the heat kernel estimates from \Cref{thm:heat-kernel}, we obtain
\begin{align}
&\int_{\partial\{u>0\}}
\abs{\delta_xQ^2}\abs{\nabla f_{s,x}}\rho_{s,x}\phi
\dd{}\mathcal H^{n-1} \notag\\
&\qquad\leq
C(n,\mathrm{v},L,\Lambda)[Q]_{C^{0,\alpha}}s^{-n/2}
\int_0^\infty
r^{\alpha}
\left(s^{-1/2}+\frac{r}{s}\right)
\exp\left(-\frac{r^2}{5s}\right)\,r^{n-2}\dd r. \label{eq:Qerror_radial}
\end{align}
Here the factor $r^{n-2}\,dr$ follows from weighted upper Ahlfors bound \eqref{eq:weighted_Ahlfors} and a decomposition into concentric annuli centered at $x$. With the change of variables
$r=\sqrt{s}\,\tau$, the right-hand side of
\eqref{eq:Qerror_radial} is bounded by
\begin{align}
C(n,\mathrm{v},L,\Lambda)[Q]_{C^{0,\alpha}}
s^{-1+\alpha/2}
\int_0^\infty
\tau^{n-2+\alpha}(1+\tau)e^{-\tau^2/5}\dd \tau
\leq
C(n,\mathrm{v},L,\Lambda,\alpha)[Q]_{C^{0,\alpha}}
s^{-1+\alpha/2}. \label{eq:Qerror_pointwise}
\end{align}
Therefore,
\begin{align}
\int_0^t\int_{\partial\{u>0\}}
\abs{\delta_xQ^2}\abs{\nabla f_{s,x}}
\rho_{s,x}\phi\dd{}\mathcal H^{n-1}\dd s 
\leq
C(n,\mathrm{v},L,\Lambda,\alpha)[Q]_{C^{0,\alpha}}
t^{\alpha/2}. \label{eq:Qerror}
\end{align}
This finishes the proof.
\end{proof}

\begin{proof}[Proof of \Cref{thm:parabolic_weiss_restated}]

The inequality for $W_{t_0,t}'$ in \eqref{eq:monder_res} is clear by Lemma \ref{lem:parabolic_weiss_compu}. The inequality in \eqref{eq:errorbound} was proved in \Cref{lem:parabolic_weiss_error}.
Therefore, we focus on the bound of $W_{t_0,t}$ claimed in \eqref{eq:Wboundgen_restated}.
\par
Let's start by bounding the Hessian term in the formula of $W_{t_0,t}$. 
By Cauchy--Schwarz, we have
\begin{align}
    &4\int_{t_0}^t\int_M \left|\left(H_{s,x}+\frac{\rho_{s,x}}{2s}g\right)(\nabla u, \nabla u)\right|\phi \dd\vol\dd s\\
    &\leq C(n,\mathrm{v},L)\int_{t_0}^t\int_M |H_{s,x}+\frac{\rho_{s,x}}{2s}g|\phi \dd\vol\dd s\\
    &=C(n,\mathrm{v},L)\int_{t_0}^t\int_M \abs{\Hess\!\left( f_{s,x}\right) - \frac{g}{2s}}\rho_{s,x}\phi \dd\vol \dd s\\
    &\leq C(n,\mathrm{v},L)\int_{t_0}^t\sqrt{\int_M \abs{\Hess\!\left(f_{s,x}\right) - \frac{g}{2s}}^2 \rho_{s,x}\phi \dd\vol} \dd s\\
    &\leq C(n,\mathrm{v},L)\sqrt{\int_{0}^t 2s\int_M \left|\Hess f_{s,x}-\frac{g}{2s}\right|^2 \rho_{s,x}\phi \dd\vol \dd s} \, \abs{\log \frac{t}{t_0}}^{1/2}.
\end{align}
Thus, it remains to estimate the double integral appearing under the square root in the last line. This integral is closely related to the time-derivative of the $\mathcal{W}$-functional for the linear heat equation, as studied in \cite{ni2004entropy}. The localized analogue of this functional was introduced in \cite[Section 4.6]{CJN21} as \emph{local pointed entropy}.
The argument in \cite[Section~4.6]{CJN21} shows that, for a Riemannian manifold $(M,g)$ satisfying $\Ric_g\geq-(n-1)\delta^2$ and $\Vol(B_r(p))\geq\mathrm v r^n$ for $r\leq\delta^{-1}$, there exists a constant $C(n,\mathrm v)$ such that the quantity (compare with \cite[eq. 0.2]{ni2004entropy}),
\[
\begin{aligned}
\mathcal{W}_t(x)
:= {} & \int_{M^n}
\Big(2t \Delta_f f + t\abs{\nabla f}^2 + f - n\Big)
\phi \rho_{t,x} \dd\vol 
- \int_0^t 4s \int_M
(n-1)\delta^2\abs{\nabla f_{s,x}}^2\phi\rho_{s,x}\dd\vol\dd s \\
& - C(n,\mathrm{v})\delta^2\int_0^t e^{-\frac{1}{100\delta^2s}}\dd s ,
\end{aligned}
\]
is monotone for $0<t\leq\delta^{-2}$. Here $f:=f_{t,x}$, $\Delta_f:=\Delta-\langle\nabla f,\nabla\cdot\rangle$, and $\phi$ is a cutoff as in \cref{cutoff_restated} with $\phi=1$ on $B_{2\delta^{-1}}(p)$ and $0$ outside $B_{4\delta^{-1}}(p)$, with $0\leq\phi\leq1$, and $x\in B_{\delta^{-1}}(p)$.
Below we use this with $\delta:=\bar r^{-1}$ and $p:=\bar x$. As in \cite[Theorem~4.22]{CJN21}, we have
$$\partial_t \mathcal{W}_t(x) \leq -2t \int_{M^n} \abs{\Hess f - \frac{1}{2t}g}^2\phi \rho_{t,x} \dd\vol,$$
and therefore
$$\int_0^{\overline{r}^2} 2s \int_M \abs{\Hess f_{s,x} - \frac{g}{2s}}^2\phi \rho_{s,x} \dd\vol \dd s \leq \mathcal{W}_{0^+} - \mathcal{W}_{\overline{r}^2}.$$
Additionally, because of the heat kernel estimates in \Cref{thm:heat-kernel}, we have $-\mathcal{W}_{\overline{r}^2} \leq C(n,\mathrm{v})$. 
Regarding $\mathcal{W}_{0^+}$, as we are in a smooth manifold, one can check using heat kernel convergence \Cref{thm:heatkernelconvergence}, and the Laplacian estimate in \Cref{thm:heat-kernel}, that $$\mathcal{W}_{0^+} = \int_{\mathbb{R}^n} (\abs{\nabla f_1}^2 + f_1-n)\rho_{1,x} d\vol = 0.$$
This also follows from Item (2) of \cite[Theorem 4.22]{CJN21}.
Altogether, this provides the bound on the Hessian term in $W_{t_0,t}$.
\par
We now show a uniform bound on
$J(t)=t^{-1}\int u^2\phi\rho_t\,d\vol$. Since $u(x)=0$ and $u$ is
$L$-Lipschitz, we have $u\le Ld_x$. Moreover, since
$\bar r\le \delta^{-1}$, the heat kernel estimates from
\Cref{thm:heat-kernel} apply on the support of $\phi$. Hence,
\begin{align}
t^{-1}\int u^2\phi\rho_t\,d\vol
&\le C(n,\mathrm{v},L)\int_{B_{5\bar r}(x)}
d_x^2\,t^{-\frac n2-1}\exp\left(-\frac{d_x^2}{5t}\right)\,d\vol \\
&\le C(n,\mathrm{v},L)\int_0^{5\bar r}
s^{n+1}t^{-\frac n2-1}\exp\left(-\frac{s^2}{5t}\right)\,ds \\
&\le C(n,\mathrm{v},L).
\end{align}
The energy term $I(t)$ is uniformly bounded as well. Indeed, since $|\nabla u|\le L$, $\|Q\|_{C^0}\le \Lambda$, $0\le \phi\le 1$, and the heat kernel is normalized, we have
\[
I(t)=\int_M \bigl(|\nabla u|^2+Q^2(x) 1_{\{u>0\}}\bigr)\phi\rho_{t,x}\,d\vol
\le L^2+\Lambda^2.
\]
\par
Combining all the estimates above with \eqref{eq:errorboundv2} about $\mathcal{E}_t$, we obtain
\[
|W_{t_0,t}|
\le C(n,\mathrm{v},L,\Lambda,\alpha)
\left(
1+\left|\log\frac{t_0}{t}\right|^{1/2}
+[Q]_{\alpha,B_{4\bar r}(x)}t^{\alpha/2}
\right).
\]
This finishes the proof.
\end{proof}

Theorem \ref{thm:parabolic_weiss_restated} shows that the derivative of $W_{t_0,t}$ quantitatively measures how far $u$ is from being $1$-homogeneous, as the function $2t f_{t,x}$ is our replacement for the radial function $d_x^2/2$.
This will be formulated in the almost rigidity lemma, \Cref{lem:alm_rigid}, where we show that the pinching on $W_{t_0,t}$ between scales, together with the pinching of the volume ratio, provides the almost $0$-symmetry at such scales.

\section{Volume Estimates}\label{sec:vol_est}

As mentioned in the introduction, the key point in the volume estimate is to control the number of bad scales around each point. Here, the bad scales are those where we do not have effective closeness to a homogeneous model. After some preparatory work, we will prove the precise statement in \eqref{eq:BoundBadScales}.

\par
The main tools for controlling the number of bad scales are the truncated parabolic Weiss quantity and the associated estimates established in \Cref{sec:paraweiss}, the almost-rigidity lemma, and the quantitative
cone-splitting principle. In \Cref{subsec:QuantRigidityConeSplitting}, we show that even in the absence of boundedness at the infinitesimal scale, the almost rigidity result still holds; namely, pinching in the truncated parabolic Weiss quantity implies almost $0$-symmetric behavior. In the same section, we also prove the quantitative cone-splitting principle for this problem, which relies on the addition of symmetries for symmetry models and the compactness of the problem.
\par
In the last \Cref{sec:pf_of_main}, we carry out the covering argument of \cite{CN13} to prove our main theorem.
Although the argument is by now well known, we include the details because, in our setting, the total number of bad scales is no longer bounded. We show that the available estimate on the number of bad scales is still enough to adapt the argument to this unbounded scenario.

\subsection{Quantitative Rigidity and Cone Splitting}\label{subsec:QuantRigidityConeSplitting}

From Theorem \ref{thm:parabolic_weiss_restated}, we know that the pinching on $W_{t_0,t}$ provides almost homogeneity on $u$ in terms of $2t f_{t,x}$, which is our replacement for the radial function $d_x^2/2$. In the almost rigidity lemma (\Cref{lem:alm_rigid}) below, we use the pinching of the volume ratio  $\mathcal{V}$ to control the closeness of $\nabla(2t f_{t,x})$ and the radial vector field, while also obtaining Gromov--Hausdorff closeness of the
underlying metric ball to a ball in a metric cone, via the ``almost volume cone implies almost metric cone theorem" in \cite{CC96}. In conclusion, the pinching of $W_{t_0}$ and $\mathcal{V}$ together provides almost $0$-symmetry as in our \Cref{def:quant_sym}.

\begin{lemma}[Almost Rigidity]\label{lem:alm_rigid}
Let $u$ be a minimizer of the one-phase problem \eqref{1-phase_Ber} with $X$ being a smooth manifold $(M,g)$ with $\Ric\geq -(n-1)$. 
Fix some ball $B_{4\bar{r}}(\bar{x})\Subset\Omega$ for some $\bar{r}\leq 1$.
Assume $\Vol(B_{\bar r}(\bar x))\ge \mathrm{v}\,\bar r^n$, $\abs{Q+1/Q}_{C^0,B_{4\bar r}(\bar x)} + \overline{r}^{\alpha}[Q]_{\alpha, B_{4\bar r}(\bar x)}<\Lambda$, and that $u$ is $L$-Lipschitz on $B_{4\bar r}(\bar x)$. For $x\in B_{\bar{r}}(\bar{x})\cap \pt\{u>0\}$, let $\mathcal{V}$ be the volume ratio:
    \begin{equation}
        \mathcal{V}_{-1}(r):=\frac{\Vol^M(B_r(x))}{\Vol^{\mathbb{H}^n_{-1}}(B_r(x))},
    \end{equation}
    and let $W_{t_0,t}$ be as in \Cref{thm:parabolic_weiss_restated}. Then, 
    for every $\varepsilon>0$ and $\gamma\in(0,\frac12)$, there exists $\delta$ with $\delta = \delta(n,\mathrm{v},L,\Lambda,\alpha,\varepsilon,\gamma)\in(0,\frac12)$
such that if
    \begin{equation}
        a) \ \ W_{t_0,t_2}(x)-W_{t_0,t_1}(x)<\delta \quad \text{and} \quad b) \ \ \log \mathcal{V}_{-1} ( \sqrt{t_1})-\log \mathcal{V}_{-1}(\delta^{-1}\sqrt{t_2})<\delta
    \end{equation}
    for some $t_0\leq t_1<t_2\leq \delta^2 \overline{r}^2$ with $\frac{t_1}{t_2}=\gamma^2$, then $B_{\sqrt{t_1}}(x)$ is $(0,\varepsilon)$-symmetric.
\end{lemma}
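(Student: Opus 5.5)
We argue by contradiction and compactness. After rescaling by $\sqrt{t_2}$ (the quantity $W$ is scale invariant, see the scaling remark), we may normalize $t_2=1$, $t_1=\gamma^2$, and $\bar r\ge\delta^{-1}$. Suppose the conclusion fails for some $\varepsilon,\gamma$. Then we obtain manifolds $(M_i,g_i,x_i)$ with $\Ric\ge-(n-1)\delta_i^2$ and $\delta_i\to0$, cutoff radii $\bar r_i\to\infty$, minimizers $u_i$ and coefficients $Q_i$ satisfying the uniform bounds, and parameters $t_{0,i}\le\gamma^2$. Along this sequence the pinchings in a) and b) tend to zero, yet $B_\gamma(x_i)$ is not $(0,\varepsilon)$-symmetric. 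Since $\mathcal{V}_{-1}$ is nonincreasing, b) gives almost volume-cone behavior on the annulus from $\gamma$ to $\delta_i^{-1}\to\infty$. Passing to a subsequence and using Cheeger--Colding ("almost volume cone implies almost metric cone"), together with volume convergence, the spaces converge in pmGH to an $\mathsf{ncRCD}(0,n)$ space $(Y,y)$ whose balls $B_R(y)$ are metric cones for every $R$. Hence $Y=C(Z)$ with vertex $y$. By \Cref{thm:compactness}, $u_i\to u_\infty$ locally uniformly, where $u_\infty$ is $L$-Lipschitz with $u_\infty(y)=0$, and $Q_i\to$ a constant, since the rescaled H\"older seminorm tends to zero.

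Next we pass the Weiss pinching to the limit. The monotone part of \eqref{eq:monder_res}, integrated over $[\gamma^2,1]$, gives
\[
\int_{\gamma^2}^1 t^{-2}\int_{M_i}\bigl[u_i-\langle\nabla u_i,\nabla(2tf^i_{t,x_i})\rangle\bigr]^2\phi_i\rho^i_{t,x_i}\,d\vol\,dt\le\delta_i\to0 .
\]
The cutoff $\phi_i\equiv1$ on $B_{2\bar r_i}\to$ everything. The heat kernels satisfy uniform Gaussian lower bounds on compact sets, so $\rho^i$ is bounded below locally. This yields, for a.e.\ $t\in[\gamma^2,1]$ and on every ball $B_R$, that $\|u_i-\langle\nabla u_i,\nabla(2tf^i_t)\rangle\|_{L^2(B_R)}\to0$.

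To identify the limit we need two convergence facts. First, $\nabla u_i\to\nabla u_\infty$ strongly in $L^2$; this follows from the energy convergence in \Cref{thm:compactness}. Second, $f^i_t\to f_t$ strongly in $W^{1,2}$ (\Cref{lem:ft_convergence}), and on the cone \eqref{eq:heatkernelcone} gives $2tf_t=\tfrac12 d_y^2+c\,t$, so $\nabla(2tf_t)=\nabla\tfrac12d_y^2$. Combining these, $\langle\nabla u_i,\nabla (2tf^i_t)\rangle$ converges weakly in $L^1_{\mathrm{loc}}$ to $\langle\nabla u_\infty,\nabla\tfrac12 d_y^2\rangle$. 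Together with lower semicontinuity of the $L^2$ norm under weak convergence, we obtain $u_\infty=\langle\nabla u_\infty,\nabla\tfrac12d_y^2\rangle=d_y\,\partial_r u_\infty$ a.e.\ on the cone.

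This Euler identity along rays makes $u_\infty$ 1-homogeneous. Concretely, via \Cref{lem:ncRCD_cone_struct} the measure on $C(Z)$ is a product in polar coordinates, and $\partial_r(u_\infty/r)=0$ holds along a.e.\ ray; continuity of $u_\infty$ then upgrades this to every ray. So $B_\gamma(y)$ with $u_\infty$ is exactly $0$-symmetric. Finally, uniform convergence of $u_i$ and the GH approximations show that $B_\gamma(x_i)$ is $(0,\varepsilon)$-symmetric for large $i$, as in \Cref{prop:basicpropstrata}(iii). This contradicts the choice of the sequence.

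The main obstacle is the limit passage of the inner product term. It requires strong convergence of both $\nabla u_i$ and $\nabla f_{t,x_i}$ on varying spaces, in a form where the limit of the product is the product of the limits. I would first try the polarization identity \eqref{eq:polarization} together with strong $W^{1,2}$ convergence of $u_i\pm 2tf^i_t$, localized with a fixed cutoff. If that fails, I would fall back on the weak-times-strong convergence of gradients from \cite{AH18}. A secondary technical point is the integration in $t$ over the whole interval $[\gamma^2,1]$: we must choose a single good time, or work with the time integral throughout, which is harmless because the bounds are uniform for $t\in[\gamma^2,1]$.
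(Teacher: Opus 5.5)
Your proposal is correct and follows essentially the same route as the paper. The paper also rescales by $\sqrt{t_2}$, argues by contradiction, gets an $\mathsf{ncRCD}(0,n)$ cone limit from Cheeger--Colding, and passes $u_i$ to a constant-coefficient minimizer with strong $W^{1,2}$ convergence. It then selects an a.e.-good time in $(\gamma^2,1)$ from the integrated monotone term, uses \Cref{lem:ft_convergence} and the gradient stability of \cite{AH18} to obtain $u_\infty=\langle\nabla u_\infty,\nabla(2tf_{t,x_\infty})\rangle$, and concludes via the cone heat kernel formula and radial integration. The one step you take for granted, the a.e.\ identification $\langle\nabla u_\infty,\nabla d_y\rangle=\partial_r u_\infty$ along rays, is the point the paper justifies with the Sobolev calculus on warped products from \cite{GigliHan2018}.
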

\begin{remark}
None of the quantities above actually depends on the choice of $t_0$, since $W_{t_0,t_2}-W_{t_0,t_1}=W_{t_0',t_2}-W_{t_0',t_1}$.
Thus $t_0$ plays no role other than making $W$ well-defined.
\end{remark}
\begin{proof}
First, set $\kappa:=-\bar r^2\in[-1,0)$ and consider the rescaling $$(t_0,t,u,g,Q)\mapsto (\lambda^{-2}t_0,\lambda^{-2}t,\lambda^{-1}u,\lambda^{-2}g,Q)$$ with $\lambda=\bar r$. Rescaling the hyperbolic reference metric as well, the volume ratio becomes $\mathcal V_\kappa$. Thus, we may assume that $\bar r=1$ and $\Ric_g\geq(n-1)\kappa g$, with the volume pinching computed using $\mathcal V_\kappa$. The pinching assumptions and the Lipschitz bound are preserved, the norm on $Q$ is scale-invariant, and so is the notion of symmetry.

\par
We argue by contradiction and suppose that the statement is false. Then there exist $\varepsilon>0$, $\gamma^2\in(0,\frac14)$, a sequence $\delta_i\to0$, manifolds $M_i$, bounded domains $\Omega_i\subset M_i$, minimizers $u_i$, points $\bar x_i,x_i$, functions $Q_i$, and scales
$t_{0,i}<t_{1,i}=t_{2,i}\gamma^2<t_{2,i}\leq \delta_i^2$
satisfying all the assumptions above, except that $B_{\sqrt{t_{1,i}}}(x_i)$ is not $(0,\varepsilon)$-symmetric. For each $i$, we rescale again by $(t_0,t,u,g,Q)_i\mapsto (\lambda_i^{-2}t_0,\lambda_i^{-2}t,\lambda_i^{-1}u,\lambda_i^{-2}g,Q)_i =: (\widetilde{t}_0,\widetilde{t}, \widetilde{u}, \widetilde{g},\widetilde{Q})_i$,
with $\lambda_i=t_{2,i}^{1/2}$. In the rescaled sequence, we have
\begin{equation}
\begin{aligned}
a)'\quad&
W_{\widetilde t_{0,i},1}
    (\widetilde u_i,\widetilde g_i,\widetilde Q_i,x_i)
-
W_{\widetilde t_{0,i},\gamma^2}
    (\widetilde u_i,\widetilde g_i,\widetilde Q_i,x_i)
<\delta_i,\\
b)'\quad&
\log\widetilde{\mathcal V}_i(\gamma)
-\log\widetilde{\mathcal V}_i(\delta_i^{-1})
<\delta_i,\\
c)'\quad&
[Q_i]_{\alpha,B_R^{\lambda_i^{-2}g_i}(x_i)}
=
\lambda_i^\alpha
[Q_i]_{\alpha,B_{R\lambda_i}^{g_i}(x_i)}
\le \Lambda\lambda_i^\alpha
\le \Lambda\delta_i^\alpha
\to 0
\text{ for every fixed $R<\infty$}.
\end{aligned}
\label{eq:aybprima}
\end{equation}
Here the rescaled volume ratio is
\[
\widetilde{\mathcal V}_i(r)
:=
\mathcal V_{\kappa t_{2,i}}^{\widetilde g_i}(r)
=
\mathcal V_{\kappa}^{g_i}
\bigl(\sqrt{t_{2,i}}\,r\bigr).
\]
Moreover,
\begin{equation}
\Ric_{\widetilde g_i}
\geq (n-1)\kappa t_{2,i}\widetilde g_i
\geq -(n-1)t_{2,i}\widetilde g_i
\geq -(n-1)\delta_i^2\widetilde g_i.
\label{eq:dprima}
\end{equation}
Finally, $B_\gamma^{\widetilde g_i}(x_i)$ is not
$(0,\varepsilon)$-symmetric. From now on, we drop the tildes and use the original notation for the rescaled sequence. Thus, the Ricci lower bounds converge to zero,  \eqref{eq:aybprima} holds, and $B_{\gamma}(x_i)$ is not $(0,\varepsilon)$-symmetric.
We now derive a contradiction based on these conditions.
\par
By $b)'$ and $\eqref{eq:dprima}$, and the almost volume cone implies almost metric cone theorem from \cite{CC96}, we have that $(M_i,g_i,x_i)\xrightarrow[]{\mathrm{pGH}} (C(Z),d_\infty,x_\infty)$.
Moreover, the convergence is also in the pmGH-sense, as we assume a lower volume bound in the sequence. It follows that $(C(Z),d_\infty, \mathcal{H}^n)$ is an $\mathsf{ncRCD}(0,n)$ cone.
\par

We next consider the coefficients $Q_i$.
By $c)'$, the oscillation of $Q_i$ on every fixed ball tends to zero.
Since $\Lambda^{-1}\le Q_i\le \Lambda$, after passing to a subsequence we may assume that $Q_i(x_i)\to Q_\infty$ for some constant $Q_\infty\in[\Lambda^{-1},\Lambda]$. Together with $c)'$, this implies that $Q_i\to Q_\infty$ locally uniformly.
\par
At the function level, we use the uniform Lipschitz bound on $u_i$. As $x_i$ is a zero of $u_i$, we are in the hypothesis of \Cref{thm:compactness} for each radius $R$, and hence $u_i$ converges uniformly on compact subsets to a minimizer $u_\infty$ of the one-phase problem with constant coefficient $Q_\infty$ on the cone $C(Z)$. 
In addition, $u_i$ converges to $u_\infty$ also in the strong $W^{1,2}(B_R(x_i))$ sense, for every $R>0$.
It now suffices to prove that $u_\infty$ is $1$-homogeneous on $B_{\gamma}(x_\infty)$ to get a contradiction.

By Theorem \ref{thm:parabolic_weiss_restated} for $u_i$ with the heat kernel $\rho_{t,x_i}$ on $M_i$ and the corresponding $f_{t, x_i}$, we have
\begin{equation}
    \int_{\gamma^2}^{1}t^{-2} \int_{M_i} 
    |u_i-\nabla u_i\cdot \nabla (2tf_{t,x_i})|^2 \phi_i \rho_{t,x_i}\dd \vol \dd t
    \leq W_{t_0,1} -W_{t_0,\gamma^2}<\delta_i\to 0 . 
\end{equation}
Therefore, up to a subsequence, the integrand of the outer integral converges to $0$ for a.e.\@ $t \in(\gamma^2,1)$. For such $t$, with \Cref{thm:heat-kernel} about $\rho_{t,x_i}$, we have
\begin{equation}\label{eq:quant_homog_u_i}
    \int_{B_{\gamma}(x_i)} |u_i-\nabla u_i\cdot \nabla 2tf_{t,x_i}|^2\dd\vol \to 0
\end{equation}
Note that $B_\gamma(x_i)$ is with respect to $g_i$, as already suggested by the fact $x_i \in M_i$. The same is true for $\dd\vol$, the heat kernel, the dot product, and the gradient operator.
\par
By \Cref{lem:ft_convergence}, $f_{t,x_i}\to f_{t,x_\infty}$ strongly in $W^{1,2}(B_\gamma(x_i))$.
Together with  $u_i\to u_\infty$ strongly in $W^{1,2}(B_\gamma(x_i))$ , we can apply the local weak stability of the gradient (\cite[Theorem~4.6(2)]{AH18} with $p=2$) and get
\[
\nabla u_i\cdot\nabla(2t f_{t,x_i}) \rightharpoonup 
\nabla u_\infty \cdot \nabla(2t f_{t,x_\infty}) \ \ \text{weakly in } L^2(B_\gamma(x_i)).
\]
It now follows from \eqref{eq:quant_homog_u_i} that $u_\infty = \nabla u_\infty\cdot \nabla(2t f_{t,x_\infty})$ holds $\mathcal H^n$-a.e.\@ on $B_\gamma(x_\infty)$. 
Since $B_\gamma(x_\infty)$ is in the $\mathsf{ncRCD}(0,n)$ cone $C(Z)$ with vertex $x_\infty$, the heat-kernel formula \eqref{eq:heatkernelcone} gives $\nabla(2t f_{t,x_\infty}) = d_{x_\infty}\nabla d_{x_\infty}$. 
Thus
\[
u_\infty
=
d_{x_\infty}
\left\langle
\nabla u_\infty,\nabla d_{x_\infty}
\right\rangle
\quad
\mathcal H^n\text{-a.e. on $B_\gamma(x_\infty)$}.
\]
Using the polar coordinates $(0,\infty)_r\times Z$ on $C(Z)$, the Sobolev calculus on warped products \cite{GigliHan2018} gives
$\langle \nabla u_\infty,\nabla d_{x_\infty}\rangle
=\partial_r u_\infty$ for $\mathcal{H}^n$- a.e.\@ point. Hence, for $\mathcal H^{n-1}_Z$-a.e.\@ $z\in Z$, the Lipschitz function $r\mapsto u_\infty(r,z)$ satisfies $r\,\partial_r u_\infty(r,z)=u_\infty(r,z)$ for a.e.\@ $r\in(0,\gamma)$. Therefore $u_\infty$ is $1$-homogeneous on $B_\gamma(x_\infty)$. We now have a contradiction since $B_{\gamma}(x_i)$ is not $(0,\varepsilon)$-symmetric. Since the argument only uses $-1\leq\kappa<0$, the choice of $\delta$ is uniform in $\kappa$, and hence independent of the original radius $\bar r$.
\end{proof}

In the remainder of this subsection, we establish the cone-splitting principle. The first lemma below addresses the rigid case, both at the space level and at the function level. A cone-splitting statement at the space level for general $\mathsf{RCD}(0,N)$ cones was proved in \cite[Proposition~1.18]{ABS19}, relying on Gigli's splitting theorem \cite{Gigli2026}. In our setting, however, the cone models are $\mathsf{ncRCD}(0,n)$ cones with the $n$-dimensional Hausdorff measure.
Therefore, any metric isometry between cone models automatically preserves the reference measure, and it suffices to establish the splitting at the metric level. Based on this observation, we give below a purely metric proof for proper metric cones with links of diameter at most $\pi$, without using Ricci curvature bounds.\footnote{The possibility of this was observed in \cite{CN13}, though a proof using Cheeger-Gromoll Splitting in Ricci limits Cones $X$ with $\Ric_X \geq 0 $ is provided instead.}.
The corresponding addition of symmetries at the function level is included in the statement as well.
\par
We use the following convention. Suppose that $\widetilde u:\mathbb R^k\times C(Z)\to [0,\infty)$ is a $k$-symmetric model, so that $\widetilde u$ is $1$-homogeneous and $\mathbb R^k$-invariant. If $z_*$ denotes the vertex of $C(Z)$, we set $\mathcal L^k:=\mathbb R^k\times\{z_*\}$, and say that the model is $k$-symmetric with respect to $\mathcal L^k$.

\begin{lemma}[Adding Symmetries/Cone Splitting, Rigid Case]\label{lemma:addingsym}
Let $(X,d,\mathcal{H}^n)$ be an $\mathsf{ncRCD}(K,n)$ space and $u:\Omega \subset X \to [0,\infty)$ a locally Lipschitz function. Suppose that
\begin{enumerate}
\item $B_{1}(x)$ is $k$-symmetric with respect to $\mathcal{L}^k$.
\item There exists $y \in B_1(x) \setminus \mathcal{L}^k$ such that $B_{2}(y)$ is $0$-symmetric.
\end{enumerate}
Then $B_1(x)$ is $(k+1)$-symmetric with respect to a
$(k+1)$-dimensional plane $\mathcal{L}^{k+1}$ satisfying $\mathcal{L}^{k}\subset \mathcal{L}^{k+1}$ and
$y\in \mathcal{L}^{k+1}$.
\end{lemma}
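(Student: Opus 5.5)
The plan is to work entirely on the model cone and to obtain the extra Euclidean direction from the fact that the two points $x$ and $y$ are both cone vertices. By hypothesis 1 there is an isometry $F:B_1(0,z_*)\subset\mathbb R^k\times C(Z)\to B_1(x)$ under which $u\circ F$ is $1$-homogeneous about the vertex and $\mathbb R^k$-invariant. Write $F^{-1}(y)=(a,w)$. Since $y\notin\mathcal L^k$, we have $w\neq z_*$. After translating along $\mathbb R^k$, which preserves all the structure, we may take $a=0$. Since $d(x,y)<1$, we have $B_1(x)\subset B_2(y)$. By hypothesis 2, $B_2(y)$ is isometric to a ball about the vertex of a second cone $C(Z')$, and $u$ is $1$-homogeneous about $y$ there. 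So on $B_1(x)$ the metric and $u$ carry two families of dilations: $\delta^x_\lambda$ centred at $x$ and $\delta^y_\lambda$ centred at $y$.

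\emph{Step 1 (a line through $x$ and $y$).} Take the radial segment from $y$ to $x$ in the cone structure at $y$; it can be continued beyond $x$ as a ray of $C(Z')$ while we stay inside $B_2(y)$. Take the radial segment from $x$ to $y$ in the structure at $x$; it continues beyond $y$ as the ray of $C(Z)$ through $w$. Together these give a unit-speed geodesic $\sigma$ through $x$ and $y$ that is minimizing on a definite interval on both sides of $x$. Using the dilations at $x$, the segment of $\sigma$ on each side of $x$ is a piece of a ray from the vertex, so we may rescale. This produces in $C(Z)$ a complete line through the vertex, with directions $\theta=w/|w|$ and $\theta'$ in $Z$ satisfying $d_Z(\theta,\theta')=\pi$.

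\emph{Step 2 (metric splitting without curvature).} We show that $Z$ is the spherical suspension of $Z'':=\{\zeta\in Z: d_Z(\zeta,\theta)=\pi/2\}$; then $C(Z)\cong\mathbb R\times C(Z'')$. For every $\zeta\in Z$, the triangle inequality in $Z$ gives $d_Z(\zeta,\theta)+d_Z(\zeta,\theta')\ge\pi$. The reverse inequality is the heart of the matter. Here we use the second cone structure at $y$: for $p=(r,\zeta)$, the function $d_y^2$ must be a cone-distance-squared. Comparing $d_y(p)^2=r^2+|w|^2-2r|w|\cos d_Z(\zeta,\theta)$ with the expression obtained by moving $y$ to the point $\sigma(s)$ on the other side of $x$, and using homogeneity under $\delta^y_\lambda$, forces $\cos d_Z(\zeta,\theta)=-\cos d_Z(\zeta,\theta')$. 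The assumption $\operatorname{diam}Z\le\pi$ rules out the other branch. This is the ``two vertices'' argument: in Euclidean terms, $\tfrac12(d_x^2-d_y^2)$ is affine along every ray. Once equality holds, the standard suspension identification gives the splitting metrically. Since the cones are noncollapsed, it is automatically a measure splitting by the remark on cone models. So $\mathbb R^k\times C(Z)\cong\mathbb R^{k+1}\times C(Z'')$, with $\mathcal L^{k+1}:=F(\mathbb R^k\times\mathbb R\theta)$ containing both $\mathcal L^k$ and $y$.

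\emph{Step 3 (function level).} On $B_1(x)$ we have $u\circ\delta^x_\lambda=\lambda u$ and $u\circ\delta^y_\lambda=\lambda u$. In the split coordinates, $\delta^y_{\lambda^{-1}}\circ\delta^x_\lambda$ is translation by $(1-\lambda^{-1})\cdot(y-x)$ along the new $\mathbb R$-factor. Hence $u\circ F$ is invariant under small translations in that direction wherever both maps stay in the domains. Iterating gives invariance along the whole factor inside $B_1$. Finally we extend $u\circ F$ by $1$-homogeneity about the vertex to all of $\mathbb R^{k+1}\times C(Z'')$, which keeps both properties.

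I expect the main obstacle to be Step 2. The two cone structures only hold on balls, not globally, and proving $d_Z(\zeta,\theta)+d_Z(\zeta,\theta')\le\pi$ without a splitting theorem requires carefully matching the distance formulas of the two cones on the overlap. If this direct computation becomes awkward, the fallback is the cited route: invoke \cite[Proposition~1.18]{ABS19}, using the line produced in Step 1 and the $\mathsf{ncRCD}(0,n)$ structure of the model cone.
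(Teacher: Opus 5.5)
\paragraph{Where Step 2 fails and how to fix it.} Steps 1 and 3 are sound, but the curvature-free version of Step 2 does not work as written.

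The derivation of $\cos d_Z(\zeta,\theta)=-\cos d_Z(\zeta,\theta')$ is only sketched. More seriously, that identity is too weak to give the splitting. The relation $d_Z(\zeta,\theta)+d_Z(\zeta,\theta')=\pi$ for all $\zeta$ holds for any spindle $dt^2+f(t)^2d\phi^2$ on $[0,\pi]\times S^1$ (end circles collapsed) with $f(0)=f(\pi)=0$, $f>0$ inside and $-f''/f\not\equiv1$. The cone over such a link does not split. So there is no ``standard suspension identification'' without a curvature input on $Z$.

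Your fallback supplies that input, and it makes the computation unnecessary:
\begin{itemize}
\item Step 1 already produces $\theta,\theta'$ with $d_Z(\theta,\theta')=\pi$, i.e.\ a line through the vertex of $C(Z)$.
\item $C(Z)$ is an $\mathsf{ncRCD}(0,n-k)$ cone by \Cref{lem:ncRCD_cone_struct}; the case $n-k=1$ is trivial.
\item Gigli's splitting theorem \cite{Gigli2026} then gives $C(Z)\cong\mathbb R\times C(Z'')$. The second factor is a cone because the vertex lies on the line.
\end{itemize}
Cite the splitting theorem itself rather than a two-vertex cone-splitting statement. A two-vertex statement would need the model to be a cone about $\hat y:=F^{-1}(y)$ globally, whereas you only know this on $B_2(y)$.

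Three smaller repairs are also needed.
\begin{itemize}
\item Translating along $\mathbb R^k$ to get $a=0$ moves the center of the ball. Instead, project the geodesic of Step 1 onto the $C(Z)$-factor.
\item In Step 3 the map $\delta^y_{\lambda^{-1}}\circ\delta^x_\lambda$ translates by $(1-\lambda^{-1})(a,b)$, not purely along the new factor. Together with $\mathbb R^k$-invariance this still gives full $\mathbb R^{k+1}$-invariance, because $b>0$.
\item You should say why the contraction $\delta^y_{\lambda^{-1}}$ from hypothesis 2 agrees on $B_1(x)$ with the dilation of the split model about $\hat y$. The reasons are that radial geodesics from a vertex of a Euclidean cone are unique, and $B_1(\hat x)$ is convex.
\end{itemize}

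\paragraph{Comparison with the paper.} With these repairs your proof is valid, but at the space level it follows a different route from the paper's.

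You split the cone at $x$ along the line through $x$ and $y$, and pay for this with the $\mathsf{RCD}$ splitting theorem.

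The paper instead splits the cone at $y$ by a purely metric blow-up.
\begin{itemize}
\item Near $\hat y$, the model $\mathbb R^k\times C(Z)$ is isometric to a ball about the vertex of $C(Z')$. By self-similarity, its tangent cone at $\hat y$ is $C(Z')$ itself.
\item Every tangent cone of $\mathbb R^k\times C(Z)$ at a point off $\mathbb R^k\times\{z_*\}$ splits off $\mathbb R\times\mathbb R^k$. Hence $C(Z')$ splits off $\mathbb R^{k+1}$.
\item The paper then checks that the remaining factor is a cone, and locates $x$ and the old plane inside the product via radial geodesics.
\end{itemize}
This costs more bookkeeping but uses no Ricci curvature, only that links have diameter at most $\pi$; that is the paper's stated reason for the choice. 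Your version is shorter once the splitting theorem is admitted. It is legitimate here, since the models are $\mathsf{ncRCD}(0,n)$ cones.

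At the function level both proofs use the same mechanism: composing dilations about $x$ and $y$ with reciprocal ratios produces a translation in the $x$-to-$y$ direction.
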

\begin{proof}
By hypothesis, there are isometries $I:B_2^{C(Z_y)}(y_*)\longrightarrow B_2(y)$, and $J:B_1^M(\hat x)\longrightarrow B_1(x)$, where $M:=C(Z_x)\times\mathbb R^k$, $\hat x:=(z_*,0)$, and $y_*$ and $z_*$ are the vertices of $C(Z_y)$ and $C(Z_x)$, respectively.
These isometries satisfy $I(y_*)=y$ and $J(\hat x)=x$, and $J$
identifies the original symmetry plane with
$\{z_*\}\times\mathbb R^k$ within $B_1^M(\hat x)$.

Since $B_1(x)\subset B_2(y)$, the composition
\[
\widetilde J:=I^{-1}\circ J:
B_1^M(\hat x)\longrightarrow B_2^{C(Z_y)}(y_*)
\]
is a well-defined isometric embedding.
Let $\hat y:=J^{-1}(y)$, so that $\widetilde J(\hat y)=y_*$. Using polar coordinates $(r,z)$ on $C(Z_x)$, write $\hat y=((b,z_0),a), a\in\mathbb R^k, b>0$, where $b>0$ since $y\notin\mathcal L^k$. Set $\ell:=d_M(\hat x,\hat y)=d(x,y)<1$ and choose $0<\rho<1-\ell$. Then $\widetilde J$ restricts to an isometry $\widetilde J:B_\rho^M(\hat y)\longrightarrow B_\rho^{C(Z_y)}(y_*)$.
\par
Let $r_i\downarrow0$, and denote by $\delta_t^y$ the homothety of ratio $t$ centered at the vertex $y_*$ of $C(Z_y)$.
For every fixed $S>0$ and all sufficiently large $i$, the map
\[
F_i:=\delta_{1/r_i}^y\circ\widetilde J
\]
restricts to an isometry
\begin{equation}\label{eq:rigid_cone_rescaled_isometry}
F_i:
\bigl(B_{Sr_i}^M(\hat y),r_i^{-1}d_M,\hat y\bigr)
\longrightarrow
\bigl(B_S^{C(Z_y)}(y_*),d_{C(Z_y)},y_*\bigr).
\end{equation}
Consequently,
\[
(M,r_i^{-1}d_M,\hat y)
\xrightarrow{\mathrm{pGH}}
(C(Z_y),d_{C(Z_y)},y_*).
\]
In particular, the tangent cone of $M$ at $\hat y$ exists and is unique. By the dimension reduction argument, after passing to a subsequence, there are pointed GH approximations
\[
\phi_i:
(Z_x,(b/r_i)d_{Z_x},z_0)\longrightarrow (Y,p)
\]
such that the maps
\begin{equation}
\Phi_i((r,z),e)
:=
\left(
\phi_i(z),
\frac{r-b}{r_i},
\frac{e-a}{r_i}
\right)\label{eq:GHrealization}
\end{equation}
realize the pointed GH convergence
\[
(M,r_i^{-1}d_M,\hat y)
\xrightarrow{\mathrm{pGH}}
(Y\times\mathbb R\times\mathbb R^k,(p,0,0)).
\]
By \eqref{eq:rigid_cone_rescaled_isometry}, after passing to a
further subsequence, the maps $F_i$ converge on bounded sets to
a pointed isometry
\begin{equation}\label{eq:rigid_cone_limit_isometry}
\psi:
(Y\times\mathbb R\times\mathbb R^k,(p,0,0))
\longrightarrow
(C(Z_y),y_*).
\end{equation}

We now identify the original symmetry plane under this isometry.
For $c\in\mathbb R^k$ with $|c|<1$, set
$\hat z_c:=(z_*,c)$ and consider the geodesic $\gamma_c(t)
= \bigl(((1-t)b,z_0),\,a+t(c-a)\bigr), 0\leq t\leq1,$
from $\hat y$ to $\hat z_c$.
This geodesic lies entirely in $B_1^M(\hat x)$, since $d_M(\hat x,\gamma_c(t))
\leq
(1-t)\ell+t|c|
<1.$
Therefore, $\widetilde J\circ\gamma_c$ is the radial geodesic
from $y_*$ to $\widetilde J(\hat z_c)$, and hence
$\widetilde J(\gamma_c(t))=\delta_t^y\bigl(\widetilde J(\hat z_c)\bigr)$. In particular, for all sufficiently large $i$,
\[
F_i(\gamma_c(r_i))
=
\widetilde J(\hat z_c).
\]
With respect to the GH approximations $\Phi_i$ introduced above, we have
\[
\Phi_i(\gamma_c(r_i))
=
\bigl(\phi_i(z_0),-b,c-a\bigr)
\longrightarrow
(p,-b,c-a).
\]
Since $F_i$ converges to $\psi$ with respect to this realization of the
GH convergence and $F_i(\gamma_c(r_i))=\widetilde J(\hat z_c)$, passing to the limit yields
\begin{equation}\label{eq:rigid_cone_original_plane}
\psi(p,-b,c-a)=\widetilde J(z_*,c),
\qquad |c|<1.
\end{equation}
Thus both $\widetilde J(\hat x)$ and the image of the original
symmetry plane lie in
$\psi(\{p\}\times\mathbb R\times\mathbb R^k)$. By \eqref{eq:rigid_cone_limit_isometry}, the product
$Y\times\mathbb R\times\mathbb R^k$ is a metric cone centered at $(p,0,0)$. Its radial homotheties preserve the slice $Y\times\{0\}\times\{0\}$, since the Euclidean components of geodesics issuing from $(p,0,0)$ vary linearly. Restricting the cone distance formula to this slice shows that $Y$ is itself a metric cone with vertex $p$.
We therefore write $Y=C(\widehat Z_y)$,
allowing the trivial cone consisting of a single point.

Finally, since $\ell<1$, the ball of radius $1$ centered at $\widetilde J(\hat x)$ is contained in $B_2^{C(Z_y)}(y_*)$. Hence $I\circ\psi$ restricts to an isometry
\begin{equation}
I \circ \psi: B_1^{C(\widehat Z_y)\times\mathbb R\times\mathbb R^k}
(p,-b,-a)
\longrightarrow B_1(x).
\end{equation}
This proves the desired conclusion at the space level.

The preceding argument actually shows that the original
$\mathbb R^k$-factor is identified with the last $\mathbb R^k$-factor in the new coordinates, and not only that the corresponding symmetry planes are identified. Indeed, set
$H:=\psi^{-1}\circ\widetilde J$. By \eqref{eq:rigid_cone_original_plane}, $H(z_*,c)=(p,-b,c-a)$. Now let $(q,e)\in B_1^M(\hat x)$ and write $H(q,e)=(Q,s,\eta)$. Comparing the distance from $(q,e)$ to the points $(z_*,c)$ with the
distance from its image to $(p,-b,c-a)$, for all $c$ in the relevant ball, gives $\eta = e-a$.

Moreover, if
\[
H(q,e_i)=(Q_i,s_i,e_i-a), \qquad i=1,2,
\]
then the fact that $H$ is an isometry yields
\[
|e_1-e_2|^2
=
d_{C(\widehat Z_y)\times\mathbb R}
\bigl((Q_1,s_1),(Q_2,s_2)\bigr)^2
+
|e_1-e_2|^2,
\]
and hence $(Q_1,s_1)=(Q_2,s_2)$. Thus, locally,
\[
H(q,e)=\bigl(H_0(q),e-a\bigr)
\]
for some map $H_0$. In particular, translations along the original
$\mathbb R^k$-factor correspond exactly to translations along the last
$\mathbb R^k$-factor in the new coordinates.

We now turn to the function level. Denote by $\widetilde u$ the function transported to the new coordinates, and continue to denote the corresponding centers by $x$ and $y$. After a rigid motion of the Euclidean factor $E:=\mathbb R^{k+1}$, we may write
\[
x=(p,0),\qquad y=(p,\ell e_1),
\]
where $e_1$ is the first coordinate vector and $\ell=d(x,y)<1$. The preceding observation shows that $\widetilde u$ is invariant along a $k$-dimensional subspace $V\subset E$. Since $y$ lies outside the original symmetry plane, we have
\[
E=\mathbb R e_1\oplus V,
\]
where the sum need not be orthogonal. Write each Euclidean coordinate as $re_1+\xi$, with $\xi\in V$. By $V$-invariance, $\widetilde u$ descends to a function of $(Q,r)$
on the image of $B_1(x)$ under the projection
\[
(Q,re_1+\xi)\longmapsto(Q,r).
\]
We keep the same notation for this function. Using polar coordinates
$Q=(z,t)$ on $C(\widehat Z_y)$, we write its arguments as
$((z,t),r)$.

The homogeneity with respect to both centers is preserved by the
preceding identifications, which preserve radial geodesics.
Moreover, contractions centered at either $x$ or $y$ preserve
$B_1(x)$: for $P\in B_1(x)$ and $0<\mu\leq1$, the product structure
gives
\[
d\bigl(x,\delta_\mu^y(P)\bigr)
\leq \mu d(x,P)+(1-\mu)\ell<1,
\]
and the corresponding assertion for contractions centered at $x$ is immediate. Consequently, both homogeneity identities descend to the reduced coordinates. Take two points $((z,t),r)$ and $((z,t),r')$ in the projected domain.
Interchanging them if necessary, assume $r'\geq r$ and set
\[
\lambda:=1+\frac{r'-r}{\ell}\geq1.
\]
In the reduced coordinates, contraction by $\lambda^{-1}$
centered at $y$ sends the first point to
\[
\left((z,\lambda^{-1}t),
\ell+\lambda^{-1}(r-\ell)\right)
=
\left((z,\lambda^{-1}t),\lambda^{-1}r'\right),
\]
which is also the image of the second point under contraction
by $\lambda^{-1}$ centered at $x$. By the two homogeneity identities,
\[
\lambda^{-1}\widetilde u((z,t),r)
=
\widetilde u\left((z,\lambda^{-1}t),\lambda^{-1}r'\right)
=
\lambda^{-1}\widetilde u((z,t),r').
\]
Hence
\[
\widetilde u((z,t),r)=\widetilde u((z,t),r').
\]
Thus the reduced function is independent of $r$. Together with
the previously established $V$-invariance, this proves that
$\widetilde u$ is invariant along the entire Euclidean factor
$E=\mathbb R^{k+1}$ on $B_1(x)$, completing the proof.

\end{proof}

By a compactness argument, the preceding lemma yields
the following quantitative cone-splitting theorem. At the space level, compactness follows from the Ricci lower bound and the noncollapsed assumption, while at the function level it follows from the uniform Lipschitz bound.
\par
In the arguments below, we will keep track of the approximate symmetry plane. More precisely, suppose that $B_r(x)$ is $(k,\delta)$-symmetric with a symmetry model $\mathbb{R}^k\times C(Z)$, and let
$F:B_r(0,z_*)\subset\mathbb R^k\times C(Z)\to B_r(x)$ be the Borel $\delta r$-pGH approximation appearing in
\Cref{def:quant_sym}.
We set $\mathcal L_\delta^k:=F\bigl((\mathbb R^k\times\{z_*\})\cap B_r(0,z_*)\bigr),$
and say that $B_r(x)$ is $(k,\delta)$-symmetric with respect to
$\mathcal L_\delta^k$.

\begin{theorem}[Quantitative cone splitting]
Let $(X,d,\mathcal{H}^n)$ be an $\mathsf{ncRCD}(K,n)$ space, $K<0$, $\Omega\subset X$ a bounded domain, and $u:\Omega\to [0,\infty)$ an $L$-Lipschitz function.
Assume that $\mathcal{H}^n(B_1(p))\geq \mathrm{v}$ for any $p\in \Omega$. Fix some positive $\varepsilon,\tau$ and $0<\chi\leq 1/2$, and let $k\in \mathbb{N}_0\cap[0,n-1]$.
Then there exists a positive $\delta=\delta(n,K,\mathrm{v}, L,\tau,\varepsilon,\chi)$ such that, if $0<r\leq 1$ and
\begin{enumerate}
    \item $B_{\chi^{-1} r}(x)$ is $(k,\delta)$-symmetric with respect to $\mathcal{L}^k_\delta$.
    \item For some $y\in B_{r}(x)\setminus B_{\tau r}(\mathcal{L}^k_\delta)$, $B_{\chi^{-1} r}(y)$ is $(0,\delta)$-symmetric,
\end{enumerate}
then $B_{r}(x)$ is also $(k+1,\varepsilon)$-symmetric.
\end{theorem}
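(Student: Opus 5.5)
We argue by contradiction and compactness, reducing to the rigid case \Cref{lemma:addingsym}. By \Cref{rmk:rescaling} we may rescale by $r^{-1}$, so that $r=1$; the rescaled space is $\mathsf{ncRCD}(r^2K,n)\subset\mathsf{ncRCD}(K,n)$ since $r\leq 1$ and $K<0$. By Bishop--Gromov, the rescaled unit balls still have volume bounded below by a constant depending on $\mathrm{v},K,n$, and the rescaled $u$ remains $L$-Lipschitz.

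Suppose the statement fails. Then there are $\delta_i\to0$, spaces $X_i$, functions $u_i$ and points $x_i,y_i$ such that:
\begin{itemize}
\item $B_{\chi^{-1}}(x_i)$ is $(k,\delta_i)$-symmetric with respect to $\mathcal L^k_{\delta_i,i}$;
\item $y_i\in B_1(x_i)\setminus B_\tau(\mathcal L^k_{\delta_i,i})$ and $B_{\chi^{-1}}(y_i)$ is $(0,\delta_i)$-symmetric;
\item $B_1(x_i)$ is not $(k+1,\varepsilon)$-symmetric.
\end{itemize}
One point needs attention: \Cref{def:quant_sym} requires $u(x)=0$, so $u_i(x_i)=u_i(y_i)=0$ is implicitly assumed. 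This gives uniform boundedness of $u_i$ on bounded balls.

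By Gromov precompactness and volume convergence \cite{DPG2018}, after passing to a subsequence we get $(X_i,x_i)\to(X_\infty,x_\infty)$ in the pmGH sense and $y_i\to y_\infty\in\overline{B_1(x_\infty)}$. By Arzel\`a--Ascoli, $u_i\to u_\infty$ locally uniformly, with $u_\infty$ $L$-Lipschitz.

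Now apply \Cref{prop:basicpropstrata}(iii)(a), with $s=\chi^{-1}$ centered at $x_i$ and then at $y_i$. It gives that $B_{\chi^{-1}}(x_\infty)$ is $k$-symmetric and $B_{\chi^{-1}}(y_\infty)$ is $0$-symmetric. It also gives that the approximate planes $\mathcal L^k_{\delta_i,i}$ converge, in the Hausdorff sense, to the exact symmetry plane $\mathcal L^k$ of the limit. Since $d(y_i,\mathcal L^k_{\delta_i,i})\geq\tau$, we obtain $d(y_\infty,\mathcal L^k)\geq\tau$, so $y_\infty\notin\mathcal L^k$.

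We want to apply \Cref{lemma:addingsym}, which uses radius $1$ at $x$, radius $2$ at $y$, and $y\in B_1(x)$. Here $y_\infty$ may lie on $\partial B_1(x_\infty)$, so we work at radius $\rho\in(1,2)$ and use the room provided by $\chi^{-1}\geq2$. Take $\rho=1+\sigma$ with $\sigma<1/4$, so that $y_\infty\in B_\rho(x_\infty)$ and $B_\rho(x_\infty)\subset B_{\chi^{-1}}(x_\infty)$ is $k$-symmetric. Rescale the metric by $\rho^{-1}$; both $u$-homogeneity and conicality are scale invariant. After rescaling, the ball at $y_\infty$ has radius $\chi^{-1}\rho^{-1}\geq 2/(1+\sigma)$. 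This is slightly less than $2$, which is an issue. To fix it, note that the rigid lemma only uses $B_1(x)\subset B_2(y)$, which holds as soon as the radius at $y$ exceeds $1+d(x,y)$. Since $d(x_\infty,y_\infty)\leq 1$ and $\chi^{-1}\geq 2$, we have $B_\rho(x_\infty)\subset B_{\chi^{-1}}(y_\infty)$ once $\rho+1\leq\chi^{-1}$. If $\chi=1/2$ exactly, I would instead shrink $\rho$ slightly below $1$ and handle the case $d(x_\infty,y_\infty)=1$ by first applying the lemma at a slightly larger ball. The cleanest route is to rerun the proof of \Cref{lemma:addingsym} with $B_R(x)\subset B_{R'}(y)$, which it clearly tolerates.

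This yields that $B_\rho(x_\infty)$, and hence $B_1(x_\infty)$, is $(k+1)$-symmetric. Finally, the exact symmetry of $B_1(x_\infty)$ transfers back to $B_1(x_i)$. Composing the isometry $B_1(0,z_*)\to B_1(x_\infty)$ with pGH approximations $B_1(x_\infty)\to B_1(x_i)$ compatible with the uniform convergence $u_i\to u_\infty$, as in the proof of \Cref{prop:basicpropstrata}(ii), makes $B_1(x_i)$ $(k+1,\varepsilon)$-symmetric for large $i$. This is a contradiction.

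The main obstacle is the step identifying the limit of the approximate planes $\mathcal L^k_{\delta_i}$ with the exact plane $\mathcal L^k$, which is needed to keep $y_\infty$ off $\mathcal L^k$. Item (iii)(a) asserts this convergence, but it requires the symmetry models to converge along with the maps $F_i$. I would use \Cref{lem:almost_lip_sym_model} to make the models almost Lipschitz and then \Cref{lem:AAarg}, exactly as in the Claim within \Cref{prop:basicpropstrata}.
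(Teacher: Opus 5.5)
Your proposal is correct and is essentially the paper's proof: rescale to $r=1$, argue by contradiction, pass to pmGH and locally uniform limits, use \Cref{prop:basicpropstrata}(iii)(a) at $x_i$ and at $y_i$ to get exact $k$- and $0$-symmetry with $\mathcal L^k_{\delta_i}\to\mathcal L^k$ and $d(y_\infty,\mathcal L^k)\ge\tau$, then apply \Cref{lemma:addingsym} and transfer the exact $(k+1)$-symmetry back to $B_1(x_i)$. The paper passes over the boundary case $d(x_\infty,y_\infty)=1$ that you flag, and it needs no adjustment of $\rho$: the $k$-symmetry of $B_{\chi^{-1}}(x_\infty)$ with $\chi^{-1}\ge 2$ already gives the local isometry near $\hat y$, the inclusion $B_1(x_\infty)\subset B_2(y_\infty)$ still holds, and the contraction estimate $\mu\, d(x,P)+(1-\mu)\ell<1$ in the rigid lemma stays strict when $\ell=1$.
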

\begin{proof}
Suppose by contradiction that the statement is not true.  Then, there is a sequence $u_i,\Omega_i,X_i$ with the uniform constants in the statement, and two sequences ${r}_i\leq 1$ and $\delta_i \to 0$, with $B_{\chi^{-1} r_i}(x_i)$ being $(k,\delta_i)$-symmetric with respect to $\mathcal{L}_{\delta_i}^k$, and $y_i \in B_{r_i}(x_i)\backslash B_{\tau r_i}(\mathcal{L}_{\delta_i}^k)$, with $B_{\chi^{-1} r_i}(y_i)$ being $(0,\delta_i)$-symmetric. Rescale the sequence  by $(u_i,d_i,x_i)\mapsto (\lambda^{-1}_iu_i,\lambda^{-1}_id_i,x_i)$, with $\lambda_i = r_i$. The new sequence satisfies 
\begin{enumerate}
\item[1'.] $B_{\chi^{-1}}(x_i)$ is $(k,\delta_i)$-symmetric with respect to $\mathcal{L}_{\delta_i}^k$. 
\item[2'.] $y_i \in B_{1}(x_i) \backslash B_{\tau}(\mathcal{L}_{\delta_i}^k)$, and $B_{\chi^{-1}}(y_i)$ is $(0,\delta_i)$-symmetric.
\end{enumerate}
but $B_1(x_i)$ is not $(k+1,\varepsilon)$-symmetric. It suffices to show that the existence of this sequence provides a contradiction.
\par
Extend each $u_i$ to a nonnegative $L$-Lipschitz function on $X_i$, without changing the notation. By compactness and the uniform volume lower bound (see \cite{DPG2018}), after passing to a subsequence, the spaces $(X_i,x_i)$ converge in the pmGH sense to an $\mathsf{ncRCD}(K,n)$ space $(X,x)$. In particular, $B_{\chi^{-1}}(x_i)\to B_{\chi^{-1}}(x)$ with respect to this convergence. Moreover, since $u_i(x_i)=0$, by Arzel\'a--Ascoli, after passing to a further subsequence, the functions $u_i$ converge locally uniformly to an $L$-Lipschitz function $u:X\to[0,\infty)$.
\par
Now, consider the symmetry models $(B_{\chi^{-1}}(0,z_{*,i})\subset\mathbb R^k\times C(Z_i),
\tilde u_i)$ and the Borel $\delta_i\chi^{-1}$-pGH approximations $F_i:B_{\chi^{-1}}(0,z_{*,i}) \to B_{\chi^{-1}}(x_i)$ defining $\mathcal L_{\delta_i}^k$ given in Item~1'.
By \Cref{prop:basicpropstrata}, Item~(iii)-(a), replacing only the model functions $\tilde u_i$ if necessary, while keeping $\mathbb{R}^k\times C(Z_i)$ and $F_i$ fixed, we may pass to a subsequence such that $F_i$ converge to an isometry $F:B_{\chi^{-1}}(0,z_*)\subset\mathbb R^k\times C(Z) \to B_{\chi^{-1}}(x)$, and $F$ pulls back $u$ to a $k$-symmetric model $\tilde u$. Setting $\mathcal L^k:= F\bigl((\mathbb R^k\times\{z_*\}) \cap B_{\chi^{-1}}(0,z_*)\bigr)$, we obtain $\mathcal L_{\delta_i}^k\to\mathcal L^k$. After passing to a further subsequence, we also have $y_i\to y$, with $d(y,\mathcal L^k)\geq\tau$.

\par

The same operation can be done using Item 2' and following the points $y_i$. Therefore there is an isometry $G: B_{\chi^{-1}}(y^*) \subset C(Z_y) \to B_{\chi^{-1}}(y)$, where $y^*$ denotes the vertex of $C(Z_y)$, that pulls back $u$ to a 1-homogeneous model. Now by \Cref{lemma:addingsym} we have that $B_{1}(x)$ is $(k+1)$-symmetric ($\chi \leq 1/2$). By the convergence of the spaces and the functions, $B_1(x_i)$ is $(k+1,\varepsilon)$-symmetric for all sufficiently large $i$, a contradiction.
\end{proof}

As usual, we will use the following iterative consequence of the cone-splitting theorem. See \cite[Corollary~4.2]{CN13} for a proof.

\begin{corollary}\label{cor:conesplitting}
Let $(X,d,\mathcal{H}^n)$ be an $\mathsf{ncRCD}(K,n)$ space, $\Omega\subset X$ a bounded domain, and $u:\Omega\to [0,\infty)$ an $L$-Lipschitz function.
Assume that $\mathcal{H}^n(B_1(p))\geq \mathrm{v}$ for any $p\in \Omega$. For every $\tau,\psi>0$ and
$0<\chi\leq 2^{-n}$, there exists
$\varepsilon=\varepsilon(n,\mathrm{v},K,L,\tau,\psi,\chi)>0$ such that the following
holds: Suppose that $0<r\leq1$ and that $B_{\chi^{-1}r}(x)$ is $(0,\varepsilon)$-symmetric. Then there exists an integer
$0\leq\ell\leq n$ such that $B_r(x)$ is $(\ell,\psi)$-symmetric with a Borel $\psi r$-pGH approximation $F:B_r(0,z_*)\subset\mathbb R^\ell\times C(Z)\to B_r(x)$ associated with a symmetry model, such that
\[
\left\{
y\in B_r(x):
B_{\chi^{-1}r}(y)\text{ is }(0,\varepsilon)\text{-symmetric}
\right\}
\subset
B_{\tau r}\left(
F\bigl(\mathbb R^\ell\times\{z_*\}\bigr)
\right).
\]
\end{corollary}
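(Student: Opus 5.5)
The plan is to follow the iterative scheme of \cite[Corollary~4.2]{CN13}, feeding in the quantitative cone-splitting theorem above one dimension at a time. We choose constants backwards along a finite chain $\varepsilon=\delta_n\le\delta_{n-1}\le\dots\le\delta_0$, together with decreasing spatial tolerances. For each $j$, the parameter $\delta_{j}$ is taken to be the $\delta$ given by the quantitative cone-splitting theorem with target symmetry parameter $\delta_{j+1}$, with separation parameter $\tau$, and with a radius ratio $\chi'$ of the form $2\chi$ or similar. The room needed for this is exactly what the assumption $\chi\le 2^{-n}$ provides: at each of the at most $n$ steps we may spend a factor $2$ of the radius ratio.

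The induction runs as follows. By hypothesis, $B_{\chi^{-1}r}(x)$ is $(0,\varepsilon)$-symmetric, so in particular it is $(0,\delta)$-symmetric at the starting level. We set $\ell=0$ initially, and the symmetry plane is then essentially the point $x$.

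Suppose that at stage $\ell$ we know $B_{r}(x)$ (in fact a slightly larger ball, with radius $2^{-\ell}\chi^{-1}r$) is $(\ell,\delta_\ell)$-symmetric with respect to $\mathcal L^\ell_{\delta_\ell}$. We then ask whether the set $\{y\in B_r(x): B_{\chi^{-1}r}(y)\text{ is }(0,\varepsilon)\text{-symmetric}\}$ is contained in $B_{\tau r}(\mathcal L^\ell_{\delta_\ell})$. If it is, we stop, and take $F$ to be the pGH approximation of the model; since $\delta_\ell\le\psi$, the ball is $(\ell,\psi)$-symmetric. If it is not, there is some $y$ in this set with $d(y,\mathcal L^\ell)\ge\tau r$. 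In that case the cone-splitting theorem, applied with the center $x$ and the point $y$, shows that the ball is $(\ell+1,\delta_{\ell+1})$-symmetric, and we continue to the next stage.

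The process must terminate by $\ell=n$, because $(n+1)$-symmetry is impossible. At $\ell=n$ the plane fills the whole ball, so the containment holds trivially.

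The only delicate point is bookkeeping. The centers and radii in the cone-splitting theorem require $B_{\chi'^{-1}r'}(x)$ and $B_{\chi'^{-1}r'}(y)$ to be symmetric, while the hypothesis is stated only at radius $\chi^{-1}r$ around points of $B_r(x)$. We therefore apply the theorem at radii $r'=2^{n-\ell}r$ or similar, using that symmetry at scale $\chi^{-1}r$ restricts to smaller concentric balls with a controlled loss; by \Cref{rmk:rescaling} this loss is at most a factor $C(n)$ in $\eta$. We also need the approximate plane at the final stage to be compared with the plane $\mathcal L^\ell$ used in the test, and this is exactly the plane produced by $F$.

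I expect no genuine obstacle beyond choosing the parameters consistently. In particular, $\psi$ enters only as the lower bound $\delta_j\le\psi$, which we enforce by taking $\delta_n\le\min\{\varepsilon,\psi\}$ at the outset.
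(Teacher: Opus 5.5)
Your overall scheme is the right one, and it is the same as the paper's, which simply defers to \cite[Corollary~4.2]{CN13}: iterate the quantitative cone-splitting theorem at most $n$ times with radius ratio $\tfrac12$, which is exactly what $\chi\le 2^{-n}$ pays for. However, the choice of constants is the only real content of this corollary, and as you wrote it the chain is inverted and does not close. Write $\delta^{\mathrm{CS}}(\eta)$ for the $\delta$ that the cone-splitting theorem returns for target $\eta$ (ratio $\tfrac12$, separation $2^{-n}\tau/3$, see below). This is a smallness threshold for the hypotheses and is in general much smaller than $\eta$. So setting $\delta_j:=\delta^{\mathrm{CS}}(\delta_{j+1})$ produces an \emph{increasing} chain $\delta_0\le\delta_1\le\dots\le\delta_n$. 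Your induction step, which upgrades $(\ell,\delta_\ell)$ to $(\ell+1,\delta_{\ell+1})$, also only makes sense in that order. More importantly, $\varepsilon$ cannot be identified with $\delta_n$. At every step $\ell\to\ell+1$ the theorem needs the ball around $y$ to be $(0,\delta^{\mathrm{CS}}(\delta_{\ell+1}))$-symmetric, and the only information you have on $y$ is the original $(0,\varepsilon)$-symmetry, reused at every stage. Stage $0$ at $x$ likewise needs $(0,\delta_0)$. So $\varepsilon$ must sit at the \emph{bottom} of the chain, while $\psi$ (and $\tau$) constrain the \emph{top}. Your requirement ``$\delta_n\le\min\{\varepsilon,\psi\}$'' is also circular, since $\varepsilon$ is the output. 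The correct bookkeeping is:
\begin{itemize}
\item set $\delta_n:=c\min\{\psi,\tau\}$, where the $\tau$ is needed for the trivial containment at $\ell=n$;
\item set $\delta_\ell:=\min\{\delta_{\ell+1},\delta^{\mathrm{CS}}(\delta_{\ell+1})\}$ for $\ell=n-1,\dots,0$;
\item set $\varepsilon:=c'\delta_0$.
\end{itemize}
This gives $\varepsilon\le\delta_0\le\dots\le\delta_n\le\psi$.

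Three smaller points also need fixing.

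\emph{Radii.} With the radii $2^{-\ell}\chi^{-1}r$ you mention, the theorem would be applied at radii up to $\chi^{-1}r/2$. This can be much larger than $1$, outside the theorem's range $0<r\le 1$; its compactness proof needs the rescaled lower bound $Kr^2$ to stay bounded. Instead, first restrict both hypotheses from radius $\chi^{-1}r$ to radius $2^n r$. Then run the iteration on $B_{2^{n-\ell}r}(x)$, applying the theorem with ratio $\tfrac12$ at $r'=2^{n-\ell-1}r\le 2^{n-1}$. Its proof works verbatim for $r'\le 2^n$, with constants depending additionally on $n$.

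\emph{Restriction losses.} Restricting to a smaller concentric ball is not what \Cref{rmk:rescaling} provides, and it costs more than $C(n)$. Passing from radius $R$ to $R'$ turns the GH error into $\eta R/R'$ and multiplies the normalized $L^2$ term by $(R/R')^{n+2}$. Modifying $F$ so that it lands in the smaller ball adds a further term controlled by $L$. Here $R/R'$ can be as large as $2^{-n}\chi^{-1}$, and as large as $2^n$ when you pass from $B_{2^{n-\ell}r}(x)$ to $B_r(x)$ at the stopping stage. So the loss depends on $n,\chi,L$. This is harmless, since $\varepsilon$ may depend on these, but it must be absorbed into $c$ and $c'$.

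\emph{Separation parameter.} In the theorem's normalization the separation is $\tau r/r'\ge 2^{-n}\tau$, not $\tau$. You should also run the containment test with $\tau/3$, so that closeness to the plane of $B_{2^{n-\ell}r}(x)$ transfers to the plane of the restricted map on $B_r(x)$. Hence $\delta^{\mathrm{CS}}$ is taken with separation $2^{-n}\tau/3$, as above.
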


The integer $\ell$ above can be viewed as the largest number of possible quantitative symmetries. Thus if $x\in\mathcal S^k_{\eta,r}$ for some $\eta> \psi$, then necessarily $\ell\leq k$.

\subsection{Proof of Main Theorem}\label{sec:pf_of_main}

Here we show that the covering argument from \cite{CN13} adapts to our case with unbounded monotonicity.
For the reader's convenience, we restate the main theorem:

\begin{theorem}\label{thm:mainrestated}
Let $X$ be an $n$-dimensional noncollapsed Ricci limit space,
    and let $u$ be a limit solution with $C^\alpha$ coefficient $Q$ on a bounded domain $\Omega\subset X$.
    Without loss of generality, we fix $B_4(x) \Subset \Omega$ and assume $\mathcal{H}^n(B_4(x))\geq \mathrm{v}>0$.
    Let $L,\Lambda$ denote, respectively, the Lipschitz and coefficient bounds for the approximating sequence of $u$ as in \Cref{subsec:lim_sol} on $B_4(x)$.
    Then for any $\eta\in(0,1)$ and $\varepsilon \in (0,1)$, there exists $C=C(n,\mathrm{v},L,\Lambda,\alpha,\eta,\varepsilon)>0$ such that for any $r\in(0,1)$, we have
    \begin{equation}
      \Vol(T_r(\mathcal{S}^k_{\varepsilon}) \cap B_1(x)) \leq \Vol(T_r(\mathcal{S}^k_{\varepsilon, r})\cap B_1(x))\leq Cr^{n-k-\eta}, \label{eq:vol1restated}
    \end{equation}
    where $T_r(\cdot)$ denotes the tubular neighborhood, $\mathcal{S}^k_{\varepsilon,r}$ is the $(k,\varepsilon,r)$-singular stratum.
    We also have 
\begin{equation}
\mathcal{H}^{n-1} \llcorner \partial \{u > 0\}\left(T_r(\mathcal{S}^k_{\varepsilon, r})\cap B_1(x)\right)\leq C(n,\mathrm{v},L,\Lambda,\alpha,\eta,\varepsilon)r^{n-k-1-\eta} \label{eq:vol2restated},
\end{equation}
and the Minkowski dimension bound
    \begin{equation}
    \dim_{\mathcal M}(\mathcal{S}^{k}_\varepsilon\cap B_1(x))\leq k. \label{eq:vol3restated}
    \end{equation}
\end{theorem}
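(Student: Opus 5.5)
The plan is to first prove \eqref{eq:vol1restated} for the approximating minimizers $u_i$ on the smooth manifolds $M_i$, with constants depending only on $n,\mathrm v,L,\Lambda,\alpha,\eta,\varepsilon$. Then we pass to the limit. By \Cref{prop:basicpropstrata}(iv), every $q\in\mathcal S^k_{\varepsilon,r}(u)\cap B_1(x)$ is a limit of points $q_i\in\mathcal S^k_{\varepsilon/2,r}(u_i)\cap B_1(x_i)$. Hence $T_r(\mathcal S^k_{\varepsilon,r}(u))$ is contained in the Hausdorff limit of $T_{2r}(\mathcal S^k_{\varepsilon/2,r}(u_i))$. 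Volume convergence \cite{colding1997ricci} then transfers the bound, at the cost of a factor $2^{n}$. The first inequality in \eqref{eq:vol1restated} is immediate, since $\mathcal S^k_\varepsilon\subset\mathcal S^k_{\varepsilon,r}$. The Minkowski bound \eqref{eq:vol3restated} follows by letting $r\to0$ and then $\eta\to0$.

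For \eqref{eq:vol2restated}, we cover $T_r(\mathcal S^k_{\varepsilon,r})\cap B_1(x)$ by balls $B_{r}(y_j)$ with $y_j$ on the free boundary. Their number is at most $C r^{-n}\cdot r^{n-k-\eta}$, by the volume bound and a Vitali argument using noncollapsing. The upper Ahlfors bound \eqref{eq:AhlforsRegularity} then gives each ball $\mathcal H^{n-1}$-measure at most $Cr^{n-1}$, and the claim follows after passing to the limit using continuity of free boundaries.

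\textbf{Smooth case.} Fix $\gamma=\gamma(n,\eta)$ small and set $r_\beta=\gamma^\beta$. Choose $\psi<\varepsilon/2$, and take $\varepsilon'=\varepsilon'(\cdot,\tau=\gamma,\psi,\chi)$ from \Cref{cor:conesplitting} with $\chi$ a power of $\gamma$. Then apply \Cref{lem:alm_rigid} with target $(0,\varepsilon')$ to get $\delta$. For $y$ in the ball and $\beta\le j$ (where $r=\gamma^j$), call the scale $r_\beta$ \emph{good} at $y$ if both the $W_{t_0,\cdot}$ pinching and the volume-ratio pinching between consecutive scales $r_{\beta+c}^2$ and $r_\beta^2$ are below $\delta$; here $c$ absorbs $\chi$ and $\delta^{-1}$.

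Take $t_0=r^2$. The monotonicity \eqref{eq:monder_res} and the bound \eqref{eq:Wbound_restated} give $\sum_\beta (W_{t_0,r_\beta^2}-W_{t_0,r_{\beta+1}^2})\le C|\log r|^{1/2}\le C\sqrt{j}$. Monotonicity of the volume ratio (Bishop--Gromov) gives a total sum bounded by $C(n,\mathrm v)$. Hence the number of bad scales is at most
\[
N_{\mathrm{bad}}\le C\delta^{-1}\bigl(\sqrt{j}+1\bigr).
\]
This is the key new input.

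The covering argument follows \cite{CN13}. Each point is assigned a $j$-tuple $T\in\{0,1\}^j$ recording bad scales, and there are at most $\binom{j}{C\sqrt j}\le e^{C\sqrt j\log j}$ possible tuples. For each fixed tuple, we cover $\mathcal S^k_{\varepsilon,r}\cap\{T\}$ inductively. At a good scale, \Cref{lem:alm_rigid} makes every relevant point $(0,\varepsilon')$-symmetric. \Cref{cor:conesplitting}, together with $x\in\mathcal S^k_{\varepsilon,r}$, then forces the points to lie within $\gamma r_\beta$ of an $\ell$-plane with $\ell\le k$. So each ball is covered by $C(n)\gamma^{-k}$ balls of radius $r_{\beta+1}$. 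At a bad scale we use $C(n)\gamma^{-n}$ balls. The total count is
\[
e^{C\sqrt j\log j}\,(C\gamma^{-k})^{j}(C\gamma^{-n})^{C\sqrt j}\le C\,r^{-k-\eta},
\]
once we choose $\gamma$ with $C(n)\le\gamma^{-\eta/2}$ and absorb the subexponential $e^{C\sqrt j\log(j/\gamma)}$ factors into $\gamma^{-\eta j/2}$ for $j\ge j_0$. Small $j$ are handled by the constant. The volume bound then follows by multiplying by $Cr^n$.

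\textbf{Main obstacle.} The main obstacle is bookkeeping the scale shift and centers. Pinching must hold from $r_{\beta+c}$ to $r_\beta$, and \Cref{lem:alm_rigid} requires $t_2\le\delta^2\bar r^2$ at points on the free boundary. Points of $\mathcal S^k_{\varepsilon,r}$ in $T_r$ need not be free boundary points at the covering centers, so we will always center the covering balls at stratum points. The second delicate point is verifying that the $\sqrt j$ growth is genuinely absorbed. We will check this carefully, since the bad-scale count enters as an exponent of $\gamma^{-n}$ and the tuple count enters combinatorially; both are $e^{O(\sqrt j\log j)}=r^{-o(1)}$.
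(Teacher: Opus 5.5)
Your proposal is correct and follows essentially the same route as the paper. The shared steps are:
\begin{itemize}
\item the \cite{CN13} tuple covering;
\item the bound of $C(1+\sqrt{j})$ on the number of bad scales, obtained from the truncated Weiss quantity at $t_0=r^2=\gamma^{2j}$ together with Bishop--Gromov pinching;
\item cone splitting at good scales;
\item absorption of the $e^{C\sqrt{j}\log j}$ factors into $\gamma^{-\eta j}$;
\item transfer to $X$ through \Cref{prop:basicpropstrata}(iv). Here the paper does not use volume convergence of tubular neighborhoods. It counts disjoint balls centered in $\mathcal S^k_{\varepsilon/2,s}(u_i)$ and bounds the volume of their images in $X$ directly.
\end{itemize}

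What remains is bookkeeping:
\begin{itemize}
\item The pinching window must lie above $\chi^{-1}r_\beta$, because \Cref{lem:alm_rigid} gives symmetry at the lower scale $\sqrt{t_1}$.
\item $\psi$ must also be $\lesssim\gamma$, so that the GH error does not spoil the $C(n)\gamma^{-k}$ count.
\item For \eqref{eq:vol2restated}, apply the Ahlfors bound directly on $X$; its proof only uses the Euler--Lagrange equation. Passing $\mathcal H^{n-1}$ through Hausdorff convergence of free boundaries gives no semicontinuity.
\end{itemize}
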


Morally speaking, our adaptation from \cite{CN13} works due to the following facts. 
First, in the Decomposition \Cref{lem:decomp}, we use that after fixing $i_0\in \mathbb{N}$ and $\gamma<1$, the number of bad scales above $\gamma^{i_0}$ is bounded by $C\sqrt{i_0}$, locally uniformly for each point.
This leads to the specific growth rate of $N_j$ in \Cref{lem:decomp}.
Second, we have the numerical fact that $j^{\sqrt{j}}$ has sub-exponential growth, i.e.\@ slower growth than $b^j$ for any $b>1$.
Hence the slow growth of $N_j$ suffices for the volume estimate.
\par
We start with some preparation for the proof.
As we mention in the introduction, for our limit solutions it suffices to consider minimizers on smooth manifolds; this will be rigorously justified at the end of the proof.
Hence we may assume that $u$ is a minimizer on a bounded domain $\Omega$ in a smooth manifold $M$, as in \Cref{subsec:lim_sol}.
We fix $B_4(\bar{x})\Subset \Omega$, and prove the volume estimates on $B_1(\bar{x})$.
\par
Our choice of parameters $\mathrm{v}, L, \Lambda, \alpha$: By the volume convergence and comparison, we may assume that $\vol (B_1(x))\geq C(n)\mathrm{v}$ for any $x\in B_1(\bar{x})$.
Additionally, by assumption, we have the Lipschitz bound $L$ and the $C^\alpha$-coefficient bound $\Lambda$ on $B_4(\bar{x})$.
\par
The following decomposition lemma is an adaptation of \cite[Lemma 3.2]{CN13}; compare our $N_j$ with their $Q$, which has no growth in $j$.

\begin{lemma}[Decomposition Lemma]\label{lem:decomp}
There are some constants $c_0(n),c_1(n)$, such that, for every $\eta\in (0,1)$ and $j\in\mathbb{N}$, we have
    \begin{enumerate}
        \item $\mathcal{S}^k_{\eta, \gamma^j}\cap B_1(\bar{x})$ is contained in the union of at most $j^{N_j}+1$ nonempty open sets $\mathcal{C}^k_{\eta, \gamma^j}$.
        \item Each set $\mathcal{C}^k_{\eta, \gamma^j}$ is the union of at most $(c_1\gamma^{-n})^{N_j}(c_0\gamma^{-k})^{j-N_j}$ balls of radius $\gamma^j$,
    \end{enumerate}
    and we have a bound $N_j \leq C(n,\mathrm{v},L,{\Lambda,\alpha},\eta)(1+j^{1/2})$.
    Here we set $\gamma(n,\eta) := c_0^{-3/\eta}$.
\end{lemma}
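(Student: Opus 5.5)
We follow the scheme of \cite[Lemma 3.2]{CN13}, with the number of bad scales per point controlled by the truncated parabolic Weiss quantity rather than by a uniformly bounded monotone quantity. Fix $\eta$, set $\gamma=c_0^{-3/\eta}$, and let $\delta=\delta(n,\mathrm{v},L,\Lambda,\alpha,\varepsilon',\gamma)$ be the constant of \Cref{lem:alm_rigid}, where $\varepsilon'$ is the $\varepsilon$ of \Cref{cor:conesplitting} applied with $\psi=\eta$, $\tau=\gamma/10$ and $\chi$ a fixed power of $\gamma$. Work on the smooth approximating manifold with $\bar r=1$ and base point in $B_1(\bar x)$.

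\textbf{Step 1: good and bad scales.} For $y\in B_1(\bar x)\cap\partial\{u>0\}$ and $0\le i\le j$, call scale $i$ \emph{good} for $y$ if both
\[
W_{\gamma^{2j},\gamma^{2i}}(y)-W_{\gamma^{2j},\gamma^{2i+2}}(y)<\delta
\quad\text{and}\quad
\log\mathcal V_{-1}(\gamma^{i+1})-\log\mathcal V_{-1}(\delta^{-1}\gamma^{i})<\delta .
\]
Otherwise the scale is bad. By \Cref{lem:alm_rigid} with $t_0=\gamma^{2j}$, every good scale yields a $(0,\varepsilon')$-symmetric ball $B_{\gamma^{i+1}}(y)$, after shifting indices by a fixed amount to absorb the factors $\chi^{-1}$ and $\delta^{-1}$; finitely many top scales are declared bad by default.

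\textbf{Step 2: counting bad scales.} First, $\log\mathcal V_{-1}$ is monotone and bounded in terms of $n$ and $\mathrm{v}$. The scale windows $[\gamma^{i+1},\delta^{-1}\gamma^i]$ overlap at most $C(\delta,\gamma)$ times, so at most $C(n,\mathrm{v},\delta,\gamma)$ scales are volume-bad. Second, \eqref{eq:monder_res} shows $t\mapsto W_{t_0,t}$ is nondecreasing on $[t_0,1]$. Its total variation is therefore at most $2\sup|W_{t_0,t}|\le C\,|\log\gamma^{2j}|^{1/2}\le C\sqrt j$ by \eqref{eq:Wbound_restated}. Since each Weiss-bad scale consumes at least $\delta$ of this variation, there are at most $C\sqrt j/\delta$ of them. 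Hence every point has at most $N_j\le C(n,\mathrm{v},L,\Lambda,\alpha,\eta)(1+\sqrt j)$ bad scales among the first $j$. This is the one genuinely new input, and it is exactly where the unbounded (but only $\sqrt{\log}$-divergent) Weiss bound enters.

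\textbf{Step 3: covering by good/bad patterns.} Assign to each $y\in\mathcal S^k_{\eta,\gamma^j}\cap B_1(\bar x)$ its tuple $T(y)\in\{0,1\}^j$ recording bad scales. The tuple has at most $N_j$ ones, so there are at most $\sum_{m\le N_j}\binom{j}{m}\le j^{N_j}+1$ possible tuples. Let $\mathcal C_T$ be the (open thickening of the) set of points with tuple $T$. We cover $\mathcal C_T$ inductively in $i$: assume it is covered by balls of radius $\gamma^i$ centered at points of $\mathcal C_T$, and refine each ball $B_{\gamma^i}(z)$ into balls of radius $\gamma^{i+1}$.
\begin{itemize}
\item If scale $i$ is bad, cover $B_{\gamma^i}(z)$ by at most $c_1\gamma^{-n}$ balls, using volume doubling only.
\item If scale $i$ is good, $B_{\chi^{-1}\gamma^{i}}(z)$ is $(0,\varepsilon')$-symmetric. \Cref{cor:conesplitting} then gives an $\ell$ with every point of $\mathcal C_T\cap B_{\gamma^i}(z)$ lying in $B_{\tau\gamma^i}(F(\mathbb R^\ell\times\{z_*\}))$; all these points share the same good scale, hence are $(0,\varepsilon')$-symmetric there. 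Because $z\in\mathcal S^k_{\eta,\gamma^j}$ and $\psi=\eta$, the remark after \Cref{cor:conesplitting} forces $\ell\le k$. A $\tau\gamma^i$-neighborhood of an $\ell$-plane inside $B_{\gamma^i}$ is covered by $c_0\gamma^{-k}$ balls of radius $\gamma^{i+1}$.
\end{itemize}
Multiplying over the $j$ scales gives at most $(c_1\gamma^{-n})^{N_j}(c_0\gamma^{-k})^{j-N_j}$ balls, as claimed.

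The main obstacle is the bookkeeping in Step 3. The $\mathsf{pGH}$ approximation $F$ and the hypotheses of \Cref{cor:conesplitting} (symmetry at the slightly larger radius $\chi^{-1}\gamma^i$) must match the scale conventions of \Cref{lem:alm_rigid}. We handle this by shifting the index defining "good" by a fixed number of scales depending on $\chi$, $\delta$ and $\gamma$, which only adds a constant to $N_j$. The constants $c_0,c_1$ must depend on $n$ alone; this comes from Bishop--Gromov together with the assumption $r\le1$.
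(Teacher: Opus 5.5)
Your overall scheme is the paper's. You record bad scales in tuples, count volume-bad scales through monotonicity of $\log\mathcal V_{-1}$, count Weiss-bad scales by telescoping with $t_0=\gamma^{2j}$ to get $N_j\le C(1+\sqrt j)$, and use cone splitting at good scales. Your tuples record failure of pinching rather than failure of $(0,\varepsilon')$-symmetry; that only costs an index shift and is harmless.

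There is, however, a genuine gap in Step~3, caused by your choice $\psi=\eta$ in \Cref{cor:conesplitting}. The corollary does not give an actual $\ell$-plane in the space. It only places the good points in the $\tau\gamma^i$-neighborhood of $F(\mathbb R^\ell\times\{z_*\})$, where $F$ is a $\psi\gamma^i$-GH approximation. With $\psi=\eta$, the set $F\bigl((\mathbb R^\ell\times\{z_*\})\cap B_{\gamma^i}(0,z_*)\bigr)$ is controlled only at resolution $\eta\gamma^i$. Since $\gamma=c_0^{-3/\eta}<\eta$, this resolution is far coarser than the radius $\gamma^{i+1}$ of the new balls. At that resolution you only know the set lies in about $\eta^{-\ell}$ balls of radius $2\eta\gamma^i$. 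Each of these needs up to $C(n)(\eta/\gamma)^n$ balls of radius $\gamma^{i+1}$. The multiplicative factor at a good scale is therefore of order $\eta^{n-\ell}\gamma^{-n}$, not $c_0(n)\gamma^{-k}$. This breaks item~2, and with it the later use of $\gamma=c_0^{-3/\eta}$ to absorb $c_0^{j}$ into $\gamma^{-j\eta/3}$.

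The fix is the paper's choice: take $\psi=\tau=\gamma/10$ and $\chi=\gamma$, and choose $c_0(n)$ large enough that $\gamma<\eta$.
\begin{itemize}
\item Since $\psi<\eta$, you still get $\ell\le k$: an $(\ell,\psi)$-symmetric ball with $\ell\ge k+1$ is $(k+1,\eta)$-symmetric, contradicting $z\in\mathcal S^k_{\eta,\gamma^j}\subset\mathcal S^k_{\eta,\gamma^i}$.
\item Cover the model plane by $c_0(n)\gamma^{-k}$ balls of radius $\gamma^{i+1}/10$. Adding the GH error $\psi\gamma^i$ and the $\tau\gamma^i$-neighborhood gives balls of radius $3\gamma^{i+1}/10$ in the space.
\item Re-centering these at good points puts them inside balls of radius $\gamma^{i+1}$.
\end{itemize}

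A smaller point: the inductive re-covering should be applied to the set of points of $\mathcal S^k_{\eta,\gamma^j}\cap B_1(\bar x)$ with the given tuple, not to its open thickening. Only those points are known to be $(0,\varepsilon')$-symmetric at the shared good scales.
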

\begin{proof}[Proof of Decomposition Lemma] 
Choosing $c_0(n)$ sufficiently large, we may assume that $\gamma<\eta$ and $\gamma\leq2^{-n}$. We also set $\chi = \gamma$ in \Cref{cor:conesplitting}, and define $\varepsilon=\varepsilon(n,\mathrm{v}, L,\gamma)$ as given by that lemma, with $\tau := \gamma/10$, and $\psi:=\gamma/10$. 
\par
To each point $x\in \pt\{u>0\}\cap B_1(\bar x)$, we associate a $j$-tuple $T^j(x)\in\{0,1\}^j$ by setting $T_a^j(x)=1$ if $B_{\gamma^{a-1}}(x)$ is not $(0,\varepsilon)$\text{-symmetric}, and $T_a^j(x)=0$ otherwise. For every integer $1\le a\le j$ and every $a$-tuple $\overline{T}^a$, we set $E(\overline T^a):=\{x\in\partial\{u>0\}\cap B_1(\bar x):T^a(x)=\overline T^a\}$, where $T^a(x)$ denotes the $a$-tuple formed by the first $a$ entries
of $T^j(x)$.

\noindent \textit{1. Construction.}

\noindent Fix $j\geq 1$. We construct inductively, for $0\leq a\leq j$,
families $\mathcal C^k_{\eta,\gamma^a}$ as follows. For $a=0$,
there is a single set $\mathcal C^k_{\eta,\gamma^0}$, consisting of
the ball $B_1(\bar x)$. For $1\leq a\leq j$, the sets
$\mathcal C^k_{\eta,\gamma^a}$ are indexed by the possible
$a$-tuples $\overline T^a$, and
$\mathcal C^k_{\eta,\gamma^a}(\overline T^a)$ covers $E(\overline T^a)\cap \mathcal S^k_{\eta,\gamma^a}$. Assume that the family at level $a-1$ has already been constructed.
Given an $a$-tuple $\overline T^a$, let $\overline T^{a-1}$ be the
$(a-1)$-tuple obtained by dropping its last entry. For each ball
$B_{\gamma^{a-1}}(y)$ forming
$\mathcal C^k_{\eta,\gamma^{a-1}}(\overline T^{a-1})$, we re-cover
\[
B_{\gamma^{a-1}}(y)
\cap \mathcal S^k_{\eta,\gamma^a}
\cap E(\overline T^a)
\]
by a minimal collection of balls of radius $\gamma^a$, centered at
points of $B_{\gamma^{a-1}}(y) \cap \mathcal S^k_{\eta,\gamma^a}\cap E(\bar T^{a})$. The union of all these balls defines
$\mathcal C^k_{\eta,\gamma^a}(\overline T^a)$. This is possible since
$\mathcal S^k_{\eta,\gamma^a}
\subset
\mathcal S^k_{\eta,\gamma^{a-1}}$ and $
E(\overline T^a)\subset E(\overline T^{a-1})$.
If the set to be re-covered is empty, we simply take the corresponding
collection to be empty. \vspace{0.2 cm}

\noindent 2. \textit{Bounding the number of sets $\mathcal{C}^k_{\eta,\gamma^j}$.}

\noindent The family of sets $\mathcal{C}^k_{\eta,\gamma^j}$ corresponds to the set of $j$-tuples such that $E(T^j)$ is not empty.
To bound the number of such $j$-tuples, consider an arbitrary point $x\in\partial\{u>0\}\cap B_1(\bar x)$ and put $\delta:=\delta(n,\mathrm{v},L,\Lambda,\alpha,\varepsilon,\gamma)$ given by \Cref{lem:alm_rigid} with $\overline{r}=1$.
We choose $m:=m(\delta,\gamma)$ such that $\gamma^{-m} \geq \delta^{-1}$.
By \Cref{lem:alm_rigid}, it follows that if
\begin{enumerate}[nosep]
    \item[(i)]$\gamma^{i-2}< \delta$,
    \item[(ii)]$\log \mathcal{V}_{-1}(\gamma^{i-1}) - \log \mathcal{V}_{-1}(\gamma^{i-2-m}) < \delta$,
    \item[(iii)]$W_{t_0,\gamma^{2i-4}}-W_{t_0,\gamma^{2i-2}} < \delta$,
\end{enumerate}
then $B_{\gamma^{i-1}}(x)$ is $\varepsilon$-symmetric, and consequently $T^j_i(x)=0$. Now, note that
\begin{enumerate}[nosep]
    \item[(a)] The number of $i\geq 2$ for which Item (i) doesn't hold is bounded by $2 + \log \delta/\log \gamma$.
    \item[(b)] The number of $i\geq 2$ for which Item (ii) doesn't hold is bounded by
    $$-(m+2)\delta^{-1}\log(c\mathrm{v}).$$ 
    This holds since $\log  \mathcal{V}_{-1}(r)$ is decreasing,  $\log  \mathcal{V}_{-1}(r) \to 0 $ as $r\to 0$, and $\log  \mathcal{V}_{-1}(\gamma^{-m})$ is bounded below by $\log (c\mathrm{v})$ for some $c=c(n,\gamma,m)\in(0,1)$. 
    \item[(c)] We argue as in (b) but with more care. 
    We write
$$(W_{t_0,1} - W_{t_0,\gamma^2}) + (W_{t_0,\gamma^2} - W_{t_0,\gamma^4}) + \ldots (W_{t_0,\gamma^{2j-2}}- W_{t_0,\gamma^{2j}}) = W_{t_0,1} - W_{t_0,\gamma^{2j}}.$$ By \eqref{eq:monder_res}, each term is nonnegative.
Moreover, by taking
\[t_0=\gamma^{2j},
\]
we have by \eqref{eq:Wbound_restated} that the sum above is bounded by 
$$2\,C(n,\mathrm{v},\Lambda,\alpha, L)\abs{\log t_0}^{1/2} = C(n,\mathrm{v},\Lambda,\alpha, L,\gamma)j^{1/2}.$$
Thus, the number of $2\leq i\leq j+1$ for which Item (iii) doesn't hold is bounded by
$$
C(n,\mathrm{v},\Lambda,\alpha, L,\gamma)j^{1/2}\delta^{-1}.
$$
\end{enumerate}
Putting all together, with $\gamma=\gamma(n,\eta)$, it follows that \begin{equation}
\text{Bad Scales}_j\ := \#\{1\leq i\leq j:T_i^j(x)=1\} \leq C(n,\mathrm{v},\Lambda,\alpha, L,\eta)(1+j^{1/2}) =: N'_j, \label{eq:BoundBadScales}
\end{equation}
which may be rounded up to an integer. Now using that $\sum_{\ell=0}^{\min\{N_j',j\}}\binom{j}{\ell}
\le j^{N_j'}+1$, we conclude the first item using $N_j:= \min\{N_j' +1,j\}$. \vspace{0.2 cm}

\noindent\textit{3. Bounding the number of balls whose union is $\mathcal{C}^{k}_{\eta,\gamma^j}$}.

Fix a $j$-tuple $\overline T^j$ for which $\mathcal C^k_{\eta,\gamma^j}(\overline T^j)$ is nonempty, and denote its prefixes by $\overline T^a$. Fix $2\leq a\leq j$ and one of the parent balls $B_{\gamma^{a-1}}(x)$ appearing in $\mathcal C^k_{\eta,\gamma^{a-1}}(\overline T^{a-1})$. By construction, its center satisfies $x\in\mathcal S^k_{\eta,\gamma^{a-1}}\cap E(\overline T^{a-1})$. Assume that $\overline T^j_{a-1}=0$. Then, by the definition of the
tuple, $B_{\gamma^{a-2}}(x)$ is $(0,\varepsilon)$-symmetric. Moreover,
every point
$y\in A:=\mathcal S^k_{\eta,\gamma^a}\cap
B_{\gamma^{a-1}}(x)\cap E(\overline T^a)$ also satisfies
$T^j_{a-1}(y)=0$, and hence $B_{\gamma^{a-2}}(y)$ is
$(0,\varepsilon)$-symmetric. We may therefore apply \Cref{cor:conesplitting} with
$r=\gamma^{a-1}$, $\chi=\gamma$, and $\tau=\psi=\gamma/10$.
We obtain an integer $\ell\le n$, a cone
$\mathbb R^\ell\times C(Z)$, and a
$\frac{1}{10}\gamma^a$-GH approximation
$F:B_{\gamma^{a-1}}((0,z_*))\to B_{\gamma^{a-1}}(x)$ such that
$A$ is contained in the $\frac{1}{10}\gamma^a$-neighborhood of
$F((\mathbb R^\ell\times\{z_*\})\cap
B_{\gamma^{a-1}}((0,z_*)))$. Since
$x\in\mathcal S^k_{\eta,\gamma^{a-1}}$ and
$\psi=\gamma/10<\eta$, necessarily $\ell\le k$.

The Euclidean factor
$(\mathbb R^\ell\times\{z_*\})\cap
B_{\gamma^{a-1}}((0,z_*))$ can be covered by at most
$c_0(n)\gamma^{-\ell}\le c_0(n)\gamma^{-k}$ balls of radius
$\frac{1}{10}\gamma^a$. Taking into account the GH error and the
$\frac{1}{10}\gamma^a$-neighborhood above, their images cover $A$ by
balls of radius $\frac{3}{10}\gamma^a$. Re-centering these balls at
points of $A$ and enlarging their radius to $\gamma^a$, we conclude
that the re-covering at level $a$ multiplies the number of balls by at
most $c_0(n)\gamma^{-k}$ whenever $\overline T^j_{a-1}=0$. If $\overline T^j_{a-1}=1$, we instead use the general bound
$c_1(n)\gamma^{-n}$. By \eqref{eq:BoundBadScales}, this can occur at
most $N'_j$ times, in addition to the initial step $a=1$, for which
$T^j_{a-1}$ is not defined. Thus, using $N_j:= \min\{N'_j+1,j\}$, there are at most $N_j$ steps at which the factor $c_1(n)\gamma^{-n}$ appears,
while at all remaining steps the factor is at most
$c_0(n)\gamma^{-k}$. Putting $c_1 > c_0$ this proves item~2.
\end{proof}
\begin{proof}[Proof of the Volume Estimate, \Cref{thm:mainrestated}] 

We first prove $\Vol(T_r(\mathcal{S}^k_{\eta, r}\cap B_1(\bar x)))\leq Cr^{n-k-\eta}$ for $\eta\in (0,1)$ on smooth manifolds.
Given such $\eta$, we fix $\gamma = c_0(n)^{-3/\eta}$ as in \Cref{lem:decomp}, and notice that it suffices to prove this bound for $r = \gamma^j$ for every $j$. From \Cref{lem:decomp}, we have
\begin{align}
    \Vol(T_{\gamma^j}(\mathcal{S}^k_{\eta, \gamma^j}\cap B_1(\bar{x})))
    &\leq (j^{{N_j}}+1)\cdot (c_1\gamma^{-n})^{N_j}(c_0\gamma^{-k})^{j-{N_j}}\cdot c_2(n)\cdot(\gamma^{j})^n \\ 
    & = c_2(n)\cdot(j^{N_j}+1)\cdot [(c_1\gamma^{-n})^{N_j}c_0^{-{N_j}}(\gamma^{-k})^{-{N_j}}]\cdot (\gamma^{j})^{n-k-\eta/3} \\ & \leq c_2(n)\cdot(j^{N_j}+1)\cdot c_3(n,\eta)^{N_j}\cdot (\gamma^{j})^{n-k-\eta/3}
\end{align}
with ${N_j}\leq C(n,\mathrm{v},L,\Lambda,\alpha,\eta)(1+j^{1/2})$.
We then have the numerical facts
$$
j^{N_j}+1 \leq C(n,\mathrm{v},L,\Lambda,\alpha,\eta)(\gamma^j)^{-\eta/3},
$$
and also $c_3(n,\eta)^{N_j} \leq C(n,\mathrm{v},L,\Lambda,\alpha,\eta)(\gamma^j)^{-\eta/3}$. 
Plugging these back into the inequality above, we get
\begin{align}
    \Vol(T_{\gamma^j}(\mathcal{S}^k_{\eta, \gamma^j}\cap B_1(\bar{x})))
    &\leq C(n,\mathrm{v},L,\Lambda,\alpha,\eta)(\gamma^j)^{n-k-\eta} \label{eq:asasw}.
\end{align}
To obtain \eqref{eq:vol1restated} on $B_1(x)$ in the statement, choose a minimal finite covering of $\partial\{u>0\}\cap B_2(x)$ by balls $B_{1/4}(x_a)$ centered in that set.
By volume comparison, the number of these balls is uniformly bounded.
Since $B_1(x_a)\subset B_4(x)$, we may apply the preceding estimate after rescaling to each of these balls. Since $T_r(\mathcal S^k_{\varepsilon,r})\cap B_1(x) \subset T_r(\mathcal S^k_{\varepsilon,r}\cap B_2(x)),$
summing over the covering gives \eqref{eq:vol1restated} for $r<1/4$. 
The cases for the remaining radii follow from volume comparison after enlarging the constant.
\par
In the present smooth setting, \eqref{eq:vol2restated} follows readily from
\eqref{eq:vol1restated} and the upper Ahlfors regularity. Indeed, fix $0<r<1/4$ and choose a maximal pairwise disjoint family of $N$ balls
$B_{r/2}(x_i)$ with centers in $\mathcal S^k_{\varepsilon,r}\cap B_2(x)$. By
maximality, the portion of the tubular neighborhood under consideration
is contained in the union of the balls $B_{2r}(x_i)$. Therefore, using \eqref{eq:AhlforsRegularity},
\begin{equation}\label{eq:i}
\begin{aligned}
&\mathcal H^{n-1}\llcorner\partial\{u>0\}
\left(T_r(\mathcal S^k_{\varepsilon,r})\cap B_1(x)\right) \\
&\qquad\leq
\sum_{i=1}^{N}
\mathcal H^{n-1}
\bigl(\partial\{u>0\}\cap B_{2r}(x_i)\bigr)
\leq
C(n,L,\Lambda)Nr^{n-1}.
\end{aligned}
\end{equation}
By \eqref{eq:asasw} and the noncollapsed assumption, we have
$N\leq C(n,\mathrm{v},\eta,L,\Lambda,\alpha,\varepsilon)r^{-k-\eta}$, and hence
\begin{equation}
\mathcal H^{n-1}\llcorner\partial\{u>0\}
\left(T_r(\mathcal S^k_{\varepsilon,r})\cap B_1(x)\right)
\leq
C(n,\mathrm{v},L,\Lambda,\alpha,\eta,\varepsilon)
r^{n-k-1-\eta}.
\end{equation}
The cases for the remaining radii $1/4\leq r<1$ follow from the local upper Ahlfors bound after enlarging the constant.
\par
We end the proof by explaining in detail why it is enough to establish the estimates above in the smooth setting. Let $(M_i,g_i,u_i)$ be an approximating sequence converging to the limit solution $(X,d,u)$, and let $f_i:B_1^{M_i}(x_i)\longrightarrow B_1^X(x)$ be $\varepsilon_i$-GH approximations realizing this convergence, with $\varepsilon_i\to0$.
Fix $r\in(0,1)$ and $\tau>0$ sufficiently small such that $s:=r+\tau<1$.
Let
\[
A:=\mathcal S^k_{\eta,r}(u)\cap B_1(x),
\qquad
A_i:=\mathcal S^k_{\eta/2,s}(u_i)\cap B_1(x_i).
\]
Applying \Cref{prop:basicpropstrata}-(iv) to a finite
$\tau/2$-net of $A$, together with \Cref{prop:basicpropstrata}-(i), for all sufficiently large $i$ we have $A\subset T_\tau^X(f_i(A_i))$, and hence $T_r^X(A)\subset T_s^X(f_i(A_i))$.
\par
Now, choose a maximal pairwise disjoint family of balls
\[
\{B_\rho^{M_i}(z_{i,j})\}_{j=1}^{N_i},
\qquad
\rho:=s/10,\quad z_{i,j}\in A_i.
\]
By \eqref{eq:asasw} and the preceding covering
argument in the smooth case at scale $s$, together
with the uniform lower volume bounds supplied by
noncollapsing, we have
\[
N_i\leq C(n,\mathrm v,L,\Lambda,\alpha,\eta)s^{-k-\eta/2}.
\]
On the other hand, the maximality gives
\[
A_i\subset\bigcup_{j=1}^{N_i}B_{2\rho}^{M_i}(z_{i,j}).
\]
Consequently, using the distortion bound for $f_i$, we have
\[
T_s^X(f_i(A_i))
\subset
\bigcup_{j=1}^{N_i}
B_{s+2\rho+\varepsilon_i}^X(f_i(z_{i,j})).
\]
For $i$ sufficiently large, $s+2\rho+\varepsilon_i<2s$.
Thus, by the uniform upper volume bound for balls in $X$,
\[
\begin{aligned}
\Vol\left(
T^X_r(A)
\right)\leq \Vol \left(T_s^X(f_i(A_i))\right)
\leq C(n)N_i s^n
\leq
C(n,\mathrm v,L,\Lambda,\alpha,\eta)
(r+\tau)^{n-k-\eta/2}.
\end{aligned}
\]
Letting $\tau\to0$ with $r\leq 1$ yields the estimate \eqref{eq:asasw} on the Ricci limit spaces $X$.
 The same covering argument as before then gives \eqref{eq:vol1restated} on $X$. The boundary measure estimate \eqref{eq:vol2restated} on $X$ then follows from the same argument above, using the upper Ahlfors regularity.
\end{proof}

\begingroup
\small
\setlength{\parskip}{0pt}
\bibliographystyle{alpha}
\bibliography{ref.bib}
\endgroup

\end{document}